\renewcommand\thesubfigure{(\alph{subfigure})}
\newcommand{\RNum}[1]{\uppercase\expandafter{\romannumeral #1\relax}}
\newcommand\restr[2]{{
  \left.\kern-\nulldelimiterspace 
  #1 
  \vphantom{|} 
  \right|_{#2} 
  }}
\numberwithin{equation}{section}
\theoremstyle{plain}
\newtheorem{theorem}{Theorem}[section]
\newtheorem{lemma}[theorem]{Lemma}
\newtheorem{proposition}[theorem]{Proposition}
\theoremstyle{definition}
\theoremstyle{remark}
\newtheorem{remark}[theorem]{Remark}
\def\BBN {{\mathbb N}}
\def\BBZ {{\mathbb Z}}
\def\BBR {{\mathbb R}}
\def\BBC {{\mathbb C}}
\newcommand*{\rom}[1]{\expandafter\@slowromancap\romannumeral #1@}
\newcommand{\f}{\widehat}
\newcommand{\m}{\mathfrak{m}}
\newcommand{\ds}{\displaystyle}
\newcommand{\p}{\tau}
\newcommand{\h}{\mathscr{H}}
\newcommand{\el}{\mathscr{L}}
\newcommand{\ii}{\mathscr}
\title[\parbox{14cm}{\centering{}} \quad]{Multi-parameter Flag Leibniz Rules of Arbitrary Complexity in mixed-norm spaces}
\author[C. Benea]{Cristina Benea}
\address[C. Benea]{Universit\'e de Nantes, Laboratoire de Math\'ematiques Jean Leray, Nantes 44322, France}
\email{cristina.benea@univ-nantes.fr}
\author[Y. Zhai]{Yujia Zhai}
\address[Y. Zhai]{CNRS - Universit\'e de Nantes, Laboratoire de Math\'ematiques Jean Leray, Nantes 44322, France}
\email{yujia.zhai@univ-nantes.fr}
\begin{document}

\renewcommand{\thesubfigure}{\roman{subfigure}}

\forestset{
    my tree/.style={
      for tree={
        shape=circle,
        fill=black,
        minimum width=4pt,
        inner sep=1pt,
        anchor=center,
        line width=1pt,
        s sep+=20pt,
      },
    },
  }

\forestset{
    my treeSep/.style={
      for tree={
        shape=circle,
        fill=black,
        minimum width=4pt,
        inner sep=1pt,
        anchor=center,
        line width=1pt,
        s sep+=35pt,
      },
    },
  }


\begin{abstract}
We prove multi-parameter Leibniz rules corresponding to flag paraproducts of arbitrary complexity in mixed-norm spaces, including endpoint estimates. The proof relies on multi-linear harmonic analysis techniques and a quantitative treatment of the commutators introduced by Bourgain and Li. The argument is robust and applicable to a generic class of multipliers, including (symmetric) Mikhlin multipliers of positive order and asymmetric variants of partial differential operators and Mikhlin multipliers of positive order. 
\end{abstract}


\maketitle

\section{Introduction} \label{introduction}

\subsection{Motivation and Main results}
In this work, we study multi-parameter flag Leibniz rules, which include the particular bi-parameter example
{\fontsize{9}{9}
\begin{align} \label{bi_Muscalu}
&\|D_{(1)}^{\beta_1}D^{\beta_2}_{(2)}\big(D_{(1)}^{\alpha_1}D_{(2)}^{\alpha_2}(f_1 f_2) f_3 D_{(1)}^{\gamma_1}D_{(2)}^{\gamma_2}(f_4 f_5)\big) \|_{L^{\vec r}}  \\
 \lesssim 
& \|D_{(1)}^{\alpha_1+\beta_1}D_{(2)}^{\alpha_2+\beta_2} f_1\|_{L^{\vec p_1}} \|f_2\|_{L^{\vec p_2 }} \|f_3\|_{L^{\vec p_3}}\|D_{(1)}^{\gamma_1}D_{(2)}^{\gamma_2}f_4\|_{L^{\vec p_4}}\|f_5\|_{L^{\vec p_5}} + \| f_1\|_{L^{\vec p_1}} \|D_{(1)}^{\alpha_1+\beta_1}D_{(2)}^{\alpha_2+\beta_2}f_2\|_{L^{\vec p_2 }} \|f_3\|_{L^{\vec p_3 }}\|D_{(1)}^{\gamma_1}D_{(2)}^{\gamma_2}f_4\|_{L^{\vec p_4 }}\|f_5\|_{L^{\vec p_5 }} +\nonumber\\
& \|D_{(1)}^{\alpha_1+\beta_1}f_1\|_{L^{\vec p_1 }} \|D_{(2)}^{\alpha_2} f_2\|_{L^{\vec p_2}} \|D_{(2)}^{\beta_2}f_3\|_{L^{\vec p_3}}\|D_{(1)}^{\gamma_1}f_4\|_{L^{\vec p_4 }}\|D_{(2)}^{\gamma_2}f_5\|_{L^{\vec p_5}} + \|D_{(1)}^{\alpha_1}f_1\|_{L^{\vec p_1 }} \|D_{(2)}^{\alpha_2+\beta_2} f_2\|_{L^{\vec p_2 }} \|f_3\|_{L^{\vec p_3 }}\|D_{(1)}^{\gamma_1+\beta_1}f_4\|_{L^{\vec p_4}}\|D_{(2)}^{\gamma_2}f_5\|_{L^{\vec p_5 }} \nonumber \\
 &+ \ldots  \text{  140 other similar terms}. \nonumber
\end{align}}Above, $\beta_1, \beta_2, \alpha_1, \alpha_2, \gamma_1, \gamma_2 \geq 0$
and the spaces $L^{\vec{p}_i}(\BBR^{d_1} \times \BBR^{d_2})$ represent mixed-norm Lebesgue spaces $L^{p_i^1}_x(L_y^{p_i^2})$ (see Section \ref{sec:Section2}, definition \ref{eq:def:mixed:norm}) with $1 \leq p^1_i, p^2_i \leq \infty$ for all $1 \leq i \leq 5$. 
Additionally, we require that $$ \sum_{i=1}^5 \frac{1}{p^1_i} = \frac{1}{r^1}, \qquad \sum_{i=1}^5\frac{1}{p^2_i} = \frac{1}{r^2}, $$ and
\[
\begin{array}{ c c c }
 \frac{1}{r^2}< \frac{d_2+\beta_2}{d_2}, & \frac{1}{p_1^2}+\frac{1}{p_2^2}  <  \frac{d_2+\alpha_2}{d_2}, & \frac{1}{p_4^2}+\frac{1}{p_5^2}  <   \frac{d_2+\gamma_2}{d_2} ,\\ 
  \frac{1}{r^1}< \min( \frac{d_1+\beta_1}{d_1} , \frac{d_2+\beta_2}{d_2}), \qquad  & \frac{1}{p_1^1}+\frac{1}{p_2^1}  <  \min(\frac{d_1+\alpha_1}{d_1},   \frac{d_2+\alpha_2}{d_2}), \qquad & \frac{1}{p_4^1}+\frac{1}{p_5^1}  < \min( \frac{d_1+\gamma_1}{d_1}, \frac{d_2+\gamma_2}{d_2}).
\end{array}
\]

The Leibniz rule \eqref{bi_Muscalu} confirms the fact that fractional partial derivatives\footnote{For $\alpha_1, \alpha_2 \geq 0$, we consider the partial differential operators $D_{(1)}^{\alpha_1}, D_{(2)}^{\alpha_2}$, initially defined on the space $\mathcal{S}(\mathbb{R}^{d_1+d_2})$ of Schwartz functions -- via the Fourier transform -- by formulas \begin{align} \label{partial_diff}
D_{(1)}^{\alpha_1}f:= \ii F^{-1}\left(|\xi|^{\alpha_1}\f{f}(\xi,\eta)\right), \qquad D_{(2)}^{\alpha_2}f:= \ii F^{-1}\left(|\eta|^{\alpha_2}\f{f}(\xi,\eta)\right).
\end{align}
In contrast, the homogeneous differential operator $D^{\beta}$ is defined on the Schwartz space $\mathcal{S}(\mathbb{R}^d)$ by 
\begin{equation} \label{homo_diff}
D^{\beta} f := \mathscr{F}^{-1}\big(|\xi|^{\beta} \f{f}(\xi) \big).
\end{equation}
} acting in various ways on products of functions are properly distributed among the functions, provided these are elements of some mixed-norm $L^{\vec p}$ spaces, with $\vec p=(p^1, \ldots, p^N), 1 \leq p^1, \ldots, p^N \leq \infty$.

The difficulty -- and thus the interest -- of flag Leibniz rules such as \eqref{bi_Muscalu} resides in the fact that straightaway composition arguments are insufficient when the input functions are too close to $L^1$, in spite of them having the form of and distributing the derivatives as compositions of simpler Leibniz rules. This is a feature shared with the \emph{flag paraproducts} introduced by Muscalu in \cite{flag_paraproducts}, which ressemble compositions of Coifman-Meyer multipliers. In fact, Coifman-Meyer multipliers are usually invoked in the study of fractional Leibniz rules, and in particular the boundedness of the one-parameter flag paraproduct from \cite{flag_paraproducts} implies the one-parameter flag Leibniz rule.\footnote{Modulo some endpoints.}

When it comes to fractional partial derivatives acting independently on various variables -- as in \eqref{bi_Muscalu} -- a new layer of difficulty is added since the boundedness of multi-parameter flag paraproducts remains, to our knowledge, a difficult open problem. However, combining multilinear harmonic analysis techniques with the method introduced by Bourgain and Li \cite{BourgainLi-kato} for proving Leibniz rules for input data in $L^\infty$ (another situation in which Coifman-Meyer multipliers cannot be invoked), we are able to prove multi-parameter flag Leibniz rules of arbitrary complexity. 

Differences and similarities between the reduction of Leibniz rules to the boundedness of Coifman-Meyer multipliers and the Bourgain-Li method will be discussed in Section \ref{sec:differences:Coifman-Meyer}.

Leibniz rules of various types have been extensively investigated and widely used in nonlinear PDEs. The simplest Leibniz rule, acting on functions defined on $\BBR^d$, takes the form
\begin{align}\label{Leib_one_paraproduct}
\| D^{\beta} (f_1 f_2) \|_{L^r} \lesssim \|D^{\beta}f_1\|_{L^{p_1}}\|f_2\|_{L^{p_2}} + \|f_1\|_{L^{p_1}}\|D^{\beta}f_2\|_{L^{p_2}}
\end{align}
where 
\begin{align} \label{exp_Leib_one_paraproduct}
& 1 \leq p_1, p_2 \leq \infty, \quad  \frac{d}{d+\alpha} < r \leq \infty, \quad  \frac{1}{p_1} + \frac{1}{p_2} = \frac{1}{r}.
\end{align}
As mentioned above, the Coifman-Meyer theorem \cite{CoifMeyer-ondelettes} implies the Leibniz rule \eqref{Leib_one_paraproduct}, but only in the range 
\[
 1 < p_1, p_2 \leq \infty, \quad  \frac{d}{d+\alpha} < r < \infty, \quad  \frac{1}{p_1} + \frac{1}{p_2} = \frac{1}{r}.
\] 

Coifman-Meyer operators are associated in frequency to Mikhlin symbols: they are $n$-linear operators $T_m$ described\footnote{Given a frequency symbol $m$, we denote by $T_m$ the associated $n$-linear operator.} by
\[
T_m(f_1, \ldots, f_n):= \int_{\BBR^{nd}} m(\xi_1, \ldots, \xi_n) \hat f_1(\xi_1) \cdot \ldots \cdot \hat f_n(\xi_n) e^{2 \pi i x(\xi_1+\ldots+\xi_n)} d \xi_1 \ldots d \xi_n,
\]
where $m$ is a Mikhlin symbol (of order $0$) satisfying
\begin{equation}
\label{eq:Mikhlin:symbol}
\big| \partial_{\xi_1}^{\gamma_1} \ldots  \partial_{\xi_n}^{\gamma_n}  m(\xi_1, \ldots, \xi_n) \big| \lesssim  \big(|\xi_1|+ \ldots+ |\xi_n| \big)^{- |\gamma_1|- \ldots - |\gamma_n|}
\end{equation}
for sufficiently many\footnote{In certain situations, finding minimal regularity conditions for Mikhlin symbols becomes important. This will not be the case for our applications concerning Leibniz rules for homogeneous (or inhomogeneous, as we will see later) partial fractional differential operators, as the symbols involved will either be smooth or will be a suitable superposition of smooth symbols.} multi-indices $\gamma_1, \ldots, \gamma_n$. The class of Mikhlin symbols on $\BBR^{nd}$ will be denoted $\mathcal{M}(\BBR^{nd})$.

Since multilinear Coifman-Meyer operators (which are particular cases of multilinear Calderon-Zygmund operators) do not satisfy $L^\infty \times L^\infty \mapsto L^\infty$, or (strong type) $L^1 \times L^p \mapsto L^{p \over {p+1}}$ bounds for $1 \leq p \leq \infty$, a different approach is required for dealing with these endpoint estimates. This was introduced in \cite{BourgainLi-kato}, where Bourgain and Li proved 
that 
\begin{align*}
\| D^{\beta} (f_1 f_2) \|_{L^\infty} \lesssim \|D^{\beta}f_1\|_{L^{\infty}}\|f_2\|_{L^{\infty}} + \|f_1\|_{L^{\infty}}\|D^{\beta}f_2\|_{L^{\infty}},
\end{align*}
a result conjectured in \cite{GrafakosMaldonadoNaibo-Kato-endpoint}. In fact, the authors proved in \cite{BourgainLi-kato} a Kato-Ponce commutator estimate involving Besov norms\footnote{The Besov norms $\| \cdot\|_{\dot{B}^{\beta}_{p,\infty}}$ associated to the real parameter $\beta$ and to the Lebesgue exponent $1 \leq p \leq \infty$, will be explicitly defined in Section \ref{sec:LP&Besov} -- see \eqref{eq:def:Beson:1param}.}
\begin{align}
\label{Leib:comm:infty}
\| D^{\beta} (f_1 f_2) - (D^{\beta}f_1) f_2 - f_1 (D^{\beta}f_2) \|_{L^\infty} \lesssim \|f_1\|_{L^{\infty}}\|f_2\|_{\dot{B}^{\beta}_{\infty,\infty}} + \|f_1\|_{\dot{B}^{\beta}_{\infty,\infty}}\|f_2\|_{L^{\infty}}.
\end{align}
The appearance of Besov norms should already suggest that a scale-by-scale analysis will be performed, and that the estimates obtained in this way will be summed according to their magnitude; this is in sharp contrast with the approach for Coifman-Meyer multipliers, in which the orthogonality between different scales plays a crucial role.

The case when (at least) one of $p_1$ or $p_2$ is equal to $1$ was proved by Oh and Wu in \cite{OhWu}, by applying the methods introduced in \cite{BourgainLi-kato}. It represented a first instance of a (strong-type) Leibniz rule for input functions in $L^1$.

Bi-parameter Leibniz rules, such as
\begin{align}\label{Leib_bi_paraproduct}
\|D_{(1)}^{\beta_1} D_{(2)}^{\beta_2} (f_1f_2) \|_{L^{r}(\BBR^{d_1 +d_2})} \lesssim & \|D_{(1)}^{\beta_1}D_{(2)}^{\beta_2}f_1\|_{L^{p_1}(\BBR^{d_1 +d_2})}\|f_2\|_{L^{p_2}(\BBR^{d_1 +d_2})} +  \|f_1\|_{L^{p_1}(\BBR^{d_1 +d_2})}\|D_{(1)}^{\beta_1}D_{(2)}^{\beta_2}f_2\|_{L^{p_2}(\BBR^{d_1 +d_2})} + \nonumber \\
&  \|D_{(1)}^{\beta_1}f_1\|_{L^{p_1}(\BBR^{d_1 +d_2})}\|D_{(2)}^{\beta_2}f_2\|_{L^{p_2}(\BBR^{d_1 +d_2})} +  \|D_{(2)}^{\beta_2}f_1\|_{L^{p_1}(\BBR^{d_1 +d_2})}\|D_{(1)}^{\beta_1}f_2\|_{L^{p_2}(\BBR^{d_1 +d_2})},
\end{align}
at least for
\begin{align}
\label{eq:range:bi-param:CM}
&1 < p_1, p_2 \leq \infty,\quad \max\big(  \frac{d_1}{d_1+\beta_1},  \frac{d_2}{d_2+\beta_2} \big) < r < \infty, \quad \frac{1}{p_1} + \frac{1}{p_2} = \frac{1}{r},
\end{align}
are a consequence of bi-parameter paraproducts' boundedness within the same range \eqref{eq:range:bi-param:CM}, as proved in \cite{bi-parameter_paraproducts}. The endpoints $p_1=p_2=r=\infty$ and strong estimates in the case $p_1=1$ or $p_2=1$ for \eqref{Leib_bi_paraproduct} are contained in the work of Oh and Wu \cite{OhWu}.

Mixed-norm estimates for bi-parameter Leibniz rules were obtained more recently, as a consequence of mixed-norm estimates for bi-parameter paraproducts \cite{vv_BHT}, \cite{quasiBanachHelicoid}\footnote{Although stated for functions defined on $\BBR \times \BBR$, the Leibniz rules in \cite{vv_BHT}, \cite{quasiBanachHelicoid} remain valid in higher dimensions.}; more exactly, it was proved that the inequality
\begin{align}\label{Leib_bi_paraproduct:mixed}
\|D_{(1)}^{\beta_1} D_{(2)}^{\beta_2} (f_1f_2) \|_{L^{r^1}_x(L^{r^2}_y)} \lesssim & \|D_{(1)}^{\beta_1}D_{(2)}^{\beta_2}f_1\|_{L^{p_1^1}_x(L^{p_1^2}_y)}\|f_2\|_{L^{p_2^1}_x(L^{p_2^2}_y)} +  \|f_1\|_{L^{p_1^1}_x(L^{p_1^2}_y)}\|D_{(1)}^{\beta_1}D_{(2)}^{\beta_2}f_2\|_{L^{p_2^1}_x(L^{p_2^2}_y)}  \nonumber \\
&+  \|D_{(1)}^{\beta_1}f_1\|_{L^{p_1^1}_x(L^{p_1^2}_y)}\|D_{(2)}^{\beta_2}f_2\|_{L^{p_2^1}_x(L^{p_2^2}_y)} +  \|D_{(2)}^{\beta_2}f_1\|_{L^{p_1^1}_x(L^{p_1^2}_y)}\|D_{(1)}^{\beta_1}f_2\|_{L^{p_2^1}_x(L^{p_2^2}_y)},
\end{align}
holds for Lebesgue exponents satisfying
\begin{align}
&1 < p^i_1, p^i_2\leq \infty,\quad  \frac{1}{p_1^i} + \frac{1}{p_2^i} = \frac{1}{r^i}, \quad \text{for all $1 \leq i \leq 2$}, 
& \frac{d_2}{d_2+\beta_2} < r^2 <\infty, \quad \max \big(   \frac{d_1}{d_1+\beta_1},  \frac{d_2}{d_2+\beta_2} \big) < r^1 <\infty . \nonumber
\end{align}
The same result, including the endpoints $r^i=\infty$ (which forces $p_1^i=p_2^i= \infty$) for some $1 \leq i \leq 2$, or $p^i_j=1$ for some $1\leq i, j \leq 2$, were proved by Oh and Wu \cite{OhWu} using the Bourgain-Li method and thus avoiding mixed-norm estimates for Coifman-Meyer multipliers. Other partial results were obtained in \cite{UMDparaproducts}.

Multi-parameter multilinear operators are especially interesting; unlike their linear analogues, they take as input several functions, and yield as output only one function, so that linear techniques (freezing a variable, using vector-valued estimates) are not easily applicable. Mixed-norm estimates for multi-parameter multilinear operators present an additional difficulty, and in general they require sharper estimates for the concerned operator (localization, weighted estimates, etc; see \cite{vv_BHT}, \cite{quasiBanachHelicoid}).

The results above in \eqref{Leib_bi_paraproduct:mixed} can be extended to the $N$-parameter case; except for a few endpoints,\footnote{Certain complications appear when one tries to prove mixed-norm estimates for $N$-parameter paraproducts, when some of the input functions are in mixed-norm Lebesgue spaces involving $L^\infty$.} this result is implicit in \cite{vv_BHT}, \cite{quasiBanachHelicoid}. The full result, including $L^1$ and $L^\infty$ endpoints is implicit in \cite{OhWu}.

We would like to comment that although \eqref{Leib_one_paraproduct}, \eqref{Leib_bi_paraproduct} and \eqref{Leib_bi_paraproduct:mixed} above describe the bi-linear case, the $n$-linear case for $n > 2$ remains valid and it can be proved by the same methods.

The one-parameter flag Leibniz rule can be perceived as the Leibniz rule for compositions of fractional differential operators. The simplest example is
\begin{align} \label{Leib_one_flag}
 \|D^{\beta}\left(D^{\alpha}(f_1f_2) f_3\right)\|_{L^r}& \lesssim  \|D^{\alpha+\beta}f_1\|_{L^{p_1}}\|f_2\|_{L^{p_2}} \|f_3\|_{L^{p_3}} + \|f_1\|_{L^{p_1}}\|D^{\alpha+\beta}f_2\|_{L^{p_2}} \|f_3\|_{L^{p_3}}  \\
& +\|D^{\alpha}f_1\|_{L^{p_1}}\|f_2\|_{L^{p_2}} \|D^\beta f_3\|_{L^{p_3}} +\|f_1\|_{L^{p_1}}\|D^{\alpha}f_2\|_{L^{p_2}} \|D^\beta f_3\|_{L^{p_3}},  \nonumber
\end{align}
where 
\begin{align}\label{exp_Leib_one_flag}
& 1 \leq p_1,p_2, p_3 \leq \infty, \quad \frac{1}{p_1} + \frac{1}{p_2} + \frac{1}{p_3} = \frac{1}{r},  \quad  0 \leq  \frac{1}{r} < \frac{d+\beta}{d}, \quad 0 \leq  \frac{1}{p_1} + \frac{1}{p_2} < \frac{d+\alpha}{d}.
\end{align}
The Leibniz rule \eqref{Leib_one_flag} (except for endpoints $(p_1, p_2) = (\infty, \infty)$ or strong-type estimates when $p_i=1$ for some $1 \leq i \leq 3$) is a consequence of the boundedness of the one-parameter flag paraproduct \cite{flag_paraproducts}. The endpoint case $(p_1, p_2) = (\infty, \infty)$ can be derived by iteratively applying the endpoint estimate \eqref{Leib_one_paraproduct} due to Bourgain and Li \cite{BourgainLi-kato}.

It is worth pointing out that interpreting the Leibniz rule \eqref{Leib_one_flag} as a composition of two classical Leibniz rules (as described in \eqref{Leib_one_paraproduct}) and iteratively invoking \eqref{Leib_one_paraproduct} only yields a limited range of exponents, namely the case $1 \leq p_1, p_2, p_3 \leq \infty , 0 \leq \frac{1}{p_1} + \frac{1}{p_2} \leq 1$ and $\max(\frac{1}{2}, \frac{d}{d+\beta}) < r \leq \infty$. In order to achieve boundedness in the nontrivial range $1 < \frac{1}{p_1} + \frac{1}{p_2} <\min(2, \frac{d+\alpha}{d})$ and $\max(\frac{1}{3}, \frac{d}{d+\beta}) < r \leq \infty $, one can decompose $D^{\beta}\big(D^{\alpha}(f_1f_2) f_3\big)$ into a sum of flag paraproducts whose boundedness was proved by Muscalu \cite{flag_paraproducts} and later extended by Miyachi and Tomita \cite{MiyachiTomita-flag} to Hardy spaces input data.

The flag paraproduct should be thought of as compositions of Coifman-Meyer multipliers: in frequency, the associated symbol is a product of singular Mikhlin symbols
\begin{equation}
\label{eq:symbol:flag:paraprod}
m(\xi):= \prod_{S \subseteq \{1, \ldots, n  \}}m_S(\xi_S)
\end{equation}
where $\xi:= (\xi_i)_{i=1}^n \in \mathbb{R}^{dn}$, $\xi_S := (\xi_i)_{i \in S} \in \mathbb{R}^{d \cdot \text{card}(S)}$ and $m_S \in \mathcal{M}(\mathbb{R}^{d \cdot \text{card}(S)})$. Hence the singularity set associated to $m$ consists of unions of subspaces of various dimensions, which can be further organized into a union of ordered subspaces -- or flags. Flag paraproducts do not satisfy $L^\infty \times \ldots \times L^\infty  \to L^\infty$, or strong type estimates $L^1 \times L^{p_2} \times \ldots \times  L^{p_n} \to L^r$; nevertheless, we will see that the flag Leibniz rule remains true even in these particular situations. 

Although the boundedness of the generic multi-parameter flag paraproduct is still an open problem, as mentioned previously, a particular case of bi-parameter flag paraproducts was proved independently in \cite{MuscaluZhai} and \cite{LuPipherZhang_flag}.
 That leads to some specific example of bi-parameter Leibniz rules in the full range of boundedness, modulo certain endpoints. However, the generic multi-parameter versions of \eqref{Leib_one_flag} known before were those that could be obtained as a result of compositions of \eqref{Leib_bi_paraproduct} or \eqref{Leib_bi_paraproduct:mixed} -- which leaves out a significant range of Lebesgue exponents. We will show that the Bourgain-Li method can be adapted to proving multi-parameter flag Leibniz rules of arbitrary complexity.

Especially in the context of flag Leibniz rules (and of flag paraproducts, as one would expect), it becomes convenient to use \emph{rooted tree} representations. The $n$-linear Leibniz rule for
\begin{align} \label{Leib_1_generic}
D^{\beta}(f_1 \ldots f_n),
\end{align}
corresponds to the simplest tree
\begin{equation}
\label{tree:Leib_1_generic}
\vcenter{\hbox{\begin{forest}
my tree
[, label={above: $D^{\beta}$}
  	[, label = {below: $f_1$}
	]
	[, label = {below: $f_2$}
	]
	[, label= {below:$\ldots$}
	]
	[, label = {below: $f_n$}
	]
]
\end{forest}}}
\end{equation}
where the root of the tree is the vertex associated to the differential operator $D^\beta$ and the leaves of the tree are the vertices associated to the $n$ functions that the differential operator acts on.  

The multilinear expression $D^\beta(D^\alpha(f_1 \cdot f_2) f_3)$ on the left hand side of \eqref{Leib_one_flag} can be represented by the following rooted tree:
\begin{equation}
\label{tree:3-lin:paraprod}
\begin{array}{cc}
\vcenter{\hbox{\begin{forest}
my tree
[, label = {above: $D^\beta$}
	[, label = {left: $D^\alpha$}
		[, label = {below: $f_1$}]
		[, label = {below: $f_2$}]
	]
	[, label = {below: $f_3$}]
]
\end{forest}}} &.
\end{array}
\end{equation}

In general, a rooted tree $\mathcal{G}$ consists of a collection of vertices which are organized according to their depth; the root -- denoted $\mathfrak{r}_{\mathcal{G}}$ -- has depth zero, the direct descendants of the root have depth $1$, and so on. The vertices which don't have any descendants are called \emph{leaves} -- and $\mathcal L_{\mathcal G}$ denotes the collection of leaves in the rooted tree $\mathcal{G}$; all the other vertices make up $\mathcal V$, the collection of vertices that have at least one descendant.\footnote{It is more natural to request that every vertex which is not a leaf has at least two descendants.}

To each $l \in \mathcal L_{\mathcal G}$ we associate a function acting on $\BBR^d$, and to each vertex $v \in \mathcal V$ we associate a fractional differential operator $D^{\beta^v}$ for some $\beta^v \geq 0$. With an abuse of notation, we identify the collection $\mathcal L_{\mathcal G}$ of leaves with the collection of functions $\{ f_1, \ldots, f_n \}$, and similarly, the collection of vertices $\mathcal V$ is identified with the collection of fractional differential operators $\{ D^{\beta^v}: v \in \mathcal V  \}$.

For each $v \in \mathcal{V}$, we define the set 
\begin{equation} \label{leaf_subtree}
\mathcal{L}(v) := \{i \in \{1,\ldots, n\}: f_i \  \text{is a descendant of} \  v \}.
\end{equation}
As a consequence, $\mathcal{L}(\text{root}) = \{1, \ldots, n\}$.

It is also easy to verify that 
for any $v_1, v_2 \in \mathcal{V}$, only one of the following situations can happen:
\begin{equation}\label{obs:ancester_descendant}
(1) \ \ \mathcal{L}(v_1) \cap \mathcal{L}(v_2) = \emptyset; \ \ 
(2)\ \  \mathcal{L}(v_1) \subseteq \mathcal{L}(v_2); \ \ 
(3)\ \  \mathcal{L}(v_2) \subseteq \mathcal{L}(v_1).
\end{equation}

Last but not least, the \emph{complexity} of the tree $\mathcal G$ is defined as the maximal depth among the leaves vertices; or equivalently, as the maximum length of upward paths from a leaf to the root. For example, \eqref{tree:Leib_1_generic} is a tree of complexity 1 while \eqref{tree:3-lin:paraprod} is of complexity 2. 

We can adapt the tree representation to the multi-parameter setting by substituting the homogenous differential operators \eqref{homo_diff} with partial differential operators \eqref{partial_diff}. The 5-linear expression on the left hand side of \eqref{bi_Muscalu} indeed corresponds to a tree of complexity 2 represented by Figure \ref{fig:5}(\subref{fig:5:2:param}) bellow; next to it, Figure \ref{fig:5}(\subref{fig:5:1:param}) depicts its one-parameter equivalent.

\begin{figure}[!htbp]
\begin{subfigure}[t]{.45\textwidth}
  \centering
 \begin{forest}
my treeSep
[, label={above: $D^{\beta}$}
  	[, label = {left: $D^{\alpha}$}
		[, label = {below: $f_1$}
		]
		[, label = {below: $f_2$}
		]
	]
	[, label = {below: $f_3$}
	]
	[, label = {right: $D^{\gamma}$}
		[, label = {below: $f_4$}
		]
		[, label = {below: $f_5$}
		]
	]
]
\end{forest}
  \caption{Tree corresponding to the one-parameter Leibniz rule $D^\beta(D^\alpha(f_1 f_2) f_3 D^\gamma(f_4 f_5))$} 
  \label{fig:5:1:param}
    \end{subfigure}
 \hfill    
\begin{subfigure}[t]{.45\textwidth}
  \centering  
  \begin{forest}
my treeSep
[, label={above: $D_{(1)}^{\beta_1}D_{(2)}^{\beta_2}$}
  	[, label = {left: $D_{(1)}^{\alpha_1}D_{(2)}^{\alpha_2}$}
		[, label = {below: $f_1$}
		]
		[, label = {below: $f_2$}
		]
	]
	[, label = {below: $f_3$}
	]
	[, label = {right: $D_{(1)}^{\gamma_1}D_{(2)}^{\gamma_2}$}
		[, label = {below: $f_4$}
		]
		[, label = {below: $f_5$}
		]
	]
]
\end{forest}
   \caption{Tree corresponding to the bi-parameter Leibniz rule $D_{(1)}^{\beta_1}D_{(2)}^{\beta_2}(D_{(1)}^{\alpha_1}D_{(2)}^{\alpha_2}(f_1 f_2) f_3 D_{(1)}^{\gamma_1}D_{(2)}^{\gamma_2}(f_4 f_5))$} 
   \label{fig:5:2:param}
  \end{subfigure}
    \caption{One and bi-parameter flag Leibniz rules}
\label{fig:5}  
\end{figure}

Building upon the above structures, one can obtain a rooted tree representation for $N$-parameter flag Leibniz rules  as well; in such a situation, the leaves $\mathcal L_{\mathcal G}$ correspond to functions $\{f_1, \ldots, f_n \}$ defined on $\BBR^{d_1} \times \ldots \times \BBR^{d_N}$, and the vertices in $\mathcal V$ to generic fractional partial differential operators $D_{(1)}^{\beta^v_1} \ldots D_{(N)}^{\beta^v_N}$, where each $\beta_j^v \geq 0$ indicates the partial derivatives associated to $v$ in the $j$-th parameter. As before, we identify a vertex $v$ with its corresponding differential operator, and the set $\mathcal L_{\mathcal G}$ with the set of functions $\{f_1, \ldots, f_n \}$.

In this paper, we establish multi-parameter flag Leibniz rules of arbitrary complexity for input data in $L^{\vec p}$ spaces, with $ 1\leq \vec p \leq \infty$.\footnote{This inequality is to be understood componentwise.} More precisely, we prove an $N$-parameter, $n$-linear Leibniz rule associated to a rooted tree $\mathcal{G}$ with $n$ leaves. The possible distribution of derivatives $D_{(1)}^{\beta^v_1} \ldots D_{(N)}^{\beta^v_N}$ among the $n$ functions $f_1, \ldots, f_n$ is described by the tensor map $\delta_1\otimes \ldots \otimes \delta_N:  \mathcal{V}^N \to \{1,\ldots, n \}^N$; for $1 \leq j \leq N$, $\delta_j$ represents a map
\begin{equation} \label{delta:derivative_distribution}
\delta_j: \mathcal{V} \rightarrow \{1,\ldots, n \}
\end{equation}
satisfying the following two conditions:
\begin{enumerate}[label=(\roman*), leftmargin=*]
\item\label{deriv:distrib:i} for any  $v \in \mathcal{V}$, $\delta_j(v) \in \mathcal{L}(v)$ (equivalently, $D^{\beta_j^v}_{(j)}$ derivatives are attributed to $f_{\delta_j(v)}$, one of the leaves descending from the vertex $v$).
\item \label{deriv:distrib:ii} if $\mathfrak{S}(v)$ denotes the set of non-leaf, direct descendants of the vertex $v$ (for some $v \in \mathcal{V}$), then
$$\delta_j(v) = \delta_j(w), \ \ \text{for some} \ \  w \in \mathfrak{S}(v).$$
\end{enumerate}

This latter condition ensures that the distribution of derivatives agrees with the composition law. As a consequence of \ref{deriv:distrib:ii}, if $\delta(v) = l$ for some $v \in \mathcal{V}$, then for any non-leaf vertex $w$ along the path from $v$ to the leaf $f_l$, we have $\delta(w) = l$. Due to observation \eqref{obs:ancester_descendant}, conditions \ref{deriv:distrib:i} and \ref{deriv:distrib:ii} can be simultaneously satisfied.
\medskip 

We denote by $\mathcal D(\mathcal{V})$ (abbreviated as $\mathcal{D}$) the collection of maps $\delta_j: \mathcal{V} \to \{1,\ldots, n \}$ satisfying conditions \ref{deriv:distrib:i} and \ref{deriv:distrib:ii} above for $1 \leq j \leq N$. We will abbreviate $\underbrace{\mathcal{D} \times \ldots \mathcal{D}}_{N \ \ \text{copies}}$ as $\mathcal{D}^N$.

We notice that such maps are well-defined since the leading partial derivatives only hit one function they act on at a time, but $\delta_j$  is not necessarily injective nor surjective -- in the example (\ref{bi_Muscalu}), $D^{\beta_1}_{(1)}$ and $D^{\alpha_1}_{(1)}$ can hit $f_1$ simultaneously (thus not injective) and it is also possible that no derivative hits $f_1$ at all (thus not surjective). With abuse of notation, we will denote for any $1 \leq l \leq n$,
\begin{equation}
\label{eq:def:distrib:deriv}
\delta_j^{-1}(l) := \sum_{v \in \mathcal{V}: \delta_j(v) = l} \beta_j^v
\end{equation}
and if $l \nin \text{range}(\delta_j)$, then $\delta_j^{-1}(l)  = 0$; hence $\delta_j^{-1}(l)$ keeps track of the number of partial derivatives attributed to $f_l$ by the map $\delta_j$.

For any $v \in \mathcal{V}$, $p_{v}$ denotes the Lebesgue exponent defined by 
\begin{equation}
\label{eq:def:Lebesg:Exp:vertex}
\frac{1}{p_v}= \sum_{i \in \mathcal{L}(v)} \frac{1}{p_i}.
\end{equation}
In the case of $N$-parameters rooted trees, we consider $N$-tuples $\vec p_{v}= (p_v^1, \ldots, p_v^N)$ and each $p_v^j$, for $1 \leq j \leq N$, is defined by 
\begin{equation}
\label{eq:def:Lebesg:Exp:vertex:muti:param}
\frac{1}{p^j_v}= \sum_{i \in \mathcal{L}(v)} \frac{1}{p_i^j}.
\end{equation}

\begin{figure}[!htbp]
\begin{forest}
my tree
[, label={above: $D_{(1)}^{\beta_1^{\mathfrak{r}_{\mathcal{G}}}} \ldots D_{(N)}^{\beta_N^{\mathfrak{r}_{\mathcal{G}}}}$}
  	[, label = {left: $D_{(1)}^{\beta_1^{1}}\ldots D_{(N)}^{\beta_N^{1}}$}
  		[, label = {left: {\fontsize{5}{5}}}
			[, label = {below: $f_1$}]
			[, label = {below: $\ldots$}]
		]
		[, label = {left: {\fontsize{5}{5}}}
			[, label = {below: $\ldots$}]
			[, label = {below: $\ldots$}]
		]
	]
	[, label = {left: $D_{(1)}^{\beta_1^{2}}\ldots D_{(N)}^{\beta_N^{2}}$}
		[, label = {left: {\fontsize{5}{5}}}
			[, label = {below: $\ldots$}]
			[, label = {below: $\ldots$}]
		]
		[, label = {left: {\fontsize{5}{5}}}
			[, label = {below: $\ldots$}]
			[, label = {below: $\ldots$}]
		]
	]
  	[, label = {right: $D_{(1)}^{\beta_1^{n_1}}\ldots D_{(N)}^{\beta_N^{n_1}}$}
		[, label = {left: {\fontsize{5}{5}}}
			[, label = {below: $ \ldots$}]
			[, label = {below: $\ldots$}]
		]
		[, label = {right:{\fontsize{5}{5}}}
			[, label = {below: $\ldots$}]
			[, label = {below: $f_n$}]
		]
	]
]
\end{forest}
\caption{A rooted tree of arbitrary complexity associated to an $N$-parameter flag Leibniz rule.}
\end{figure}
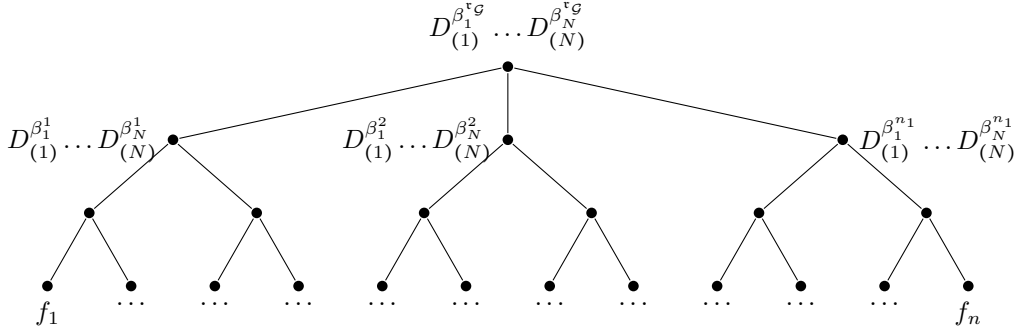

Now we are ready to state our main result:
\begin{theorem}
\label{thm:main}
Let $\mathcal{G}$ be a rooted tree of root $\mathfrak{r}_{\mathcal G}$, and to every $v \in \mathcal V$ we associate the $N$-parameter fractional differential operator $D_{(1)}^{\beta^v_1} \ldots D_{(N)}^{\beta^v_N}$, with $\beta_1^v, \ldots, \beta_N^v \geq 0$. Let $\mathcal D^N$ be the collection of maps $\delta_1\otimes \ldots \otimes \delta_N: \mathcal{V}^N \to \{1,\ldots, n \}^N$ satisfying conditions \ref{deriv:distrib:i} and \ref{deriv:distrib:ii}, which describe the admissible distributions of derivatives. If $T_{\mathcal{G}}$ denotes the $n$-linear operator indicated by the rooted tree $\mathcal{G}$, then for any functions $f_1, \ldots, f_n \in \mathcal{S}(\BBR^{d_1} \times \ldots \times \BBR^{d_N})$,
 \begin{align} \label{induction_result}
\|T_{\mathcal{G}}(f_1, \ldots, f_n) \|_{\vec{r}} \lesssim \sum_{\delta_1\otimes \ldots \otimes \delta_N \in \mathcal D^N}\prod_{l=1}^n\|D_{(1)}^{\delta_1^{-1}(l)} \ldots D_{(N)}^{\delta_N^{-1}(l)}f_{l}\|_{\vec{p_l}},
\end{align}
provided that $1\leq p^j_l \leq \infty$ for all $1 \leq j \leq N$, $1 \leq l \leq n$,
\begin{equation}
\label{condition:Holder}
\frac{1}{r^j}= \sum_{l =1}^n \frac{1}{p^j_l}, \quad \text{for all      } 1 \leq j \leq N,
\end{equation}
\begin{equation}
\label{cond:thm:1}
\frac{d_N}{d_N+\beta_N^{\mathfrak{r}_{\mathcal G}}}< r^N, \quad \max(\frac{d_N}{d_N+\beta_N^{\mathfrak{r}_{\mathcal G}}}, \frac{d_{N-1}}{d_{N-1}+\beta_{N-1}^{\mathfrak{r}_{\mathcal G}}} )< r^{N-1}, \ldots, \max \big( \frac{d_N}{d_N+\beta_N^{\mathfrak{r}_{\mathcal G}}}, \ldots, \frac{d_1}{d_1+\beta_1^{\mathfrak{r}_{\mathcal G}}}  \big)< r^1 
\end{equation}
and in general, for any $v \in \mathcal V$, 
\begin{equation}
\label{cond:thm:2}
\frac{d_N}{d_N+\beta_N^{v}}< p_v^N, \quad  \max(\frac{d_N}{d_N+\beta_N^{v}}, \frac{d_{N-1}}{d_{N-1}+\beta_{N-1}^{v}} )< p^{N-1}_v, \ldots, \max \big( \frac{d_N}{d_N+\beta_N^{v}}, \ldots, \frac{d_1}{d_1+\beta_1^{v}}  \big)< p_v^1. 
\end{equation}
Whenever $\beta_j^v \in 2 \BBN$, the corresponding conditions on $p^{j}_v$ in \eqref{cond:thm:2} above can be removed.
\end{theorem}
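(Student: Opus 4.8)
The plan is to prove Theorem~\ref{thm:main} by induction on the complexity of the rooted tree $\mathcal G$, using at each step a quantitative, scale-by-scale version of the multi-parameter Leibniz rule in the spirit of Bourgain--Li, rather than any boundedness result for flag paraproducts. The base case (complexity $1$) is the $N$-parameter version of \eqref{Leib_bi_paraproduct:mixed} for $D_{(1)}^{\beta_1}\ldots D_{(N)}^{\beta_N}(f_1\cdots f_n)$, including all $L^1$ and $L^\infty$ endpoints; as noted in the excerpt this is essentially contained in \cite{OhWu}, but I would reprove it in the form I actually need, namely a \emph{commutator} estimate. Writing $P_k^{(j)}$ for the Littlewood--Paley projections in the $j$-th parameter and decomposing each $f_i$ scale-by-scale in every parameter, the leading ``diagonal'' terms reproduce the right-hand side of the Leibniz rule via Hölder (with the order-$\beta_j$ derivative landing on whichever factor carries the highest frequency in the $j$-th parameter), and the off-diagonal remainders are summable in each parameter precisely because of the strict inequalities in \eqref{cond:thm:2}: the geometric gain $2^{-\beta_j^v|k_j - k_j'|}$ from the Mikhlin symbol of $D_{(j)}^{\beta_j^v}$ beats the loss one pays to put things in $L^{r^j}$ with $r^j$ possibly below $1$. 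The even-integer exceptions come from the fact that when $\beta_j^v \in 2\BBN$ the operator $D_{(j)}^{\beta_j^v}$ is a genuine differential operator (a polynomial in the Laplacian) and the classical Leibniz rule holds with no frequency restriction at all in that parameter.

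For the inductive step, suppose $\mathcal G$ has complexity $\kappa+1$ and let $v_1,\ldots,v_m$ be the depth-$1$ non-leaf vertices, each the root of a subtree $\mathcal G_s$ of complexity $\le \kappa$. Then
\[
T_{\mathcal G}(f_1,\ldots,f_n) = D_{(1)}^{\beta_1^{\mathfrak r}}\ldots D_{(N)}^{\beta_N^{\mathfrak r}}\Big(\prod_{s} T_{\mathcal G_s}(\mathbf f_{s}) \cdot \prod_{l \text{ a direct leaf}} f_l\Big),
\]
and I would apply the complexity-$1$ commutator estimate to this product, treating each $g_s := T_{\mathcal G_s}(\mathbf f_s)$ and each direct-leaf $f_l$ as a single factor. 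This produces terms in which the root derivative $D_{(1)}^{\beta_1^{\mathfrak r}}\ldots D_{(N)}^{\beta_N^{\mathfrak r}}$ is distributed, in each parameter $j$ independently, onto one of the factors. When in parameter $j$ it lands on a direct leaf $f_l$, condition~\ref{deriv:distrib:ii} is vacuous there and we simply keep $D_{(j)}^{\beta_j^{\mathfrak r}}f_l$. When it lands on some $g_s$, I need to absorb $D_{(j)}^{\beta_j^{\mathfrak r}}$ \emph{inside} $T_{\mathcal G_s}$: this is the key algebraic point, and it is where condition~\ref{deriv:distrib:ii} is used — the derivative must be pushed down the subtree along a single branch until it reaches a leaf, modifying the exponents $\beta_j^{w}$ at the vertices it passes through (it adds to $\delta_j^{-1}(l)$ for the chosen leaf $l$). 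Formally, $D_{(j)}^{\beta_j^{\mathfrak r}} T_{\mathcal G_s} = T_{\mathcal G_s'}$ where $\mathcal G_s'$ is $\mathcal G_s$ with the root exponent raised by $\beta_j^{\mathfrak r}$ in the $j$-th parameter; one then re-expands $T_{\mathcal G_s'}$ by the inductive hypothesis. Bookkeeping shows the resulting derivative distributions are exactly the elements of $\mathcal D^N$ for $\mathcal G$, and the Hölder exponents match because $1/p_v^j = \sum_{i\in\mathcal L(v)} 1/p_i^j$ is additive over the tree; one also has to check that the hypotheses \eqref{cond:thm:2} for $\mathcal G_s'$ follow from those for $\mathcal G$, which they do since raising a root exponent only \emph{relaxes} the constraint at that vertex, and the vertices interior to $\mathcal G_s$ keep their exponents.

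One delicate point deserves emphasis: $g_s = T_{\mathcal G_s}(\mathbf f_s)$ is a genuine function (an element of, say, a suitable $L^{\vec p_{v_s}}$ space) only \emph{after} one knows the inductive estimate, so the scale-by-scale decomposition in the outer step must be carried out at the level of the full expression, not by first ``evaluating'' the inner operators — this is the same subtlety flagged in the introduction about composition arguments failing near $L^1$. Concretely, I would run the outer Littlewood--Paley decomposition on the symbol side, so that $g_s$ appears already frequency-localized, and only at the very end invoke the inductive bound on the localized pieces, summing over scales using the geometric decay. The main obstacle I anticipate is precisely the interaction between the outer summation over scales (which must be controlled using the strict inequalities at the root, \eqref{cond:thm:1}) and the inner summations hidden inside each $T_{\mathcal G_s}$: one must ensure the decay factors in different parameters and at different levels of the tree do not interfere, and that when several factors are in $L^\infty$-type mixed-norm spaces the ``diagonal'' frequency configuration is still the dominant one. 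Handling the mixed norms carefully — applying the one-parameter Bourgain--Li argument in each variable in turn while keeping the other variables frozen inside a vector-valued norm, and checking that the requisite vector-valued maximal and square-function bounds survive down to $L^{r^j}<1$ — is where the bulk of the technical work lies.
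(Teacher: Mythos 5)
Your overall strategy — induction on tree complexity, with Bourgain–Li scale-by-scale Besov estimates and frequency localization performed \emph{before} invoking the inductive hypothesis — is the right one and matches the paper's architecture. You correctly identify the core subtlety (that naive composition fails when intermediate Hölder exponents drop below $1$, so the subtree operators must appear already frequency-localized), and your remark about $\beta_j^v \in 2\mathbb N$ is also right. But there is a genuine gap, and it sits exactly where you anticipate the ``bulk of the technical work.''

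The gap is the tensorization mechanism. After Littlewood–Paley decomposition, restriction to a cone, and the splitting of the root symbol, you are left with a commutator symbol such as
\[
m_{C_\beta}\Big(\sum_{l \in \mathcal L(v_{i_0})}\xi_l,\ \sum_{l \notin \mathcal L(v_{i_0})}\xi_l\Big) \cdot \sum_{l \in \mathcal L(v_{\tilde i})}\xi_l
\]
localized to a Whitney cube, and this symbol does \emph{not} factor as a product over the depth-$1$ subtrees $\mathcal G^{v_1},\ldots,\mathcal G^{v_{n_1}}$. Consequently, $T_{\mathcal G}\big((P_{k_{\max}}f_l)_l\big)$ is not a product of $T_{\mathcal G^{v_i}}\big((P_{k_{\max}}f_l)_{l\in\mathcal L(v_i)}\big)$'s, so you cannot simply ``invoke the inductive bound on the localized pieces'' and apply (quasi-Banach) Hölder. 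What the paper does — and what is missing from your proposal — is a (multi-) Fourier series decomposition of the localized root/commutator symbol on the Whitney rectangle, yielding a rapidly convergent expansion into modulations $e^{2\pi i L\cdot\xi/2^{k_{\max}}}$. Since modulations do commute with the multilinear structure (they become translations), the operator then becomes a superposition, weighted by fast-decaying coefficients $C_L$, of genuine \emph{products} of subtree operators $\prod_i T_{\mathcal G^{v_i}}\big((P_{k_{\max}, L/2^{k_{\max}}}f_l)_l\big)$, to which one can finally apply Hölder and the inductive hypothesis. Without this step, the induction does not close.

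Two further points. First, your identity $D_{(j)}^{\beta_j^{\mathfrak r}} T_{\mathcal G_s} = T_{\mathcal G_s'}$ is not literally true — $D_{(j)}^{\beta_j^{\mathfrak r}}$ acts in the full output variable $\sum_{l=1}^n\xi_l^j$, whereas $T_{\mathcal G_s'}$ would differentiate in $\sum_{l\in\mathcal L(v_s)}\xi_l^j$. These agree only up to a commutator after restriction to the cone where $v_s$ carries the dominant frequency in parameter $j$, and that commutator is precisely what step above must handle. Second, the inductive statement needs to be formulated more finely than you indicate: the paper runs a coupled battery of statements (global estimate, estimate on a union of Whitney cubes with one dominant frequency, estimate with two comparable dominant frequencies, and their various multi-parameter hybrids), since the subtrees inherit different cone/Whitney configurations from the root decomposition, and a single global-type hypothesis is not strong enough to bootstrap. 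Your plan is compatible with all of this, but as written it stops at exactly the point where the argument becomes nontrivial.
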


The only previously known case of the above result corresponds to $N=1$: the one-parameter flag Leibniz rules associated to trees of arbitrary complexity are a consequence\footnote{Modulo endpoints.} of the boundedness of flag paraproducts of arbitrary complexity from \cite{flag_paraproducts}.

We remark that the mixed norms 
\[
\|\cdot\|_{L^{\vec p_l}(\BBR^{d_1}\times \ldots \BBR^{d_N})}=\| \ldots \| \cdot\|_{L^{ p_l^N (\BBR^{d_N})}} \ldots \|_{{L^{ p_l^1}(\BBR^{d_1})}}
\]
in Theorem \ref{thm:main} can be further replaced by 
\[
\| \ldots \| \cdot\|_{L^{ \vec p_l^N (\BBR^{d_N})}} \ldots \|_{L^{\vec p_l^1}(\BBR^{d_1})};
\]
that is, each $\|\cdot\|_{L^{p_l^j}(\BBR^{d_j})}$ norm can be replaced by the mixed norm $\|\cdot\|_{L^{\vec p_l^j}(\BBR^{d_j})}$, as long as $\vec{p}^j_l = (p^j_{l,1}, \ldots, p^j_{l, d_j}) \in [1, \infty]^{d_j}$. In this situation we require (component-wise) conditions analogous to \eqref{condition:Holder}, \eqref{cond:thm:1} and \eqref{cond:thm:2}:
\begin{equation*}
\frac{1}{\vec{r}^j}= \sum_{l =1}^n \frac{1}{\vec{p}^j_l},\quad \text{for all    } 1 \leq j \leq N,
\end{equation*}
\begin{equation*}
\frac{d_N}{d_N+\beta_N^{\mathfrak{r}_{\mathcal G}}}< \min(r_1^N,\ldots, r_{d_N}^N), 
\ldots, \max \big( \frac{d_N}{d_N+\beta_N^{\mathfrak{r}_{\mathcal G}}}, \ldots, \frac{d_1}{d_1+\beta_1^{\mathfrak{r}_{\mathcal G}}}  \big)< \min(r^1_1, \ldots, r^1_{d_1})
\end{equation*}
and for any $v \in \mathcal V$, 
\begin{equation*}
\frac{d_N}{d_N+\beta_N^{v}}< \min(p_{v,1}^N,\ldots, p_{v,d_N}^N), 
\ldots, \max \big(\frac{d_N}{d_N+\beta_N^{v}}, \ldots, \frac{d_1}{d_1+\beta_1^{v}}  \big)< \min(p_{v,1}^1, \ldots, p_{v,d_1}^1) . 
\end{equation*}
This remains true for all our results, namely Theorem \ref{thm:main}-\ref{thm:asymm:Mikhlin}.

Moreover, for a rooted tree $\mathcal{G}$, we can associate to each vertex $v \in \mathcal{V}$ the inhomogeneous differential operator $J_{(1)}^{\beta^v_1} \ldots J_{(N)}^{\beta^v_N}$\footnote{For $1 \leq j \leq N$, we denote by $J_{(j)}^{\beta_j}$, with $\beta_j \geq 0$, the inhomogeneous partial differential operator defined on the space of Schwartz functions $\mathcal{S}(\mathbb{R}^{d_1}\times \ldots \BBR^{d_N})$ by 
\begin{align*} 
J_{(j)}^{\beta_j}f:= \ii F^{-1}\big((1+|\xi^j|^2)^{\frac{\beta_j}{2}}\f{f}(\xi^1,\ldots, \xi^N)\big).
\end{align*}
} instead of the homogeneous differential operator $D_{(1)}^{\beta^v_1} \ldots D_{(N)}^{\beta^v_N}$. The multilinear operator -- denoted by $T^{J}_{\mathcal{G}}$ -- satisfies the same mixed-norm estimates as $T_{\mathcal{G}}$ described in Theorem \ref{thm:main} and can be treated in the same fashion; the only notable difference appears at the level of cone/paraproduct decompositions, since the inhomogeneous partial differential operators do not pick out small frequency scales. Details on how to adjust the decompositions can be found in Grafakos-Oh \cite{graf-Leibniz_rules} or Oh-Wu \cite{OhWu}. 

Now we return to our initial examples -- the explicit Leibniz rule \eqref{bi_Muscalu} -- in order to clarify the notation in our main theorem. In the example \eqref{bi_Muscalu}, $n = 5$ and $N=2$, so that the set of leaves $\mathcal{L}_{\mathcal{G}}$ consists of 
\[
\mathcal{L}_{\mathcal{G}}=\{f_1, f_2, f_3, f_4, f_5\},
\]
and the collection $\mathcal V$ of non-leaf vertices of
\[
\mathcal V= \{\vec \beta= (\beta_1, \beta_2), \vec \alpha=(\alpha_1, \alpha_2), \vec \gamma=(\gamma_1, \gamma_2) \}.
\]
Then 
$$\mathcal L(\vec \beta)=\{1, 2, 3, 4, 5\}, \quad \mathcal L(\vec \alpha)=\{1, 2\}, \quad \mathcal L(\vec \gamma)=\{ 4, 5\}  $$
and for any $j= 1,2$,  $\delta_j: \{ \beta_j, \alpha_j, \gamma_j \} \to \{1, 2, 3, 4, 5\}$ must satisfy the condition that 
\begin{align*}
 \delta_j(\alpha_j) \in \{1,2 \}, \quad \delta_j(\gamma_j) \in \{4,5 \},
\end{align*}
and
\begin{align*}
& \delta_j(\beta_j) \in \{1, 2, 3, 4, 5\} \ \ \text{with} \ \ \delta_j(\beta_j) = \delta_j(\alpha_j) \ \ \text{or} \ \ \delta_j(\beta_j) = \delta_j(\gamma_j) \ \ \text{or} \ \  \delta_j(\beta_j) = 3.
\end{align*}
As a result, there are in total 144 choices of $\delta_1 \otimes \delta_2$! The first term in the right-hand side of \eqref{bi_Muscalu} indeed corresponds to the particular choice of the maps 
\begin{align*}
& \delta_1(\beta_1) = \delta_1(\alpha_1) = 1, \delta_1(\gamma_1) = 4; \qquad \delta_2(\beta_2) = \delta_2(\alpha_2) = 1, \delta_2(\gamma_2) = 4,
\end{align*}
whereas the fourth term corresponds to 
\begin{align*}
& \delta_1(\beta_1) = \delta_1(\gamma_1) = 4, \delta_1(\alpha_1) = 1; \qquad \delta_2(\beta_2) = \delta_2(\alpha_2) = 2, \delta_2(\gamma_2) = 5.
\end{align*}

We can equivalently represent the flag Leibniz rules in frequency -- since the fractional (partial) differential operators themselves are defined in frequency (see \eqref{partial_diff}, \eqref{homo_diff}). Starting from the observation that
\begin{equation}
\label{eq:D:beta:n}
D^\beta(f_1 \, f_2\, \ldots f_n)= \mathcal F^{-1} (|\xi_1+ \ldots+\xi_n|^\beta \hat{f_1}(\xi_1) \cdot \ldots \cdot \hat f_n(\xi_n)),
\end{equation}
we realize that a correspondence can be established between the trees
\begin{equation}
\label{tree:corresp:D:beta}
\begin{array}{cccc}
\vcenter{\hbox{
\begin{forest}
my tree
[, label={above: $D^{\beta}$}
  	[, label = {below: $f_1$}
	]
	[, label = {below: $f_2$}
	]
	[, label= {below:$\ldots$}
	]
	[, label = {below: $f_n$}
	]
]
\end{forest}}} & \leftrightsquigarrow  &
\vcenter{\hbox{\begin{forest}
my tree
[, label={above: $|\xi_1+\ldots+\xi_n|^{\beta}$}
  	[, label = {below: $\hat f_1(\xi_1)$}
	]
	[, label = {below: $\hat f_2(\xi_2)$}
	]
	[, label= {below:$\ldots$}
	]
	[, label = {below: $\hat f_n(\xi_n)$}
	]
]
\end{forest}}}& . 
\end{array}
\end{equation}

One should notice that in the frequency representation of the flag Leibniz rules, the set of leaves $\hat{\mathcal{L}}_{\mathcal G}$ consists of $\{ \hat f_1(\xi_1), \ldots \hat f_n(\xi_n) \}$. The vertices $v \in \mathcal V$ should be identified, in the one-parameter case, with the symbols
\begin{equation}
\label{cond:tree:multiplier:1}
 |\sum_{i \in \mathcal{L} (v)} \xi_i|^{\beta^v};
\end{equation}
in the $N$-parameters case, the non-leaf vertices appearing in the flag Leibniz rules should be identified with 
\begin{equation}
\label{cond:tree:multiplier:N}
\prod_{j=1}^N |\sum_{i \in \mathcal{L} (v)} \xi^j_i|^{\beta_j^v}.
\end{equation}

The representation in frequency of the 5-linear flag appearing in \eqref{bi_Muscalu} and of its one-parameter analogue are represented in Figures \ref{fig:freq:5} (\subref{fig:freq:5:2}) and \ref{fig:freq:5}(\subref{fig:freq:5:1}) below.
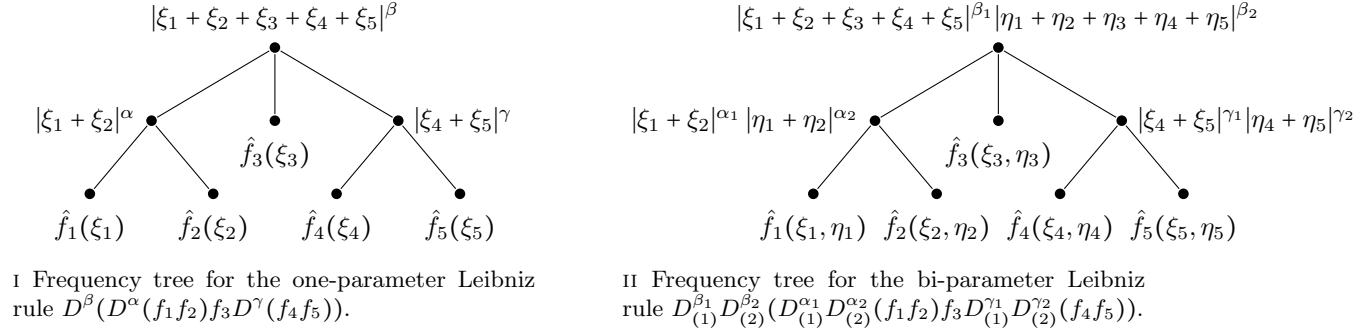
\begin{figure}[!htbp]
\hspace*{\fill}
\begin{subfigure}[t]{.40\textwidth}
  \centering
 \begin{forest}
my treeSep
[, label={above: $|\xi_1+\xi_2+\xi_3+\xi_4+\xi_5|^{\beta}$}
  	[, label = {left: $|\xi_1+\xi_2|^{\alpha}$}
		[, label = {below: $\hat f_1(\xi_1)$}
		]
		[, label = {below: $\hat f_2(\xi_2)$}
		]
	]
	[, label = {below: $\hat f_3(\xi_3)$}
	]
	[, label = {right: $|\xi_4+\xi_5|^{\gamma}$}
		[, label = {below: $\hat f_4(\xi_4)$}
		]
		[, label = {below: $\hat f_5(\xi_5)$}
		]
	]
]
\end{forest}

  \caption{Frequency tree for the one-parameter Leibniz rule $D^\beta(D^\alpha(f_1 f_2) f_3 D^\gamma(f_4 f_5))$.} 
  \label{fig:freq:5:1}
    \end{subfigure}
 \hfill    
\begin{subfigure}[t]{.40\textwidth}
  \centering  
  \begin{forest}
my treeSep
[, label={above: $|\xi_1+\xi_2+\xi_3+\xi_4+\xi_5|^{\beta_1} |\eta_1+\eta_2+\eta_3+\eta_4+\eta_5|^{\beta_2}$}
  	[, label = {left: $|\xi_1+\xi_2|^{\alpha_1} \, |\eta_1+\eta_2|^{\alpha_2}$}
		[, label = {below: $\hat f_1 (\xi_1, \eta_1)$}
		]
		[, label = {below: $\hat f_2(\xi_2, \eta_2)$}
		]
	]
	[, label = {below: $\hat f_3(\xi_3, \eta_3)$}
	]
	[, label = {right: $|\xi_4+\xi_5|^{\gamma_1}|\eta_4+\eta_5|^{\gamma_2}$}
		[, label = {below: $\hat f_4(\xi_4, \eta_4)$}
		]
		[, label = {below: $\hat f_5(\xi_5, \eta_5)$}
		]
	]
]
\end{forest}
   \caption{Frequency tree for the bi-parameter Leibniz rule $D_{(1)}^{\beta_1}D_{(2)}^{\beta_2}(D_{(1)}^{\alpha_1}D_{(2)}^{\alpha_2}(f_1 f_2) f_3 D_{(1)}^{\gamma_1}D_{(2)}^{\gamma_2}(f_4 f_5))$.} 
   \label{fig:freq:5:2}
  \end{subfigure}
 \hspace*{\fill}
  \caption{Frequency representation of one and bi-parameter flag Leibniz rules}
\label{fig:freq:5}
\end{figure}

Interestingly, the same methods imply the boundedness of multi-parameter flag multipliers that do not correspond directly to multi-parameter flag Leibniz rules, since the structures of the trees are different for different parameters. One such example is the following $5$-linear, bi-parameter expression
\begin{align}\label{example:asymmetric_diff}
&      \qquad \qquad T_{\mathcal{G}_1\otimes \mathcal{G}_2}(f_1, f_2, f_3, f_4, f_5)(x,y)  \\
:= &\int_{\BBR^{10}} |\xi_1+\xi_2+\xi_3+\xi_4+\xi_5|^{\beta_1} |\xi_1+\xi_2|^{\alpha_1}|\xi_3+\xi_4+\xi_5|^{\gamma_1}|\xi_3+\xi_4|^{\zeta_1}  |\eta_1+\eta_2+\eta_3+\eta_4+\eta_5|^{\beta_2}|\eta_1+\eta_3|^{\alpha_2}|\eta_2+\eta_4|^{\gamma_2}\nonumber \\
& \quad \prod_{l=1}^5 \hat{f_l}(\xi_l, \eta_l)e^{2 \pi i (x, y) \cdot (\xi_1+\xi_2+\xi_3+\xi_4+\xi_5, \eta_1+\eta_2+\eta_3+\eta_4+\eta_5)} d \xi d \eta. \nonumber
\end{align}

More generally, we can consider $n$-linear, $N$-parameter operators that can be represented as $T_{\mathcal{G}_1 \otimes \ldots \otimes \mathcal{G}_N}$, where each $\mathcal{G}_j$, for $1 \leq j \leq N$, indicates the frequency tree in the $j$-th parameter.

For any $1 \leq j \leq N$, we denote by $\mathcal{V}_j$ the set of vertices with at least one descendant and by $\mathcal L_j$ the set of vertices with no descendants. Whereas $\mathcal L_j$ is always going to be identified with the collection of functions $\{ f_1, \ldots, f_n \}$, the collections $\mathcal{V}_j$ on the other hand can be quite different due to the distinct tree structures associated to each parameter. Let $(\xi_l^j)_{1 \leq l \leq n}$ denote the frequency variables for the $j$-th parameter. To every vertex $v_j \in \mathcal{V}_j$, we associate a symbol $\displaystyle \big|\sum_{l \in \mathcal{L}(v_j)}\xi^j_l\big|^{\beta^{v_j}_j}$ with $\beta^{v_j}_j \geq 0$. The distribution of derivatives is still described by maps $\delta_1 \otimes \ldots \otimes \delta_N$, where for every $1 \leq j \leq N$,
$$
\delta_j : \mathcal{V}_j \rightarrow \{1, \ldots, n\}
$$
satisfies the conditions \ref{deriv:distrib:i} and \ref{deriv:distrib:ii}, with $\mathcal{V}$ replaced by $\mathcal{V}_j$. We denote by $\mathcal{D}(\mathcal{V}_j)$ the collection of such $\delta_j$s, so that $\mathcal{D}(\mathcal{V}_1) \times \ldots \times \mathcal{D}(\mathcal{V}_N)$ represents the collection of admissible distributions of derivatives among the $n$ functions $f_1, \ldots, f_n$.

We now state the Leibniz-type estimates for multi-parameter flag multipliers with asymmetric symbols generated by partial differential operators:

\begin{theorem}
\label{thm:an:example}
Let $T_{\mathcal{G}_1\otimes \ldots \otimes \mathcal{G}_N}$ denote the $n$-linear operator associated to $\mathcal{G}_j$ for $1 \leq j \leq N$ such that each $\mathcal{G}_j$ denotes the frequency tree for the $j$-th parameter. Suppose that every vertex $v_j \in \mathcal{V}_j$ is associated to a symbol $\displaystyle \big|\sum_{l \in \mathcal{L}(v_j)}\xi^j_l\big|^{\beta^{v_j}_j}$ with $\beta^{v_j}_j \geq 0$. Then for any functions $f_1, \ldots, f_n \in \mathcal{S}(\BBR^{d_1} \times \ldots \times \BBR^{d_N})$, we have
\begin{align*}
\|T_{\mathcal{G}_1\otimes \ldots \otimes \mathcal{G}_N} (f_1, \ldots, f_n) \|_{\vec r} \lesssim \sum_{\delta_1\otimes  \ldots \otimes \delta_N \in \mathcal{D}(\mathcal{V}_1) \times \ldots \times \mathcal{D}(\mathcal{V}_N)}\prod_{l=1}^n\|D_{(1)}^{\delta_1^{-1}(l)}\ldots D_{(N)}^{\delta_N^{-1}(l)}f_{l}\|_{\vec{p_l}},
\end{align*} 
for any $1\leq p_l^j \leq \infty$, $1 \leq j \leq N$, $1 \leq l \leq n$, 
\[
\frac{1}{r^j}= \sum_{l =1}^n \frac{1}{p^j_l}, \quad \text{for all      } 1 \leq j \leq N,
\]
and
\begin{equation}\label{exponents:asymmetric_differential}
\max\Big\{\frac{d_j}{d_j+\beta^{v_j}_j}: v_j \in \mathcal{V}_j, 1 \leq j \leq N \Big\} < \min\{r^j: 1 \leq j \leq N  \}.
\end{equation}
\end{theorem}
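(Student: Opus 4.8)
The plan is to run the argument that will prove Theorem \ref{thm:main} essentially verbatim, by induction on the total number of non-leaf vertices $\sum_{j=1}^N |\mathcal V_j|$; the one genuinely new observation is that the symbol factorizes across parameters, $m(\xi^1,\dots,\xi^N)=\prod_{j=1}^N m_j(\xi^j)$ with $m_j(\xi^j)=\prod_{v_j\in\mathcal V_j}\big|\sum_{l\in\mathcal L(v_j)}\xi^j_l\big|^{\beta^{v_j}_j}$, so that each $m_j$ is a one-parameter flag symbol (of the type in \cite{flag_paraproducts}, but possibly singular) attached to its own tree $\mathcal G_j$, and these one-parameter factors do not interact. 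Consequently one would perform, parameter by parameter, the same Littlewood--Paley together with cone/paraproduct decomposition that drives Theorem \ref{thm:main}, the asymmetry among the trees $\mathcal G_1,\dots,\mathcal G_N$ being invisible to the decomposition carried out in any single frequency variable.

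In more detail, the per-parameter step goes as follows. Fix $j$, attach to each $f_l$ a Littlewood--Paley decomposition in its $j$-th frequency variable, and consider the non-leaf vertex $v_j\in\mathcal V_j$ closest to the root of $\mathcal G_j$, carrying the multiplier $\big|\sum_{l\in\mathcal L(v_j)}\xi^j_l\big|^{\beta^{v_j}_j}$. Split the sum over frequency configurations into the \emph{paraproduct} regions, where one direct non-leaf subtree of $v_j$ carries the dominant top scale and the factor $\big|\sum\xi^j_l\big|^{\beta^{v_j}_j}$ can be transferred onto that subtree modulo a Mikhlin (Coifman--Meyer) symbol, and the \emph{commutator} region, where two or more such subtrees have comparable top scales. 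On the paraproduct pieces the derivative descends into a strictly smaller subtree and one invokes the inductive hypothesis, with the maps $\delta_j$ reorganized accordingly; on the commutator piece one uses the quantitative Bourgain--Li commutator estimate (the multi-parameter analogue of \eqref{Leib:comm:infty}), which gains a power $2^{-\varepsilon(\text{scale gap})}$ and thereby lets one split the $\beta^{v_j}_j$ derivatives among the comparable subtrees at the cost of a Besov-type factor. Carrying this out in every parameter and then peeling off the mixed norms from the innermost slot $L^{p^N}(\BBR^{d_N})$ outward to $L^{p^1}(\BBR^{d_1})$ — with a vector-valued Coifman--Meyer/maximal bound in each slot for the paraproduct pieces and a vector-valued square-function/Besov bound for the commutator pieces — reduces the whole estimate to summing geometric series in the scale gaps. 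The $L^\infty$, $L^1$ and quasi-Banach endpoints are handled exactly as in Theorem \ref{thm:main} (cf.\ also \cite{BourgainLi-kato}, \cite{OhWu}), via the substitute endpoint bounds for the building blocks.

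The summability of these geometric series is where the hypothesis enters, and it explains the cruder form of \eqref{exponents:asymmetric_differential} compared with the vertex-by-vertex conditions \eqref{cond:thm:2}: for $v_j\in\mathcal V_j$ the exponent naturally attached to its subtree is $p^j_{v_j}$ defined by \eqref{eq:def:Lebesg:Exp:vertex:muti:param}, and since $\mathcal L(v_j)\subseteq\{1,\dots,n\}$ one has $p^j_{v_j}\ge r^j$ by H\"older, so \eqref{exponents:asymmetric_differential} forces $d_j/(d_j+\beta^{v_j}_j)<r^j\le p^j_{v_j}$ for every vertex of every tree, uniformly — precisely what is needed to sum the Bernstein/Besov gain in parameter $j$ irrespective of the other parameters. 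In the symmetric flag Leibniz setting the sharper conditions \eqref{cond:thm:1}--\eqref{cond:thm:2} are affordable because descending into a subtree is synchronized across all parameters (the leaf set $\mathcal L(v)$ is the same in every parameter), so the parameters can be kept ordered; with asymmetric trees this synchronization is lost — a subtree of $\mathcal G_j$ matches no subtree of $\mathcal G_{j'}$ — and the main difficulty becomes precisely this bookkeeping: tracking, uniformly over the finitely many admissible maps $\delta_1\otimes\cdots\otimes\delta_N$, which commutator gains remain available in which parameters, and checking that the one-parameter commutator estimate can be applied in parameter $j$ with the other parameters frozen without losing its decay. The uniform hypothesis \eqref{exponents:asymmetric_differential} is exactly what lets this go through without having to keep track of the order of the parameters.
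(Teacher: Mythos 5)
Your high-level identification of the obstruction — that a subtree of $\mathcal G_j$ will not in general correspond to any subtree of $\mathcal G_{j'}$ — is correct, and this is indeed the only genuinely new phenomenon relative to Theorem~\ref{thm:main}. However, you do not propose the actual mechanism that resolves it, and where you do sketch an argument the frequency decomposition is garbled.

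The concrete gap is the following. After you split the root symbols $m_{\mathfrak r_{\mathcal G_1}},\dots,m_{\mathfrak r_{\mathcal G_N}}$ and perform the Fourier series decomposition, the resulting object is not a product $\prod_i T_{\mathcal G^{v_i}}$ of operators rooted in strictly smaller single subtrees — it is an operator whose symbol is a tensor product, parameter by parameter, of \emph{forests} (disjoint unions of rooted trees), and these forests partition the leaf set $\{1,\dots,n\}$ differently in different parameters. In the paper's example \eqref{example:asymmetric_diff}, after one reduction step the first parameter is governed by the forest rooted at $|\xi_1+\xi_2|^{\alpha_1}$ and $|\xi_3+\xi_4+\xi_5|^{\gamma_1}$ (leaf partition $\{1,2\}\sqcup\{3,4,5\}$), while the second parameter is governed by the forest rooted at $|\eta_1+\eta_3|^{\alpha_2}$ and $|\eta_2+\eta_4|^{\gamma_2+\beta_2}$ (leaf partition $\{1,3\}\sqcup\{2,4\}\sqcup\{5\}$). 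Because these partitions are incompatible, you cannot ``descend into a strictly smaller subtree and invoke the inductive hypothesis'': there is no single smaller flag to invoke it on. The inductive statement must be formulated and proved for tensor products of forests, and the induction must run on the maximal tree complexity within those forests, not on $\sum_j|\mathcal V_j|$ with an appeal to the one-tree hypothesis. Your phrase ``the main difficulty becomes precisely this bookkeeping'' names the problem without supplying the new statement that makes the bookkeeping possible.

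Secondly, your description of the decomposition into ``paraproduct'' and ``commutator'' regions has the roles reversed, and the commutator step would fail where you place it. In the Bourgain--Li method (as deployed throughout the paper) the commutator splitting $|\sum_l\xi^j_l|^{\beta}=|\xi^j_{l_0}|^{\beta}+\bigl(|\sum_l\xi^j_l|^{\beta}-|\xi^j_{l_0}|^{\beta}\bigr)$ is performed precisely in the \emph{off-diagonal} region where one frequency $|\xi^j_{l_0}|$ dominates; the gain $2^{k_{\max}(\beta-1)}2^{k_{\min}}$ depends on the large gap $k_{\min}\ll k_{\max}$. In the \emph{diagonal} region — two subtree scales comparable — there is no scale gap and the commutator produces no gain; that region is treated by a direct Fourier series decomposition of the full root symbol restricted to a cube of side $\sim 2^{k_{\max}}$, and the resulting coefficients $C_L$ have only the limited decay $|C_L|\lesssim(1+|L|)^{-(d_j+\beta^{v_j}_j)}$. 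It is \emph{this} limited decay, not a ``Bernstein/Besov gain,'' that forces \eqref{exponents:asymmetric_differential} (observe the paper's remark that the constraint disappears when $\beta^{v_j}_j\in2\BBN$, i.e.\ when the symbol is a polynomial and the coefficients decay rapidly). Finally, transferring the root factor onto the dominant subtree ``modulo a Mikhlin (Coifman--Meyer) symbol'' and invoking vector-valued Coifman--Meyer estimates would not by itself give the endpoint range $1\le p^j_l\le\infty$ the theorem claims; the commutator route is used in the off-diagonal region exactly to avoid that machinery.
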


We remark that the condition \eqref{exponents:asymmetric_differential} on the Lebesgue exponents  is only sufficient, and that in the case when $\beta_j^{v_j} \in 2 \BBN$ it can be disregarded.

Moreover, the same method allows us to prove mixed-norm Leibniz-type estimates for multilinear Mikhlin multipliers of order $\beta>0$, by systematically reducing them to estimates for linear Mikhlin multipliers. This extends to flags associated to Mikhlin multipliers of strictly positive order, both in one-parameter and in multi-parameter settings.

Let $\beta \in \BBR$. We say $m(\xi_1, \ldots, \xi_n)$ is a Mikhlin symbol \emph{of order $\beta$} provided that $m : \BBR^{dn} \to \BBC$ is smooth away from the origin $\{(\xi_1,\ldots, \xi_n) = 0 \}$ and satisfies the condition
\begin{equation}
\label{eq:positive:order:class}
\big| \partial_{\xi_1}^{\gamma_1} \ldots  \partial_{\xi_n}^{\gamma_n}  m(\xi_1, \ldots, \xi_n) \big| \lesssim  \big(|\xi_1|+ \ldots+ |\xi_n| \big)^{\beta- |\gamma_1|- \ldots - |\gamma_n|}
\end{equation}
for sufficiently many multi-indices $\gamma_1, \ldots, \gamma_n$. We denote\footnote{It should be clear from the context what is the space of variables a symbol $m \in \mathcal M_\beta$ acts on. Especially for flags, it will be more convenient to leave this implicit, since the number of variables depends on each vertex of the rooted tree.} by $\mathcal M_\beta$ the class of symbols satisfying the above conditions.

\begin{theorem}
\label{thm:multipliers:positive:order}
Let $T_{\mathcal{G}}$ denote the $n$-linear operator indicated by a rooted tree $\mathcal{G}$ of root $\mathfrak{r}_{\mathcal G}$, where every vertex $v \in \mathcal V$ is associated to a symbol $m_v \in \mathcal M_{\beta^v}$ of order $\beta^v >0$.  Let $\mathcal D$ be the collection of maps defined in \eqref{delta:derivative_distribution}, satisfying conditions \ref{deriv:distrib:i} and \ref{deriv:distrib:ii}. Then for any functions $f_1, \ldots, f_n \in \mathcal{S}(\BBR^{d})$,
we have
 \begin{align} \label{eq:positive:order:1:param}
\|T_{\mathcal{G}}(f_1, \ldots, f_n) \|_{r} \lesssim \sum_{\delta \in \mathcal D}\prod_{l=1}^n\|D^{\delta^{-1}(l)} f_{l}\|_{p_l},
\end{align}
provided that $1 < p_l < \infty$, $1 \leq l \leq n$, and $1/n<r <\infty$ satisfy the H\"older condition 
\[
\frac{1}{r}= \frac{1}{p_1}+\ldots+\frac{1}{p_n}.
\]
\end{theorem}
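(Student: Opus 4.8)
The plan is to \emph{peel off the positive-order Mikhlin symbols one vertex at a time}: each $m_v\in\mathcal M_{\beta^v}$ is rewritten as a finite sum of products of a genuine Coifman--Meyer (order-zero) symbol with a single fractional derivative $D^{\beta^v}$, so that $T_{\mathcal G}$ is reduced to a finite sum of flag Leibniz rule operators of the type already handled by Theorem \ref{thm:main} (in the one-parameter case $N=1$), decorated by order-zero Mikhlin symbols that ride along harmlessly. The only structural input about $m_v$ used in the reduction is condition \eqref{eq:positive:order:class}, i.e. smoothness away from the origin together with the homogeneity bound of order $\beta^v$; this is exactly what lets one trade the multilinear positive-order symbol for order-zero symbols plus the linear (one-variable) Mikhlin multipliers $D^{\beta}=|\cdot|^{\beta}$.

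The elementary step is this: if $m\in\mathcal M_{\beta}$ in variables $(\zeta_1,\ldots,\zeta_k)$ with $\beta>0$, fix a smooth degree-zero partition of unity $1=\sum_{c=1}^{k}\chi_c$ on the sphere $\{|\zeta_1|+\ldots+|\zeta_k|=1\}$ with $\chi_c$ supported where $|\zeta_c|\gtrsim|\zeta_1|+\ldots+|\zeta_k|$; on the support of $\chi_c$ one has $|\zeta_c|\sim|\zeta_1|+\ldots+|\zeta_k|$, so division by $|\zeta_c|^{\beta}$ costs no derivatives and $m=\sum_{c=1}^{k}a_c\,|\zeta_c|^{\beta}$ with $a_c:=m\chi_c|\zeta_c|^{-\beta}\in\mathcal M_0$. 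For a single vertex (complexity one) this already gives $T_m(f_1,\ldots,f_n)=\sum_{l=1}^{n}T_{a_l}(f_1,\ldots,D^{\beta}f_l,\ldots,f_n)$, and the Coifman--Meyer theorem \cite{CoifMeyer-ondelettes}, valid for $1<p_l<\infty$ and $1/n<r<\infty$, yields \eqref{eq:positive:order:1:param}. For a general tree I would run an induction on the number of vertices still carrying a genuinely positive-order symbol, applying the elementary step to the topmost such vertex $v$ with $m_v$ viewed as a function of the aggregated frequencies $\zeta_c=\sum_{i\in\mathcal L(c)}\xi_i$ of its children. Each resulting summand carries the factor $|\zeta_c|^{\beta^{v}}$ on one child $c$: if $c$ is a leaf $f_i$ this is $D^{\beta^{v}}$ acting directly on $f_i$, and the vertex $v$ is replaced by the order-zero symbol $a_c$; if $c$ is a non-leaf vertex $w$ the factor equals $|\sum_{i\in\mathcal L(w)}\xi_i|^{\beta^{v}}$, i.e. it stacks a pure fractional-derivative vertex $D^{\beta^{v}}$ on top of the subtree rooted at $w$, while $v$ again becomes order zero. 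After finitely many steps one obtains a finite sum of operators indexed by maps $\delta$ with $\delta(v)\in\mathcal L(v)$; tracing how the factors cascade through the internal vertices shows that the maps that actually arise are precisely those respecting the composition law, i.e. conditions \ref{deriv:distrib:i}--\ref{deriv:distrib:ii}, so $\delta\in\mathcal D$, and that the fractional derivative accumulated on $f_l$ is exactly $D^{\delta^{-1}(l)}$.

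It then remains to observe that each term so produced is a flag Leibniz rule operator, for a tree whose vertices are pure fractional derivatives $D^{\beta}$ (the percolated factors together with the directly placed ones) dressed by Coifman--Meyer symbols $a_{v,\cdot}\in\mathcal M_0$, evaluated at $(D^{\delta^{-1}(1)}f_1,\ldots,D^{\delta^{-1}(n)}f_n)$. Its $L^{p_1}\times\ldots\times L^{p_n}\to L^{r}$ boundedness throughout the interior range $1<p_l<\infty$, $1/n<r<\infty$ follows from the method underlying Theorem \ref{thm:main} with $N=1$ (the order-zero decorations $a_{v,\cdot}$ only make the single-scale estimates easier), or equivalently from the boundedness of one-parameter flag paraproducts of the corresponding complexity \cite{flag_paraproducts}, \cite{MiyachiTomita-flag}; summing the finitely many $\delta\in\mathcal D$ then gives \eqref{eq:positive:order:1:param}. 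I expect the genuine obstacle to be the percolation step: a positive-order derivative forced through an internal vertex cannot simply be commuted onto a leaf, since the factor $|\sum_{i\in\mathcal L(w)}\xi_i|^{\beta}$ is \emph{not} a Coifman--Meyer symbol (it is singular where the group frequency vanishes), so one is genuinely thrown back onto a flag Leibniz rule and must appeal to Theorem \ref{thm:main} or rerun its scale-by-scale argument; the remaining points --- checking that the cascade reproduces exactly the distributions in $\mathcal D$, and that the residual flags stay within the admissible range of exponents --- are routine but require care.
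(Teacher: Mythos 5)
Your factorization step fails for trees of depth $\geq 2$, and this breaks the reduction at the very start. You propose to partition the frequency space by directional cones in the \emph{aggregated} variables $\zeta_c = \sum_{i\in\mathcal L(c)}\xi_i$, with $\chi_c$ supported where $|\zeta_c|\gtrsim\sum_{c'}|\zeta_{c'}|$, and then to write $m_v = \sum_c a_c|\zeta_c|^{\beta^v}$ with $a_c := m_v\chi_c|\zeta_c|^{-\beta^v}\in\mathcal M_0$. But $m_v$ is only assumed to be a Mikhlin symbol of order $\beta^v>0$ in the \emph{underlying} variables $(\xi_i)_{i\in\mathcal L(v)}$: $|\partial^\gamma m_v|\lesssim\big(\sum_i|\xi_i|\big)^{\beta^v-|\gamma|}$. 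Whenever $v$ has at least one non-leaf child, the linear map $(\xi_i)\mapsto(\zeta_c)$ has a nontrivial kernel, and on the support of $\chi_c$ the dominant aggregated frequency $|\zeta_c|$ is only comparable to $\sum_{c'}|\zeta_{c'}|$, \emph{not} to $\sum_i|\xi_i|$: cancellation inside the sums $\zeta_c=\sum_{i\in\mathcal L(c)}\xi_i$ can make all the $\zeta_c$ small while the $\xi_i$ are large. Consequently $|a_c|\lesssim \big(\sum_i|\xi_i|\big)^{\beta^v}|\zeta_c|^{-\beta^v}$ is unbounded, $a_c\notin\mathcal M_0$, and the ``order-zero symbols that ride along harmlessly'' simply do not exist. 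This is a more fundamental problem than the one you flag at the end (that the factored-out $|\zeta_w|^{\beta^v}$ is a flag symbol rather than a Coifman--Meyer symbol); the companion factor is already not a multiplier symbol before you ever get to the percolation step. The only depth at which your partition is sound is depth $1$, where every child is a leaf so $\zeta_c=\xi_c$, and then indeed $|\xi_c|\sim\sum_l|\xi_l|$ on $\operatorname{supp}\chi_c$ --- this is the one case where Coifman--Meyer alone would suffice.

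The paper's proof of this theorem (Section~\ref{sec:generic:multipliers}) avoids the issue by decomposing in the \emph{individual} $\xi_i$-variables: on the cone $\{|\xi_{l_0}|\gg|\xi_l|,\ l\neq l_0\}$, it does not try to factor $m_v$ multiplicatively but instead approximates it by freezing the small variables, $m_v(\xi_1,\ldots,\xi_n)\approx m_v(0,\ldots,\xi_{\mathcal L(v_{i_0})},\ldots,0)$, where $v_{i_0}$ is the child whose subtree contains $l_0$. The frozen symbol is again Mikhlin of order $\beta^v$ in the surviving variables, so it can be multiplied into the child's symbol (raising that child's order) --- this is the paper's percolation step --- while the error term is a commutator $m_v(\xi)-m_v(0,\ldots,\xi_{\mathcal L(v_{i_0})},\ldots,0)$ that is estimated scale by scale via Fourier series on Whitney boxes and Besov interpolation, exactly as in the Bourgain--Li machinery of Sections~\ref{Bourgain-Li_hilow}--\ref{generic_induction}. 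If you want to salvage your plan, the natural fix is to switch to the individual-variable cone decomposition, factor out $|\xi_{l_0}|^{\beta^v}$ (placing the derivative directly on the leaf $f_{l_0}$ rather than on an internal vertex $w$), and then recognize that the residual operator still carries the subtree symbols of positive order, so a genuine flag argument --- not just Coifman--Meyer --- is unavoidable. But that is essentially what the paper does; your aggregated-variable shortcut does not reach it.
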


Notice that in this situation we fail to recover precisely the $L^\infty$ and $L^1$ endpoints.

In the $N$-parameter case, we consider $\beta_1, \ldots, \beta_N \in \BBR$ and we say that $m$ is an $N$-parameter Mikhlin symbol of order $(\beta_1, \ldots, \beta_N)$, simply written as $m \in \mathcal M_{\beta_1, \ldots, \beta_N}$, if $m : \BBR^{(d_1+ \ldots d_N) n} \to \BBC$ is smooth away from the region $\displaystyle \bigcup_{j=1}^N\{(\xi_1, \ldots \xi_n) \in \BBR^{(d_1+\ldots + d_N)n}: (\xi_1^j, \ldots, \xi_n^j)=0 \text{  in   } \BBR^{d_j n}\}$ and satisfies the condition
\begin{equation}
\label{eq:positive:order:class:multi:param}
\big| \partial_{\xi^1_1}^{\gamma^1_1} \ldots \partial_{\xi^N_1}^{\gamma^N_1} \ldots  \partial_{\xi_n}^{\gamma_n^1} \ldots \partial_{\xi_n^N}^{\gamma_n^N}  m(\xi_1, \ldots, \xi_n) \big| \lesssim \prod_{j=1}^N  \big(|\xi_1^j|+ \ldots+ |\xi_n^j| \big)^{\beta_j- |\gamma^j_1|- \ldots - |\gamma^j_n|}
\end{equation}
for sufficiently many multi-indices $\gamma^1_1, \ldots, \gamma^N_1, \ldots, \gamma^1_n, \ldots, \gamma^N_n$.

Then we have the following $N$-parameter result for symbols which tensorize in each parameter:

\begin{theorem}
\label{thm:Nparam:tensor:positive:multipl}
Let $T_{\mathcal{G}}$ denote the $n$-linear operator indicated by a rooted tree $\mathcal{G}$ of root $\mathfrak{r}_{\mathcal G}$, where every vertex is associated to a symbol $m_v$ satisfying 
\begin{equation} \label{symbol_N_parameter}
m_v(\xi_{i_1}^1, \ldots, \xi_{i_1}^N, \ldots, \xi_{i_k}^1, \ldots, \xi_{i_k}^N)= \prod_{j=1}^N m_v^j(\xi_{i_1}^j, \ldots, \xi_{i_k}^j),
\end{equation}
with $m_v^j \in  \mathcal{M}_{\beta^v_j}$ for $\beta^v_j>0$.  Let $\mathcal D$ be the collection of maps defined in \eqref{delta:derivative_distribution}, satisfying conditions \ref{deriv:distrib:i} and \ref{deriv:distrib:ii}. Then for any functions $f_1, \ldots, f_n \in \mathcal{S}(\BBR^{d_1} \times \ldots \times \BBR^{d_N})$, we have
\begin{align} \label{eq:positive:order:1:param}
\|T_{\mathcal{G}}(f_1, \ldots, f_n) \|_{\vec r} \lesssim  \sum_{\delta_1\otimes \ldots \otimes \delta_N \in \mathcal D^N}\prod_{l=1}^n\|D_{(1)}^{\delta_1^{-1}(l)} \ldots D_{(N)}^{\delta_N^{-1}(l)}f_{l}\|_{\vec{p_l}},
\end{align}
provided that $1< p_1^1, \ldots, p_1^N, p_2^1, \ldots, p_2^N, \ldots, p_n^1, \ldots, p_n^N < \infty$, $\frac{1}{n}<r^1, \ldots, r^N < \infty$ satisfy component-wise the H\"older condition 
\[
\frac{1}{\vec r}= \frac{1}{ \vec p_1}+\ldots+\frac{1}{\vec p_n}.
\] 
\end{theorem}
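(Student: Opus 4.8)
The plan is to reduce, parameter by parameter, to the one-parameter result Theorem~\ref{thm:multipliers:positive:order}, which in turn is proved --- like Theorem~\ref{thm:main} --- by the scale-by-scale Bourgain--Li method, now reducing matters to \emph{linear} Mikhlin multipliers. The key algebraic step, at each vertex $v$, is to insert a smooth conical partition of unity in the frequency variables $(\xi_i)_{i\in\mathcal L(v)}$: this writes the vertex symbol $m_v\in\mathcal M_{\beta^v}$ as $m_v=\sum_{\ell\in\mathcal L(v)}m_{v,\ell}$, where $m_{v,\ell}$ is supported on the cone where $|\xi_\ell|$ dominates $|\xi_i|$ for all $i\in\mathcal L(v)$, so that $|\xi_\ell|\sim\sum_{i\in\mathcal L(v)}|\xi_i|$ there and one may factor $m_{v,\ell}=|\xi_\ell|^{\beta^v}a_{v,\ell}$ with $a_{v,\ell}\in\mathcal M_0$. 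Summing over the finitely many choices of dominant leaf at every vertex decomposes $T_{\mathcal G}$ into a finite sum; for each such selection the extracted factors $|\xi_\ell|^{\beta^v}$ are slid down towards the leaves, the mismatch with the lower vertices being the Bourgain--Li commutators, which one Taylor-expands in the small-over-large frequency ratio exactly as in the proof of Theorem~\ref{thm:main}. Every term of the expansion gains a power of that ratio and is therefore summable by magnitude; what remain are bounds for linear Mikhlin multipliers (one acting on each input) together with low-complexity Coifman--Meyer-type $n$-linear pieces, all controlled by classical estimates. The $N$-parameter, tensorized case of Theorem~\ref{thm:Nparam:tensor:positive:multipl} is obtained by running this argument in each frequency block $(\xi^j_i)_i$ separately, which is legitimate precisely because of the tensor hypothesis~\eqref{symbol_N_parameter}.

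The derivative exponents produced are exactly those appearing on the right-hand side. Fix a parameter $j$ and a selection of dominant leaves. If $\ell$ is the leaf of $\mathcal L(v)$ carrying the largest $j$-th frequency, then by observation~\eqref{obs:ancester_descendant} $\ell$ descends through a unique child $w$ of $v$ and is also the largest-frequency leaf of $\mathcal L(w)\subseteq\mathcal L(v)$; consequently the selection map $\delta_j\colon v\mapsto\ell$ satisfies conditions~\ref{deriv:distrib:i} and~\ref{deriv:distrib:ii}, i.e.\ $\delta_1\otimes\cdots\otimes\delta_N\in\mathcal D^N$, and the total $j$-th derivative falling on $f_l$ equals $\sum_{v\colon\delta_j(v)=l}\beta^v_j=\delta_j^{-1}(l)$ as in~\eqref{eq:def:distrib:deriv}. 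Summing over the finitely many selections and applying Hölder's inequality on the frequency-supported pieces to match the Lebesgue exponents reproduces $\sum_{\delta_1\otimes\cdots\otimes\delta_N\in\mathcal D^N}\prod_{l=1}^n\|D_{(1)}^{\delta_1^{-1}(l)}\cdots D_{(N)}^{\delta_N^{-1}(l)}f_l\|_{\vec{p_l}}$.

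Because the symbols tensorize, Theorem~\ref{thm:multipliers:positive:order} can be applied successively in the variables $x_N,x_{N-1},\dots,x_1$, which requires its Banach-space-valued --- more precisely, mixed-norm $L^{\vec q}$-valued --- extension; this holds throughout the Banach range $1<p<\infty$ since the only operators surviving the reduction (linear Mikhlin multipliers and low-complexity multilinear Coifman--Meyer operators) admit such extensions, by Fefferman--Stein theory for the linear pieces and by the helicoidal / vector-valued arguments of~\cite{vv_BHT},~\cite{quasiBanachHelicoid} for the multilinear ones. I expect the main obstacle to be exactly this bookkeeping: controlling all the commutator expansions simultaneously over every vertex and every parameter while carrying out the magnitude summations in the vector-valued setting, so that the intermediate (Besov-flavored) estimates produced at inner vertices can always be summed against the output norm --- a step that goes through for every $r^j>1/n$ and $1<p_l^j<\infty$ precisely because $\beta^v_j>0$ at each vertex, which makes every such summation geometric. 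Finally, the loss of the $L^1$ and $L^\infty$ endpoints recovered in Theorem~\ref{thm:main} is intrinsic rather than technical: already in the linear, complexity-one case a general Mikhlin multiplier of positive order fails to map $L^\infty$ to $L^\infty$, in contrast to the homogeneous multipliers $|\xi|^{\beta}$, so the reduction necessarily confines the Lebesgue exponents to $1<p^j_l<\infty$ and $1/n<r^j<\infty$.
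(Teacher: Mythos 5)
The central gap is in the factorization step. On the cone where $|\xi_\ell|$ dominates, writing $m_{v,\ell}=|\xi_\ell|^{\beta^v}a_{v,\ell}$ with $a_{v,\ell}\in\mathcal M_0$ and carrying this out at \emph{every} vertex leaves behind, after the $|\xi_\ell|^{\beta^v}$ factors have been removed, a product of order-zero Mikhlin symbols over the rooted tree --- i.e.\ a multi-parameter \emph{flag paraproduct}. The boundedness of that object is precisely the problem the paper records as open, and which the entire commutator framework is designed to circumvent; your reduction therefore ends where the paper's argument begins. The paper's actual proof of the one-parameter Theorem~\ref{thm:multipliers:positive:order} never performs this multilinear order-zero factorization. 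On the off-diagonal cone it approximates $m_\beta(\xi_1,\dots,\xi_n)$ by $m_\beta(\xi_1,0,\dots,0)$, Taylor-expands the commutator $m_\beta(\xi_1,\dots,\xi_n)-m_\beta(\xi_1,0,\dots,0)$, and runs a double Fourier series on Whitney rectangles whose coefficients carry the factor $2^{k_1(\beta-1)}2^{k_2}$ making the scale sum geometric; only the leftover \emph{linear} piece $T_{m_\beta(\cdot,0,\dots,0)}$ is then reduced (by dividing by $|\xi_1|^\beta$) to a classical order-zero linear multiplier. The ``mismatch with the lower vertices'' you invoke has no content once $m_{v,\ell}=|\xi_\ell|^{\beta^v}a_{v,\ell}$ is an exact identity on the cone; there is no commutator to expand, and the residual $a_{v,\ell}$ is exactly what cannot be dispatched by ``classical estimates'' in the multi-parameter flag setting.

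A second, related gap is the $N$-parameter reduction by applying Theorem~\ref{thm:multipliers:positive:order} successively in $x_N,\dots,x_1$ via mixed-norm-valued extensions. This iteration does not go through for multilinear multi-parameter operators: all $n$ inputs carry frequency blocks in \emph{every} parameter, so one cannot freeze a variable and treat the remaining operator as a Banach-valued linear or $n$-linear map in the naive sense --- the paper says as much in the introduction (``linear techniques (freezing a variable, using vector-valued estimates) are not easily applicable''). The hypothesis~\eqref{symbol_N_parameter} is used in the paper only to run the conical and Fourier series decompositions \emph{independently} in each parameter; the summations over scales, and the Besov-norm interpolations that redistribute derivatives, are performed \emph{jointly} across all $N$ parameters, via the multi-index $\min$-optimization and geometric-mean bookkeeping of Section~\ref{generic_optim_interp}. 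That joint optimization, not a vector-valued extension of the one-parameter theorem, is what closes the argument. The tree combinatorics in your second paragraph and the explanation of why $L^1$, $L^\infty$ endpoints must be dropped are both correct, but the two reductions above as proposed do not establish the theorem.
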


Although we expect the non-tensorized equivalent result to remain true in all its generality, that will be analyzed in an upcoming paper. For now we only examine the depth-1 result:
\begin{theorem}\label{thm:multiparameter_nontensor}
If $\beta_1, \ldots, \beta_N >0$ and $m \in \mathcal M_{\beta_1, \ldots, \beta_N}$ is an $N$-parameter Mikhlin symbol satisfying \eqref{eq:positive:order:class:multi:param}, then the associated multiplier $T_{m}$ satisfies
\[
\|T_m(f_1, \ldots, f_n)\|_{L^{\vec r}} \lesssim  \sum_{\substack{\sigma_1^1, \ldots, \sigma^N_n \in \{ 0, 1 \} \\ \sigma_1^j+ \ldots+ \sigma_n^j=1}}\prod_{l=1}^n\|D^{\sigma^1_l \beta_1}_{(1)} \ldots D^{\sigma^N_l \beta_N}_{(N)} f_{l}\|_{\vec p_l},
\]
for any $1< p_1^1, \ldots, p_1^N, p_2^1, \ldots, p_2^N, \ldots, p_n^1, \ldots, p_n^N < \infty$, $\frac{1}{n}<r^1, \ldots, r^N < \infty$ such that component-wise the following H\"older condition holds
\[
\frac{1}{\vec r}= \frac{1}{ \vec p_1}+\ldots+\frac{1}{\vec p_n}.
\] 
\end{theorem}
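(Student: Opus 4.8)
The plan is to split the symbol $m$ into finitely many pieces, on each of which a single fractional derivative can be factored out — one per parameter — onto one input function; this uses crucially that $\beta_j>0$. What remains is a $0$-order, $N$-parameter, $n$-linear multiplier, whose boundedness on mixed-norm spaces in the stated range I would deduce from a further paraproduct decomposition together with square-function and strong maximal function estimates, i.e.\ by reduction to linear Mikhlin multipliers. (Unlike the case of trees of complexity $\ge 2$, no commutators arise for a depth-$1$ tree, so the factorization below is exact.)

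First I would perform a cone decomposition in each parameter. For each $1\le j\le N$ fix a smooth homogeneous-degree-$0$ partition of unity $1=\sum_{l=1}^n\Theta^j_l(\xi^j_1,\dots,\xi^j_n)$, with $\Theta^j_l$ supported in $\{|\xi^j_l|\ge c\max_{l'}|\xi^j_{l'}|\}$. Multiplying these over $j$ writes $m=\sum_\sigma m_\sigma$, the sum running over all maps $j\mapsto l^\ast(j)\in\{1,\dots,n\}$ (equivalently over the admissible tuples $\sigma=(\sigma^j_l)$ with $\sigma^j_l=\mathbf 1_{\{l=l^\ast(j)\}}$), where $m_\sigma:=m\prod_{j=1}^N\Theta^j_{l^\ast(j)}$. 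On $\supp m_\sigma$ one has $|\xi^j_{l^\ast(j)}|\sim|\xi^j_1|+\dots+|\xi^j_n|$ for every $j$, so $\widetilde m_\sigma:=m_\sigma\big/\prod_{j=1}^N|\xi^j_{l^\ast(j)}|^{\beta_j}$ is well-defined and, by the Leibniz rule applied to \eqref{eq:positive:order:class:multi:param} together with the bounds for $\Theta^j_l$ and for $|\xi^j_{l^\ast(j)}|^{-\beta_j}$ on the cone, belongs to $\mathcal M_{0,\dots,0}$. Hence, exactly,
\[
T_{m_\sigma}(f_1,\dots,f_n)=T_{\widetilde m_\sigma}\big(D^{\sigma^1_1\beta_1}_{(1)}\!\cdots D^{\sigma^N_1\beta_N}_{(N)}f_1,\ \dots,\ D^{\sigma^1_n\beta_1}_{(1)}\!\cdots D^{\sigma^N_n\beta_N}_{(N)}f_n\big),
\]
which has precisely the shape of the right-hand side of the claimed inequality. (When $\beta_j\in 2\BBN$ I would skip the $j$-th decomposition: $|\xi^j_1+\dots+\xi^j_n|^{\beta_j}$ is then a polynomial and the derivatives distribute by the multinomial theorem, which explains why the sufficient condition drops in that case.)

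It then remains to bound each $0$-order piece, i.e.\ to prove $\|T_{\widetilde m_\sigma}(g_1,\dots,g_n)\|_{L^{\vec r}}\lesssim\prod_l\|g_l\|_{L^{\vec p_l}}$ for $1<p^j_l<\infty$, $1/n<r^j<\infty$, $\tfrac1{\vec r}=\sum_l\tfrac1{\vec p_l}$; applying this with $g_l=D^{\sigma^1_l\beta_1}_{(1)}\!\cdots D^{\sigma^N_l\beta_N}_{(N)}f_l$ and summing over the $n^N$ choices of $\sigma$ closes the argument. A Littlewood--Paley decomposition $g_l=\sum_{\vec k_l}\vec\Delta_{\vec k_l}g_l$ in each parameter, followed by the same cone splitting (this time in the frequencies of $g_1,\dots,g_n$), expresses $T_{\widetilde m_\sigma}$ as a finite sum of iterated paraproducts $\sum_{\vec k}\big(\vec\Delta_{\vec k}g_{l_0}\big)\prod_{l\ne l_0}\big(\vec\Delta_{\le\vec k}g_l\big)$, possibly precomposed in the non-distinguished parameters with bounded Fourier multipliers. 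Bounding $\vec\Delta_{\le\vec k}g_l$ pointwise by the strong maximal function $M_{\mathrm{str}}g_l$ for $l\ne l_0$, exploiting the frequency separation of the summands in $\vec k$, and using H\"older's inequality in the mixed-norm spaces, I would reduce to the boundedness on $L^{\vec p}$, $1<p^j<\infty$, of the multi-parameter square function $g\mapsto(\sum_{\vec k}|\vec\Delta_{\vec k}g|^2)^{1/2}$ and of $M_{\mathrm{str}}$ — both consequences of the one-parameter Mikhlin multiplier theorem and the Hardy--Littlewood maximal inequality, iterated over the $N$ parameters.

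The hard part will be this last step in the quasi-Banach regime: since $r^j>1/n$ is permitted, some $r^j$ may be $<1$, and then one can no longer pass from $\|\sum_{\vec k}a_{\vec k}\|_{L^{\vec r}}$ to $\|(\sum_{\vec k}|a_{\vec k}|^2)^{1/2}\|_{L^{\vec r}}$ via Littlewood--Paley, nor appeal to duality. The iterated-paraproduct estimate has to be obtained instead through the localized and vector-valued (sharp-function) techniques of \cite{vv_BHT}, \cite{quasiBanachHelicoid}, which supply exactly the mixed-norm bounds for multi-parameter paraproducts needed here. The remaining bookkeeping — keeping the cone factorizations consistent across all $N$ parameters and overlapping cones, and tracking the mixed norms through the reductions — is routine.
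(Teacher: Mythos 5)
Your proof is correct, but it takes precisely the ``classical'' route that the authors deliberately set aside, whereas the paper's own proof runs through its commutator/scale-by-scale machinery. Indeed, at the start of Section~\ref{sec:generic:multipliers} the paper acknowledges that Theorem~\ref{thm:multiparameter_nontensor} ``is implied by the boundedness of multi-parameter paraproducts,'' and states that its goal is only to show that the Bourgain--Li methodology ``offers an alternative route.'' Your argument is exactly that implied reduction made explicit: a conical partition of unity $1=\sum_l\Theta^j_l$ in each parameter, an exact factorization $m_\sigma = \widetilde m_\sigma\prod_j|\xi^j_{l^*(j)}|^{\beta_j}$ on each cone (which is clean with no commutator error because the tree has depth one and the cone keeps $\xi^j_{l^*(j)}$ comparable to the full $j$-th frequency vector), and a reduction to the mixed-norm, quasi-Banach-range bound for the order-$(0,\dots,0)$ multiplier $T_{\widetilde m_\sigma}$, which is supplied by \cite{vv_BHT}, \cite{quasiBanachHelicoid}. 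The paper's proof instead re-derives the result without invoking multi-parameter paraproduct bounds: on each off-diagonal cone it splits $m$ into the residual $m(\xi_1,0,\dots,0)$ plus the commutator $m(\xi_1,\dots,\xi_n)-m(\xi_1,0,\dots,0)$, expands each piece in Fourier series on Whitney rectangles with quantified $2^{k(\beta_j-1)}2^{k'}$ gains in the coefficients, handles the residual by a high-low switch, and sums over scales via Besov norms, optimization and interpolation (Section~\ref{generic_optim_interp}). Your route is shorter but delegates the analytic difficulty to the cited multi-parameter paraproduct theorems; the paper's route is independent of that theory, which is the point, since it is exactly this independence that allows it to treat the flag operators of Theorems~\ref{thm:main}--\ref{thm:an:example}, where no multi-parameter Coifman--Meyer theorem is available. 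For the depth-one statement at hand the two approaches yield the same range (neither reaches $L^1$/$L^\infty$ endpoints), so yours is a valid and more economical alternative. One small misplacement: your parenthetical about $\beta_j\in 2\BBN$ really pertains to Theorem~\ref{thm:main}, where the root symbol is the explicit power $|\xi^j_1+\dots+\xi^j_n|^{\beta_j}$; in the present theorem $m$ is a general Mikhlin symbol of order $(\beta_1,\dots,\beta_N)$, not a polynomial, so that remark does not apply here.
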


We notice that the symbol does not have to obey the symmetry in \eqref{symbol_N_parameter}; in fact, we also obtain Leibniz-type estimates for multipliers associated to asymmetric Mikhlin symbols, as long as they tensorize. One such example is 
\begin{align}\label{example:asymmetric_diff:M}
&T_{\mathcal{G}_1\otimes \mathcal{G}_2}(f_1, f_2, f_3, f_4, f_5)(x,y) \nonumber \\
:= &\int_{\BBR^6} m_1(\xi_1,\xi_2,\xi_3,\xi_4,\xi_5)m_2(\xi_1,\xi_2)m_3(\xi_3, \xi_4,\xi_5) m_4(\xi_3,\xi_4) \tilde{m}_1(\eta_1,\eta_2,\eta_3,\eta_4,\eta_5) \tilde{m}_2(\eta_1,\eta_3) \tilde{m}_3(\eta_2,\eta_4)\nonumber \\
& \quad \prod_{l=1}^5 \hat{f_l}(\xi_l, \eta_l)e^{2 \pi i (x, y) \cdot (\xi_1+\xi_2+\xi_3+\xi_4+\xi_5, \eta_1+\eta_2+\eta_3+\eta_4+\eta_5)} d \xi d \eta,
\end{align}
which can be perceived as an extension of \eqref{example:asymmetric_diff} in the off-diagonal regions.

The following theorem gives a general formulation on the boundedness of multipliers associated to tensorized asymmetric Mikhlin symbols:
\begin{theorem}
\label{thm:asymm:Mikhlin}
Let $T_{\mathcal{G}_1\otimes \ldots \otimes \mathcal{G}_N}$ denote the $n$-linear operator associated to $\mathcal{G}_j$ for $1 \leq j \leq N$ such that each $\mathcal{G}_j$ denotes the frequency tree for the $j$-th parameter. Suppose that every vertex $v_j \in \mathcal{V}_j$ is associated to a symbol $m^j_{v_j} \in \mathcal{M}_{\beta^{v_j}_j}$ with $\beta_v^{v_j} >0$. Let $\mathcal D(\mathcal{V}_j)$ be the collection of maps $\delta_j: \mathcal{V}_j \rightarrow \{1,\ldots, n\}$ defined in \eqref{delta:derivative_distribution}, satisfying conditions \ref{deriv:distrib:i} and \ref{deriv:distrib:ii}. Then for any $f_1, \ldots, f_n \in \mathcal{S}(\BBR^{d_1} \times \ldots \times \BBR^{d_N})$,
\begin{align*}
\|T_{\mathcal{G}_1\otimes \ldots \otimes \mathcal{G}_N} (f_1, \ldots, f_n) \|_{\vec r} \lesssim  \sum_{\delta_1\otimes \ldots \otimes \delta_N \in \mathcal{D}(\mathcal{V}_1) \times \ldots \times \mathcal{D}(\mathcal{V}_N)} \prod_{l=1}^n\|D_{(1)}^{\delta_1^{-1}(l)} \ldots D_{(N)}^{\delta_N^{-1}(l)}f_{l}\|_{\vec{p_l}},
\end{align*} 
for any $1< p_1^1, \ldots, p_1^N, p_2^1, \ldots, p_2^N, \ldots, p_n^1, \ldots, p_n^N < \infty$, $\frac{1}{n}<r^1, \ldots, r^N < \infty$ satisfying component-wise the H\"older condition 
\[
\frac{1}{\vec r}= \frac{1}{ \vec p_1}+\ldots+\frac{1}{\vec p_n}.
\] 
\end{theorem}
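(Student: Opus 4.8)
The plan is to deduce Theorem \ref{thm:asymm:Mikhlin} from the partial-derivative version, Theorem \ref{thm:an:example}, by the same \emph{superposition of dyadic pieces} device that reduces a Mikhlin multiplier of positive order to a (weighted average of smooth multiples of) pure fractional derivatives. Fix a parameter $j$ and a vertex $v_j \in \mathcal{V}_j$ with symbol $m^j_{v_j} \in \mathcal{M}_{\beta_j^{v_j}}$. Write $m^j_{v_j}(\zeta) = |\zeta|^{\beta_j^{v_j}} \, \widetilde m^j_{v_j}(\zeta)$ where $\zeta := \sum_{l \in \mathcal{L}(v_j)} \xi_l^j$ and $\widetilde m^j_{v_j}(\zeta) := m^j_{v_j}(\zeta)/|\zeta|^{\beta_j^{v_j}}$ is a classical order-$0$ Mikhlin symbol (here we use $\beta_j^{v_j} > 0$ so that dividing by $|\zeta|^{\beta_j^{v_j}}$ does not destroy the smoothness at infinity, and the condition $\mathcal{M}_{\beta_j^{v_j}}$ exactly says the quotient is order-$0$ away from the origin). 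Decompose $\widetilde m^j_{v_j}$ into a Littlewood--Paley-type sum $\widetilde m^j_{v_j}(\zeta) = \sum_{k \in \BBZ} \psi(2^{-k}\zeta)\, \widetilde m^j_{v_j}(\zeta)$ and expand each compactly-supported piece $\psi(2^{-k}\cdot)\,\widetilde m^j_{v_j}$ into a (rapidly converging) Fourier series on a dilated cube; the upshot is a representation
\[
m^j_{v_j}\Big(\sum_{l \in \mathcal{L}(v_j)} \xi_l^j\Big) = \int \Big|\sum_{l \in \mathcal{L}(v_j)} \xi_l^j\Big|^{\beta_j^{v_j}}\, e^{2\pi i\, t \cdot \sum_{l} \xi_l^j}\, d\mu^j_{v_j}(t),
\]
with $d\mu^j_{v_j}$ a finite (signed) measure having rapidly decaying "Fourier coefficients" — the standard Coifman--Meyer/Mikhlin decomposition, adapted so that the order-$\beta$ homogeneous factor is pulled out.

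Next I would tensorize this over all vertices and all parameters. Since each vertex symbol factors as in \eqref{symbol_N_parameter}-type structure — here it is built into the hypothesis of Theorem \ref{thm:asymm:Mikhlin}, where the full symbol is $\prod_{j=1}^N \prod_{v_j \in \mathcal{V}_j} m^j_{v_j}$ — applying the above to every factor writes $T_{\mathcal{G}_1 \otimes \ldots \otimes \mathcal{G}_N}$ as an absolutely convergent integral/sum
\[
T_{\mathcal{G}_1 \otimes \ldots \otimes \mathcal{G}_N}(f_1,\ldots,f_n) = \int \cdots \int T^{(\vec t\,)}_{\mathcal{G}_1 \otimes \ldots \otimes \mathcal{G}_N}(f_1,\ldots,f_n)\, \prod_{j,v_j} d\mu^j_{v_j}(t_{v_j}),
\]
where each $T^{(\vec t\,)}$ is precisely the \emph{pure fractional-derivative} multi-parameter flag multiplier of Theorem \ref{thm:an:example}, except that each leaf $\hat f_l(\xi_l,\ldots)$ is multiplied by a harmless unimodular modulation $\prod e^{2\pi i\, t \cdot \xi_l^j}$. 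These modulations correspond to translations of the $f_l$, which leave every $L^{\vec p_l}$ and every $D^{\alpha}_{(j)}$-norm invariant. Therefore, applying Theorem \ref{thm:an:example} to each $T^{(\vec t\,)}$ and using that the resulting bound is uniform in $\vec t$, then integrating against the finite measures $\prod d\mu^j_{v_j}$ (whose total masses are controlled by finitely many Mikhlin seminorms of the $m^j_{v_j}$), yields exactly the claimed estimate; the admissible exponent range is the intersection of the ranges needed for the order-$0$ Mikhlin decompositions (which forces $1 < p_l^j < \infty$ and $1/n < r^j < \infty$, with no lower restriction coming from the $\beta$'s, the homogeneous factor having been absorbed) and the range of Theorem \ref{thm:an:example} with all the $\beta$-conditions \eqref{exponents:asymmetric_differential} automatically satisfied since $r^j > 1/n > d_j/(d_j+\beta_j^{v_j})$ is not needed — rather, one checks that for each $v_j$ the decomposition requires only Coifman--Meyer-type bounds, valid in the stated Banach and quasi-Banach range.

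The one genuinely delicate point — and the main obstacle — is making the decomposition \emph{stable under composition}, i.e.\ verifying that after pulling the homogeneous factor $|\sum_{l \in \mathcal{L}(v_j)}\xi_l^j|^{\beta_j^{v_j}}$ out of $m^j_{v_j}$ at every vertex, the leftover order-$0$ symbols at \emph{different} vertices along the same flag (which share frequency variables) can still be separately expanded into Fourier series without their singularities interacting. Concretely, one must track that the "remainder modulations" $e^{2\pi i\, t_{v_j}\cdot \sum_{l \in \mathcal L(v_j)} \xi_l^j}$ for nested vertices $v_j \subsetneq w_j$ still factor through translations of individual $f_l$'s: since $\mathcal{L}(v_j) \subseteq \mathcal{L}(w_j)$ the modulation for $w_j$ acts on a \emph{superset} of leaves, so the combined modulation is $\prod_l e^{2\pi i\, s_l \cdot \xi_l^j}$ with $s_l = \sum_{v_j \ni l} t_{v_j}$, still a pure translation of $f_l$ and hence norm-preserving. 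This is the point where the tree structure and observation \eqref{obs:ancester_descendant} are used: the nesting guarantees the modulation phases are consistent, exactly as condition \ref{deriv:distrib:ii} guarantees the derivative distributions are. Once this bookkeeping is in place, the remaining steps are the routine summation of a geometric/rapidly-decaying series already carried out in Theorem \ref{thm:multipliers:positive:order} and Theorem \ref{thm:Nparam:tensor:positive:multipl}, so I would present Theorem \ref{thm:asymm:Mikhlin} as a corollary of Theorem \ref{thm:an:example} via this reduction, spelling out only the modulation-invariance and the uniform-in-$\vec t$ application.
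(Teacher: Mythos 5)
Your reduction to Theorem \ref{thm:an:example} hinges on writing the order--$0$ remainder $\widetilde m^j_{v_j}(\zeta) := m^j_{v_j}(\zeta)/|\zeta|^{\beta^{v_j}_j}$ as the Fourier transform of a \emph{finite} signed measure, so that $T_{\mathcal{G}_1\otimes\ldots\otimes\mathcal{G}_N}$ becomes an absolutely convergent superposition of translates of the pure fractional-derivative flag. This is the step that fails. The Coifman--Meyer decomposition of an order--$0$ Mikhlin symbol is a \emph{two-index} sum $\widetilde m(\zeta)=\sum_{k\in\BBZ}\sum_{L\in\BBZ^d} c^k_L\,\psi(2^{-k}\zeta)\,e^{2\pi i L\cdot \zeta/(C2^k)}$ in which the coefficients decay rapidly in $L$ \emph{uniformly in} $k$, but $\sum_k\sum_L |c^k_L|=\infty$: the $L$-sum contributes a constant per scale and the $k$-sum is infinite. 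The purported measure $d\mu^j_{v_j}=\sum_{k,L}c^k_L\,\delta_{L/(C2^k)}$ therefore has infinite total variation. Were it finite, every order--$0$ Mikhlin multiplier, in particular the Hilbert transform, would be a bounded average of translations and hence bounded on $L^\infty$, which is false. So the integral representation you propose does not exist in the absolute sense you need.

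There is a second, independent mismatch of ranges that also signals the gap. If you could reduce to Theorem \ref{thm:an:example} with the homogeneous factors $|\zeta|^{\beta^{v_j}_j}$ pulled out, you would (i) inherit the lower restriction $r^j>\max_{v_j} d_j/(d_j+\beta^{v_j}_j)$ from the limited decay $|C_L|\lesssim (1+|L|)^{-(d_j+\beta^{v_j}_j)}$ that the homogeneous symbol produces on diagonal cones, and (ii) simultaneously gain the $p=1,\infty$ endpoints. Theorem \ref{thm:asymm:Mikhlin} has exactly the opposite profile: no $\beta$-dependent lower bound on $r^j$ (because a Mikhlin symbol of positive order is smooth through the origin, so the Fourier coefficients on diagonal Whitney rectangles decay arbitrarily fast), but $1<p^j_l<\infty$ strictly (because the order--$0$ contributions behave like genuine Calder\'on--Zygmund operators and forbid the endpoints). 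The paper's intended argument for this theorem, as indicated in Section \ref{sec:generic:multipliers} for Theorems \ref{thm:multipliers:positive:order}--\ref{thm:multiparameter_nontensor}, keeps the Littlewood--Paley decomposition on the \emph{functions}, restricts to conical regions, splits the root Mikhlin symbol at each vertex into a commutator $m(\xi_1,\xi_2,\ldots)-m(\xi_1,0,\ldots)$ plus a reduced symbol, expands scale-localized pieces into Fourier series with the explicit factor $2^{k(\beta-1)}2^{k'}$ (respectively $2^{k\beta}$ on the diagonal), and then sums over scales via Besov optimization; the asymmetric tree structure is handled by the induction sketched at the end of Section \ref{generic_induction}. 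None of this is an application of Theorem \ref{thm:an:example}; the scale-by-scale analysis is essential precisely because the measure you want to integrate against is not finite.
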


Finally, the same type of reasoning allows us also to reprove \emph{smoothing properties} of $n$-linear Mikhlin multipliers associated to a symbol of negative order, in the mixed-norm multi-parameter setting. Our prototypical example consists of the $n$-linear $N$-parameter fractional integral operator, whose frequency symbol is given by
\[
(|\xi^1_1|^2 + \ldots |\xi^1_n|^2)^{-\frac{\nu_1}{2}} \ldots (|\xi_1^N|^2 + \ldots + |\xi_n^N|^2)^{-\frac{\nu_N}{2}},
\]
for $0 \leq \nu_j \leq n d_j$, for $1 \leq j \leq N$. This is motivated by the works of Hart-Torres-Wu \cite{HartTorresWu-smoothing-bil} and Yang-Liu-Wu \cite{YangLiuWu-smoothing}, where smoothing properties for less regular multipliers are studied in the mixed-norm and respectively in the bi-parameter setting. More concrete statements and sketches of proofs will be detailed in Section \ref{remark:smoothing}. 

Although the smoothing properties and the results in Theorem \ref{thm:multiparameter_nontensor} are not new, our intention here is to illustrate that a careful, quantitative scale-by-scale analysis (which includes improved estimates thanks to the introduction of certain commutators) offers and alternative route to proving them. We will elaborate on this method in the next section, as well as in Section \ref{Bourgain-Li_hilow}.

\subsection{Strategy}\label{strategy}
We provide an overview of our methodology and draw a comparison with the approach based on Coifman-Meyer multipliers. For the sake of simplicity, we will focus on dimension one but the discussion can be easily extended to higher dimensions. 

To gain some intuition of the Leibniz rules described in Theorem \ref{thm:main}, we observe that the derivatives capture a function's oscillation rate, and that can be understood through the Fourier transform. Because of \eqref{eq:D:beta:n} and an observation that goes back to Bony \cite{bony-initial-paraprod}, it is natural to decompose the frequency space into regions
\[
\tilde R_l:= \{ (\xi_1, \ldots, \xi_n): |\xi_1+ \ldots + \xi_n| \sim |\xi_l| \},
\]
since in that case we expect to have $D^\beta( (f_1 \cdot \ldots \cdot f_n)\mid_{\tilde R_j}) \sim f_1 \cdot \ldots f_{l-1} \cdot (D^\beta f_l) \cdot f_{l+1} \cdot \ldots \cdot f_n$.

Indeed, if we restrict our attention to the region\footnote{Throughout the paper, we say that two positive expressions $E_1$ and $E_2$ are equivalent and we write $E_1 \sim E_2$ if there exists $C>0$ so that $C^{-1} E_1 \leq E_2 \leq C E_1$. Correspondingly, we say that $E_1$ is much smaller than $E_2$ and write $E_1 \ll E_2$ if there exists $C>0$ (which in general will be implicitly depending on the dimension, number of functions involved) so that $E_1 \leq C^{-1} E_2$.} $R_1:=\{(\xi_1, \ldots, \xi_n) : |\xi_1|\gg |\xi_2|, \ldots ,|\xi_n| \}$, we have that 
\[
\begin{array}{cccc}
\vcenter{\hbox{\begin{forest}
my treeSep
[, label={above: $|\xi_1+\ldots+\xi_n|^{\beta}$}
  	[, label = {below: $\hat f_1(\xi_1)$}
	]
	[, label = {below: $\hat f_2(\xi_2)$}
	]
	[, label= {below:$\ldots$}
	]
	[, label = {below: $\hat f_n(\xi_n)$}
	]
]
\end{forest}}}  & \sim &
\vcenter{\hbox{\begin{forest}
my treeSep
[, label={above: $\,$}
  	[, label = {below: $|\xi_1|^\beta \hat  f_1(\xi_1)$}
	]
	[, label = {below: $\hat f_2(\xi_2)$}
	]
	[, label= {below:$\ldots$}
	]
	[, label = {below: $\hat f_n(\xi_n)$}
	]
]
\end{forest}}} & \,
\end{array}
\]
and if $\tilde{\chi}_{R_1}$ is a smooth function adapted to the region $R_1$, then
\[
|\xi_1+\ldots+\xi_n|^{\beta} \cdot \tilde{\chi}_{R_1}(\xi_1, \ldots, \xi_n)= m(\xi_1, \ldots, \xi_n) |\xi_1|^\beta \cdot \tilde{\chi}_{R_1}(\xi_1, \ldots, \xi_n),
\]
where 
\[
m(\xi_1, \ldots, \xi_n):= \frac{|\xi_1+\ldots+\xi_n|^{\beta}}{|\xi_1|^\beta} \cdot \tilde{\chi}_{R_1}(\xi_1, \ldots, \xi_n).
\]

Now the key point is to notice that $m(\xi_1, \ldots, \xi_n)$ is a classical multilinear Mikhlin symbol: it is only singular at $\xi_1=\xi_2= \ldots=\xi_n=0$ (in the region $R_1$, $|\xi_1|=0$ is equivalent to $ (\xi_1, \ldots, \xi_n) =0$), and it decays fast away from the origin. So once the functions $f_1, \ldots, f_n$ are jointly restricted in frequency to the region $R_1$,
\[
D^\beta(f_1, f_2, \ldots, f_n)=T_m(D^\beta f_1, f_2, \ldots, f_n).
\]
This is, in short,\footnote{For more details, and a comparison with the Bourgain-Li approach, see Section \ref{sec:differences:Coifman-Meyer}.} how the boundedness of Coifman-Meyer multipliers (associated to Mikhlin symbols) imply Leibniz-type estimates such as \eqref{Leib_one_paraproduct}, \eqref{Leib_bi_paraproduct}, \eqref{Leib_bi_paraproduct:mixed}, \eqref{Leib_one_flag}, etc. Of course, this excludes certain endpoints.

On the other hand, the methodology that we rely on are \emph{commutators} -- originally introduced by Bourgain and Li \cite{BourgainLi-kato} -- which manage to capture a certain cancellation between different scales. To start with, we write $R_1$ as 
\begin{equation}\label{conical_1}
R_1:= \bigcup_{k_1 \gg k_2, \ldots, k_n \in \BBZ} R_{k_1, k_2, \ldots, k_n }:=\bigcup_{k_1 \gg k_2, \ldots, k_n} \{ (\xi_1, \ldots, \xi_n) : |\xi_l| \sim 2^{k_l} \text{     for $1 \leq l \leq n$} \}
\end{equation}
and notice that on $R_1$,
\begin{equation}
\label{eq:intr:comm}
|\xi_1+\ldots+\xi_n|^{\beta} = |\xi_1|^\beta + (|\xi_1+\ldots+\xi_n|^{\beta}-|\xi_1|^\beta).
\end{equation}

The first term on the right hand side of the above identity seems to be exactly what we wanted: we can indeed replace $|\xi_1+\ldots+\xi_n|^{\beta}$ by $|\xi_1|^{\beta}$; for the second term, we notice that it becomes
\begin{equation}
\label{eq:commutator}
\begin{aligned}
&\big( \int_0^1 \beta \, |\xi_1+ t(\xi_2+ \ldots \xi_n)|^{\beta-2} (\xi_1+ t(\xi_2+ \ldots \xi_n)) dt \big) (\xi_2+\ldots+\xi_n) \\
&= \sum_{l=2}^n  \big( \int_0^1 \beta \, |\xi_1+ t(\xi_2+ \ldots \xi_n)|^{\beta-2} (\xi_1+ t(\xi_2+ \ldots \xi_n)) dt \big) \cdot \xi_l \\
:&= \sum_{l=2}^n m_{C_\beta,1,l}(\xi_1, \xi_2 + \ldots \xi_n).
\end{aligned}
\end{equation}
We restrict $m_{C_\beta,1,l}$ to the regions $R_{k_1,k_l,\pm}$ defined by  
\begin{align*}
R_{k_1,k_l,+} := & \{(\xi_1, \ldots \xi_n): \xi_1 \sim 2^{k_1}, |\xi_l| \sim 2^{k_l} \ll 2^{k_1},   |\xi_{\tilde{l}}| \ll 2^{k_1} \text{ for } \tilde{l} \neq 1, l\}, \\
R_{k_1,k_l,-} := & \{(\xi_1, \ldots \xi_n): \xi_1\sim -2^{k_1}, |\xi_l| \sim 2^{k_l} \ll 2^{k_1},   |\xi_{\tilde{l}}| \ll 2^{k_1} \text{ for } \tilde{l} \neq 1, l\}, 
\end{align*}
which are contained in $R_{k_1, \ldots, k_n} \text{\ \ for\ \ } k_1 \gg k_2, \ldots, k_n$, and denote by $m_{C_\beta,1,l,\pm}^{k_1, k_l}$ the localized symbol. We notice that $T_{m_{C_\beta, 1,l}^{k_1, k_l,\pm}}$ -- the multilinear operator whose symbols is given precisely by $m^{k_1, k_l,\pm}_{C_\beta,1,l}$ -- satisfies
\begin{equation}
\label{eq:improved:comm}
\| T_{m^{k_1, k_l,\pm}_{C_\beta, 1,l}} \|_{L^{p_1} \times \ldots \times L^{p_n} \to L^p} \lesssim 2^{k_1(\beta-1)} 2^{k_l}, \qquad \text{for   } 1 \leq p_1, \ldots, p_n \leq \infty.
\end{equation}
This quantified interaction between different scales will allow us to sum over the scales $k_1$ and $k_l$, as we will see in more detail in Section \ref{Bourgain-Li_hilow}.

For flag Leibniz rules, such as $D^\beta(D^\alpha(f_1 f_2) f_3 D^\gamma(f_4 f_5))$ whose frequency tree representation appears in Figure \ref{fig:5}(\subref{fig:5:1:param}), it is natural to restrict our attention to similar frequency regions: if 
\[
 R'_1=\{ (\xi_1, \ldots, \xi_5):  |\xi_1| \gg |\xi_2|, |\xi_3| , |\xi_4| \gg |\xi_5|   \},
 \]
then 
\begin{align*}
&|\xi_1+\xi_2+\xi_3+\xi_4+\xi_5|^\beta \, |\xi_1+\xi_2|^\alpha \, |\xi_4+\xi_5|^\gamma \tilde{\chi}_{R'_1}(\xi_1, \xi_2, \xi_3, \xi_4, \xi_5) \, \hat f_1 (\xi_1) \hat f_2 (\xi_2) \hat f_3  (\xi_3) \hat f_4 (\xi_4) \hat f_5 (\xi_5) \\
& = \frac{ |\xi_1+\xi_2+\xi_3+\xi_4+\xi_5|^\beta }{|\xi_1|^\beta} \, \frac{|\xi_1+\xi_2|^\alpha}{|\xi_1|^\alpha} \frac{|\xi_4+\xi_5|^\gamma}{|\xi_4|^\gamma}  \tilde{\chi}_{R'_1}(\xi_1, \xi_2, \xi_3, \xi_4, \xi_5) \ \,  \widehat{ D^{\alpha+\beta} f_1} (\xi_1) \hat f_2 (\xi_2) \hat f_3  (\xi_3) \widehat{ D^\gamma f_4} (\xi_4) \hat f_5 (\xi_5). 
\end{align*}

This implies that the Leibniz rule is a consequence of the boundedness of the $5$-linear operator associated to the symbol 
\[
\frac{ |\xi_1+\xi_2+\xi_3+\xi_4+\xi_5|^\beta }{|\xi_1|^\beta} \, \frac{|\xi_1+\xi_2|^\alpha}{|\xi_1|^\alpha} \frac{|\xi_4+\xi_5|^\gamma}{|\xi_4|^\gamma}  \tilde{\chi}_{R'_1}(\xi_1, \xi_2, \xi_3, \xi_4, \xi_5),
\]
which can also be represented as
\begin{equation}
\label{eq:product:Mikhlin}
m_\beta(\xi_1, \xi_2, \xi_3, \xi_4, \xi_5) \cdot m_\alpha(\xi_1, \xi_2) \cdot m_\gamma (\xi_4, \xi_5),
\end{equation}
where $m_\beta(\xi_1, \xi_2, \xi_3, \xi_4, \xi_5), m_\alpha(\xi_1, \xi_2), m_\gamma (\xi_4, \xi_5)$ are all Mikhlin multipliers when smoothly restricted to the region $R'_1$. Expressions such as \eqref{eq:product:Mikhlin} are prototypes of symbols associated to flag paraproducts -- compare to the definition in \eqref{eq:symbol:flag:paraprod}.

We will however choose a different path for estimating $D^\beta(D^\alpha(f_1 f_2) f_3 D^\gamma(f_4 f_5))$, which is closer to the Bourgain-Li approach, since it will allow us to also treat the multi-parameter flag Leibniz rules. The main steps of the strategy are:
\begin{enumerate}
\item \label{dec:root:symbol}
splitting of the root symbol and appearance of commutators in the off-diagonal frequency regions;
\item  \label{dec:Fourier:series}
Fourier series decomposition for the symbols;
\item \label{dec:tensorization:subtrees}
tensorization into frequency-localized subtrees. 
\end{enumerate}

In the treatment of Leibniz rules of complexity 1, Step \ref{dec:root:symbol} distributes the derivative to a leaf \eqref{eq:intr:comm} while for a generic flag Leibniz rule, it will pass the derivative from the root to one of its direct descendants. This, together with the application of Step \ref{dec:Fourier:series}, allows us to reduce the estimation of the original tree to subtrees of lower complexities, which leads to Step \ref{dec:tensorization:subtrees}. 

The same methodology can be employed to develop Leibniz-type estimates for operators associated to Mikhlin symbols of order $\beta$ for $\beta>0$. We observe that in the region $R_1$ given by \eqref{conical_1}, the condition \eqref{eq:positive:order:class} satisfied by the Mikhlin symbol $m_{\beta}(\xi_1, \ldots, \xi_n)$ can be reformulated as
\begin{equation*}
\big| \partial_{\xi_1}^{\gamma_1} \ldots  \partial_{\xi_n}^{\gamma_n}  m_\beta(\xi_1, \ldots, \xi_n) \big| \lesssim  \big(|\xi_1|+ \ldots+ |\xi_n| \big)^{\beta- |\gamma_1|- \ldots - |\gamma_n|} \sim |\xi_1|^{\beta- |\gamma_1|- \ldots - |\gamma_n|}.
\end{equation*}
Such a heuristic computation suggests that $m_{\beta}(\xi_1, \xi_2, \ldots, \xi_n)$ behaves like $m_{\beta}(\xi_1, 0, \ldots, 0)$ on $R_1$. We can make it rigorous by invoking Step \ref{dec:root:symbol} and introducing the commutator
\begin{equation*}
m_{\beta}(\xi_1, 0, \ldots, 0) + \left(m_{\beta}(\xi_1, \xi_2, \ldots, \xi_n) - m_{\beta}(\xi_1, 0, \ldots, 0)\right)
\end{equation*}
and the scale-by-scale analysis is applicable in this setting as well. 

We remark that although this strategy works for obtaining Leibniz-type estimates for Mikhlin multipliers with symbols of strictly positive order, it fails in the case of classical Mikhlin symbols, which correspond to order zero.

To extend the methodology to the multi-parameter setting, we notice that for multipliers which are tensor product of symbols in each parameter -- such as the ones involved in Theorem \ref{thm:main} and Theorem \ref{thm:Nparam:tensor:positive:multipl} -- Steps \ref{dec:root:symbol} and \ref{dec:Fourier:series} can be performed independently in each parameter. Step \ref{dec:tensorization:subtrees} -- which allows a decoupling of the rooted tree into subtrees of lower complexities -- can be attained thanks to Step \ref{dec:Fourier:series}; further computations concern the distribution of derivatives -- this is carried out independently for each parameter so that the mixed Besov and Lebesgue norms (see Section \ref{sec:LP&Besov}) naturally appear. 
\medskip

\subsection{Structure of the paper}
The paper is organized as follows: in Section \ref{sec:Section2} we introduce the necessary terminology and review the Bourgain-Li approach from \cite{BourgainLi-kato}, putting the accent on certain novel aspects that will be involved in treating the mixed-norm multi-parameter generic flag Leibniz rule. In Section \ref{sec:5lin:1param} we present a 5-linear flag Leibniz rule, and in Section \ref{sec:2param:5linflag} its bi-parameter version; these particular examples are interesting enough to capture the complexity of the general case, without being too technical. In Section \ref{generic_induction} we present in detail the inductive proof of our main Theorem \ref{thm:main} and illustrate briefly a modified induction requested by Theorem \ref{thm:an:example}. Finally, in Section \ref{sec:generic:multipliers} we discuss Leibniz-type estimates for generic Mikhlin multipliers of positive order (Theorem \ref{thm:multipliers:positive:order}, \ref{thm:Nparam:tensor:positive:multipl}, \ref{thm:multiparameter_nontensor}) and recover the smoothing properties described in Theorem \ref{thm:smoothing:N:param}.
\\

\subsection{Acknowledgements}
C. Benea acknowledges partial support from ANR project RAGE ANR-18-CE40-0012 and research grant PN-III-P1-1.1-TE-2019-2275 from UEFISCDI, Romania. Y. Zhai's research is supported by ERC project FAnFArE no. 637510 and the region Pays de la Loire.

\section{Notation and useful results}
\label{sec:Section2}
In this section we set the notation that will be used throughout the paper, and review the Bourgain-Li method from \cite{BourgainLi-kato}.

\subsection{Littlewood-Paley projections and Besov spaces}~\\
\label{sec:LP&Besov}
Let $N \geq 1$, and let $\vec p=(p^1, \ldots, p^N)$ be an $N$-tuple of positive Lebesgue exponents: that is, we assume that $0< p^1, \ldots, p^N \leq \infty$. For functions on $\BBR^{d_1} \times \ldots \times \BBR^{d_N}$, we define the mixed (quasi)norm 
\begin{equation}
\label{eq:def:mixed:norm}
\|f\|_{L^{\vec p}}=\|f\|_{L^{p^1}_{x_1} L^{p^2}_{x_2} \ldots L^{p^N}_{x_N}}:= \Big( \int_{\BBR^{d_1}} \ldots \int_{\BBR^{d_{N-1}}}    \big(  \int_{\BBR^{d_N}} |f(x_1,  \ldots, x_{N-1}, x_N )|^{p^N}  d \, x_N\big)^{\frac{p^{N-1}}{p^N}} d \, x_{N-1} \big)^{\frac{p^{N-2}}{p^{N-1}}} \ldots d x_1 \Big)^\frac{1}{p^1}.
\end{equation}
Whenever one of the $p^j$ is equal to $\infty$, the integration in the $x_j$ variable is replaced by taking the essential supremum with respect to $x_j$.

Then the space $L^{\vec p}(\BBR^{d_1+\ldots+d_N})$ consists of all the functions defined on $\BBR^{d_1+\ldots+d_N}$, with finite $\| \cdot \|_{L^{\vec p}}$ norm:
\begin{equation}
\label{eq:def:mixedLp}
L^{\vec p}(\BBR^{d_1+\ldots+d_N}):= \{ f: \BBR^{d_1+\ldots+d_N} \to \BBC : \| f \|_{L^{\vec p}} < \infty  \}.
\end{equation}

\medskip
\begin{remark}We record a few useful properties of mixed-norm spaces: \mbox{} 
\begin{enumerate}
\item If $\vec p =(p^1, \ldots, p^N)$ with $1 \leq p^j \leq \infty$ for all $1 \leq j \leq N$, then $\| \cdot \|_{L^{\vec p}}$ is a norm and $L^{\vec p}(\BBR^{d_1+\ldots+d_N})$ is a Banach space.
\item Generally, $\| \cdot \|_{L^{\vec p}}$ is a quasi-norm and $L^{\vec p}(\BBR^{d_1+\ldots+d_N})$ is a quasi-Banach space.
\item If $\tau>0$ is so that
\[
\tau \leq \min \big( 1, \min_{1 \leq j \leq N} p^j \big),
\] 
then $\| \cdot \|_{L^{\vec p}}^\tau$ is subadditive.
\end{enumerate}
\end{remark}
\medskip

Throughout the paper, we make use of the classical Littlewood-Paley decompositions.\footnote{Small perturbations of the base functions $\varphi$ and $\psi$ will not change the inherent properties of the Littlewood-Paley families $\{ \psi(2^k \cdot)  \}_{k \in \BBZ}$. These perturbations will be denoted generically $\tilde \varphi$ and $\tilde \psi$.} On $\BBR^d$, we start with $\varphi: \BBR^d \to [0, \infty)$ a radial function so that $0 \leq \varphi \leq 1$, $\varphi \equiv 1$ on $\{ |\xi| \leq 1 \}$, $\varphi \equiv 0$ on $\{ |\xi| \geq 2 \}$; then we define $\psi(\xi):= \varphi(\xi) - \varphi(2 \xi)$, which is supported on $\{ 1/2 \leq |\xi| \leq 2 \}$, and we obtain
\[
\sum_{k \in \BBZ} \psi(2^{-k} \xi) =1, \qquad \text{for all     } \xi \neq 0.
\]
Hence for any Schwartz function $f \in \mathcal{S}(\BBR^d)$, 
\[
\hat f(\xi)= \sum_{k \in \BBZ} \psi(2^{-k} \xi) \, \hat f(\xi)  \qquad \text{for all     } \xi \neq 0.
\]
If we denote $\psi_k(\xi):= \psi(2^{-k} \xi)$, then the identity above reads in space as
\[
f(x)= \sum_{k \in \BBZ} f \ast \check \psi_{k}(x).
\]

For any $k \in \BBZ$, $\Delta_k$ denotes the Littlewood-Paley projection associated to the frequency region $\{ |\xi| \sim 2^k  \}$:
\[
f \mapsto \Delta_k f:= f \ast \check \psi_{k}= \ii F^{-1} (\hat f \cdot \psi_k).
\]

Then $\ds f = \sum_{k \in \BBZ} \Delta_k f$, and since the functions $ \check \psi_{k}$ are $L^1$-normalized,\footnote{That is, $\| \check \psi_{k}\|_1=\| \check \psi\|_1$ uniformly in $k$.} we have uniformly in $k \in \BBZ$,
\begin{equation}
\label{eq:Young}
\| \Delta_k f\|_p \leq \| \check \psi\|_1 \|f\|_p, \qquad \text{for any     } 1 \leq p \leq \infty. 
\end{equation}

For functions on $\BBR^{d_1} \times \ldots \times \BBR^{d_N}$, we consider the multi-parameter Littlewood-Paley decomposition
\[
f= \sum_{k_1, \ldots, k_N \in \BBZ}   \Delta_{k_1}^{(1)} \Delta_{k_2}^{(2)} \ldots \Delta_{k_N}^{(N)} f,
\]
where 
\begin{equation}
\Delta_{k_j}^{(j)} f (x_1, \ldots, x_{j-1}, x_j, x_{j+1}, \ldots, x_N):= f \ast_j \check \psi_{k_j} (x_1, \ldots, x_N) = \int_{\BBR^{d_j}} f(x_1, \ldots, x_{j-1}, x_j- t, x_{j+1}, \ldots, x_N) \check \psi_{k_j}(t) dt.
\end{equation}

As before in \eqref{eq:Young}, we have $\ds \| \Delta_{k_j}^{(j)} f\|_{L^p(\BBR^{d_1+\ldots+d_N})}  \lesssim \|f\|_{L^p(\BBR^{d_1+\ldots+d_N})}$ for any $1 \leq p \leq \infty$, and moreover, the mixed-norm estimate
\begin{equation}
\label{eq:Young:mixed}
\| \Delta^{(j)}_{k_j} f\|_{\vec p} \lesssim \|f\|_{\vec p}
\end{equation}
holds for any $\vec p= (p^1, \ldots, p^N)$ with $1 \leq p^1, \ldots, p^N \leq \infty$. This is due to the observation that, for almost every $(x_1, \ldots, x_j)$, 
\[
\|\Delta_{k_j}^{(j)} f (x_1, \ldots, x_{j-1}, x_j,\cdot) \|_{L^{p^{j+1}}_{x_{j+1}}  \ldots L^{p^N}_{x_N}} \lesssim \int_{\BBR^{d_j}} \|f(x_1, \ldots, x_{j-1}, x_j-t, \cdot)\|_{L^{p^{j+1}}_{x_{j+1}}  \ldots L^{p^N}_{x_N}} \,|\check \psi_{k_j}^{(j)}(t)| dt,
\]
which is nothing but a direct application of Minkowski's integral inequality (which is appropriate since $1 \leq p^1, \ldots, p^N \leq \infty$). Then we use Young's convolution inequality in the $x_j$ variable and integrate in the remaining variables to obtain \eqref{eq:Young:mixed}.

More generally,\footnote{This can be further extended by replacing each $\| \cdot\|_{L^{p^j}(\BBR^{d_j})}$ norm with the mixed-norm $\|\cdot\|_{L^{p_1^j}L^{p_2^j} \ldots L^{p_{d_j}^j}}    $, as long as all the Lebesgue exponents are between $1$ and $\infty$.} we obtain
\[
\|\Delta_{k_1}^{(1)} \Delta_{k_2}^{(2)} \ldots \Delta_{k_N}^{(N)} f\|_{\vec p} \lesssim \|f\|_{\vec p}, \quad \text{for any       } \vec p= (p^1, \ldots, p^N) \text{     with      } 1 \leq p^1, \ldots, p^N \leq \infty.
\]

Next, for any $s \in \BBR$ and any $1 \leq p \leq \infty$, we introduce the homogeneous Besov norms $\|\cdot\|_{\dot B^s_{p, \infty}}$ on $\mathcal{S}(\BBR^d)$ as
\begin{equation}
\label{eq:def:Beson:1param}
\|f\|_{\dot B^s_{p, \infty}}:= \sup_{k \in \BBZ} 2^{k s} \| \Delta_k f \|_p.
\end{equation}
A straightforward, but important observation is the inequality
\begin{equation}
\label{eq:obs:besov}
\|f\|_{\dot B^s_{p, \infty}} \lesssim \|D^s f\|_p,
\end{equation}
which holds true whenever the right hand side is finite.

Similarly, the $N$-parameter Besov norms of functions in $\mathcal{S}(\BBR^{d_1} \times \ldots \times \BBR^{d_N})$ are defined by
\begin{equation}
\label{eq:def:Beson:Nparam}
\|f\|_{\dot B^{s_1}_{p^1, \infty} \ldots  \dot B^{s_N}_{p^N, \infty} }:= \sup_{k_1, \ldots, k_N} 2^{k_1 s_1 } \ldots 2^{k_N s_N}  \|\Delta_{k_1}^{(1)} \Delta_{k_2}^{(2)} \ldots \Delta_{k_N}^{(N)} f\|_{\vec p}.
\end{equation}

One can also consider mixed Besov and Lebesgue norms: let $1 \leq m \leq N$ and $i_1, \ldots, i_m \in \{ 1, \ldots, N  \}$, and $s_{i_1}, \ldots, s_{i_m} \in \BBR$; then for any function $f \in \mathcal{S}(\BBR^{d_1}\times \ldots \times \BBR^{d_N}) \to \BBC$, we define
\begin{equation}
\label{eq:def:Beson:Nparam:mix}
\|f\|_{L^{p^1} \ldots  L^{p^{i_1-1}}  \dot B^{s_{i_1}}_{p^{i_1}, \infty} L^{p^{i_1+1}} \ldots  L^{p^{i_2-1}}  \dot B^{s_{i_2}}_{p^{i_2}, \infty} L^{p^{i_2+1}}  \ldots L^{p^N}}:= \sup_{k_{i_1}, \ldots, k_{i_m}} 2^{k_{i_1}  s_{i_1} } \ldots 2^{k_{i_m}  s_{i_m}}  \|\Delta_{k_{i_1}}^{(i_1)} \Delta_{k_{i_2}}^{(i_2)} \ldots \Delta_{k_{i_m}}^{(i_m)} f\|_{\vec p}.
\end{equation}

Since
\[
2^{k_j  s_j} \| \Delta^{(j)}_{k_j} f\|_{\vec p} = \|\tilde{\Delta}^{(j)}_{k_j} D^{s_j}_{(j)} f\|_{\vec p} \lesssim \|D^{s_j}_{(j)} f\|_{\vec p},
\]
for a slightly different Littlewood-Paley projection $\tilde \Delta^{(j)}_{k_j}$ having similar support properties, we deduce that in general
\[
\|f\|_{L^{p^1} \ldots  L^{p^{i_1-1}}  \dot B^{s_{i_1}}_{p^{i_1}, \infty} L^{p^{i_1+1}} \ldots  L^{p^{i_2-1}}  \dot B^{s_{i_2}}_{p^{i_2}, \infty} L^{p^{i_2+1}}  \ldots L^{p^N}} \lesssim 
 \|D^{s_{i_1}}_{(i_1)} \ldots D^{s_{i_m}}_{(i_m)} f\|_{\vec p},
\]
provided all the Lebesgue indices are in the Banach regime: $\vec p= (p^1, \ldots, p^N)$ with $1 \leq p^1, \ldots, p^N \leq \infty$. We will also need a mixed-norm interpolation result: for any $-\infty<s_0  < s_1 <\infty$ and any $0 \leq \theta \leq 1$ so that $s:= \theta s_0+ (1-\theta) s_1$,
\begin{equation}
\label{eq:interpolation:Besov}
\|f\|_{X_1 \ldots X_{j-1} \dot B^{s}_{p^j, \infty} X_{j+1} \ldots X_N} \lesssim \|f\|_{X_1 \ldots X_{j-1} \dot B^{s_0}_{p^j, \infty} X_{j+1} \ldots X_N}^{\theta} \cdot  \|f\|_{X_1 \ldots X_{j-1} \dot B^{s_1}_{p^j, \infty} X_{j+1} \ldots X_N}^{1-\theta},
\end{equation}
where the norms $\|\cdot\|_{X_i}$, for $i \in \{ 1, \ldots,  N \} \setminus \{ j \}$, denote either a Lebesgue $\|\cdot\|_{L^{p^i}}$ or a Besov $\| \cdot \|_{\dot B^{\tilde s_i}_{p^i, \infty}}$ norm. This is a straightforward consequence of the identity $\ds 2^{k s }= \big(2^{k s_0} \big)^{\theta}  \big(2^{k s_1} \big)^{1-\theta}$.

Oftentimes, this interpolation inequality will be used in the form 
\begin{equation}
\label{eq:interpolation:Besov:epsilon}
\|f\|_{X_1 \ldots X_{j-1} \dot B^{\epsilon}_{p^j, \infty} X_{j+1} \ldots X_N} \lesssim \|f\|_{X_1 \ldots X_{j-1} \dot B^{0}_{p^j, \infty} X_{j+1} \ldots X_N}^{{\beta-\epsilon} \over \beta} \cdot  \|f\|_{X_1 \ldots X_{j-1} \dot B^{\beta}_{p^j, \infty} X_{j+1} \ldots X_N}^{\epsilon \over \beta},
\end{equation}
where\footnote{In Section \ref{remark:smoothing}, the same inequality with $\beta < \epsilon < 0$ will be needed.} $0 < \epsilon< \beta$. 
\medskip

Before we proceed, we need to introduce a few extra operators and their properties. Recalling the definition of $\Delta_k$ and $\psi_k$, we define\footnote{In what follows, we want to make sure that the scale $2^\ell$ is much smaller than $2^k$: $2^\ell \ll 2^k$; this translates into assuming the existence of $\mathfrak{c}>0$ large enough (depending implicitly on the dimension, the number of functions involved, etc) so that $\ell \leq k - \mathfrak{c}$. Moreover, when $2^\ell \nll 2^k$ (so that $\ell > k - \mathfrak{c}$), we write $2^{\ell} \succ 2^{k}$ or equivalently $\ell \succ k$.}
\begin{equation}
\label{eq:def:Sk}
S_k f(x):= \sum_{\ell \ll k}\Delta_{\ell} f(x),
\end{equation}
which is of the form $S_k f(x)= f \ast \tilde \varphi_k(x)$, where $\tilde \varphi_k(x)= 2^{dk} \tilde \varphi (2^{k} x)$. As a consequence, 
\[
\|S_k f\|_p \lesssim \|f\|_p \qquad \text{for all    } 1 \leq p \leq \infty,
\]
uniformly in $k \in \BBZ$. Similarly, $\Delta_{\leq k}f := \sum_{\ell \leq k}f$ satisfies 
\[
\|\Delta_{\leq k}f \|_p \lesssim \|f\|_p \qquad \text{for all    } 1 \leq p \leq \infty \text{    and for all  } k \in \BBZ. 
\]

On the other hand, 
\begin{equation}
\label{eq:def:Delta>}
\Delta_{\succ k} f(x) := \sum_{\ell \succ k}\Delta_{\ell} f(x)=\sum_{\ell \geq k - \mathfrak{c}}\Delta_{\ell} f(x),
\end{equation}
and since $\Delta_{\succ k} f(x)+S_k f(x) = f(x)$, we again have
\begin{equation}\label{young_Delta>}
\|\Delta_{\succ k} f\|_p \lesssim \|f\|_p \qquad \text{for all    } 1 \leq p \leq \infty,
\end{equation}
uniformly in $k \in \BBZ$.

For functions on $\BBR$, we also define the directional projection operators
\begin{equation}
\label{eq:direction:LP:def}
\Delta_{k,+} f(x):= \ii F^{-1}(\hat{f} \cdot \psi_{k,+})(x), \quad \Delta_{k,-} f(x):= \ii F^{-1}(\hat{f} \cdot \psi_{k,-})(x)
\end{equation}
where 
$\psi_{k,+}(\xi) := \psi_k(\xi) \chi_{\{\xi \geq 0\}}$ and $\psi_{k,-}(\xi) := \psi_k( \xi) \chi_{\{\xi <0\}}$. In dimension one, the region $\{|\xi| \sim 2^k\}$ naturally splits into two intervals, namely $\{\xi \sim 2^k \}$ and $\{\xi \sim -2^k \}$, which justifies the notion of ``directional" projection. We observe that $\Delta_{k,+}f =\tilde \Delta_{k,+}\Delta_{k} f$ for $\tilde \Delta_{k,+}$ associated to $\tilde \psi_{k, +}$, a function which is $\equiv 1$ on $\{ 2^{k-1} \leq \xi \leq 2^{k+1} \}$ (similarly for $\Delta_{k,-}f = \tilde \Delta_{k,-}\Delta_{k} f$), so that 
\begin{equation}
\label{eq:direction:LP:same}
\|\Delta_{k,+} f\|_p, \|\Delta_{k,-} f\|_p \lesssim \|\Delta_{k} f\|_p \lesssim \|f\|_p \qquad \text{for all    } 1 \leq p \leq \infty.
\end{equation}

All these definitions can be reformulated as Fourier projections onto the $j$th coordinate:
\begin{align}\label{projection_<>}
& S^{(j)}_{k_j} f(x):= \sum_{\ell \ll k_j}\Delta^{(j)}_{\ell} f(x),\qquad \text{and} \qquad \Delta^{(j)}_{\succ k_j} f(x):= \sum_{\ell > k_j - \mathfrak{c}}\Delta^{(j)}_{\ell} f(x). 
\end{align}

For any function $f \in \mathcal{S}(\BBR^N)$, we define the directional projection operator on the $j$-th parameter by
\begin{align*}
 \Delta_{k_j,\pm}^{(j)} f (x_1, \ldots, x_{j-1}, x_j, x_{j+1}, \ldots, x_N)=& \int_{\BBR^{d_j}} f(x_1, \ldots, x_{j-1}, x_j- t, x_{j+1}, \ldots, x_N) \check \psi_{k_j,\pm}(t) dt.
\end{align*}
As before, for any $\vec p=(p^1, \ldots, p^N)$ with $1 \leq p^1, \ldots, p^N \leq \infty$, 
\[
\|S^{(j)}_{k_j} f\|_{\vec p}, \: \|\Delta^{(j)}_{\succ k_j} f\|_{\vec p}\lesssim \|f\|_{\vec p} \qquad \text{and} \qquad \|\Delta_{k_j,+}^{(j)}f\|_{\vec{p}}, \|\Delta_{k_j,-}^{(j)}f\|_{\vec{p}} \lesssim \|\Delta_{k_j}^{(j)}f\|_{\vec{p}} \lesssim \|f\|_{\vec{p}}.
\]

The Fourier series decomposition, which plays an important role in tensorizing the operator associated to a flag Leibniz rule into subtrees of lower complexity, will introduce certain modulations, which are however inconsequential: if $P_k^{(j)}$ is any of the projections $\Delta^{(j)}_k$, $S^{(j)}_k$ or directional projections $\Delta^{(j)}_{k,+}$, $\Delta^{(j)}_{k,-}$ (so that $P_k^{(j)}$ can also written as a convolution with the function $2^{kd_j}\check\phi(2^k x_j)$), then for any $a \in \BBR^{d_j}$, $P_{k, a}^{(j)}$ denotes
\begin{equation}
\label{eq:def:mod:proj}
P_{k, a}^{(j)} f(x_1, \ldots, x_{j-1}, x_j, x_{j+1}, \ldots, x_N):= \int_{\BBR^{d_j}} \phi(2^{-k} \xi_j) e^{2 \pi i a \cdot \frac{\xi_j}{2^k}} (\ii F^{(j)} f)(x_1, \ldots, x_{j-1}, \xi_j, x_{j+1}, \ldots, x_N) e^{2 \pi i x_j \cdot \xi_j} d \xi_j.
\end{equation} 
But $P_{k, a}^{(j)} f$ is simply the convolution in the $j$th coordinate between $f$ and $\check \phi_k$, evaluated at $x_j+ \frac{a}{2^k}$:
\[
P_{k, a}^{(j)} f(x_1, \ldots, x_j, \ldots, x_N)= f \ast_j \check \phi_k(x_1, \ldots,  x_j+  \frac{a}{2^k}, \ldots, x_N).
\]

Due to the trivial identity $\ds \| P_{k, a}^{(j)} f\|_{\vec p} = \|P_{k}^{(j)} f \|_{\vec p}$, we deduce 
\begin{equation}
\label{eq:trivial:modulation}
\|S^{(j)}_{k_j, a} f\|_{\vec p} \lesssim \|f\|_{\vec p} \qquad \text{and} \qquad \|\Delta^{(j)}_{ k_j, a} f\|_{\vec p},  \, \|\Delta^{(j)}_{k_j,+, a} f\|_{\vec p}, \, \|\Delta^{(j)}_{ k_j, -, a} f\|_{\vec p} \lesssim \|\Delta^{(j)}_{k_j} f\|_{\vec p} \lesssim \|f\|_{\vec p},
\end{equation}
for any $\vec p=(p^1, \ldots, p^N)$ with $1 \leq p^1, \ldots, p^N \leq \infty$.

\subsection{A review of the Bourgain-Li approach} \label{Bourgain-Li_hilow} ~\\
Here we present in dimension one\footnote{This assumption allows for a simplification of the notations, without restricting the method's generality -- see Remark \ref{remark:higher:dim}.} 
some elements of the Bourgain-Li proof of the bilinear Leibniz rule
\begin{align}\label{Leib_one_paraproduct:strategy}
\| D^{\alpha} (f g) \|_{L^p} \lesssim \|D^{\alpha}f\|_{L^{p_1}}\|g\|_{L^{p_2}} + \|f\|_{L^{p_1}}\|D^{\alpha} g \|_{L^{p_2}}
\end{align}
where $1 \leq p_1, p_2 \leq \infty, \quad  \frac{1}{1+\alpha} < p \leq \infty, \quad  \frac{1}{p_1} + \frac{1}{p_2} = \frac{1}{p}$, and $\alpha \geq 0$.
Due to the introduction of ``commutators'', the use of Coifman-Meyer multipliers can be completely avoided, thus extending the range of Leibniz rules beyond that of Coifman-Meyer multipliers. In distinction to \cite{BourgainLi-kato}, here we quantify the improvement produced by the commutator terms as an interaction between different scales, which in turn requires a suitable double Fourier series decomposition.

We start with Littlewood-Paley decompositions for the functions $f$ and $g$
\[
f = \sum_{k \in \BBZ} \Delta_k f, \qquad g= \sum_{\ell \in \BBZ} \Delta_\ell g,
\]
so that $D^{\alpha} (f g) $ becomes
\begin{equation}
\label{eq:initial:LP}
\sum_{k, \ell \in \BBZ} \int_{\BBR^2} |\xi_1+\xi_2|^\alpha  \widehat{\Delta_k f}(\xi_1)  \widehat{\Delta_\ell g}(\xi_2) e^{ 2 \pi i x (\xi_1+\xi_2)} d \xi_1 d \xi_2.
\end{equation}

We have several possibilities, depending whether $ |\xi_1+\xi_2| \sim |\xi_2| \gg |\xi_1|, \quad |\xi_1+\xi_2| \sim |\xi_1| \gg |\xi_2|,$ or $|\xi_1| \sim|\xi_2| \geq |\xi_1+\xi_2| $:
\begin{figure}[!htbp]
\begin{subfigure}[t]{.3\textwidth}
  \centering
  \begin{forest}
    my tree
    [, label={above:$\Delta_\ell $}
           [, label={left:$\Delta_k f$}]
          [, label={right:$\Delta_\ell g$}]
    ] 
  \end{forest} 
  \caption{When $k \leq \ell -2$, $|\xi_1+\xi_2| \sim 2^\ell$}
    \end{subfigure}
 \hfill    
\begin{subfigure}[t]{.3\textwidth}
  \centering  
   \begin{forest}
    my tree
    [, label={above:$\Delta_k $}
           [, label={left:$\Delta_k f$}]
          [, label={right:$\Delta_\ell g$}]
    ]
  \end{forest}
   \caption{When $\ell \leq k -2$, $|\xi_1+\xi_2| \sim 2^k$}
  \end{subfigure}
 \hfill 
  \begin{subfigure}[t]{.3\textwidth}
  \centering
  \begin{forest}
    my tree
    [, label={above:$S_\ell$}
           [, label={left:$\Delta_k f$}]
          [, label={right:$\Delta_\ell g$}]
    ] 
  \end{forest} 
\caption{When $|k -\ell| \leq 2$,  $|\xi_1+\xi_2| \leq 2^\ell$}
    \end{subfigure} 
\end{figure}

This allows us to decompose $D^\alpha(f \cdot g)$ as 
\begin{align*}
&\sum_{k \ll \ell} \int_{\BBR^2} |\xi_1+\xi_2|^\alpha  \widehat{\Delta_k f}(\xi_1)  \widehat{\Delta_\ell g}(\xi_2) e^{ 2 \pi i x (\xi_1+\xi_2)} d \xi_1 d \xi_2 \\
+&\sum_{k \gg \ell} \int_{\BBR^2} |\xi_1+\xi_2|^\alpha  \widehat{\Delta_k f}(\xi_1)  \widehat{\Delta_\ell g}(\xi_2) e^{ 2 \pi i x (\xi_1+\xi_2)} d \xi_1 d \xi_2 \\
+&\sum_{ |k - \ell| \leq 2} \int_{\BBR^2} |\xi_1+\xi_2|^\alpha  \widehat{\Delta_k f}(\xi_1)  \widehat{\Delta_\ell g}(\xi_2) e^{ 2 \pi i x (\xi_1+\xi_2)} d \xi_1 d \xi_2:= I+II+III,
\end{align*}
as suggested by the figure below:

\[
\begin{array}{cccccccc}
\vcenter{\hbox{\begin{forest}
  my tree
    [, label={above:$D^\alpha$}
           [, label={left:$\Delta_k f$}]
          [, label={right:$\Delta_\ell g$}]
    ]
    \end{forest}}} & = &
  \vcenter{\hbox{\begin{forest}
    my tree
    [, label={above:$\Delta_\ell D^\alpha$}
           [, label={left:$\Delta_k f$}]
          [, label={right:$\Delta_\ell g$}]
    ]
  \end{forest}}} &+& 
    \vcenter{\hbox{\begin{forest}
    my tree
    [, label={above:$\Delta_k D^\alpha$}
           [, label={left:$\Delta_k f$}]
          [, label={right:$\Delta_\ell g$}]
    ]
  \end{forest}}}
  &+& 
    \vcenter{\hbox{\begin{forest}
    my tree
    [, label={above:$S_\ell D^\alpha$}
           [, label={left:$\Delta_k f$}]
          [, label={right:$\Delta_\ell g$}]
    ]
  \end{forest}}}&.
\end{array}    
\]

We study each of the cases $I$ and $III$ by taking a closer look at the associated multiplier -- this will be sufficient since $I$ and $II$ are symmetric. We highlight the main steps:
\begin{enumerate}[label=(\arabic*), leftmargin=*]
\item In treating $I$, we approximate $$|\xi_1+\xi_2|^\alpha =|\xi_2|^\alpha+`` \text{  error  ''},$$
so that it becomes 
\begin{align} \label{lem2.2:identity}
&\sum_{k \ll \ell} \int_{\BBR^2} |\xi_2|^\alpha  \widehat{\Delta_k f}(\xi_1)  \widehat{\Delta_\ell g}(\xi_2) e^{ 2 \pi i x (\xi_1+\xi_2)} d \xi_1 d \xi_2 \\
+& \sum_{k \ll \ell} \int_{\BBR^2} \big( |\xi_1+\xi_2|^\alpha -|\xi_2|^\alpha)  \widehat{\Delta_k f}(\xi_1)  \widehat{\Delta_\ell g}(\xi_2) e^{ 2 \pi i x (\xi_1+\xi_2)} d \xi_1 d \xi_2 \\
:=& \sum_{k \ll \ell}   \Delta_k f(x) \cdot (\Delta_\ell D^\alpha g)(x)   + \sum_{k \ll \ell}  [D^\alpha, \Delta_k f] \Delta_\ell g(x).
\end{align}

The term $[D^\alpha, \Delta_k f] \Delta_\ell g(x)$ represents a \emph{commutator}; we will see that it behaves better than the initial $D^\alpha( \Delta_k f \cdot \Delta_\ell g)$, in a way that can be expressed quantitatively. 

\item In estimating the first part, it is convenient to switch the order of summation, which produces
\begin{align*}
& \sum_{k,  \ell}   \Delta_k f(x) \cdot (\Delta_\ell D^\alpha g)(x) -  \sum_{k \succ \ell}   \Delta_k f(x) \cdot (\Delta_\ell D^\alpha g)(x) \\
=& f(x) \cdot D^\alpha g(x) -  \sum_{k \succ \ell}   \Delta_k f(x) \cdot (\Delta_\ell D^\alpha g)(x). 
\end{align*} 

\item With this, $I$ is converted into 
\begin{align*}
I &=  f(x) \cdot D^\alpha g(x) -   \sum_{k \succ \ell}   \Delta_k f(x) \cdot (\Delta_\ell D^\alpha g)(x) +  \sum_{k \ll \ell}  [D^\alpha, \Delta_k f] \Delta_\ell g(x) \\
&:= f(x) \cdot D^\alpha g(x) - I_A+I_B.
\end{align*}
\item The term $III$ reduces essentially to 
\begin{align*}
\sum_{\ell \in \BBZ} \int_{\BBR^2} |\xi_1+\xi_2|^\alpha  \widehat{\Delta_\ell f}(\xi_1)  \widehat{\Delta_\ell g}(\xi_2) e^{ 2 \pi i x (\xi_1+\xi_2)} d \xi_1 d \xi_2.
\end{align*}
\end{enumerate}

Next, we claim that it is sufficient to have precise estimates for the corresponding bilinear operators, with scales $k$ and $\ell$ fixed. 

\begin{lemma}
\label{lemma:multipliers:simpleParaproduct}
Let $1 \leq p_1, p_2 \leq \infty$, ${1 \over 2} \leq p \leq \infty$ satisfy $\frac{1}{p_1}+\frac{1}{p_2}=\frac{1}{p}$, and $k, \ell \in \BBZ$. Then we have
\begin{equation}
\label{result:local:product}
\| \Delta_k f \cdot (\Delta_\ell D^\alpha g)\|_p \lesssim  2^{\ell \alpha}  \| \Delta_k f\|_{p_1} \:  \|\Delta_\ell g\|_{p_2}.
\end{equation}
If $k \ll \ell$, 
\begin{equation}
\label{result:commutator:1}
\|  [D^\alpha, \Delta_k f] \Delta_\ell g\|_p \lesssim 2^{(\alpha-1) \ell} 2^k   \| \Delta_k f\|_{p_1} \:  \|\Delta_\ell g\|_{p_2}
\end{equation}
and 
\begin{equation}\label{result:Leibniz:Whitney}
\| D^\alpha \left(S_{\ell} f \Delta_\ell g\right)\|_p \lesssim  2^{\ell \alpha}  \|S_\ell f\|_{p_1} \:  \|\Delta_\ell g\|_{p_2}.
\end{equation}

Under the additional assumption that $\frac{1}{\alpha+1}<p \leq \infty$, 
\begin{equation}
\label{result:local:diag}
\| D^\alpha( \Delta_\ell f \cdot \Delta_\ell  g)\|_p \lesssim 2^{\ell \alpha}  \| \Delta_\ell f\|_{p_1} \:  \|\Delta_\ell g\|_{p_2}.
\end{equation}

\end{lemma}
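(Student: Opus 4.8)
The plan is to treat the four estimates separately, in each case exploiting the Fourier support of the bilinear piece together with the exact homogeneity of the symbol $|\xi|^{\alpha}$. Recall that the standing bound $\tfrac12\le p$ is automatic from $\tfrac1{p_1}+\tfrac1{p_2}=\tfrac1p$ with $p_1,p_2\ge1$. Estimate \eqref{result:local:product} is pure H\"older plus the remark that $\Delta_\ell D^\alpha$ is, up to the scalar $2^{\ell\alpha}$, an averaging operator: writing $\widehat{\Delta_\ell D^\alpha g}(\xi)=|\xi|^\alpha\widetilde\psi_\ell(\xi)\,\widehat{\Delta_\ell g}(\xi)$ with $\widetilde\psi\equiv1$ on $\operatorname{supp}\psi$, the symbol $|\xi|^\alpha\widetilde\psi_\ell(\xi)$ equals $2^{\ell\alpha}\Phi(2^{-\ell}\xi)$ with $\Phi(\xi)=|\xi|^\alpha\widetilde\psi(\xi)$ smooth and compactly supported, so $\Delta_\ell D^\alpha g=2^{\ell\alpha}\,\Delta_\ell g\ast(\check\Phi)_{2^{-\ell}}$ and hence $\|\Delta_\ell D^\alpha g\|_{p_2}\lesssim2^{\ell\alpha}\|\Delta_\ell g\|_{p_2}$; then $\|\Delta_k f\cdot\Delta_\ell D^\alpha g\|_p\le\|\Delta_k f\|_{p_1}\|\Delta_\ell D^\alpha g\|_{p_2}$. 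For \eqref{result:Leibniz:Whitney} one observes that, since $\operatorname{supp}\widehat{S_\ell f}\subseteq\{|\xi_1|\ll2^\ell\}$ and $\operatorname{supp}\widehat{\Delta_\ell g}\subseteq\{|\xi_2|\sim2^\ell\}$, the product $S_\ell f\cdot\Delta_\ell g$ has Fourier support in an annulus $\{|\xi|\sim2^\ell\}$; thus $D^\alpha(S_\ell f\,\Delta_\ell g)=2^{\ell\alpha}\,(S_\ell f\,\Delta_\ell g)\ast(\check\Phi)_{2^{-\ell}}$ for a compactly supported bump $\Phi$, and since convolving a function band-limited at scale $2^\ell$ with an $L^1$-normalized bump at scale $2^{-\ell}$ is bounded on $L^p$ for all $0<p\le\infty$ (a standard property of band-limited functions, via Peetre's maximal inequality), H\"older concludes.

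\textbf{The commutator \eqref{result:commutator:1}.} Here the gain $2^{(\alpha-1)\ell}2^{k}$ over $2^{\ell\alpha}$ is produced exactly as in \eqref{eq:commutator}. The symbol of $[D^\alpha,\Delta_k f]\Delta_\ell g$ is $\bigl(|\xi_1+\xi_2|^\alpha-|\xi_2|^\alpha\bigr)$ localized to the box $\{|\xi_1|\sim2^k\}\times\{|\xi_2|\sim2^\ell\}$ with $k\ll\ell$. By the fundamental theorem of calculus it equals $\xi_1\,R(\xi_1,\xi_2)$ with $R(\xi_1,\xi_2)=\int_0^1\alpha\,|\xi_2+t\xi_1|^{\alpha-1}\operatorname{sgn}(\xi_2+t\xi_1)\,dt$; on the relevant region $|\xi_2+t\xi_1|\sim2^\ell$ uniformly in $t\in[0,1]$, so $R$ is a smooth symbol of size $2^{(\alpha-1)\ell}$ whose $\xi_1$-derivatives gain $2^{-\ell}\le2^{-k}$ and whose $\xi_2$-derivatives gain $2^{-\ell}$ — precisely here $k\ll\ell$ is used. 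Hence the symbol is $2^{(\alpha-1)\ell}2^{k}$ times a symbol adapted to the box $\{|\xi_1|\sim2^k\}\times\{|\xi_2|\sim2^\ell\}$; expanding this last factor in a double Fourier series on $[-2^k,2^k]\times[-2^\ell,2^\ell]$ yields $\sum_{n_1,n_2}c_{n_1,n_2}\,\psi(2^{-k}\xi_1)\psi(2^{-\ell}\xi_2)\,e^{2\pi i n_1 2^{-k}\xi_1}e^{2\pi i n_2 2^{-\ell}\xi_2}$ with $(c_{n_1,n_2})$ rapidly decreasing uniformly in $k,\ell$. The operator thus becomes $(F,G)\mapsto\sum_{n_1,n_2}c_{n_1,n_2}\,(\Delta_{k,n_1}F)(\Delta_{\ell,n_2}G)$, and applying H\"older together with the modulation-invariance \eqref{eq:trivial:modulation} of Littlewood--Paley bounds and summing the absolutely convergent series gives \eqref{result:commutator:1}.

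\textbf{The diagonal estimate \eqref{result:local:diag}.} Here the output frequency ranges over all of $\{|\xi|\lesssim2^\ell\}$, so one decomposes $D^\alpha(\Delta_\ell f\cdot\Delta_\ell g)=\sum_{j\le\ell+O(1)}\Delta_j D^\alpha(\Delta_\ell f\cdot\Delta_\ell g)$, where $\Delta_j D^\alpha$ is $2^{j\alpha}$ times convolution with an $L^1$-normalized bump at scale $2^{-j}$. For the $O(1)$ scales $j\sim\ell$ one bounds the term by $2^{j\alpha}\|\Delta_\ell f\cdot\Delta_\ell g\|_p\lesssim2^{\ell\alpha}\|\Delta_\ell f\|_{p_1}\|\Delta_\ell g\|_{p_2}$ using the band-limited-convolution fact and H\"older. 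For $j\ll\ell$ one invokes the Bernstein-type inequality that convolving a function with Fourier support in $\{|\xi|\le2^\ell\}$ against an $L^1$-normalized bump at the \emph{coarser} scale $2^{-j}$ costs a factor $2^{(\ell-j)(1/p-1)}$ when $0<p<1$ and nothing when $p\ge1$; this gives $\|\Delta_j D^\alpha(\Delta_\ell f\cdot\Delta_\ell g)\|_p\lesssim2^{j\alpha}\,2^{(\ell-j)_+(1/p-1)_+}\|\Delta_\ell f\|_{p_1}\|\Delta_\ell g\|_{p_2}$. Summing with the $\tau$-triangle inequality, $\tau=\min(1,p)$, the controlling series is $\sum_{j\le\ell+O(1)}2^{j\alpha\tau}\,2^{(\ell-j)_+(1/p-1)_+\tau}$, a geometric series that sums to $\lesssim2^{\ell\alpha\tau}$ if and only if $p>\tfrac1{\alpha+1}$; taking $\tau$-th roots yields \eqref{result:local:diag}.

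\textbf{Main obstacle.} The genuinely delicate point is \eqref{result:local:diag} in the quasi-Banach range $p<1$: establishing the low-frequency-projection loss $2^{(\ell-j)(1/p-1)}$ (most cleanly via an atomic/local-means description of band-limited functions, bounding $\|v\|_{L^1(\widetilde Q)}\lesssim 2^{\ell(1/p-1)}\|v\|_{L^p(\widetilde Q)}$ on cubes $\widetilde Q$ of side $2^{-j}$) and checking that the resulting geometric series converges exactly on the stated range — a range which is sharp, as the choice $g=f$ with $\widehat f$ supported in an annulus and $D^\alpha(f^2)(x)\sim|x|^{-1-\alpha}$ at infinity shows. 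The other three estimates are, by comparison, H\"older together with elementary symbol manipulations and the standard mapping properties of band-limited functions.
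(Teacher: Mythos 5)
Your argument is correct, and for the two ``easy'' parts it coincides with the paper: \eqref{result:local:product} is H\"older plus the observation that $\Delta_\ell D^\alpha$ is a rescaled bump with amplitude $2^{\ell\alpha}$, and your treatment of \eqref{result:commutator:1} --- fundamental theorem of calculus to produce the factor $\xi_1 R(\xi_1,\xi_2)$, observe $R$ is smooth of size $2^{(\alpha-1)\ell}$ on the Whitney box, double Fourier series with rapidly decaying coefficients, sum with the $\tau$-triangle inequality --- is the paper's argument (cf.\ \eqref{def:commutator}--\eqref{eq:commutator:superposition}). The genuinely different content is in how you handle \eqref{result:Leibniz:Whitney} and, above all, \eqref{result:local:diag}.

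For the diagonal term the paper expands $|\xi_1+\xi_2|^\alpha\phi(2^{-\ell}(\xi_1+\xi_2))$ into a single Fourier series on $[-2^{\ell+3},2^{\ell+3}]$ (see \eqref{eq:Fourier:dec:diagonal}--\eqref{FD:delta:l:delta:l}); the coefficients decay only like $(1+|\tilde L|)^{-(1+\alpha)}$, and requiring $\sum_{\tilde L}|C_{\tilde L}|^\tau<\infty$ with $\tau=\min(1,p)$ is exactly the condition $p>\frac{1}{1+\alpha}$. You instead Littlewood--Paley the \emph{output} frequency, $D^\alpha(\Delta_\ell f\cdot\Delta_\ell g)=\sum_{j\lesssim\ell}\Delta_j D^\alpha(\cdot)$, and for $j\ll\ell$ pay the band-limited Bernstein penalty $2^{(\ell-j)(1/p-1)}$ when $0<p<1$; the resulting geometric series converges iff $\alpha>1/p-1$, recovering the same sharp range. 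Both routes are sound, and both localize the loss in the same place (the frequency of the output near the origin). However, a couple of remarks. First, the Bernstein estimate you invoke --- that $\Delta_j$ on functions band-limited to $\{|\xi|\lesssim 2^\ell\}$, $j\ll\ell$, costs $2^{(\ell-j)d(1/p-1)}$ on $L^p$ for $p<1$ --- is correct (Plancherel--P\'olya/Nikol'skii plus a Peetre maximal function argument and the $p$-triangle inequality produce it), but it is the real content of your proof of \eqref{result:local:diag}, and you have gestured at it rather than proved it; I would spell it out, since it is exactly the estimate where the quasi-Banach range is in play. Second, the paper's Fourier-series route is not chosen arbitrarily: it produces the representation \eqref{FD:delta:l:delta:l} of the diagonal paraproduct as a superposition of modulated products of Littlewood--Paley pieces, which is the structural input needed in the tensorization steps for higher-complexity flags (see Remark~\ref{remark:Bourgain-Li:generalizations}). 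Your argument proves Lemma~\ref{lemma:multipliers:simpleParaproduct} but would have to be redone at each stage of the induction, whereas the superposition representation survives. Finally, for \eqref{result:Leibniz:Whitney} your observation that the output is band-limited in an annulus $\{|\xi|\sim2^\ell\}$ is a tidy shortcut; the paper prefers to deduce this from the splitting \eqref{lem2.2:identity} and the already-proven commutator bound, or from a direct Fourier series, again to keep the representation uniform. Your sharpness example for \eqref{result:local:diag} is correct and worth keeping.
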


Now we show how the fixed-scale estimates listed above allow us to control the terms $I$ and $III$, and thus prove \eqref{Leib_one_paraproduct:strategy}. Once that concluded, we will return to the proof of Lemma \ref{lemma:multipliers:simpleParaproduct}, since it allows to illustrate some of the main ideas needed for dealing with the more general Theorem \ref{thm:main}.

\begin{proof}[Proof of the Leibniz rule \eqref{Leib_one_paraproduct:strategy} assuming Lemma \ref{lemma:multipliers:simpleParaproduct}]
We let $\tau\leq \min(1, p)$ so that $\| \cdot \|_{L^{\vec p}}^\tau$ is subadditive.

\begin{enumerate} [leftmargin=*]
\item[$\bullet$] \underline{estimating $III$:}

Using \eqref{result:local:diag} of Lemma \ref{lemma:multipliers:simpleParaproduct} and the Besov norms definitions \eqref{eq:def:Beson:1param}, we have for $\frac{1}{1+\alpha}< p \leq \infty$ and $\tau \leq \min(1, p)$:
\begin{align*}
\|III\|_p^\tau & \lesssim \sum_{\ell \in \BBZ} 2^{\ell \alpha \tau} \|\Delta_\ell f \|_{p_1}^{\tau} \|\Delta_\ell g \|_{p_2}^{\tau} \\
& \lesssim  \sum_{\ell \in \BBZ} \min \big(  2^{\ell \alpha \tau} \|f\|_{\dot{B}^0_{p_1, \infty}}^{\tau} \|g\|_{\dot{B}^0_{p_2, \infty}}^{\tau} , 2^{- \ell \alpha \tau}  \|f\|_{\dot{B}^{\alpha}_{p_1, \infty}}^{\tau} \|g\|_{\dot{B}^{\alpha}_{p_2, \infty}}^{\tau}       \big).
\end{align*}
Optimizing in $\ell$, it is not difficult to see that 
\begin{align*}
\|III\|_p & \lesssim \|f\|_{\dot{B}^\alpha_{p_1, \infty}}^{\frac{1}{2} }  \|g\|_{\dot{B}^\alpha_{p_2, \infty}}^{\frac{1}{2}} \|f\|_{\dot{B}^0_{p_1, \infty}}^{\frac{1}{2}} \|g\|_{\dot{B}^0_{p_2, \infty}}^{\frac{1}{2} } \\
&\lesssim \|D^{\alpha}f\|_{L^{p_1}}\|g\|_{L^{p_2}} + \|f\|_{L^{p_1}}\|D^{\alpha} g \|_{L^{p_2}}.
\end{align*}
\item[$\bullet$] \underline{estimating $I_A$:} 

For any $k, \ell \in \BBZ$ and $0< \epsilon <\alpha$ we have, as a consequence of \eqref{result:local:product}, the following two inequalities:
\begin{align}
\| \Delta_k f \cdot (\Delta_\ell D^\alpha g)\|_p &\lesssim  2^{\ell \alpha}  \| \Delta_k f\|_{p_1} \:  \|\Delta_\ell g\|_{p_2} \lesssim 2^{\ell \alpha}  \|f\|_{\dot{B}^0_{p_1, \infty}} \|g\|_{\dot{B}^0_{p_2, \infty}} \\
\| \Delta_k f \cdot (\Delta_\ell D^\alpha g)\|_p &\lesssim 2^{-k \alpha }2^{\ell (\alpha -\epsilon)}  \big(2^{k \alpha} \| \Delta_k f\|_{p_1}\big) \:  \big(2^{\ell \epsilon} \|\Delta_\ell g\|_{p_2}\big) \lesssim  2^{-k \alpha }2^{\ell (\alpha -\epsilon)} \|f\|_{\dot{B}^\alpha_{p_1, \infty}} \|g\|_{\dot{B}^\epsilon_{p_2, \infty}}.
\end{align}

If we raise them to the power $\tau \leq \min(1, p)$ and sum over $\ell$ with $k \succ \ell$, we obtain 
\begin{align*}
\|I_A\|_p^\tau \lesssim \sum_{k \in \BBZ} \min \big( 2^{k \alpha \tau} \|f\|_{\dot{B}^0_{p_1, \infty}}^{\tau} \|g\|_{\dot{B}^0_{p_2, \infty}}^{\tau}, 2^{-k \epsilon \tau }  \|f\|_{\dot{B}^\alpha_{p_1, \infty}}^{\tau} \|g\|_{\dot{B}^\epsilon_{p_2, \infty}}^{\tau}\big).
\end{align*}

Optimizing over $k \in \BBZ$, we have as before
\begin{align*}
\|I_A\|_p^\tau &\lesssim \big(  \|f\|_{\dot{B}^0_{p_1, \infty}}^{\tau} \|g\|_{\dot{B}^0_{p_2, \infty}}^{\tau}  \big)^{\frac{\epsilon}{\alpha+\epsilon}}  \big(   \|f\|_{\dot{B}^\alpha_{p_1, \infty}}^{\tau} \|g\|_{\dot{B}^\epsilon_{p_2, \infty}}^{\tau} \big)^{\frac{\alpha}{\alpha+\epsilon}}.
\end{align*}

Using the interpolation of Besov norms mentioned earlier in \eqref{eq:interpolation:Besov:epsilon}, we deduce

\begin{align*}
\|I_A\|_p^\tau&\lesssim \|f\|_{\dot{B}^\alpha_{p_1, \infty}}^{\frac{\alpha}{\alpha+\epsilon} \tau}  \|g\|_{\dot{B}^\alpha_{p_2, \infty}}^{\frac{\epsilon}{\alpha+\epsilon} \tau} \|f\|_{\dot{B}^0_{p_1, \infty}}^{\frac{\epsilon}{\alpha+\epsilon} \tau} \|g\|_{\dot{B}^0_{p_2, \infty}}^{\frac{\alpha}{\alpha+\epsilon} \tau} \lesssim  \big( \|D^{\alpha}f\|_{L^{p_1}}\|g\|_{L^{p_2}} + \|f\|_{L^{p_1}}\|D^{\alpha} g \|_{L^{p_2}} \big)^\tau.
\end{align*}

\item[$\bullet$] \underline{estimating $I_B$:} 

Due to \eqref{result:commutator:1}, $I_B$ is similar to $I_A$. We start by noticing that, for $k \ll \ell$ fixed, 
\begin{equation}
\label{eq:min:1}
\big \| [D^\alpha, \Delta_k f] \Delta_\ell g  \big\|^\tau_p \lesssim  2^{(\alpha-1) \ell \tau} 2^{k \tau}  \|f\|_{\dot B^0_{p_1, \infty}}^\tau  \|g\|_{\dot B^0_{p_2, \infty}}^\tau
\end{equation} 
and on the other hand, if $0<\epsilon<\min(\alpha, 1)$,  
\begin{align}
\label{eq:min:2}
\big \| [D^\alpha, \Delta_k f] \Delta_\ell g  \big\|^\tau_p &\lesssim 2^{-\ell \tau} 2^{k(1-\epsilon) \tau} 2^{k \epsilon \tau}  \| \Delta_k f\|_{p_1}^{\tau} 2^{ \ell \alpha \tau}\|\Delta_\ell g\|_{p_2}^{\tau} \nonumber \\
 &\lesssim  2^{-\ell \tau} 2^{k(1-\epsilon) \tau} \|f\|_{\dot B^{\epsilon}_{p_1, \infty}}^\tau  \|g\|_{\dot B^{\alpha}_{p_2, \infty}}^\tau.
\end{align}

Combining \eqref{eq:min:1} and \eqref{eq:min:2} for $\ell$ fixed, and summing over $k \ll \ell$, we obtain 
\begin{equation}
\label{eq:min:comm}
\sum_{k: k \ll \ell } \big \| [D^\alpha, \Delta_k f] \Delta_\ell g  \big\|^\tau_p  \lesssim \min \big(  2^{\alpha \ell \tau}  \|f\|_{\dot B^0_{p_1, \infty}}^\tau  \|g\|_{\dot B^0_{p_2, \infty}}^\tau ,    2^{- \epsilon \ell \tau} \|f\|_{\dot B^{\epsilon}_{p_1, \infty}}^\tau  \|g\|_{\dot B^{\alpha}_{p_2, \infty}}^\tau  \big).
\end{equation}

Now the summation in $\ell$ is straightforward:
\begin{align*}
\|I_B\|_p^\tau &\lesssim \sum_{\ell \in \BBZ}  \min \big(  2^{\alpha \ell \tau}  \|f\|_{\dot B^0_{p_1, \infty}}^\tau  \|g\|_{\dot B^0_{p_2, \infty}}^\tau ,    2^{- \epsilon \ell \tau} \|f\|_{\dot B^{\epsilon}_{p_1, \infty}}^\tau  \|g\|_{\dot B^{\alpha}_{p_2, \infty}}^\tau  \big) \\
& \lesssim \big(   \|f\|_{\dot{B}^0_{p_1, \infty}}  \|g\|_{\dot{B}^0_{p_2, \infty}}  \big)^\frac{\tau \epsilon}{\alpha+\epsilon} \big(   \|f\|_{\dot{B}^\epsilon_{p_1, \infty}}  \|g\|_{\dot{B}^\alpha_{p_2, \infty}} \big)^\frac{\tau \alpha}{\alpha+\epsilon}. 
\end{align*}

Using again the interpolation of Besov norms from \eqref{eq:interpolation:Besov:epsilon}, we deduce
\begin{equation}
\label{ea:conc:comm}
\|I_B\|_p^\tau \lesssim \|f\|_{\dot{B}^\alpha_{p_1, \infty}}^{\frac{\epsilon}{\alpha+\epsilon} \tau}  \|g\|_{\dot{B}^\alpha_{p_2, \infty}}^{\frac{\alpha}{\alpha+\epsilon} \tau} \|f\|_{\dot{B}^0_{p_1, \infty}}^{\frac{\alpha}{\alpha+\epsilon} \tau} \|g\|_{\dot{B}^0_{p_2, \infty}}^{\frac{\epsilon}{\alpha+\epsilon} \tau} \lesssim  \big( \|D^{\alpha}f\|_{L^{p_1}}\|g\|_{L^{p_2}} + \|f\|_{L^{p_1}}\|D^{\alpha} g \|_{L^{p_2}} \big)^\tau.
\end{equation}
\end{enumerate}
Finally, due to a trivial application of H\"older's inequality ($\| f \cdot D^\alpha g\|_p \lesssim \| f\|_{p_1} \cdot \|D^\alpha g\|_{p_2}$), the Leibniz rule \eqref{Leib_one_paraproduct:strategy} follows.
\end{proof}

The proof above illustrates the main principle of our paper: in order to estimate multilinear operators associated to a symbol of positive order, it is sufficient to obtain quantitative estimates for the associated Littlewood-Paley pieces. We return now to the proof of \eqref{result:local:product}-\eqref{result:local:diag}.

\begin{proof}[Proof of Lemma \ref{lemma:multipliers:simpleParaproduct}]
The inequality \eqref{result:local:product} is a direct consequence of H\"older's inequality, and \eqref{result:Leibniz:Whitney} can be derived from \eqref{lem2.2:identity} and \eqref{result:commutator:1} in a straightforward manner. So our main focus will be proving \eqref{result:local:diag} and \eqref{result:commutator:1}. For this, we appropriately use Fourier series decompositions for the localized symbols -- our approach (see \ref{strategy}) for proving Theorem \ref{thm:main} will also rely on Fourier series decompositions, but in that setting they will allow us to easily tensorize the flag paraproduct into simpler object -- see also Remark \ref{remark:Bourgain-Li:generalizations}.

\vskip .2cm
\begin{enumerate} [leftmargin=15pt]
\item \label{FS:diagonal} \underline{\emph{Fourier series decomposition for the ``diagonal'' term}}
\vskip .2cm
Written in frequency, $D^\alpha( \Delta_\ell f \cdot \Delta_\ell  g)(x)$ becomes
\begin{align*}
 & \int_{\BBR^2} |\xi_1+\xi_2|^\alpha \widehat{\Delta_\ell f} (\xi_1) \widehat{\Delta_\ell g} (\xi_2) e^{2 \pi i x(\xi_1+\xi_2)} d \xi_1 d \xi_2 \\
=&   \int_{\BBR^2} |\xi_1+\xi_2|^\alpha \phi(2^{-\ell} (\xi_1+\xi_2)) \widehat{\Delta_\ell f} (\xi_1) \widehat{\Delta_\ell g} (\xi_2) e^{2 \pi i x(\xi_1+\xi_2)} d \xi_1 d \xi_2,
\end{align*}
where $\phi$ is a smooth, radial function that is equal to $1$ on a neighborhood of $\{ |\zeta| \leq 4 \}$ and is supported on $\{ |\zeta| \leq 8 \}$. We proceed with the Fourier series decomposition of $|\xi_1+\xi_2|^\alpha \phi(2^{-\ell} (\xi_1+\xi_2))$ on $[-2^{\ell+3}, 2^{\ell+3}]$. If we denote $\zeta=\xi_1+\xi_2$, we have
\begin{equation}
\label{eq:Fourier:dec:diagonal}
|\zeta|^\alpha \phi(2^{-\ell} \zeta)= \sum_{\tilde L \in \BBZ} C^{\ell}_{\tilde L} e^{2\pi i \frac{\tilde L}{2^{\ell+4}} \zeta},
\end{equation}
where 
\begin{align*}
C^{\ell}_{\tilde L} & = \tilde C \frac{1}{2^{\ell+4}} \int_{[-2^{\ell+3}, 2^{\ell+3}] } |\zeta'|^\alpha \phi(2^{-\ell} \zeta') \, e^{-2\pi i \frac{\tilde L}{2^{\ell+4}} \zeta'} d \zeta' = 2^{\ell \alpha}  \tilde C  \int_{[-8,8] } |\zeta'|^\alpha \phi(\zeta') \, e^{\frac{-2\pi i \tilde L \zeta'}{2^4}} d \zeta' \\
& := 2^{\ell \alpha} C_{\tilde L}.
\end{align*}

Because we are integrating $|\zeta|^\alpha$ close to the origin, the coefficients $C_{\tilde L}$ only have limited decay (see \cite[Lemma~1]{graf-Leibniz_rules}):
\begin{equation}
\label{eq:limited:decay}
|C_{\tilde L}| \lesssim \frac{1}{(1+|\tilde L|)^{1+\alpha}}.
\end{equation}

This means that we can express $D^\alpha( \Delta_\ell f \cdot \Delta_\ell  g)$ as
\begin{align}
\label{FD:delta:l:delta:l}
D^\alpha( \Delta_\ell f \cdot \Delta_\ell  g)(x) = \sum_{\tilde L \in \BBZ} 2^{\ell \alpha} C_{\tilde L} \: \big(  \Delta_{\ell, {\tilde L \over 2^\ell}} f \big)(x)    \big(  \Delta_{\ell, {\tilde L \over 2^\ell}} g \big)(x).
\end{align}

And this implies, for $\frac{1}{1+\alpha} < \tau \leq  \min(1,p)$,
\begin{align*}
\|D^\alpha( \Delta_\ell f \cdot \Delta_\ell  g)\|_p^\tau & \lesssim \sum_{\tilde L} 2^{\ell \alpha \tau} C_{\tilde L}^\tau \| \Delta_{\ell, {\tilde L \over 2^\ell}} f \|_{p_1}^\tau \| \Delta_{\ell, {\tilde L \over 2^\ell}} g \|_{p_2}^\tau \lesssim 2^{\ell \alpha \tau} \,  \|  \Delta_{\ell} f \|_{p_1}^\tau \|  \Delta_{\ell} g \|_{p_2}^\tau.
\end{align*}

\medskip 

\item \label{FS:comm} \underline{\emph{Fourier series decomposition for the commutator symbol}}
\vskip .2cm
Now $k \ll \ell$ are fixed and the commutator $[D^\alpha, \Delta_k f] \Delta_\ell g$ is given by 
\begin{align}\label{eq:comm:1}
&\int_{\BBR^2} \big(|\xi_1+\xi_2|^\alpha -|\xi_2|^\alpha \big) \widehat{\Delta_k f}(\xi_1)  \widehat{\Delta_\ell g}(\xi_2) e^{ 2 \pi i x (\xi_1+\xi_2)} d \xi_1 d \xi_2\\
=& 2^k \int_{\BBR^2} \frac{|\xi_1+\xi_2|^\alpha -|\xi_2|^\alpha}{\xi_1}  \tilde{\tilde{\psi}}_k(\xi_1) \widehat{ \Delta_k f}(\xi_1)  \widehat{\Delta_\ell g}(\xi_2) e^{ 2 \pi i x (\xi_1+\xi_2)} d \xi_1 d \xi_2,\nonumber
\end{align}
with $ \tilde{\tilde{\psi}}_k(\xi_1)=\frac{ \xi_1}{2^k} \tilde \psi_k(\xi_1)$, and $\tilde \psi_k(\xi_1) \equiv 1$ on the support of $ \psi_k(\xi_1)$.

We let 
\begin{equation}
\label{def:commutator}
m_{C_{\alpha}}(\xi_2, \xi_1)  := \frac{| \xi_1 +\xi_2|^{\alpha} -  |\xi_2|^{\alpha}}{\xi_1}= \alpha \int_0^1 |\xi_2 + t\xi_1|^{\alpha-2} (\xi_2 + t\xi_1) dt
\end{equation}
denote\footnote{The symbol $m_{C_{\alpha}}(\xi_2, \xi_1)$, associated to the region $\{|\xi_1| \ll |\xi_2|\}$, is central in the commutator's analysis. On the contrary, if we restrict our attention to the cone $\{|\xi_2| \ll |\xi_1|\}$, we need to study the behavior of $m_{C_{\alpha}}(\xi_1, \xi_2)$ defined by  $\frac{| \xi_1 +\xi_2|^{\alpha} -  |\xi_1|^{\alpha}}{\xi_2}$.} the symbol measuring the average contribution of the commutator, localized to the cone $R_2 := \{(\xi_1, \xi_2):|\xi_1| \ll |\xi_2|\}$. Since $k \ll \ell$, we can further (smoothly) localize it to the region $\{ |\xi_1|\ll 2^{\ell}, |\xi_2| \sim 2^\ell \}$:
\begin{equation} \label{symbol_undirectional_WC}
 m^{\ell}_{C_{\alpha}}(\xi_2, \xi_1)= m_{C_{\alpha}}(\xi_2, \xi_1)  \tilde \varphi_{\ell}(\xi_1) \tilde \psi_{\ell}(\xi_2).
\end{equation}

In view of the fact that we want to use a Fourier series decomposition in both the variables $\xi_1$ and $\xi_2$, $ m^{\ell}_{C_{\alpha}}$ will be split as
\begin{equation} \label{eq:doubleFS_+/-}
m^{\ell}_{C_{\alpha}}(\xi_2, \xi_1) = m^{\ell,+}_{C_{\alpha}}(\xi_2, \xi_1) + m^{\ell,-}_{C_{\alpha}}(\xi_2, \xi_1),
\end{equation}
where 
\begin{equation}
\label{eq:doubleFS:3}
m^{\ell,+}_{C_{\alpha}}(\xi_2, \xi_1) := m_{C_{\alpha}}(\xi_2, \xi_1)  \tilde \varphi_{\ell}(\xi_1) \tilde \psi_{\ell}^+(\xi_2), \qquad
m^{\ell,-}_{C_{\alpha}}(\xi_2, \xi_1) :=m_{C_{\alpha}}(\xi_2, \xi_1)  \tilde \varphi_{\ell}(\xi_1) \tilde \psi_{\ell}^-(\xi_2).
\end{equation}
Here $\tilde \psi_{\ell}^+$ is a bump function compactly supported on $[2^{\ell-2}, 2^{\ell+2}]$, $\tilde \psi_{\ell}^-$ a bump function compactly supported on $[-2^{\ell+2}, -2^{\ell-2}]$, and 
$\tilde \varphi_{\ell}$ on $[-2^{\ell-3}, 2^{\ell-3}]$. Overall we have
\begin{align}
[D^\alpha, \Delta_k f] \Delta_\ell g(x) &=  2^k \int_{\BBR^2} m^{\ell,+}_{C_{\alpha}}(\xi_2, \xi_1) \tilde{\tilde{\psi}}_k(\xi_1) \widehat{ \Delta_k f}(\xi_1)  \widehat{\Delta_{\ell,+} g}(\xi_2) e^{ 2 \pi i x (\xi_1+\xi_2)} d \xi_1 d \xi_2 \label{eq:expr:comm} \\
&+ 2^k \int_{\BBR^2} m^{\ell,-}_{C_{\alpha}}(\xi_2, \xi_1) \tilde{\tilde{\psi}}_k(\xi_1) \widehat{ \Delta_k f}(\xi_1)  \widehat{\Delta_{\ell, -} g}(\xi_2) e^{ 2 \pi i x (\xi_1+\xi_2)} d \xi_1 d \xi_2. \nonumber
\end{align}

We can now perform a Fourier series decomposition of $m^{\ell,+}_{C_{\alpha}}(\xi_2, \xi_1)$ on $[2^{\ell-2}, 2^{\ell+2}] \times [-2^{\ell-3}, 2^{\ell-3}]$. Notice that this is a double Fourier series expansion, involving both variables $\xi_1$ and $\xi_2$. Indeed, we obtain
\begin{align*}
m^{\ell,+}_{C_{\alpha}}(\xi_2, \xi_1) = \sum_{L_1,L_2 \in \BBZ}C^{\ell,+}_{L_1, L_2} e^{2\pi i \frac{L_1}{2^{\ell-2}} \xi_1} e^{2\pi i \frac{L_2}{2^{\ell+2}} \xi_2},
\end{align*}
where the Fourier coefficients are described by 
\begin{align*}
C^{\ell,+}_{L_1, L_2} & = \tilde C \frac{1}{2^{\ell} \, 2^\ell} \int_{[-2^{\ell-3}, 2^{\ell-3}] \times [2^{\ell-2}, 2^{\ell+2}]}  \int_0^1 (\xi'_2 + t \xi'_1)^{\alpha-1}dt \: e^{-2\pi i \frac{L_1}{2^{\ell-2}} \xi'_1} e^{-2\pi i \frac{L_2}{2^{\ell+2}} \xi'_2} d \xi_1' \, d \xi_2' \\
& = 2^{\ell (\alpha-1)}  \tilde C  \int_{[-{1 \over 8}, {1 \over 8}] \times [{1 \over 4}, 4]}  \int_0^1 (\xi'_2 + t \xi'_1)^{\alpha-1}dt \: e^{-2\pi i 4 L_1  \xi'_1} e^{-2\pi i \frac{L_2}{4} \xi'_2} d \xi_1' \, d \xi_2' \\
& := 2^{\ell (\alpha-1)} C^+_{L_1, L_2}.
\end{align*}

Similarly, a double Fourier series decomposition can be applied to $m^{\ell,-}_{C_{\alpha}}(\xi_2, \xi_1)$ on $[-2^{\ell+2}, -2^{\ell-2}] \times [-2^{\ell-3}, 2^{\ell-3}]$ so that
\begin{equation*}
m^{\ell,-}_{C_{\alpha}}(\xi_2, \xi_1) = \sum_{L_1,L_2 \in \BBZ}C^{\ell,-}_{L_1, L_2} e^{2\pi i \frac{L_1}{2^{\ell-2}} \xi_1} e^{2\pi i \frac{L_2}{2^{\ell+2}} \xi_2},
\end{equation*}
where the Fourier coefficients can be expressed as
\begin{align*}
C^{\ell,-}_{L_1, L_2} & = 2^{\ell (\alpha-1)}  \tilde C  \int_{[-{1 \over 8}, {1 \over 8}] \times [-4, -{1 \over 4}]}  \int_0^1 (\xi'_2 + t \xi'_1)^{\alpha-1}dt \: e^{-2\pi i 4 L_1  \xi'_1} e^{-2\pi i \frac{L_2}{4} \xi'_2} d \xi_1' \, d \xi_2' := 2^{\ell(\alpha-1)}C^-_{L_1,L_2}.
\end{align*}

A straightforward, but important observation is the fact that the new coefficients $C^+_{L_1, L_2}$ and $C^-_{L_1,L_2}$ do not depend on the parameter $\ell$; moreover, due to the fact that $\int_0^1 (\xi'_2 + t \xi'_1)^{\alpha-1}dt$ is smooth for $(\xi_2', \xi_1') \in [{1 \over 4}, 4] \times [-{1 \over 8}, {1 \over 8}] \cup [-4, -{1 \over 4}] \times [-{1 \over 8}, {1 \over 8}]$, we also deduce their fast decay:
\begin{align} \label{strategy:comm_Fourier_coef_decay}
|C^+_{L_1, L_2}|, |C^-_{L_1, L_2}| \lesssim_{M} \frac{1}{(1+|L_1| + |L_2|)^{M}},
\end{align}
for any $M>0$.

Recalling \eqref{eq:expr:comm}, $[D^\alpha, \Delta_k f] \Delta_\ell g$ becomes
\begin{equation}
\label{eq:commutaor:superpositiom:no:norms}
\begin{aligned}
& \sum_{L_1,L_2} C^+_{L_1, L_2}  2^{\ell (\alpha-1)} 2^k  \big( \tilde{\tilde{\Delta}}_{k, {L_1 \over 2^{\ell-3}}} \Delta_{k} f \big)(x)    \big( \tilde \Delta_{\ell, +, {L_2 \over 2^{\ell+2}}} \Delta_{\ell} g \big)(x) \\
 + &\sum_{L_1,L_2} C^-_{L_1, L_2}  2^{\ell (\alpha-1)} 2^k  \big( \tilde{\tilde{\Delta}}_{k, {L_1 \over 2^{\ell-3}}} \Delta_{k}  f \big)(x)    \big(  \tilde \Delta_{\ell, -, {L_2 \over 2^{\ell+2}}} \Delta_{\ell} g \big)(x) = I_{+} + I_{-}.
\end{aligned}
\end{equation}
In other words, the commutator $[D^\alpha, \Delta_k f] \Delta_\ell g$ emerges as a superposition of products of modulated Littlewood-Paley projections, times $2^{\ell (\alpha-1)} 2^k$. 

So if $\tau\leq \min(1, p)$,
\begin{equation}
\label{eq:commutator:superposition}
\begin{aligned}
\|I_{\pm}\|_p^\tau & \lesssim 
\sum_{L_1,L_2} \big|C^{\pm}_{L_1, L_2}\big|^\tau   2^{\ell (\alpha-1) \tau} 2^{k \tau} \| \tilde{\tilde{\Delta}}_{k, {L_1 \over 2^{\ell-3}}} \Delta_{k} f \|_{p_1}^\tau \|\tilde \Delta_{\ell, \pm, {L_2 \over 2^{\ell+2}}} \Delta_{\ell} g  \|_{p_2}^\tau \\
& \lesssim  \sum_{L_1,L_2} |C^{\pm}_{L_1, L_2}|^{\tau}   2^{\ell (\alpha-1) \tau} 2^{k \tau} \| \Delta_{k} f \|_{p_1}^\tau \| \Delta_{\ell} g \|_{p_2}^\tau. 
\end{aligned}
\end{equation}
The fast decay of the $C^{\pm}_{L_1, L_2}$ coefficients from \eqref{strategy:comm_Fourier_coef_decay} and the estimate \eqref{eq:direction:LP:same} imply \eqref{result:commutator:1}. 

Following the same ideas, one can provide a direct proof for \eqref{result:Leibniz:Whitney} without invoking the commutator estimate \eqref{result:commutator:1}. For example, one can perform a double Fourier series decomposition for the smooth function
\[
|\xi_1+\xi_2|^\alpha \tilde \varphi_{\ell}(\xi_1) \tilde{\psi}_{\ell, \pm}(\xi_2)
\] 
on $[-2^{\ell-3}, 2^{\ell-3}] \times [2^{\ell-2}, 2^{\ell+2}]$ and $[-2^{\ell-3}, 2^{\ell-3}] \times [-2^{\ell+2}, -2^{\ell-2}]$ respectively.
\end{enumerate}
\end{proof}

\begin{remark} \label{remark:Bourgain-Li:generalizations}
We would like to draw attention to a certain component in the above argument: the Fourier series decomposition \eqref{eq:commutaor:superpositiom:no:norms} (and its consequence \eqref{eq:commutator:superposition}) will be as important as the localized estimates \eqref{result:commutator:1} in the treatment of generic flags. More concretely, $\|\Delta_k f\|_{p_1}$ can be replaced by $\| D^\alpha (\Delta_k f_1\cdot S_k f_2)\|_t$ with $t >0$ (possibly $t<1$) in the study of a flag $D^\beta(D^\alpha (\Delta_k f_1\cdot S_k f_2) \Delta_{\ell} g)$. The Fourier series decomposition reduces the original estimate to superpositions of subtree estimates, thus tensorizing a generic flag into flags of lower complexity. However, quantitative estimates in the spirit of \eqref{result:commutator:1} for $\| D^\alpha (\Delta_k f_1\cdot S_k f_2)\|_t$ will be needed in order to conclude by summing up the scales $k$ and $\ell$, as before. 
\end{remark}

\begin{remark}\label{remark:higher:dim} 
(1)
In higher dimensions, when $\xi_1, \xi_2 \in \BBR^d$, we cannot use \eqref{def:commutator} anymore; instead, we need to directly handle
\begin{equation}
\label{eq:comm:higher}
|\xi_1+\xi_2|^{\alpha}- |\xi_2|^\alpha = \int_0^1 \xi_1 \cdot (t \xi_1+ \xi_2) |t \xi_1+ \xi_2|^{\alpha -2} dt.
\end{equation}
in the off-diagonal region $\{(\xi_1, \xi_2) \in \BBR^{2d}: |\xi_2| \gg |\xi_1|\}$.
So when we want to estimate
\begin{equation} \label{eq:symbol_annulus}
\int_{\BBR^{2d}} \big(|\xi_1+\xi_2|^\alpha -|\xi_2|^\alpha \big) \widehat{\Delta_k f}(\xi_1)  \widehat{\Delta_\ell g}(\xi_2) e^{ 2 \pi i x \cdot (\xi_1+\xi_2)} d \xi_1 d \xi_2,
\end{equation}
under the assumption that $|\xi_1| \sim 2^k \ll 2^\ell \sim |\xi_2|$, and perform a double Fourier series decomposition for
\[
\big(|\xi_1+\xi_2|^\alpha -|\xi_2|^\alpha \big) \tilde{\psi}_k(\xi_1)  \tilde{\psi}_\ell(\xi_2),
\]
we need to further localize the symbol above onto ``Whitney boxes/rectangles'' of sizes $ \sim 2^k \times 2^\ell$.

This requires a technical (although standard) decomposition of the region $\{ \xi \in \BBR^d : \xi \neq 0\}$ as a finite\footnote{The numbers of cones depends on the dimension.} collection $\mathfrak{C}$ of directional cones; moreover, to each $\mathfrak{c} \in \mathfrak{C}$ we associate a collection of Whitney cubes $\{ Q_{k, \mathfrak{c}}\}_{k \in \BBZ}$ covering the conical regions and having the property that
\[
\text{sidelength}(Q_{k, \mathfrak{c}}) \sim 2^k \sim \text{dist}(Q_{k, \mathfrak{c}}, 0).
\]
In this way, for any $k \in \BBZ$, we have
\[
\psi_{k}(\xi)= \sum_{\mathfrak{c} \in \mathfrak{C}} \tilde\psi_{k, \mathfrak{c}}(\xi) \psi_{k}(\xi),
\]
which should bring back to mind the decomposition performed in \eqref{symbol_undirectional_WC}-\eqref{eq:doubleFS:3}.

So the symbol $|\xi_1+\xi_2|^\alpha -|\xi_2|^\alpha$, initially localized through Littlewood-Paley projections $\tilde \psi_{k}(\xi_1)$ and $\tilde \psi_{\ell}(\xi_2)$, now becomes
\begin{equation*}
\big(|\xi_1+\xi_2|^\alpha -|\xi_2|^\alpha \big) \tilde \psi_{k}(\xi_1) \tilde \psi_{\ell}(\xi_2) = \sum_{\mathfrak{c}_1, \mathfrak{c}_2 \in \mathfrak{C}}\big(|\xi_1+\xi_2|^\alpha -|\xi_2|^\alpha \big) \tilde \psi_{k,\mathfrak{c}_1}(\xi_1) \tilde \psi_{\ell,\mathfrak{c}_2}(\xi_2) \tilde \psi_{k}(\xi_1) \tilde \psi_{\ell}(\xi_2).
\end{equation*}

We then use a double Fourier series decomposition on each Whitney rectangle $Q_{k,\mathfrak{c}_1} \times Q_{\ell,\mathfrak{c}_2}$ in order to capture the interaction between two different scales. We have 
\begin{align*}
\big( |\xi_1+\xi_2|^{\alpha}- |\xi_2|^\alpha \big)  \tilde \psi_{k,\mathfrak{c}_1}(\xi_1) \tilde \psi_{\ell,\mathfrak{c}_2}(\xi_2) = 2^{\ell(\alpha-1)} 2^k \sum_{L_1, L_2 \in \BBZ^d} C_{L_1, L_2}^{k, \ell,\mathfrak{c}_1,\mathfrak{c}_2} e^{2\pi i \frac{L_1}{2^{k}} \xi_1} e^{2\pi i \frac{L_2}{2^{\ell}} \xi_2},
\end{align*}
where
\begin{equation}
\label{eq:higher:dim:Fourier:coeff}
C_{L_1, L_2}^{k, \ell,\mathfrak{c}_1,\mathfrak{c}_2} = \int_{\{ |\xi'_1|\sim 1\} \cap c_1 }  \int_{\{ |\xi'_2|\sim 1\} \cap c_2 }  \tilde \psi_{0,\mathfrak{c}_1}(\xi'_1) \tilde \psi_{0,\mathfrak{c}_2}(\xi'_2) \big( \int_0^1 \xi_1' \cdot (2^{k- \ell} t \xi_1' + \xi_2') |2^{k- \ell} t \xi_1' + \xi_2'|^{\alpha-2} dt \big) \, e^{-2 \pi i L_1 \cdot \xi_1'} e^{-2 \pi i L_2 \cdot \xi_2'} d \xi_1' d \xi_2'.
\end{equation}
In contrast to the one-dimensional case, the renormalized Fourier coefficients do depend on the scales $2^k$ and $2^\ell$. However, since $2^{k- \ell} \ll 1$ and the multiplier 
\begin{equation}
\label{eq:comm:multip:higher}
\tilde \psi_{0,\mathfrak{c}_1}(\xi'_1) \tilde \psi_{0,\mathfrak{c}_2}(\xi'_2) \int_0^1 \xi_1' \cdot (2^{k- \ell} t \xi_1' + \xi_2') |2^{k- \ell} t \xi_1' + \xi_2'|^{\alpha-2} dt 
\end{equation}
is smooth on its support included in $\{ (\xi_1', \xi_2') \in \BBR^{2d} : |\xi'_1|\sim 1, |\xi'_2|\sim 1 \} \cap \mathfrak{c}_1 \times \mathfrak{c}_2$, we do have, \emph{uniformly in $k$ and $\ell$},
\begin{equation}
\label{eq:decay:cond:higher}
|C_{L_1, L_2}^{k, \ell, \mathfrak{c}_1, \mathfrak{c}_2}|\lesssim_{M} \frac{1}{(1+|L_1| + |L_2|)^{M}},
\end{equation}
for any $M>0$. Indeed, this is a consequence of the uniform boundedness of the derivatives of the multiplier in \eqref{eq:comm:multip:higher}. Since the above estimate holds for any tensor product of directional cones $\mathfrak{c}_1 \times \mathfrak{c}_2$ with $\mathfrak{c}_1, \mathfrak{c}_2 \in \mathfrak{C}$ and there are only finitely many directional cones in $\mathfrak{C}$, we obtain the desired estimate for \eqref{eq:symbol_annulus}.~\\
(2)
In the diagonal region $\{(\xi_1,\xi_2) \in \BBR^{2d}: |\xi_1| \sim |\xi_2| \}$, $|\xi_1+\xi_2|^\alpha$ is not supported away from the origin, we perform a Fourier series decomposition of $|\xi_1+\xi_2|^\alpha \phi_\ell(\xi_1+\xi_2)$ on the cube $[-2^{\ell+3}, 2^{\ell+3}]^d \subseteq \BBR^d$ and notice that the corresponding Fourier coefficients only have limited decay:
\begin{equation}
\label{eq:limited:decay:higher:dim}
|C_L|\lesssim \frac{2^{\ell \alpha}}{\left( 1+|L| \right)^{\alpha+d}}. 
\end{equation}

The way the Fourier series are being performed is the only point in the proof which is different in higher dimensions. More precisely, the Fourier series decomposition requires a smooth localization of the symbol to Whitney rectangles within the region $\{ (\xi_1, \xi_2) : |\xi_1| \sim 2^k, |\xi_2| \sim 2^\ell \}$, and the renormalized Fourier coefficients do depend on the scales $k$ and $\ell$, although they are still well-behaved thanks to \eqref{eq:decay:cond:higher}. In order to avoid unnecessary complications, in what follows we will focus on the one dimensional case. 
\end{remark}

\smallskip 

\subsection{Some contrasting aspects with the Coifman-Meyer multiplier approach}~\\
\label{sec:differences:Coifman-Meyer}
Alternatively, one might want to use the boundedness of Coifman-Meyer multipliers to deduce the Leibniz rule \eqref{Leib_one_paraproduct:strategy}. The Littlewood-Paley decompositions $f =\sum_{k \in \BBZ} \Delta_k f$, $g=\sum_{\ell \in \BBZ} \Delta_\ell g$ will again be grouped into ``paraproducts'', with possibly altered projections:
\begin{align*}
f(x)g(x) & = \sum_{k \ll \ell} \Delta_k f (x) \Delta_\ell g(x) + \sum_{\ell \ll k} \Delta_k f (x) \Delta_\ell g(x) + \sum_{ |k - \ell| \leq 3} \Delta_k f (x) \Delta_\ell g(x)\\
& = \sum_{\ell \in \BBZ} S_\ell f(x) \Delta_\ell g(x) + \sum_{k \in \BBZ} \Delta_k f(x) S_k g(x) + \sum_{k \in \BBZ}   \Delta_k f (x) \tilde \Delta_k g(x).
\end{align*}
Then $D^\alpha(f \cdot g)$ becomes
\begin{align*}
D^\alpha( \sum_{\ell \in \BBZ} S_\ell f \cdot  \Delta_\ell g) + D^\alpha ( \sum_{k \in \BBZ} \Delta_k f \cdot S_k g) + D^\alpha (\sum_{k \in \BBZ}   \Delta_k f \cdot \tilde \Delta_k g);
\end{align*}
the first two terms being symmetric, it suffices to understand the first and the third one. Now we notice the following:
\\
\noindent
\textbf{(1)} the first term can be expressed as
\[
D^\alpha( \sum_{\ell \in \BBZ} S_\ell f \cdot  \Delta_\ell g)= \int_{\BBR^2} \sum_{\ell \in \BBZ} \varphi(2^\ell \xi) \psi(2^\ell \eta) \frac{|\xi+\eta|^\alpha}{|\eta|^\alpha} \hat f(\xi) \widehat{ D^\alpha g}(\eta)  e^{ 2 \pi i x(\xi+\eta)} d \xi d \eta;
\]
where $$\ds m_1(\xi, \eta):= \sum_{\ell \in \BBZ} \varphi(2^\ell \xi) \psi(2^\ell \eta) \frac{|\xi+\eta|^\alpha}{|\eta|^\alpha}$$ is a Coifman-Meyer symbol, satisfying \eqref{eq:Mikhlin:symbol}; this is because $m_1$ is morally constant on Whitney cubes $[-2^{\ell-2}, 2^{\ell-2}] \times [2^\ell, 2^{\ell+1}]$.
As a consequence, $$\ds D^\alpha( \sum_{\ell \in \BBZ} S_\ell f \cdot  \Delta_\ell g)(x) = T_{m_1}(f, D^\alpha g)(x),$$ where $T_{m_1}$ is the Coifman-Meyer multiplier associated to the Mikhlin symbol $m_1(\xi, \eta)$.
\\
\noindent
\textbf{(2)}
In a similar way, thanks to \eqref{eq:Fourier:dec:diagonal}, $\ds  D^\alpha (\sum_{k \in \BBZ}   \Delta_k f \cdot \tilde \Delta_k g)$ can be represented as a superposition of Coifman-Meyer-type multipliers
\[
D^\alpha (\sum_{k \in \BBZ}   \Delta_k f \cdot \tilde \Delta_k g)= \sum_{\tilde L \in \BBZ} C_{\tilde L} T_{m_{\tilde L}}(f, D^\alpha g),
\]
where 
\[
 m_{\tilde L}(\xi, \eta):= \sum_{\ell \in \BBZ} \psi(2^{-\ell} \xi) e^{2 \pi i \frac{\tilde L}{2^\ell} \xi} \tilde \psi(2^{-\ell} \eta) e^{2 \pi i \frac{\tilde L}{2^\ell} \eta}  2^{\ell \alpha} |\eta|^{-\alpha}
\]
is a symbol singular only at the origin, but depending explicitly on $\tilde L$. Since $\|T_{ m_{\tilde L}}\|_{L^{p_1} \times L^{p_2} \to L^p}$ can be shown to depend at most logarithmically\footnote{Indeed, the frequency modulation $e^{2 \pi i \tilde L \cdot}$ has an $\tilde L$-shifting effect in space (at every scale) and invoking the boundedness of shifted square functions produces the desired result; see \cite{graf-Leibniz_rules} or \cite{multilinear_harmonic}.} on $1+|\tilde L|$ whenever $1/{p_1}+1/{p_2}=1/p$, with $1<p_1, p_2\leq \infty, 1/2 < p<\infty$, \eqref{Leib_one_paraproduct:strategy} follows for $1<p_1, p_2\leq \infty, 1/2 < p<\infty$ such that $$\sum_{\tilde L} |C_{\tilde L}|^{\min(1, p)}$$ is summable, i.e. for $\frac{1}{1+\alpha}<p<\infty$.
\vskip .05in
We conclude with a brief comparison between the Coifman-Meyer multiplier approach and the Bourgain-Li approach presented above in Section \ref{Bourgain-Li_hilow}. 
\begin{enumerate}[label=(\roman*), leftmargin=*]
\item the Bourgain-Li approach can deal with endpoints for which Coifman-Meyer multipliers fail to be bounded; this was the original framework in which it was introduced -- the $L^\infty \times L^\infty \to L^\infty$ endpoint;
\item \label{obs:shared:Deriv} in the Bourgain-Li approach, which involves summation over the scales, the derivatives are allocated jointly to the functions $f$ and $g$: for small scales, both $f$ and $g$ get no derivatives; in contrast, for large scales $f$ is being attributed $\alpha$ derivatives and $g$ picks up $\epsilon$ derivatives for some $\epsilon \in (0, \alpha)$. Interpolation eventually yields, for some $0< \theta<1$,
\begin{equation}
\label{eq:Leib:comm:better}
\| D^{\alpha} (f g) - D^\alpha f \cdot g - f \cdot D^\alpha g\|_{L^p} \lesssim \big( \| f\|_{\dot B^\alpha_{p_1, \infty}} \| g\|_{\dot B^0_{p_2, \infty}}   \big)^\theta \, \big(   \| f\|_{\dot B^0_{p_1, \infty}} \| g\|_{\dot B^\alpha_{p_2, \infty}}   \big)^{1-\theta},
\end{equation}
which is a sharper estimate than \eqref{Leib_one_paraproduct:strategy};
\item 
in the Coifman-Meyer multiplier approach, the derivative will always be attached to the function with higher oscillation, as one can see from the identity $\ds D^\alpha( \sum_{\ell \in \BBZ} S_\ell f \cdot  \Delta_\ell g)(x) = T_{m_1}(f, D^\alpha g)(x)$; as a consequence, one can obtain ``off-diagonal'' $L^p$ estimates for the Leibniz rule, in the sense that \eqref{Leib_one_paraproduct:strategy} can be replaced by 
\begin{align}\label{Leib_one_paraproduct:soff:diagonal}
\| D^{\alpha} (f g) \|_{L^p} \lesssim \|D^{\alpha}f\|_{L^{p_1}}\|g\|_{L^{p_2}} + \|f\|_{L^{q_1}}\|D^{\alpha} g \|_{L^{q_2}},
\end{align}
where $\frac{1}{p}=\frac{1}{p_1}+\frac{1}{p_2}=\frac{1}{q_1}+\frac{1}{q_2}$, and $1<p_1, p_2, q_1, q_2 \leq \infty$, $\frac{1}{2}< p < \infty$. Interestingly, such off-diagonal estimates proved to be useful in certain applications -- for example \cite{KatoPonceCommut}, \cite{kenig1993_vvLeibniz}.
\item the Coifman-Meyer approach yields another type of improvement for the ``commutator'' in \eqref{eq:Leib:comm:better}:
\[
\| D^{\alpha} (f g) - D^\alpha f \cdot g - f \cdot D^\alpha g\|_{L^p} \lesssim \|D^{\alpha_1}f\|_{L^{p_1}}\|D^{\alpha_2} g\|_{L^{p_2}},
\]
for any $0<\alpha_1, \alpha_2<\alpha$ with $\alpha_1 + \alpha_2=\alpha$. Because of \ref{obs:shared:Deriv} and the role played by interpolation, without substantial modifications, the Bourgain-Li approach cannot recover such a result. 
\end{enumerate}

Our methodology for proving estimates for generic flag Leibniz rules and other Leibniz-type estimates can be perceived as a generalization of the Bourgain-Li approach presented above, as hinted in Remark \ref{remark:Bourgain-Li:generalizations}.

\section{A five-linear flag: one-parameter case}
\label{sec:5lin:1param}

Before dealing with the generic result, we consider a simpler example: $D^\beta( D^\alpha(f_1 \cdot f_2) \cdot f_3  \cdot D^\gamma( f_4 \cdot f_5)  )$, which is a one-parameter version of \eqref{bi_Muscalu}. This will allow us to emphasize the main ideas without getting too technical. We will assume that $d=1$, although the argument remains identical in higher dimensions.

We will prove, for $\alpha, \beta, \gamma \geq 0$ that
\begin{align} \label{eq:one:param_M}
&\|D^{\beta}\big(D^{\alpha}(f_1 f_2) f_3 D^{\gamma}(f_4 f_5)\big) \|_{L^{r}}  \\
 \lesssim 
& \|D^{\alpha+\beta}f_1\|_{L^{p_1}} \|f_2\|_{L^{p_2 }} \|f_3\|_{L^{ p_3}}\|D^{\gamma}f_4\|_{L^{p_4}}\|f_5\|_{L^{p_5}} + \| f_1\|_{L^{p_1}} \|D^{\alpha+\beta} f_2\|_{L^{p_2 }} \|f_3\|_{L^{p_3 }} \|D^{\gamma}f_4\|_{L^{p_4 }}\|f_5\|_{L^{p_5 }} \nonumber\\
+&\|D^{\alpha+\beta}f_1\|_{L^{p_1 }} \|f_2\|_{L^{p_2}} \|f_3\|_{L^{p_3}}\|f_4\|_{L^{p_4 }}\|D^{\gamma}f_5\|_{L^{p_5}} + \text{ other similar terms}, \nonumber
\end{align}
where $1 \leq p_1, \ldots, p_5 \leq \infty$, $1/5 \leq r \leq \infty$ and
\begin{equation}
\label{eq:cond:5lin:musc:1param}
\frac{1}{r}=\frac{1}{p_1}+\ldots+\frac{1}{p_5}, \quad \frac{1}{r}<1+\beta, \quad \frac{1}{p_{1,2}}:=\frac{1}{p_1}+\frac{1}{p_2} < 1+\alpha, \quad \frac{1}{p_{4,5}}:=\frac{1}{p_4}+\frac{1}{p_5} < 1+\gamma.
\end{equation}
Whenever $\alpha, \beta$ or $\gamma \in 2 \BBZ$, the constraint \eqref{eq:cond:5lin:musc:1param} can be removed.

Our approach for the above Leibniz rule relies on an iterative argument that in particular makes use of frequency-localized estimates for $D^{\alpha}(f_1 f_2)$ and $D^{\gamma}(f_4 f_5)$, in the spirit of Lemma \ref{lemma:multipliers:simpleParaproduct}. The present proof is not as systematic as the one in Section \ref{generic_induction}, although many elements are contained in the treatment of this particular example.

We use Littlewood-Paley projections to decompose the functions in frequency into dyadic pieces
\[
f_l=\sum_{k_l} \Delta_{k_l} f_l, \qquad \text{for all    } 1 \leq l \leq 5,
\]
for which the derivation becomes equivalent to multiplication: $D^\alpha \big(  \Delta_{k_l} f_l  \big) \sim 2^{\alpha  k_l}   \Delta_{k_l} f_l$. That means that we need to estimate
\begin{align}
\label{5lin:freq:rep}
\sum_{k_1, \ldots, k_5 } \int_{\BBR^5} |\xi_1+ \ldots + \xi_5|^\beta \cdot |\xi_1+\xi_2|^\alpha \cdot |\xi_4+\xi_5|^\gamma \widehat{\Delta_{k_1} f_1}(\xi_1) \cdot \ldots \cdot \widehat{\Delta_{k_5} f_5}(\xi_5) e^{2 \pi i x(\xi_1+\ldots+\xi_5)} d \xi_1 \ldots d\xi_5.
\end{align}

\[
\begin{forest}
    my treeSep
    [, label={above:$D^\beta$}
                       [, label={left: $D^\alpha$}
                                    [, label={below: $ \Delta_{k_1} f_1$}]
                                    [, label={below: $\Delta_{k_2} f_2$}]
                           ]
                        [, label={below:$\Delta_{k_3} f_3$}]
                            [, label={right: $D^\gamma$}
                                    [, label={below: $\Delta_{k_4} f_4$}]
                                    [, label={below: $\Delta_{k_5} f_5$}]
                             ]
]
\end{forest} 
\]

As explained in Sections \ref{strategy} and \ref{sec:Section2}, the frequency space will be split in various conical regions, producing in this way the classical paraproduct decomposition. Restrictions of \eqref{5lin:freq:rep} to each of these regions need to be analyzed, and we will see that the leading derivatives will be distributed among two functions: the highest oscillating functions and an auxiliary one.

Due to the structure of the present flag, we will need to consider several conical regions in frequency: 
\begin{enumerate}[label=(\Roman*), leftmargin=*]
\item \label{caseI:k_1} the region where $f_1$ is the fastest oscillating function:
\begin{equation} \label{Sec3:off-diagonal:1}
R_1=\{ (\xi_1, \ldots, \xi_5) :  |\xi_1| \gg |\xi_2|, \ldots, |\xi_5|    \}.
\end{equation}
This region is symmetric to those where $f_2, f_4$ or $f_5$ oscillate much faster than the remaining functions.
\item \label{caseII:k_3} the region
\begin{equation} \label{Sec3:off-diagonal:3}
R_3=\{ (\xi_1, \ldots, \xi_5) :  |\xi_3| \gg |\xi_1|, |\xi_2|, |\xi_4|, |\xi_5|    \},
\end{equation}
where $f_3$ is the fastest oscillating function.

\item \label{caseIII:multi} ``diagonal'' regions 
\begin{equation} \label{Sec3:diagonal}
\tilde R_{l_1, l_2}=\{ (\xi_1, \ldots, \xi_5) :  |\xi_{l_1}| \sim  |\xi_{l_2}| \geq |\xi_{l'}| \ \ \text{for} \ \ l' \neq l_1, l_2 \},
\end{equation}
for $l_1 \neq l_2$.
In this situation, at least two of the functions oscillate at comparable high rates, which might cause $|\xi_1+\ldots+\xi_5|^\beta$ to become more singular than in the previous cases.
\end{enumerate}

\medskip

\subsection{\ref{caseI:k_1}: study of the conical region $R_1$}~\\
In \eqref{5lin:freq:rep}, we restrict the summation over $k_1 \ll k_2, \ldots, k_5$: we are in the situation when $2^{k_1} \sim |\xi_1| \ll |\xi_l| \sim 2^{k_l} $ for all $2 \leq l \leq 5$. In this case, $$2^{k_1-1} <|\xi_1+ \ldots+ \xi_5| < 2^{k_1+1},$$
which we write in short $|\xi_1+ \ldots+ \xi_5| \sim 2^{k_1}$.

Following the principle introduced by Bourgain and Li, in this scenario we would like to approximate $|\xi_1+ \ldots+ \xi_5|^\beta$ by $|\xi_1|^\beta$, and use to good advantage the better-behaving commutator $|\xi_1+ \ldots+ \xi_5|^\beta -|\xi_1|^\beta$. Since a $D^\alpha$ derivative also acts on $f_1$, we take an intermediate step in which we approximate $|\xi_1+ \ldots+ \xi_5|^\beta$ by $|\xi_1+\xi_2|^\beta$. This is consistent with the overall approach, since $|\xi_1+\xi_2| \sim 2^{k_1}$ as well. Thus the symbol corresponding to this particular Leibniz rule, which appears in \eqref{5lin:freq:rep}, breaks down as 
\begin{align*}
\label{eq:multipl:I}
&\frac{|\xi_1+ \ldots+ \xi_5|^\beta  - |\xi_1+\xi_2|^\beta}{\xi_3+\xi_4+\xi_5} \cdot \xi_3 \cdot |\xi_1+\xi_2|^\alpha \cdot |\xi_4+\xi_5|^\gamma + \frac{|\xi_1+ \ldots+ \xi_5|^\beta  - |\xi_1+\xi_2|^\beta}{\xi_3+\xi_4+\xi_5} \cdot (\xi_4+\xi_5) \cdot |\xi_1+\xi_2|^\alpha \cdot |\xi_4+\xi_5|^\gamma \\
& \qquad  + \frac{|\xi_1+\xi_2|^{\alpha+\beta}  - |\xi_1|^{\alpha+\beta}}{\xi_2} \cdot \xi_2 \cdot |\xi_4+\xi_5|^\gamma +  |\xi_1|^{\alpha+\beta} \cdot |\xi_4+\xi_5|^\gamma:= m_{I_A}+m_{I_B}+m_{I_C}+m_{I_D},
\end{align*}
which means that \eqref{5lin:freq:rep} restricted to the frequency region $R_1$ \eqref{Sec3:off-diagonal:1} reads as
\begin{align*}
&\sum_{k_2, \ldots, k_5 \ll k_1 } T_{m_{I_A}}(\Delta_{k_1}f_1, \ldots, \Delta_{k_5}f_5)(x)+ \sum_{k_2, \ldots, k_5  \ll k_1} T_{m_{I_B}}(\Delta_{k_1} f_1, \ldots,\Delta_{k_5} f_5)(x) \\
& + \sum_{k_2, \ldots, k_5 \ll k_1} T_{m_{I_C}}(\Delta_{k_1} f_1, \ldots, \Delta_{k_5} f_5)(x)+ \sum_{k_2, \ldots, k_5 \ll k_1 } T_{m_{I_D}}(\Delta_{k_1} f_1, \ldots, \Delta_{k_5} f_5)(x) := I_A+I_B+I_C+I_D.
\end{align*}
This corresponds to the first step of the strategy presented in the introduction: the \emph{splitting of the root symbol}. In Section \ref{generic_induction}, the splitting will be different: the emphasis will be put on the subtree structures obtained by removing the tree root corresponding to $D^\beta$. Here instead we track the root derivatives $D^\beta$ as they descend towards the leaves represented by the functions $f_1, \ldots, f_5$.

We will see that, due to the multiplier's shape, in the study of $I_A$ the functions $f_1$ and $f_3$ will play a special role and the $D^\beta$ derivatives will be shared among them; for $I_B$, it will be $f_1$ and one of $f_4$ or $f_5$ (an extra paraproduct decomposition will be used here, which will also determine the distribution of derivatives); for $I_C$, $f_1$ and $f_2$; and finally, for $I_D$, summing up all the scales will require a change in the order of summation.

\medskip

\begin{itemize}[ leftmargin=*]
\item[$I_A$)] In estimating the $I_A$ term, we will only need the Littlewood-Paley information for the functions $f_1$ and $f_3$; as a consequence, we can sum over $k_2, k_4, k_5 \ll k_1$ and focus on the five-linear operator
\begin{align*}
T_{m_{I_A}}^{k_1, k_3}(f_1, \ldots, f_5)(x):=\int_{\BBR^5} & \frac{|\xi_1+ \ldots+ \xi_5|^\beta  - |\xi_1+\xi_2|^\beta}{\xi_3+\xi_4+\xi_5} \cdot \xi_3 \cdot |\xi_1+\xi_2|^\alpha \cdot |\xi_4+\xi_5|^\gamma \\  & \quad \widehat{\Delta_{k_1} f_1}(\xi_1) \cdot \widehat{S_{ k_1} f_2}(\xi_2) \cdot \widehat{\Delta_{k_3} f_3}(\xi_3) \cdot \widehat{S_{k_1} f_4}(\xi_4) \cdot \widehat{S_{ k_1} f_5}(\xi_5) e^{2 \pi i x \left(\xi_1+\ldots+\xi_5 \right)} d \, \xi_1 \ldots d \, \xi_5.
\end{align*}

Next, we use \emph{Fourier series decompositions for the symbols} involved -- the second step of our strategy. For this we need to place ourselves in a suitable situation, and in particular to smoothly restrict the symbols to intervals where the Fourier series decomposition can be implemented.

First we look at $m_{C_\beta}(\xi_1+\xi_2, \xi_3+\xi_4+\xi_5)$ which, according to definition \eqref{def:commutator}, consists of  
\[
 \frac{|\xi_1+ \ldots+ \xi_5|^\beta  - |\xi_1+\xi_2|^\beta}{\xi_3+\xi_4+\xi_5} = \beta \int_0^1 |\xi_1+\xi_2+ t(\xi_3+\xi_4+\xi_5)|^{\beta-2} \big( \xi_1+\xi_2+ t(\xi_3+\xi_4+\xi_5) \big)  dt,
\]
and further restrict it to $\big[ 2^{k_1}, 2^{k_1+1} \big] \times [- 2^{k_1-1}, 2^{k_1-1} ]$ and $\big[ -2^{k_1+1}, -2^{k_1} \big] \times [- 2^{k_1-1}, 2^{k_1-1} ]$, respectively:
\[
 m^{k_1, \pm}_{C_\beta}(\xi_1+\xi_2, \xi_3+\xi_4+\xi_5):= m_{C_\beta}(\xi_1+\xi_2, \xi_3+\xi_4+\xi_5) \tilde \psi_{k_1, \pm}(\xi_1+\xi_2) \tilde \varphi_{k_1}(\xi_3+\xi_4+\xi_5).
\]
Then we proceed with a double Fourier series decomposition of $m^{k_1, \pm}_{C_\beta}$ on $\pm \big[ 2^{k_1}, 2^{k_1+1} \big] \times [- 2^{k_1-1}, 2^{k_1-1} ]$:
\begin{align}
\label{eq:FS:5lin:comm}
m^{k_1, \pm}_{C_\beta}(\xi_1+\xi_2, \xi_3+\xi_4+\xi_5)= \sum_{L_1, L_2 \in \BBZ} C_{L_1, L_2}^{\pm} 2^{k_1(\beta-1)} e^{2 \pi i  L_1 \frac{\xi_1+\xi_2}{2^{k_1}}} \, e^{2 \pi i  L_2 \frac{\xi_3+\xi_4+\xi_5}{2^{k_1}}}.
\end{align}

Similarly, we perform a Fourier series decomposition of $|\xi_1+\xi_2|^\alpha$ on the same interval $\pm [2^{k_1}, 2^{k_1+1}]$:
\begin{equation}
\label{eq:FS:I_A:2}
|\xi_1+\xi_2|^{\alpha} \tilde \psi_{k_1, \pm}(\xi_1+\xi_2)= \sum_{\tilde L \in \BBZ} C_{\tilde L}^{\pm} 2^{k_1 \alpha}  e^{\frac{2 \pi i  \tilde L \left( \xi_1+\xi_2\right)}{2^{k_1}}},
\end{equation}
and notice that the coefficients\footnote{Notice that the \emph{Fourier coefficients} consist of $C_{L_1, L_2}^{\pm} 2^{k_1(\beta-1)}$ and $C_{\tilde L}^{\pm} 2^{k_1 \alpha}$, respectively.} $C_{L_1,L_2}^{\pm}$ and $C_{\tilde L}^{\pm}$ (which depend on $\beta$ and $\alpha$, but not on $k_1$) decay fast enough: see \eqref{strategy:comm_Fourier_coef_decay}.

Thanks to these Fourier series decompositions of the symbols, $T_{m_{I_A}}(f_1, \ldots, f_5)$ becomes a superposition of tensorized operators of the form:
\begin{align*}
& \sum_{L_1, L_2 \in \BBZ} \sum_{ \tilde L \in \BBZ} C_{L_1, L_2}^{\pm} C_{\tilde L}^{\pm} \int_{\BBR^5} 2^{k_1(\beta-1)}  2^{k_3}  2^{k_1 \alpha}  \widehat{\Delta_{k_1, \pm} f_1}(\xi_1)  e^{\frac{2 \pi i  (L_1+\tilde L) \xi_1}{2^{k_1}}} \cdot \widehat{S_{ k_1} f_2}(\xi_2)  e^{\frac{2 \pi i  (L_1+\tilde L) \xi_2}{2^{k_1}}} \\ & \cdot \widehat{\Delta_{k_3} f_3}(\xi_3) \tilde \psi_{k_3}(\xi_3)  {\xi_3 \over 2^{k_3}}  e^{\frac{2 \pi i  L_2  \xi_3}{2^{k_1}}} \cdot |\xi_4 +\xi_5|^\gamma \cdot \widehat{S_{k_1} f_4}(\xi_4)  e^{\frac{2 \pi i  L_2 \xi_4}{2^{k_1}}} \cdot \widehat{S_{ k_1} f_5}(\xi_5)  e^{\frac{2 \pi i  L_2 \xi_5}{2^{k_1}}} e^{2 \pi i x \left(\xi_1+\ldots+\xi_5 \right)} d \, \xi_1 \ldots d \, \xi_5   \\
:= & \sum_{L_1, L_2 \in \BBZ} \sum_{ \tilde L \in \BBZ} C_{L_1, L_2}^{\pm} C_{\tilde L}^{\pm}  2^{k_1(\beta-1)}  2^{k_3}  2^{k_1 \alpha}  (\Delta_{{k_1},\pm,\frac{L_1+\tilde L}{2^{k_1}}} f_1)(x) \cdot (S_{{k_1},\frac{L_1+\tilde L}{2^{k_1}}} f_2)(x) \cdot (\tilde{\tilde \Delta}_{{k_3}, \frac{L_2}{2^{k_1}}} f_3)(x) \\ & \quad \cdot D^\gamma(S_{k_1,  \frac{L_2}{2^{k_1}}} f_4 \cdot S_{k_1,  \frac{L_2}{2^{k_1}}} f_5) (x).
\end{align*}

This step is precisely the \emph{tensorization into subtrees} part of our strategy.

Now we simply notice that, for $0<\tau \leq \min (1, r)$, we can use H\"older's inequality with $ \frac{1}{p_1}+\frac{1}{p_2}+ \frac{1}{p_3}+ \frac{1}{p_{4, 5}}=\frac{1}{r}$ for each of the tensorized structures:
\begin{align*}
\|T_{m_{I_A}}^{k_1, k_3}(f_1, \ldots, f_5)\|_r^\tau \lesssim \sum_{L_1, L_2 \in \BBZ} \sum_{ \tilde L \in \BBZ} |C_{L_1, L_2}^{\pm}|^\tau |C_{\tilde L}^{\pm}|^\tau   2^{k_1(\beta-1) \tau}  2^{k_3 \tau}  2^{k_1 \alpha \tau} \big\| \Delta_{{k_1}, \pm, \frac{L_1+\tilde L}{2^{k_1}}} f_1\big\|_{p_1}^\tau \big\|S_{{k_1},\frac{L_1+\tilde L}{2^{k_1}}} f_2 \big\|_{p_2}^\tau  & \\ \big \|\tilde {\tilde \Delta}_{{k_3}, \frac{L_2}{2^{k_1}}} \Delta_{k_3} f_3 \big\|_{p_3}^\tau  \big \| D^\gamma(S_{k_1,  \frac{L_2}{2^{k_1}}} f_4 \cdot S_{k_1,  \frac{L_2}{2^{k_1}}} f_5) \big \|_{p_{4,5}}^\tau&.
\end{align*}

For the last term, we invoke the bilinear Leibniz rule (in this particular case, the unified result that first appeared in \cite{OhWu})
\begin{align*}
\| D^\gamma(S_{k_1,  \frac{L_2}{2^{k_1}}} f_4  \cdot S_{k_1,  \frac{L_2}{2^{k_1}}}  f_5)\|_{p_{4,5}} & \lesssim \|D^\gamma S_{k_1,  \frac{L_2}{2^{k_1}}} f_4\|_{p_4} \cdot \| S_{k_1,  \frac{L_2}{2^{k_1}}}  f_5\|_{p_5} +  \| S_{k_1,  \frac{L_2}{2^{k_1}}} f_4 \|_{p_4} \cdot \| D^\gamma  S_{k_1,  \frac{L_2}{2^{k_1}}} f_5\|_{p_5},
\end{align*}
which holds true whenever $\frac{1}{p_{4,5}}=\frac{1}{p_4}+\frac{1}{p_5}$, $1\leq p_4, p_5 \leq \infty$, and $\frac{1}{p_{4,5}}<1+\gamma$.

Now the estimates \eqref{eq:direction:LP:same} and \eqref{eq:trivial:modulation} (and implicitly the fact that $1 \leq p_i \leq \infty$ for all $1 \leq i \leq 5$) imply that 
\begin{align*}
\|I_A \|_{r}^{\tau}& \lesssim \sum_{k_3< k_1} 2^{k_1(\beta-1) \tau}  2^{k_3 \p}  2^{k_1 \alpha \p}  \|\Delta_{{k_1}} f_1\|_{p_1}^{\p}  \|f_2\|_{p_2}^{\p}  \| \Delta_{{k_3}} f_3\|_{p_3}^{\tau} \big( \|D^\gamma f_4\|_{p_4} \cdot \| f_5\|_{p_5} +  \| f_4\|_{p_4} \cdot \| D^\gamma f_5\|_{p_5} \big)^\tau.
\end{align*}

So we are left with proving the estimate
\begin{align}
\label{eq:I_A:Lp:aim}
\sum_{k_3< k_1} 2^{k_1(\beta-1) \p}  2^{k_3 \p}  2^{k_1 \alpha \p}  \|\Delta_{{k_1}} f_1\|_{p_1}^{\p}  \|\Delta_{{k_3}} f_3\|_{p_3}^{\tau} \lesssim \|D^{\alpha+\beta} f_1\|_{p_1}^{\p} \cdot \|f_3\|_{p_3}^{\p}+ \|D^{\alpha} f_1\|_{p_1}^{\p} \cdot \|D^{\beta}f_3\|_{p_3}^{\p}.
\end{align}

Just like in Section \ref{Bourgain-Li_hilow},
\begin{align*}
\sum_{k_3< k_1} 2^{k_1(\beta-1) \p}  2^{k_3 \p}  2^{k_1  \alpha \p}  \|\Delta_{{k_1}} f_1\|_{p_1}^{\p}  \|\Delta_{{k_3}} f_3\|_{p_3}^\tau &  \lesssim \sum_{k_1} \min \big( 2^{k_1  \beta \p} \|D^\alpha  f_1\|_{\dot B_{p_1, \infty}^0}^{\p}   \| f_3\|_{\dot B_{p_3, \infty}^0}^{\p},  2^{- k_1  \epsilon \p} \|D^\alpha  f_1\|_{\dot B_{p_1, \infty}^{\beta}}^{\p}   \| f_3\|_{\dot B_{p_3, \infty}^{\epsilon}}^{\p} \big) \\
&\lesssim  \big( \|D^\alpha  f_1\|_{\dot B_{p_1, \infty}^0}^{\p}   \| f_3\|_{\dot B_{p_3, \infty}^0}^{\p}\big)^{\epsilon \over {\beta+\epsilon}}  \big( \|D^\alpha  f_1\|_{\dot B_{p_1, \infty}^{\beta}}^{\p}   \| f_3\|_{\dot B_{p_3, \infty}^{\epsilon}}^{\p}  \big)^{\beta \over {\beta+\epsilon}} \\
&\lesssim  \big( \|D^\alpha  f_1\|_{\dot B_{p_1, \infty}^0}^{\p}   \| f_3\|_{\dot B_{p_3, \infty}^\beta}^{\p} \big)^{\epsilon \over {\beta+\epsilon}}  \big( \|D^\alpha  f_1\|_{\dot B_{p_1, \infty}^{\beta}}^{\p}   \| f_3\|_{\dot B_{p_3, \infty}^{0}}^{\p}  \big)^{\beta \over {\beta+\epsilon}}.
\end{align*}

Using Young's inequality and standard properties of Besov norms -- more specifically \eqref{eq:obs:besov}, we deduce that the expression above is bounded by 
\[
\big(  \|D^\alpha  f_1\|_{p_1}  \|D^\beta f_3\|_{p_3} +  \|D^{\alpha +\beta} f_1\|_{p_1}  \| f_3\|_{p_3} \big)^{\p}.
\]

\medskip
\item[$I_B)$] In frequency we are still restricted to the region $R_1$ \eqref{Sec3:off-diagonal:1}, but the shape of the multiplier $m_{I_B}$ suggests the important role played by $\xi_4+\xi_5$ in the current situation, which requires an extra conical decomposition. We assume without loss of generality that we are restricted to the region
\[
R_{1, 5}=\{ (\xi_1, \ldots, \xi_5) :  |\xi_1| \gg |\xi_2|, \ldots, |\xi_5| \text{    and   } |\xi_4| \leq |\xi_5|  \}.
\]
In consequence, we can sum in \eqref{5lin:freq:rep} over $k_2, k_3 \ll k_1$ and $k_4 \leq k_5$ and focus our attention on 
\begin{align*}
T_{m_{I_B}}^{k_1, k_5}(f_1, \ldots, f_5)(x):=\int_{\BBR^5} & \frac{|\xi_1+ \ldots+ \xi_5|^\beta  - |\xi_1+\xi_2|^\beta}{\xi_3+\xi_4+\xi_5} \cdot (\xi_4+\xi_5) \cdot |\xi_1+\xi_2|^\alpha \cdot |\xi_4+\xi_5|^\gamma \\  & \widehat{\Delta_{k_1} f_1}(\xi_1) \cdot \widehat{S_{ k_1} f_2}(\xi_2) \cdot \widehat{S_{k_1} f_3}(\xi_3) \cdot \widehat{\Delta_{\leq k_5} f_4}(\xi_4) \cdot \widehat{\Delta_{k_5} f_5}(\xi_5) e^{2 \pi i x \left(\xi_1+\ldots+\xi_5 \right)} d \, \xi_1 \ldots d \, \xi_5.
\end{align*}

As before, we perform a double Fourier series decomposition of $m_{C_\beta}^{k_1, \pm}$ on $\pm \big[ 2^{k_1}, 2^{k_1+1} \big] \times [- 2^{k_1-1}, 2^{k_1-1} ]$, and similarly to \eqref{eq:FS:I_A:2}, we decompose $|\xi_1+\xi_2|^\alpha$ on $\pm \big[ 2^{k_1}, 2^{k_1+1} \big]$; in both cases, the coefficients have arbitrary decay.

If we denote by $d$ the classical derivative on $\BBR$, we have 
\begin{align*}
\|T_{m_{I_A}}^{k_1, k_5}(f_1, \ldots, f_5)\|_r^\tau \lesssim \sum_{L_1, L_2 \in \BBZ} \sum_{ \tilde L \in \BBZ} |C_{L_1, L_2}^{\pm}|^\tau |C^{\pm}_{\tilde L}|^\tau   2^{k_1(\beta-1) \tau}  2^{k_1 \alpha \tau}  \big\| \Delta_{{k_1}, \pm, \frac{L_1+\tilde L}{2^{k_1}}} f_1\big\|_{p_1}^\tau \big\|S_{{k_1},\frac{L_1+\tilde L}{2^{k_1}}} f_2 \big\|_{p_2}^\tau \big \|S_{k_1, \frac{L_2}{2^{k_1}}} f_3 \big\|_{p_3}^\tau & \\ \cdot \big \|  D^{\gamma} \circ d (\Delta_{\leq k_5 ,  \frac{L_2}{2^{k_1}}} f_4 \cdot \Delta_{k_5,  \frac{L_2}{2^{k_1}}} f_5) \big \|_{p_{4,5}}^\tau&.
\end{align*}

We again use the induction hypothesis for estimating the last term; however, $f_4$ and $f_5$ are already well localized in frequency, so we can invoke \eqref{result:Leibniz:Whitney} and \eqref{result:local:diag}:
\begin{align*}
 \big \| D^{\gamma}\circ d (\Delta_{\leq k_5 ,  \frac{L_2}{2^{k_1}}} f_4 \cdot \Delta_{k_5,  \frac{L_2}{2^{k_1}}} f_5) \big \|_{p_{4,5}} 
 & \lesssim  2^{k_5}\|f_4\|_{p_4} \|  \Delta_{k_5}  D^\gamma f_5 \|_{p_5}
\end{align*}
for $1 \leq p_4, p_5 \leq \infty$, $\frac{1}{1+\gamma} < p_{4,5} \leq \infty$. The last inequality follows from \eqref{eq:trivial:modulation} and Young's convolution inequality.

With these considerations, and invoking again \eqref{eq:trivial:modulation} and \eqref{eq:direction:LP:same}, we can simply focus on bounding 
\begin{align*}
\|I_B  \|_{r}^{\p} &\lesssim \sum_{k_5< k_1} 2^{k_1(\beta-1) \p}  2^{k_5 \p}  2^{k_1  \alpha \p}  \|\Delta_{{k_1}} f_1\|_{p_1}^{\p}  \| f_2\|_{p_2}^{\p}  \| f_3\|_{p_3}^{\p} \| f_4  \|_{p_4}^{\p}    \| \Delta_{k_5} f_5\|_{p_5}^{\p} \\
& \lesssim  \| f_2\|_{p_2}^{\p}  \| f_3\|_{p_3}^{\p} \| f_4 \|_{p_4}^{\p} \big( \sum_{k_5< k_1} 2^{k_1(\beta-1) \p}  2^{k_5  \p}   \|\Delta_{{k_1}} D^\alpha f_1\|_{p_1}^{\p}  \| \Delta_{k_5} D^\gamma f_5\|_{p_5}^{\p} \big).
\end{align*}

As before, we make appear the Besov norms, and we optimize in the $2^{k_1}$ parameter: the expression in the last display involving $k_5$ and $k_1$ is bounded above by 
\begin{align*}
\sum_{k_1} \min & \big( 2^{k_1  \beta \p} \|D^\alpha  f_1\|_{\dot B_{p_1, \infty}^0}^{\p}   \|D^\gamma f_5\|_{\dot B_{p_5, \infty}^0}^{\p},  2^{- k_1  \epsilon \p} \|D^\alpha  f_1\|_{\dot B_{p_1, \infty}^{\beta}}^{\p}   \| D^\gamma f_5\|_{\dot B_{p_5, \infty}^{\epsilon}}^{\p} \big) \\
&\lesssim  \big( \|D^\alpha  f_1\|_{\dot B_{p_1, \infty}^0}^{\p}   \|D^\gamma f_5\|_{\dot B_{p_5, \infty}^0}^{\p}\big)^{\epsilon \over {\beta+\epsilon}}  \big( \|D^\alpha  f_1\|_{\dot B_{p_1, \infty}^{\beta}}^{\p}   \|D^\gamma f_5\|_{\dot B_{p_5, \infty}^{\epsilon}}^{\p}  \big)^{\beta \over {\beta+\epsilon}} \\
&\lesssim  \big( \|D^\alpha  f_1\|_{\dot B_{p_1, \infty}^0}^{\p}   \| D^\gamma f_5\|_{\dot B_{p_5, \infty}^\beta}^{\p} \big)^{\epsilon \over {\beta+\epsilon}}  \big( \|D^\alpha  f_1\|_{\dot B_{p_1, \infty}^{\beta}}^{\p}   \| D^\gamma f_5\|_{\dot B_{p_5, \infty}^{0}}^{\p}  \big)^{\beta \over {\beta+\epsilon}}.
\end{align*}

Notice that the $D^\beta$ derivatives are distributed between the $D^\alpha f_1$ and $D^\gamma f_5$ functions.
\vskip .2cm

\item[$I_C$)] In this case, only the Littlewood-Paley information for $f_1$ and $f_2$ will be needed, so we sum over $k_3, k_4, k_5 \ll k_1$ in \eqref{5lin:freq:rep}, focusing on  
\begin{align*}
T_{m_{I_C}}^{k_1, k_2}(f_1, \ldots, f_5)(x):=\int_{\BBR^5} & \frac{|\xi_1+\xi_2|^{\alpha+\beta}  - |\xi_1|^{\alpha+\beta}}{\xi_2} \cdot \xi_2 \cdot |\xi_4+\xi_5|^\gamma \quad \widehat{\Delta_{k_1} f_1}(\xi_1) \cdot \widehat{\Delta_{k_2} f_2}(\xi_2) \\
& \cdot \widehat{S_{k_1} f_3}(\xi_3) \cdot \widehat{S_{k_1} f_4}(\xi_4) \cdot \widehat{S_{ k_1} f_5}(\xi_5) e^{2 \pi i x \left(\xi_1+\ldots+\xi_5 \right)} d \, \xi_1 \ldots d \, \xi_5.
\end{align*}
This factorizes straightway into 
\begin{align*}
\Big( \int_{\BBR^2} & \frac{|\xi_1+\xi_2|^{\alpha+\beta}  - |\xi_1|^{\alpha+\beta}}{\xi_2} \cdot \xi_2 \cdot \widehat{\Delta_{k_1} f_1}(\xi_1) \cdot \widehat{\Delta_{k_2} f_2}(\xi_2) e^{2 \pi i x \left(\xi_1+\xi_2 \right)} d \, \xi_1  \xi_2 \Big) \cdot S_{k_1} f_3(x) \cdot D^\gamma( S_{k_1} f_4 \cdot S_{k_1} f_5)(x).
\end{align*}

The first term corresponds to $m_{C_{\alpha+\beta}}(\xi_1, \xi_2)$, a commutator symbol as in \eqref{def:commutator}, so we simply estimate it as in \eqref{eq:commutator:superposition}. For the remaining terms, we use the bilinear Leibniz rule result and the boundedness of the $S_{k_1}$ operator to deduce
\begin{align*}
\|I_C\|_{r}^{\p} \lesssim \Big(\sum_{k_2 < k_1} 2^{k_1(\alpha+\beta-1) \p} 2^{k_2 \p} \|\Delta_{{k_1}} f_1\|_{p_1}^{\p}  \|\Delta_{{k_2}} f_2\|_{p_2}^{\p}  \Big) \cdot \|f_3\|_{p_3}^{\p} \big( \|D^\gamma f_4\|_{p_4} \cdot \| f_5\|_{p_5} +  \| f_4\|_{p_4} \cdot \| D^\gamma f_5\|_{p_5} \big)^{\p}.
\end{align*}

The summation over $k_2<k_1$ is reduced as before to 
\begin{align*}
\sum_{k_1} \min \big( 2^{k_1 (\alpha+ \beta) \p} \| f_1\|_{\dot B_{p_1, \infty}^0}^{\p}   \| f_2\|_{\dot B_{p_2, \infty}^0}^{\p},  &  2^{- k_1  \epsilon \p} \| f_1\|_{\dot B_{p_1, \infty}^{\alpha+\beta}}^{\p}   \|  f_2\|_{\dot B_{p_2, \infty}^{\epsilon}}^{\p} \big) \lesssim \ldots  \\
&\lesssim  \big( \| f_1\|_{\dot B_{p_1, \infty}^0}^{\p}   \| f_2\|_{\dot B_{p_2, \infty}^{\alpha+\beta}}^{\p} \big)^{\epsilon \over {\alpha+\beta+\epsilon}}  \big( \| f_1\|_{\dot B_{p_1, \infty}^{\alpha+\beta}}^{\p}   \| f_2\|_{\dot B_{p_2, \infty}^{0}}^{\p}  \big)^{{\alpha+\beta} \over {\alpha+\beta+\epsilon}}.
\end{align*}

\item[$I_D$)] For this last term, we need to deal with $T_{m_{I_D}}^{k_1}(f_1, \ldots, f_5)$, defined by
\begin{align*}
&\quad \int_{\BBR^5}|\xi_1|^{\alpha+\beta} \cdot |\xi_4+\xi_5|^\gamma \quad \widehat{\Delta_{k_1} f_1}(\xi_1) \cdot \widehat{S_{k_1} f_2}(\xi_2) \cdot \widehat{S_{k_1} f_3}(\xi_3) \cdot \widehat{S_{k_1} f_4}(\xi_4) \cdot \widehat{S_{ k_1} f_5}(\xi_5) e^{2 \pi i x \left(\xi_1+\ldots+\xi_5 \right)} d \, \xi_1 \ldots d \, \xi_5 \\
&= (\Delta_{k_1} D^{\alpha+\beta} f_1)(x)  S_{k_1} f_2(x)  S_{k_1} f_3(x)  D^\gamma(S_{k_1} f_4 \cdot S_{k_1} f_5)(x).
\end{align*}

The idea is to write each $S_{k_1} f_l$, for $2 \leq l \leq 5$, as 
\[
S_{k_1} f_l(x) 
= f_l(x)- \Delta_{\succ k_1}f_l(x).
\]
Then $T_{m_{I_D}}^{k_1}(f_1, \ldots, f_5)$ becomes, once we sum in $k_1$,
\begin{align*}
&\sum_{k_1}(\Delta_{k_1} D^{\alpha+\beta} f_1)(x)  ( f_2(x) - \Delta_{\succ k_1} f_2)(x)  ( f_3(x) - \Delta_{\succ k_1} f_3)(x)  D^\gamma(( f_4 - \Delta_{\succ k_1} f_4) \cdot ( f_5 - \Delta_{\succ k_1} f_5)(x) \\
= &  (D^{\alpha+\beta} f_1)(x) \cdot f_2(x)\cdot f_3(x) \cdot D^\gamma(f_4 \cdot f_5)(x) - \sum_{k_1} (\Delta_{k_1} D^{\alpha+\beta} f_1)(x) (\Delta_{\succ k_1} f_2)(x) f_3(x)  D^\gamma(f_4 \cdot f_5)(x) \\
-&  \sum_{k_1} (\Delta_{k_1} D^{\alpha+\beta} f_1)(x)  f_2(x) f_3(x)  D^\gamma( (\Delta_{\succ k_1} f_4) \cdot f_5)(x)    + \text{similar terms}.
\end{align*}

The first term is bounded thanks to H\"older's inequality and the boundedness of the bilinear Leibniz rule for $D^\gamma(f_4 \cdot f_5)$. For the remaining terms, the $D^\beta$ derivatives will be shared between $D^\alpha \Delta_{k_1} f_1$ and $\Delta_{\succ k_1} f_l$, for some $l \in \{ 2, 3, 4, 5 \}$. If there is more than one function with associated projections $\Delta_{\succ k_1}$, we simply use one of them.

Two situations become apparent: 1) when the function associated to the maximal scale (in this case $f_1$ which corresponds to $k_1$) and the function on which $\Delta_{\succ k_1}$ acts are in the same subtree, and  2) when the functions are in different subtrees. 

The term $\sum_{k_1} (\Delta_{k_1} D^{\alpha+\beta} f_1) (\Delta_{\succ k_1} f_2) f_3  D^\gamma(f_4 \cdot f_5)$ corresponds to the first situation ($f_1$ and $f_2$ are contained in the same subtree associated to the Leibniz rule $D^\alpha(f_1 \cdot f_2)$), and $\sum_{k_1} (\Delta_{k_1} D^{\alpha+\beta} f_1)  f_2 f_3  D^\gamma( (\Delta_{\succ k_1} f_4) \cdot f_5)$ to the second one ($f_1$ and $f_4$ are leaves in different subtrees).

In order to estimate $\sum_{k_1} (\Delta_{k_1} D^{\alpha+\beta} f_1)(x) (\Delta_{\succ  k_1} f_2)(x) f_3(x)  D^\gamma(f_4 \cdot f_5)(x)$, we start with the observation that
\begin{equation}
\label{eq:switch} 
\| \Delta_{\succ  k_1} f_2\|_{p_2} \leq \min( \| f_2\|_{p_2}, \, 2^{- \epsilon k_1}\| f_2\|_{\dot B_{p_2, \infty}^\epsilon}).
\end{equation}
Then we use H\"older's inequality and the bilinear Leibniz rule \eqref{Leib_one_paraproduct:strategy} for the $ D^\gamma(f_4 \cdot f_5)$ part, to again, reduce ourselves to estimating
\begin{align*}
&\sum_{k_1} 2^{k_1 \beta} \|\Delta_{k_1} f_1\|_{p_1}^{\p}  \|\Delta_{\succ  k_1} f_2 \|_{p_2}^{\p} \lesssim  \sum_{k_1} \min  \big( 2^{k_1  \beta \p} \|D^\alpha  f_1\|_{\dot B_{p_1, \infty}^0}^{\p}   \| f_2\|_{p_2}^{\p},  2^{- k_1  \epsilon \p} \|D^\alpha  f_1\|_{\dot B_{p_1, \infty}^{\beta}}^{\p}   \| f_2\|_{\dot B_{p_2, \infty}^{\epsilon}}^{\p} \big) \\
&\lesssim  \big( \|D^\alpha  f_1\|_{\dot B_{p_1, \infty}^0}^{\p}   \| f_2\|_{p_2}^{\p}\big)^{\epsilon \over {\beta+\epsilon}}  \big( \|D^\alpha  f_1\|_{\dot B_{p_1, \infty}^{\beta}}^{\p}   \|f_2\|_{\dot B_{p_2, \infty}^{\epsilon}}^{\p}  \big)^{\beta \over {\beta+\epsilon}} \lesssim  \big( \|D^\alpha  f_1\|_{\dot B_{p_1, \infty}^0}^{\p}   \| f_2\|_{\dot B_{p_2, \infty}^\beta}^{\p} \big)^{\epsilon \over {\beta+\epsilon}}  \big( \|D^\alpha  f_1\|_{\dot B_{p_1, \infty}^{\beta}}^{\p}   \| f_2\|_{p_2}^{\p}  \big)^{\beta \over {\beta+\epsilon}}.
\end{align*}
Of course, the $D^{\alpha+\beta}$ derivatives could be shared between $f_1$ and $f_2$.

In the second situation, we start by applying H\"older to obtain 
\begin{align}
\label{eq:I_D:change}
\sum_{k_1} 2^{k_1 \beta \tau} \|D^\alpha \Delta_{k_1} f_1\|_{p_1}^\tau \|f_2\|_{p_2}^\tau \|f_3\|_{p_3}^\tau \|D^\gamma( (\Delta_{\succ  k_1} f_4) \cdot f_5)\|_{p_{4,5}}^\tau.
\end{align}
Instead of directly applying the bilinear Leibniz rule \eqref{Leib_one_paraproduct:strategy} to $D^\gamma( (\Delta_{\succ  k_1} f_4) \cdot f_5)$ , we perform an extra paraproduct decomposition. This is because  we want the terms appearing in the Leibniz rule to coincide with those obtained by regular composition : we should not have terms such as $\|f_1\|_{p_1} \|D^\beta f_4\|_{p_4}\|D^\gamma f_5\|_{p_5}$ appearing. Hence we write 
\begin{align*}
D^\gamma( (\Delta_{\succ  k_1} f_4) \cdot f_5)= \sum_{k_4} D^\gamma( (\Delta_{k_4}\Delta_{\succ  k_1} f_4) \cdot (\Delta_{\leq k_4} f_5))+  \sum_{k_5} D^\gamma( (S_{k_5}\Delta_{\succ  k_1} f_4) \cdot (\Delta_{ k_5} f_5)).
\end{align*}
Now notice that $\Delta_{k_4}\Delta_{\succ  k_1} \neq 0$ only if $k_4 \succ k_1$, and similarly $S_{k_5}\Delta_{\succ  k_1} \neq 0$ only if $k_5 \succ k_1$. So in fact 
\[
\sum_{k_4} D^\gamma( (\Delta_{k_4}\Delta_{\succ  k_1} f_4) \cdot (\Delta_{\leq k_4} f_5))= \sum_{k_4 \succ k_1} D^\gamma( (\Delta_{k_4}\Delta_{\succ  k_1} f_4) \cdot (\Delta_{\leq k_4} f_5))
\]
and 
\[
 \sum_{k_5} D^\gamma( (S_{k_5}\Delta_{\succ  k_1} f_4) \cdot (\Delta_{ k_5} f_5))=  \sum_{k_5 \succ k_1} D^\gamma( (S_{k_5}\Delta_{\succ  k_1} f_4) \cdot (\Delta_{ k_5} f_5)).
\]
Overall we get, thanks to \eqref{result:Leibniz:Whitney} and \eqref{result:local:diag}, 
\begin{align*}
\|D^\gamma( (\Delta_{\succ  k_1} f_4) \cdot f_5)\|_{p_{4,5}}^\tau \lesssim  & \sum_{k_4 \succ k_1} \|\Delta_{k_4}\Delta_{\succ  k_1} D^\gamma f_4\|_{p_4}^\tau \|\Delta_{\leq k_4} f_5\|_{p_5}^\tau +  \sum_{k_5 \succ k_1} \|S_{k_5}\Delta_{\succ  k_1} f_4 \|_{p_4}^\tau \|\Delta_{ k_5} D^\gamma f_5\|_{p_5}^\tau \\
\lesssim &  \sum_{k_4 \succ k_1} \|\Delta_{k_4} D^\gamma f_4\|_{p_4}^\tau \| f_5\|_{p_5}^\tau +  \sum_{k_5 \succ k_1} \|\Delta_{\succ  k_1} f_4 \|_{p_4}^\tau \|\Delta_{ k_5} D^\gamma f_5\|_{p_5}^\tau. 
\end{align*}
So that \eqref{eq:I_D:change} is reduced to 
\begin{align*}
&\|f_2\|_{p_2}^\tau \|f_3\|_{p_3}^\tau \| f_5\|_{p_5}^\tau  \big( \sum_{k_4 \succ k_1} 2^{k_1 \beta \tau} \|D^\alpha \Delta_{k_1} f_1\|_{p_1}^\tau  \|\Delta_{k_4} D^\gamma f_4\|_{p_4}^\tau \big) + \|f_2\|_{p_2}^\tau \|f_3\|_{p_3}^\tau \| f_4\|_{p_4}^\tau  \big( \sum_{k_5 \succ k_1} 2^{k_1 \beta \tau} \|D^\alpha \Delta_{k_1} f_1\|_{p_1}^\tau  \|\Delta_{k_5} D^\gamma f_5\|_{p_5}^\tau \big). 
\end{align*}

Due to symmetry, we only look at the term $\sum_{k_4 \succ k_1} 2^{k_1 \beta \tau} \|D^\alpha \Delta_{k_1} f_1\|_{p_1}^\tau  \|\Delta_{k_4} D^\gamma f_4\|_{p_4}^\tau$, which can be estimated by
\begin{align*}
&\sum_{k_4} \min \big(  2^{k_4 \beta \tau} \|D^\alpha f_1\|_{\dot{B}^0_{p_1, \infty}}^{\tau} \| D^\gamma f_4\|_{\dot{B}^0_{p_4, \infty}}^{\tau}, 2^{-k_4 \epsilon \tau }  \|D^\alpha f_1\|_{\dot{B}^\epsilon_{p_1, \infty}}^{\tau} \| D^\gamma f_4\|_{\dot{B}^\beta_{p_4, \infty}}^{\tau}   \big) \\
&\lesssim \big(    \|D^\alpha f_1\|_{\dot{B}^0_{p_1, \infty}}^{\tau} \| D^\gamma f_4\|_{\dot{B}^0_{p_4, \infty}}^{\tau}  \big)^{\epsilon \over {\beta+\epsilon}}  \big(   \|D^\alpha f_1\|_{\dot{B}^\epsilon_{p_1, \infty}}^{\tau} \| D^\gamma f_4\|_{\dot{B}^\beta_{p_4, \infty}}^{\tau}  \big)^{\beta \over {\beta+\epsilon}} \\
&\lesssim \big(    \|D^\alpha f_1\|_{\dot{B}^\beta_{p_1, \infty}}^{\tau} \| D^\gamma f_4\|_{\dot{B}^0_{p_4, \infty}}^{\tau}  \big)^{\epsilon \over {\beta+\epsilon}}  \big(   \|D^\alpha f_1\|_{\dot{B}^0_{p_1, \infty}}^{\tau} \| D^\gamma f_4\|_{\dot{B}^\beta_{p_4, \infty}}^{\tau}  \big)^{\beta \over {\beta+\epsilon}}. 
\end{align*}
Of course, this is bounded by $\big(\|D^{\alpha+\beta} f_1\|_{p_1}  \| D^\gamma f_4\|_{p_4}+\|D^{\alpha} f_1\|_{p_1}  \| D^{\beta+\gamma} f_4\|_{p_4} \big)^\tau$; we point out that the distribution of derivatives follows the same law as the composition of Leibniz rules, except that now input functions in any $L^p$ spaces, with $1 \leq p \leq \infty$, are admissible.
\end{itemize}

This exhausts the possible cases corresponding to the restriction to the frequency conical region $R_1$ \eqref{Sec3:off-diagonal:1}.

\bigskip

\subsection{\ref{caseII:k_3}: study of the conical region $R_3$}~\\
In this case, we restrict our attention to the frequency region $R_3$ \eqref{Sec3:off-diagonal:3}. 
Here we split the multiplier associated to the flag in 
\eqref{5lin:freq:rep} into 
 \begin{align*}
&\frac{ |\xi_1+\ldots +\xi_5|^\beta- |\xi_3|^\beta}{\xi_1+\xi_2 +\xi_4 +\xi_5} \cdot  (\xi_1+\xi_2) \cdot |\xi_1+\xi_2|^\alpha  \cdot |\xi_4+\xi_5|^\gamma + \frac{ |\xi_1+\ldots +\xi_5|^\beta- |\xi_3|^\beta}{\xi_1+\xi_2 +\xi_4 +\xi_5} \cdot |\xi_1+\xi_2|^\alpha    \cdot   (\xi_4+\xi_5) \cdot |\xi_4+\xi_5|^\gamma  \\
& + |\xi_3|^\beta \cdot |\xi_1+\xi_2|^\alpha \cdot |\xi_4+\xi_5|^\gamma := m_{II_A}+ m_{II_B}+ m_{II_C}.
  \end{align*}
As before, these are combined with Littlewood-Paley projections to obtain that \eqref{5lin:freq:rep} restricted to $R_3$ \eqref{Sec3:off-diagonal:3} equals
\begin{align*}
&\sum_{k_1, \ldots, k_5 \ll k_3 } T_{m_{II_A}}(\Delta_{k_1}f_1, \ldots, \Delta_{k_5}f_5)(x)+ \sum_{k_1, \ldots, k_5 \ll k_3 } T_{m_{II_B}}(\Delta_{k_1} f_1, \ldots,\Delta_{k_5} f_5)(x) \\
 + &\sum_{k_1, \ldots, k_5 \ll k_3 } T_{m_{II_C}}(\Delta_{k_1} f_1, \ldots, \Delta_{k_5} f_5)(x):= II_A+II_B+II_C.
\end{align*}

The multipliers $m_{II_A}$ and $m_{II_B}$ are symmetric, so it will suffice to study $II_A$ and $II_C$.

\begin{itemize}[leftmargin=*]
\item[$II_A$)] While $f_3$ is the fastest oscillating function, the shape of $m_{II_A}$ indicates that one of $f_1$ or $f_2$ will also be involved in the scale-by-scale analysis. To decide which, an additional paraproduct decomposition concerning the $\xi_1$ and $\xi_2$ variables is necessary -- for simplicity we assume $|\xi_2| \leq |\xi_1|$.

After summing over $k_4, k_5 \ll k_3$ and $k_2 \leq k_1$ in \eqref{5lin:freq:rep}, we need to analyze 
\begin{align*}
T_{m_{II_A}}^{k_3, k_1}(f_1, \ldots, f_5)(x):=\int_{\BBR^5} & \frac{|\xi_1+ \ldots+ \xi_5|^\beta  - |\xi_3|^\beta}{\xi_1+\xi_2+\xi_4+\xi_5}  \cdot  (\xi_1+\xi_2) \cdot |\xi_1+\xi_2|^\alpha  \cdot |\xi_4+\xi_5|^\gamma \\  &  \widehat{\Delta_{k_1} f_1}(\xi_1) \cdot \widehat{\Delta_{\leq k_1} f_2}(\xi_2) \cdot \widehat{\Delta_{k_3} f_3}(\xi_3) \cdot \widehat{S_{k_3} f_4}(\xi_4) \cdot \widehat{S_{ k_3} f_5}(\xi_5) e^{2 \pi i x \left(\xi_1+\ldots+\xi_5 \right)} d \, \xi_1 \ldots d \, \xi_5.
\end{align*}

We decompose into Fourier series the symbol 
\[
m_{C_\beta}^{k_3, \pm}(\xi_3, \xi_1+\xi_2+\xi_4+\xi_5)= \frac{|\xi_1+ \ldots+ \xi_5|^\beta  - |\xi_3|^\beta}{\xi_1+\xi_2+\xi_4+\xi_5} \tilde \psi_{k_3, \pm}(\xi_3) \tilde \varphi_{k_3}(\xi_1+\xi_2+\xi_4+\xi_5),
\]  
obtaining arbitrary decay for the Fourier coefficients. This allows to express $T_{m_{II_A}}^{k_3, k_1}(f_1, \ldots, f_5)(x)$ as a sum of 
\begin{align*}
\sum_{L_1, L_2 \in \BBZ} C_{L_1, L_2}^{\pm} 2^{k_3(\beta-1)} D^{\alpha} \circ d \big( \Delta_{k_1, \frac{L_2}{2^{k_3}}} f_1 \cdot \Delta_{\leq k_1, \frac{L_2}{2^{k_3}}} f_2 \big)(x) \cdot \Delta_{k_3,\pm, \frac{L_1}{2^{k_3}}} f_3(x) \cdot D^{\gamma} \big( S_{k_3, \frac{L_2}{2^{k_3}}} f_4 \cdot S_{k_3, \frac{L_2}{2^{k_3}}} f_5 \big)(x).
\end{align*}
We will appeal shortly to the boundedness of the bilinear Leibniz rule \eqref{Leib_one_paraproduct:strategy}, and its frequency-localized versions \eqref{result:Leibniz:Whitney} and \eqref{result:local:diag},
\[
\|D^{\alpha}\circ d \big( \Delta_{k_1, \frac{L_2}{2^{k_3}}} f_1 \cdot \Delta_{\leq k_1, \frac{L_2}{2^{k_3}}} f_2 \big)\|_{p_{1,2}} 
\lesssim 2^{k_1}\|\Delta_{k_1} D^\alpha f_1 \|_{p_1}  \|f_2  \|_{p_2},
\]
which is true as long as $\frac{1}{p_{1,2}}=\frac{1}{p_1}+\frac{1}{p_2}$, $1\leq p_1, p_2 \leq \infty$, and $\frac{1}{p_{1,2}}<1+\alpha$. 

Thus we have, for $0<\tau \leq \min(1, r)$, 
\begin{align*}
\| II_A  \|_r^{\tau} & \lesssim \sum_{L_1, L_2 \in \BBZ} |C_{L_1, L_2}^{\pm}|^\tau \sum_{k_1< k_3} 2^{k_3(\beta-1) \p}  2^{k_1 \p}  \| \Delta_{k_1} D^\alpha f_1 \|_{p_1}^{\p}  \|f_2  \|_{p_2}^{\p}  \|\Delta_{k_3, \pm} f_3\|_{p_3}^{\p} \|D^{\gamma} \big( S_{k_3, \frac{L_2}{2^{k_3}}} f_4 \cdot S_{k_3, \frac{L_2}{2^{k_3}}} f_5 \big)\|_{p_{4,5}}^{\p} \\
&\lesssim  \sum_{L_1, L_2 \in \BBZ} |C^{\pm}_{L_1, L_2}|^\tau  \sum_{k_1< k_3} 2^{k_3(\beta-1) \p}  2^{k_1 \p}  \| \Delta_{k_1} D^\alpha f_1 \|_{p_1}^{\p}  \| f_2  \|_{p_2}^{\p}  \|\Delta_{k_3} f_3\|_{p_3}^{\p} \\
& \qquad \cdot  \big(  \|S_{k_3, \frac{L_2}{2^{k_3}}} D^{\gamma} f_4 \|_{p_4} \| S_{k_3, \frac{L_2}{2^{k_3}}} f_5\|_{p_5} +  \|S_{k_3, \frac{L_2}{2^{k_3}}} f_4 \|_{p_4} \| S_{k_3, \frac{L_2}{2^{k_3}}} D^{\gamma} f_5\|_{p_5} \big)^\tau.
\end{align*}

Due to the fast decay of the coefficients $|C^{\pm}_{L_1, L_2}|$, and the boundedness of the $\Delta_{ \leq k_1, a}$ and $S_{k_3, a}$ operators (with norms independent of $k_1$, $k_3$ or $a$), we are left with summing
\begin{align*}
\sum_{k_1< k_3} 2^{k_3(\beta-1) \p}  2^{k_1 \p}  \|\Delta_{{k_1}} D^\alpha f_1\|_{p_1}^{\p}  \|\Delta_{{k_3}} f_3\|_{p_3}^{\tau} &  \lesssim \sum_{k_3} \min \big( 2^{k_3  \beta \p} \|D^\alpha  f_1\|_{\dot B_{p_1, \infty}^0}^{\p}   \| f_3\|_{\dot B_{p_3, \infty}^0}^{\p},  2^{- k_3  \epsilon \p} \|D^\alpha  f_1\|_{\dot B_{p_1, \infty}^{\epsilon}}^{\p}   \| f_3\|_{\dot B_{p_3, \infty}^{\beta}}^{\p} \big) \\
&\lesssim  \big( \|D^\alpha  f_1\|_{\dot B_{p_1, \infty}^\beta}^{\p}   \| f_3\|_{\dot B_{p_3, \infty}^0}^{\p} \big)^{\epsilon \over {\beta+\epsilon}}  \big( \|D^\alpha  f_1\|_{\dot B_{p_1, \infty}^{0}}^{\p}   \| f_3\|_{\dot B_{p_3, \infty}^{\beta}}^{\p}  \big)^{\beta \over {\beta+\epsilon}}.
\end{align*}

\medskip

\item[$II_C)$] Now we look at $II_C$, which for a fixed $k_3 \in \BBZ$ and after summing in $k_1, k_2, k_4, k_5 \ll k_3$ in \eqref{5lin:freq:rep}, corresponds to the operator 
\begin{equation}
\label{eq:T:m_3}
D^\alpha(S_{k_3} f_1 \cdot S_{k_3} f_2) \cdot \Delta_{k_3}D^\beta f_3 \cdot D^\gamma(S_{k_3} f_4 \cdot S_{k_3} f_5).
\end{equation}
The derivatives $D^\beta$ hit the fastest oscillating function, i.e. $f_3$. We still need to sum over $k_3$, so although the operator in \eqref{eq:T:m_3} is tensorized, it will not be trivial to estimate. Here we switch the order of summation again as in $I_D$, and the operator in \eqref{eq:T:m_3} becomes
\begin{equation}
\label{eq:T:m_3:switch}
D^\alpha(  (f_1- \Delta_{\succ k_3} f_1) \cdot (f_2- \Delta_{\succ k_3} f_2)) \cdot \Delta_{k_3}D^\beta f_3 \cdot D^\gamma((f_4- \Delta_{\succ k_3} f_4) \cdot (f_5- \Delta_{\succ k_3} f_5)).
\end{equation}

We use the linearity of the derivation operators to sum over $k_3$ the expressions in \eqref{eq:T:m_3:switch}; we have
\begin{align*}
II_C=&D^\alpha( f_1 \cdot  f_2) \cdot D^\beta f_3 \cdot D^\gamma( f_4 \cdot  f_5) - \sum_{k_3} D^\alpha( \Delta_{\succ k_3} f_1 \cdot  f_2) \cdot \Delta_{k_3}D^\beta f_3 \cdot D^\gamma( f_4 \cdot  f_5) \\
&- \sum_{k_3} D^\alpha( f_1 \cdot  \Delta_{\succ k_3}  f_2) \cdot \Delta_{k_3}D^\beta f_3 \cdot D^\gamma( f_4 \cdot  f_5)+ \text{ similar term}
\end{align*}

The first term can be easily bounded in $L^r$ thanks to H\"older's inequality and the paraproduct Leibniz rule \eqref{Leib_one_paraproduct:strategy}. For the other terms, it will be sufficient to use one of the functions on which $\Delta_{\succ k_3}$ acts; and if it acts on several, we pick one of them, which will contribute to the summation in $k_3$. For flag Leibniz rules, a special attention is required by the distribution of the derivatives; for that reason, when we are compelled to use a function belonging to a different subtree than $f_3$, an intermediate step is necessary to make sure that the $D^\beta$ derivatives will be distributed according to the composition laws. It will be enough to treat the second term, since the remaining ones are very similar. It will be bounded, in $\|\cdot \|_r^{\p}$, by
\begin{align*}
&\sum_{k_3} \|D^\alpha( \Delta_{\succ k_3} f_1 \cdot  f_2)\|_{p_{1, 2}}^{\p} \|\Delta_{k_3} D^\beta f_3\|_{p_3}^{\p}  \|D^\gamma( \Delta_{\succ k_3} f_4 \cdot  f_5)\|_{p_{4,5}}^{\p} \\
&\lesssim \sum_{k_3}  \|D^\alpha( \Delta_{\succ k_3} f_1 \cdot  f_2)\|_{p_{1, 2}}^{\p}  \|\Delta_{k_3} D^\beta f_3\|_{p_3}^{\p} \big(  \|D^\gamma f_4 \|_{p_4} \cdot \| f_5\|_{p_5} + \| f_4\|_{p_4} \cdot  \|D^\gamma  f_5 \|_{p_5}  \big)^{\p}.
\end{align*}

We take a closer look at the factor $D^\alpha( \Delta_{\succ k_3} f_1 \cdot  f_2)$, which is equal to
\begin{align*}
\sum_{k_1} D^\alpha( \Delta_{k_1} \Delta_{\succ k_3} f_1 \cdot \Delta_{\leq k_1}  f_2)(x)+  \sum_{k_2} D^\alpha( S_{k_2} \Delta_{\succ k_3} f_1 \cdot \Delta_{ k_2}  f_2)(x).
\end{align*}
We notice that the only way $ \Delta_{k_1} \Delta_{\succ k_3} \neq 0$ is if $k_1\succ k_3$, and similarly, $S_{k_2} \Delta_{\succ k_3} \neq 0$ only if $k_2 \succ k_3$. Thus the expression above becomes
\begin{align*}
\sum_{k_1 \succ k_3} D^\alpha( \Delta_{k_1} \Delta_{\succ k_3} f_1 \cdot \Delta_{\leq k_1}  f_2)(x)+  \sum_{k_2 \succ k_3} D^\alpha( S_{k_2} \Delta_{\succ k_3} f_1 \cdot \Delta_{ k_2}  f_2)(x).
\end{align*}
Because of \eqref{result:local:diag} and \eqref{result:Leibniz:Whitney}, which indicate that derivatives tend to move towards higher oscillating functions,
\begin{align*}
\|D^\alpha( \Delta_{\succ k_3} f_1 \cdot  f_2)\|_{p_{1, 2}}^{\p} & \lesssim \sum_{k_1 \succ k_3} \|D^\alpha( \Delta_{k_1} \Delta_{\succ k_3} f_1 \cdot \Delta_{\leq k_1}  f_2)\|_{p_{1,2}}^{\p} + \sum_{k_2 \succ k_3} \|D^\alpha( S_{k_2} \Delta_{\succ k_3} f_1 \cdot \Delta_{ k_2}  f_2)\|_{p_{1,2}}^{\p} \\
&\lesssim \sum_{k_1 \succ k_3}  \|\Delta_{k_1} D^\alpha f_1\|_{p_1}^\tau \| f_2\|_{p_2}^\tau + \sum_{k_2 \succ k_3}  \| f_1\|_{p_1}^\tau \|\Delta_{ k_2} D^\alpha  f_2\|_{p_2}^\tau.
\end{align*}
Hence $\|II_C  \|_{r}^{\p}$ is bounded by the sum of several similar terms of the form 
\[
\sum_{k_1 \succ k_3}  \|\Delta_{k_1} D^\alpha f_1\|_{p_1}^\tau \| f_2\|_{p_2}^\tau  2^{k_3 \beta \tau} \|\Delta_{k_3} f_3\|_{p_3}^{\p}  \|D^\gamma f_4 \|_{p_4}^{\p}  \| f_5\|_{p_5}^\tau. 
\]

We can put the functions $f_2, f_4$ and $f_5$ aside (in the one-parameter case, there will always be two functions involved in this type of summation), so we are left with
\begin{align*}
\sum_{k_1} & \min \big(  2^{k_1 \beta \tau} \|D^\alpha f_1\|_{\dot{B}^0_{p_1, \infty}}^{\tau} \| f_3\|_{\dot{B}^0_{p_3, \infty}}^{\tau}, 2^{-k_1 \epsilon \tau }  \|D^\alpha f_1\|_{\dot{B}^\beta_{p_1, \infty}}^{\tau} \| f_3\|_{\dot{B}^\epsilon_{p_3, \infty}}^{\tau}   \big) \\
&\lesssim \big(    \|D^\alpha f_1\|_{\dot{B}^0_{p_1, \infty}}^{\tau} \|  f_3\|_{\dot{B}^\beta_{p_3, \infty}}^{\tau}  \big)^{\epsilon \over {\beta+\epsilon}}  \big(   \|D^\alpha f_1\|_{\dot{B}^\beta_{p_1, \infty}}^{\tau} \| f_3\|_{\dot{B}^0_{p_3, \infty}}^{\tau}  \big)^{\beta \over {\beta+\epsilon}}. 
\end{align*} 
\end{itemize}
\vskip .4cm

\subsection{\ref{caseIII:multi}: study of ``diagonal'' conical regions}~\\ 
Here we restrict the operator in \eqref{5lin:freq:rep} to the frequency region \eqref{Sec3:diagonal}. 
Due to the structure of the flag and its symmetries, it will be enough to investigate the cases $l_1=1, l_2=3$ (representing $III_A$) and $l_1=1, l_2=5$ (case $III_B$).

In this situation, we will not make use of commutators; instead, the Littlewood-Paley information of the functions $f_{l_1}$ and $f_{l_2}$ will be sufficient for estimating the summation of the various scales in \eqref{5lin:freq:rep}.

\begin{itemize}[leftmargin=*]
\item[$III_A)$] In this case, the main contribution will come from the functions $f_1$ and $f_3$. We can sum in \eqref{5lin:freq:rep} over $k_2, k_4, k_5 \leq k_1  \sim k_3$ to get
\begin{align*}
T_{m_{III_A}}^{k_1}(f_1, \ldots, f_5)(x):=\int_{\BBR^5} & |\xi_1+ \ldots + \xi_5|^\beta \cdot |\xi_1+\xi_2|^\alpha \cdot |\xi_4+\xi_5|^\gamma \cdot \widehat{\Delta_{k_1} f_1}(\xi_1) \cdot \widehat{\Delta_{\leq k_1} f_2}(\xi_2)  \\  & \quad \cdot \widehat{\Delta_{k_1} f_3}(\xi_3) \cdot \widehat{\Delta_{\leq k_1} f_4}(\xi_4) \cdot \widehat{\Delta_{\leq k_1} f_5}(\xi_5) e^{2 \pi i x \left(\xi_1+\ldots+\xi_5 \right)} d \, \xi_1 \ldots d \, \xi_5.
\end{align*}

Given the assumptions on the scales, we have that $ |\xi_1+ \ldots + \xi_5|\leq C 2^{k_1}$ and $ |\xi_1+ \xi_2| \leq  2^{k_1+1}$. Hence\footnote{Here we might need to assume a certain amount of separation between the frequency pieces, which is easy to obtain by a sparsification argument that only introduces $O(1)$ new terms.} we use Fourier series on $[- 2^{k_1}, 2^{k_1}]$ to tensorize and decompose the $|\xi_1+ \ldots + \xi_5|^\beta \tilde \varphi_{k_1}(\xi_1+ \ldots + \xi_5)$ symbol; the Fourier coefficients will only have limited decay, but that is still okay. $T_{m_{III_A}}^{k_1}(f_1, \ldots, f_5)(x)$ becomes
\begin{align*}
\sum_{L \in \BBZ} C_L 2^{k_1 \beta} D^\alpha(\Delta_{k_1, \frac{L}{2^{k_1}}} f_1 \cdot \Delta_{\leq k_1, \frac{L}{2^{k_1}}} f_2)(x) \cdot  \Delta_{k_1, \frac{L}{2^{k_1}}} f_3 (x) \cdot D^\gamma(\Delta_{\leq k_1, \frac{L}{2^{k_1}}} f_4 \cdot \Delta_{\leq k_1, \frac{L}{2^{k_1}}} f_5)(x),
\end{align*}
where $|C_L| \lesssim (1+|L|)^{-(1+\beta)}$. So for $\tau \leq \min(1, r)$ with $\frac{1}{1+\beta} < \tau$, we have 
\begin{align*}
\| III_A  \|_r^\tau \lesssim \sum_{k_1} \sum_{L \in \BBZ} |C_L|^\tau 2^{k_1 \beta \tau } \|D^\alpha(\Delta_{k_1, \frac{L}{2^{k_1}}} f_1 \cdot \Delta_{\leq k_1, \frac{L}{2^{k_1}}} f_2)\|_{p_{1,2}}^\tau \| \Delta_{k_1, \frac{L}{2^{k_1}}} f_3 \|_{p_3}^\tau \|D^\gamma(\Delta_{\leq k_1, \frac{L}{2^{k_1}}} f_4 \cdot \Delta_{\leq k_1, \frac{L}{2^{k_1}}} f_5)\|_{p_{4,5}}^\tau.
\end{align*}

For $p_{1, 2}, p_{4,5}$ so that $\frac{1}{1+\alpha}< p_{1,2}=\frac{p_1 p_2}{p_1+p_2} \leq \infty$, $\frac{1}{1+\gamma}< p_{4,5}=\frac{p_4 p_5}{p_4+p_5} \leq \infty$, we further deduce
\begin{align*}
\| III_A  \|_r^\tau &\lesssim  \sum_{k_1} \sum_{L \in \BBZ} C_L^\tau 2^{k_1 \beta \tau } \|\Delta_{k_1, \frac{L}{2^{k_1}}} D^\alpha f_1 \|_{p_1}^\tau \|f_2\|_{p_{2}}^\tau \| \Delta_{k_1, \frac{L}{2^{k_1}}} f_3 \|_{p_3}^\tau \\
& \qquad \qquad\cdot  \big(  \|D^\gamma(\Delta_{ \leq k_1, \frac{L}{2^{k_1}}} f_4) \|_{p_4} \|\Delta_{\leq k_1, \frac{L}{2^{k_1}}} f_5\|_{p_{5}} +   \|\Delta_{\leq k_1, \frac{L}{2^{k_1}}} f_4\|_{p_4}  \| D^\gamma (\Delta_{\leq k_1, \frac{L}{2^{k_1}}} f_5)\|_{p_5} \big)^\tau \\
&\lesssim  \sum_{k_1}  \|\Delta_{k_1} D^\alpha f_1 \|_{p_1}^\tau \| f_2\|_{p_{2}}^\tau \| \Delta_{k_1} f_3 \|_{p_3}^\tau  \big(  \|D^\gamma f_4 \|_{p_4} \|f_5\|_{p_{5}} +   \| f_4\|_{p_4}  \| D^\gamma  f_5\|_{p_5} \big)^\tau.
\end{align*}

Summing in $k_1$ is by now a formality:
\begin{align*}
 \sum_{k_1}  2^{k_1 \beta \p}  \|\Delta_{k_1} D^{\alpha} f_1\|_{p_1}^{\p}    \|\Delta_{ k_1}  f_3\|_{p_3}^{\p} &\lesssim \sum_{k_1} \min \big( 2^{k_1  \beta \p} \|D^\alpha  f_1\|_{\dot B_{p_1, \infty}^0}^{\p}   \| f_3\|_{\dot B_{p_3, \infty}^0}^{\p},  2^{- k_1  \beta \p} \|D^\alpha  f_1\|_{\dot B_{p_1, \infty}^{\beta}}^{\p}   \| f_3\|_{\dot B_{p_3, \infty}^{\beta}}^{\p} \big) \\
&\lesssim  \big( \|D^\alpha  f_1\|_{\dot B_{p_1, \infty}^0}^{\p}   \| f_3\|_{\dot B_{p_3, \infty}^\beta}^{\p} \big)^{1 \over 2}  \big( \|D^\alpha  f_1\|_{\dot B_{p_1, \infty}^{\beta}}^{\p}   \| f_3\|_{\dot B_{p_3, \infty}^{0}}^{\p}  \big)^{1\over 2}.
\end{align*}
\medskip
\item[$III_B)$] Now we are in the situation when $|\xi_2|, |\xi_3|, |\xi_4| \leq |\xi_1| \sim |\xi_5|$. Accordingly, we sum over $k_2, k_3, k_4 \leq k_1 \sim k_5$ in \eqref{5lin:freq:rep} to obtain
\begin{align*}
T_{m_{III_B}}^{k_1}(f_1, \ldots, f_5)(x):=\int_{\BBR^5} & |\xi_1+ \ldots + \xi_5|^\beta \cdot |\xi_1+\xi_2|^\alpha \cdot |\xi_4+\xi_5|^\gamma \widehat{\Delta_{k_1} f_1}(\xi_1) \cdot \widehat{\Delta_{\leq k_1} f_2}(\xi_2)  \\  & \quad \cdot \widehat{\Delta_{\leq k_1} f_3}(\xi_3) \cdot \widehat{\Delta_{\leq k_1} f_4}(\xi_4) \cdot \widehat{\Delta_{ k_1} f_5}(\xi_5) e^{2 \pi i x \left(\xi_1+\ldots+\xi_5 \right)} d \, \xi_1 \ldots d \, \xi_5.
\end{align*}

As before, we smoothly localize and use a Fourier series expansion for $|\xi_1+ \ldots + \xi_5|^\beta$ on $[-2^{k_1}, 2^{k_1}]$, with limited decay of the Fourier coefficients. This allows to rewrite
\begin{align*}
T_{m_{III_B}}^{k_1}(f_1, \ldots, f_5)(x)= \sum_{L \in \BBZ} C_L 2^{k_1 \beta} D^\alpha(\Delta_{k_1, \frac{L}{2^{k_1}}} f_1 \cdot \Delta_{\leq k_1, \frac{L}{2^{k_1}}} f_2)(x) \cdot \Delta_{\leq k_1, \frac{L}{2^{k_1}}} f_3 (x) \cdot D^\gamma(\Delta_{\leq k_1, \frac{L}{2^{k_1}}} f_4 \cdot \Delta_{k_1, \frac{L}{2^{k_1}}} f_5)(x),
\end{align*}
with $|C_L| \lesssim (1+|L|)^{-(1+\beta)}$. As before, 
\begin{align*}
\| III_B  \|_{r}^{\p} &\lesssim  \| f_2\|_{p_2}^{\p}   \|  f_3\|_{p_3}^{\p}   \|  f_4\|_{p_4}^{\p}  \big( \|D^\alpha  f_1\|_{\dot B_{p_1, \infty}^0}^{\p}   \| D^\gamma f_5\|_{\dot B_{p_5, \infty}^\beta}^{\p} \big)^{1 \over 2}  \big( \|D^\alpha  f_1\|_{\dot B_{p_1, \infty}^{\beta}}^{\p}   \|D^\gamma f_5\|_{\dot B_{p_5, \infty}^{0}}^{\p}  \big)^{1\over 2}.
\end{align*}
\end{itemize}

This completes the proof of \eqref{eq:one:param_M}, which, we recall, follows also from the flag paraproducts' boundedness in \cite{flag_paraproducts} -- except for certain endpoints.

\bigskip

\section{A five-linear flag: bi-parameter case}
\label{sec:2param:5linflag}

Now we move to the bi-parameter version of \eqref{eq:one:param_M}, which reads as
\begin{align}
\label{thm:5lin:biparam}
&\| D_{(1)}^{\beta_1}D_{(2)}^{\beta_2}( D_{(1)}^{\alpha_1}D_{(2)}^{\alpha_2}(f_1 \cdot f_2) \cdot f_3  \cdot D_{(1)}^{\gamma_1}D_{(2)}^{\gamma_2}( f_4 \cdot f_5) ) \|_{L^pL^q} \\
& \lesssim \| D_{(1)}^{\alpha_1+\beta_1}D_{(2)}^{\alpha_2+\beta_2} f_1 \|_{L^{p_1}L^{q_1}} \| f_2 \|_{L^{p_2}L^{q_2}} \|f_3\|_{L^{p_3}L^{q_3}} \|  D_{(1)}^{\gamma_1}D_{(2)}^{\gamma_2} f_4 \|_{L^{p_4}L^{q_4}}  \| f_5 \|_{L^{p_5}L^{q_5}} + \text{other similar terms.} \nonumber
\end{align}
This is the main motivation of our work, since bi-parameter equivalents of \cite{flag_paraproducts} are yet to be proved. For simplicity, we work with functions defined on $\BBR \times \BBR$, although all the results remain valid for functions on $\BBR^{d_1} \times \BBR^{d_2}$. In order for \eqref{thm:5lin:biparam} to hold, the conditions on the Lebesgue exponents, already presented in the Introduction, are
\[
\begin{array}{ c c c }
 \frac{1}{q}< 1+\beta_2, & \frac{1}{q_{1,2}}:= \frac{1}{q_1}+\frac{1}{q_2}  < 1+\alpha_2, & \frac{1}{q_{4,5}}:=\frac{1}{q_4}+\frac{1}{q_5}  <  1+\gamma_2,\\ 
  \frac{1}{p}< \min( 1+\beta_1, 1+\beta_2 ), \qquad  &  \frac{1}{p_{1,2}}:= \frac{1}{p_1}+\frac{1}{p_2}  <  \min( 1+\alpha_1, 1+\alpha_2), \qquad &  \frac{1}{p_{4,5}}:= \frac{1}{p_4}+\frac{1}{p_5}  < \min(1+\gamma_1, 1+\gamma_2).
\end{array}
\]
The asymmetry on the Lebesgue exponents associated to the first and second variables is a consequence of the mixed norm condition. The constraints on $p$ and $q$ are imposed by slower decaying conditions on the associated Fourier coefficients when treating the ``diagonal case'': see Section \ref{sec:2:param:5:flag:several:max}.

Although conceptually the method employed for proving \eqref{thm:5lin:biparam} will be similar to that presented in the previous section, technical aspects specific\footnote{These difficulties will be nowhere near as laborious as the technical aspects typical of multi-parameter singular integrals.} to multi-parameter problems will appear; in particular, the number of cases that need to be considered is significantly higher. That is partly due to the asymmetry of the objects: $f_2$ might not be involved in the summation of the flag acting in the first variable, but we cannot put it aside because it might be involved in the summation of the flag acting in the second variable.

As before in Section \ref{sec:5lin:1param}, in this particular situation we will not systematically apply the inductive procedure presented later in Section \ref{subsection_bi_leibniz}; however, the differences are minor.

In the bi-parameter setting, the multiplier associated to the flag \eqref{thm:5lin:biparam} is 
\begin{equation}
\label{eq:flag:5:bi}
\big( |\xi_1+\ldots+\xi_5|^{\beta_1} |\xi_1+\xi_2|^{\alpha_1} |\xi_4+\xi_5|^{\gamma_1}  \big) \cdot \big( |\eta_1+\ldots+\eta_5|^{\beta_2} |\eta_1+\eta_2|^{\alpha_2} |\eta_4+\eta_5|^{\gamma_2}  \big).
\end{equation}

As before in Section \ref{sec:5lin:1param}, the frequency regions will be decomposed into cones, allowing to determine the fastest oscillating functions in each parameter, and (up to a point) the distribution of derivatives. Depending on the structure of the associated flag/rooted tree and on the conical regions in each parameter, the multiplier will be split into several pieces. However, in each parameter, the decomposition is carried out as in the previous section.

Our aim is to estimate
\begin{align}
\label{5lin:freq:rep:bi}
&\sum_{k_1, \ldots, k_5 } \sum_{m_1, \ldots, m_5 } \int_{\BBR^{10}}\big( |\xi_1+\ldots+\xi_5|^{\beta_1} |\xi_1+\xi_2|^{\alpha_1} |\xi_4+\xi_5|^{\gamma_1}  \big) \cdot \big( |\eta_1+\ldots+\eta_5|^{\beta_2} |\eta_1+\eta_2|^{\alpha_2} |\eta_4+\eta_5|^{\gamma_2}  \big)  \ii F (\Delta_{k_1}^{(1)}\Delta_{m_1}^{(2)} f_1)(\xi_1, \eta_1) \\
& \cdot \ii F (\Delta_{k_2}^{(1)}\Delta_{m_2}^{(2)} f_2)(\xi_2, \eta_2) \cdot \ldots \cdot  \ii F (\Delta_{k_5}^{(1)}\Delta_{m_5}^{(2)} f_5)(\xi_5, \eta_5) e^{2 \pi i x(\xi_1+\ldots+\xi_5)} e^{2 \pi i y(\eta_1+\ldots+\eta_5)}  d \xi_1 \ldots d\xi_5  d \eta_1 \ldots d\eta_5 \nonumber
\end{align}
in the mixed (quasi-)norm  $\| \cdot \|_{L^pL^q}$. Throughout the section, $\tau$ will denote
\[
\tau:= \min(1, p, q),
\]
which renders $\| \cdot \|_{L^pL^q}^{\tau}$ subadditive.

Next, we will consider various operators that arise from restricting our attention to conical frequency regions $\{ |\xi_1| \gg |\xi_2|, \ldots, |\xi_5|, |\eta_1| \gg |\eta_1|, \ldots, |\eta_5|  \}$, etc. This corresponds to a bi-parameter paraproduct decomposition and allows to split \eqref{5lin:freq:rep:bi} into pieces that will be independently estimated. We will restrict our attention to certain typical conical regions, as the remaining cases follow from similar arguments.

\subsection{Study of the ``off-diagonal'' conical region $R_1^{(1)} \times R_1^{(2)}$}
\label{sec:5:flag:bi:param:k_1:m_1} 
When we are in the region where $|\xi_1|$ is the largest frequency variable in the first component and $|\eta_1|$ is the largest one in the second component, we want to approximate $|\xi_1+\ldots+\xi_5|^{\beta_1}$ by $|\xi_1|^{\beta_1}$ (and $|\eta_1+\ldots+\eta_5|^{\beta_2}$ by $|\eta_1|^{\beta_1}$). As before in Section \ref{sec:5lin:1param}, $\xi_1$ is connected to $\xi_2$ by the $D^{\alpha_1}_{(1)}$ derivative, so several steps are necessary.

We will have 
\begin{align}
\label{eq:decomposition:1st:coord}
|\xi_1+\ldots+\xi_5|^{\beta_1} = |\xi_1+\ldots+\xi_5|^{\beta_1}- |\xi_1+\xi_2|^{\beta_1} + |\xi_1+\xi_2|^{\beta_1}- |\xi_1|^{\beta_1}+|\xi_1|^{\beta_1} &\\
 = \frac{|\xi_1+\ldots+\xi_5|^{\beta_1}- |\xi_1+\xi_2|^{\beta_1}}{\xi_3+\xi_4+\xi_5 } \cdot \xi_3 + \frac{|\xi_1+\ldots+\xi_5|^{\beta_1}- |\xi_1+\xi_2|^{\beta_1}}{\xi_3+\xi_4+\xi_5 } \cdot (\xi_4+\xi_5)&+\big(|\xi_1+\xi_2|^{\beta_1} - |\xi_1|^{\beta_1}\big)+|\xi_1|^{\beta_1} \nonumber
\end{align}

With this decomposition, the part of the multiplier from \eqref{eq:flag:5:bi} acting on the first coordinate writes as
\begin{align*}
& \frac{|\xi_1+\ldots+\xi_5|^{\beta_1}- |\xi_1+\xi_2|^{\beta_1}}{\xi_3+\xi_4+\xi_5 } \cdot \xi_3 \cdot |\xi_1+\xi_2|^{\alpha_1} |\xi_4+\xi_5|^{\gamma_1} + \frac{|\xi_1+\ldots+\xi_5|^{\beta_1}- |\xi_1+\xi_2|^{\beta_1}}{\xi_3+\xi_4+\xi_5 } \cdot (\xi_4+\xi_5) \cdot |\xi_1+\xi_2|^{\alpha_1} |\xi_4+\xi_5|^{\gamma_1} \\
&+ \frac{|\xi_1+\xi_2|^{\beta_1+\alpha_1}- |\xi_1|^{\beta_1+\alpha_1}}{\xi_2}\cdot \xi_2 \cdot |\xi_4+\xi_5|^{\gamma_1} + |\xi_1|^{\beta_1+\alpha_1} \cdot |\xi_4+\xi_5|^{\gamma_1}:=m_{I_A^{(1)}}+m_{I_B^{(1)}}+m_{I_C^{(1)}}+m_{I_D^{(1)}}.
\end{align*}

The above formulas indicate the presence of three commutators and one multiplier that will require the change in the order of summation. For $m_{I_A^{(1)}}$, the variables involved in the commutator are $\xi_1+\xi_2$ and $\xi_3$, for $m_{I_B^{(1)}}$ - $\xi_1+\xi_2$ and $\xi_4+\xi_5$, and for $m_{I_C^{(1)}}$ it will be $\xi_1$ and $\xi_2$ that will participate in the summation.

In the second coordinate, the multiplier will split similarly into four terms:

\begin{align*}
& \frac{|\eta_1+\ldots+\eta_5|^{\beta_2}- |\eta_1+\eta_2|^{\beta_2}}{\eta_3+\eta_4+\eta_5 } \cdot \eta_3 \cdot |\eta_1+\eta_2|^{\alpha_2} |\eta_4+\eta_5|^{\gamma_2} + \frac{|\eta_1+\ldots+\eta_5|^{\beta_2}- |\eta_1+\eta_2|^{\beta_2}}{\eta_3+\eta_4+\eta_5 } \cdot (\eta_4+\eta_5) \cdot |\eta_1+\eta_2|^{\alpha_2} |\eta_4+\eta_5|^{\gamma_2} \\
&+ \frac{|\eta_1+\eta_2|^{\beta_2+\alpha_2}- |\eta_1|^{\beta_2+\alpha_2}}{\eta_2}\cdot \eta_2 \cdot |\eta_4+\eta_5|^{\gamma_2} + |\eta_1|^{\beta_2+\alpha_2} \cdot |\eta_4+\eta_5|^{\gamma_2}:=m_{I_A^{(2)}}+m_{I_B^{(2)}}+m_{I_C^{(2)}}+m_{I_D^{(2)}}.
\end{align*}

When the symbol is restricted to the conical region $R_1^{(1)} \times R_1^{(2)}$, \eqref{5lin:freq:rep:bi} breaks down as 
{\fontsize{9}{9}\begin{align*}
&\sum_{\substack {k_2, \ldots, k_5 \ll k_1  \\ m_2, \ldots, m_5 \ll m_1}} T_{I_A^{(1)}\times I_A^{(2)}}(\Delta_{k_1}^{(1)}\Delta_{m_1}^{(2)} f_1, \ldots, \Delta_{k_5}^{(1)}\Delta_{m_5}^{(2)} f_5)(x, y) + \sum_{\substack {k_2, \ldots, k_5 \ll k_1  \\ m_2, \ldots, m_5 \ll m_1}} T_{I_A^{(1)}\times I_B^{(2)}}(\Delta_{k_1}^{(1)}\Delta_{m_1}^{(2)} f_1, \ldots, \Delta_{k_5}^{(1)}\Delta_{m_5}^{(2)} f_5)(x, y)+ \\
&\sum_{\substack {k_2, \ldots, k_5 \ll k_1  \\ m_2, \ldots, m_5 \ll m_1}} T_{I_A^{(1)}\times I_C^{(2)}}(\Delta_{k_1}^{(1)}\Delta_{m_1}^{(2)} f_1, \ldots, \Delta_{k_5}^{(1)}\Delta_{m_5}^{(2)} f_5)(x, y)+ \ldots+ \sum_{\substack {k_2, \ldots, k_5 \ll k_1  \\ m_2, \ldots, m_5 \ll m_1}} T_{I_D^{(1)}\times I_D^{(2)}}(\Delta_{k_1}^{(1)}\Delta_{m_1}^{(2)} f_1, \ldots, \Delta_{k_5}^{(1)}\Delta_{m_5}^{(2)} f_5)(x, y)  \\
&:= I_A^{(1)}\times I_A^{(2)} + I_A^{(1)}\times I_B^{(2)} + I_A^{(1)}\times I_C^{(2)}+ \ldots+ I_D^{(1)}\times I_D^{(2)}.
\end{align*}}
In total, we have $16$ cases, many of which are similar; so we will only treat some of them, as explained below. Due to the structure of the symbols, $I_A^{(1)}\times I_B^{(2)}$ presents novel attributes, and will be discussed in more detail.

\begin{itemize}[leftmargin=.3cm]
\item $I_A^{(1)}\times I_B^{(2)})$

\bigskip
\[
\begin{array}{ccc}
 \vcenter{\hbox{\begin{forest}
    my treeSep
    [, label={above:$\big[D_{(1)}^{\beta_1}, \Delta^{(1)}_{k_1}\big] \Delta^{(1)}_{k_3} $}
                       [, label={left: $D_{(1)}^{\alpha_1}$}
                                    [, label={below: $ \Delta^{(1)}_{k_1} f_1$}]
                                    [, label={below: $\Delta^{(1)}_{k_2} f_2$}]
                           ]
                        [, label={below:$\Delta^{(1)}_{k_3} f_3$}]
                            [, label={right: $D_{(1)}^{\gamma_1}$}
                                    [, label={below: $\Delta^{(1)}_{k_4} f_4$}]
                                    [, label={below: $\Delta^{(1)}_{k_5} f_5$}]
                             ]
]
\end{forest}}} & \quad\quad \quad \quad&  \vcenter{\hbox{\begin{forest}
    my treeSep
    [, label={above:$\big[D_{(2)}^{\beta_2}, \Delta^{(2)}_{m_1}\big] \Delta^{(2)}_{m_5} $}
                       [, label={left: $D_{(2)}^{\alpha_2}$}
                                    [, label={below: $ \Delta^{(2)}_{m_1} f_1$}]
                                    [, label={below: $\Delta^{(2)}_{m_2} f_2$}]
                           ]
                        [, label={below:$\Delta^{(2)}_{m_3} f_3$}]
                            [, label={right: $D_{(2)}^{\gamma_2}$}
                                    [, label={below: $\Delta^{(2)}_{m_4} f_4$}]
                                    [, label={below: $\Delta^{(2)}_{m_5} f_5$}]
                             ]
]
\end{forest}}}
\end{array}
\]

The multiplier suggested by the trees above is $m_{I_A^{(1)}}(\xi_1, \ldots, \xi_5) \cdot m_{I_B^{(2)}}(\eta_1, \ldots, \eta_5)$, which corresponds to
{\fontsize{9}{9}\begin{align*}
&\Big( \frac{|\xi_1+\ldots+\xi_5|^{\beta_1}- |\xi_1+\xi_2|^{\beta_1}}{\xi_3+\xi_4+\xi_5} \cdot \xi_3 \cdot |\xi_1+\xi_2|^{\alpha_1} |\xi_4+\xi_5|^{\gamma_1} \Big) \cdot \Big( \frac{|\eta_1+\ldots+\eta_5|^{\beta_2}- |\eta_1+\eta_2|^{\beta_2}}{\eta_3+\eta_4+\eta_5} \cdot (\eta_4+\eta_5) \cdot |\eta_1+\eta_2|^{\alpha_2} |\eta_4+\eta_5|^{\gamma_2} \Big). 
\end{align*}}
In the first variable the functions involved in the summation over the scales are $f_1$ and $f_3$, so the functions $f_2,f_4$ and $f_5$ do not contribute to this process. However, in the second variable, the shape of the commutator indicates that $f_1$ and one of the functions $f_4$ or $f_5$ will play a prominent role; by restricting the symbol further to the region where $|\eta_4| \leq |\eta_5|$, we know the functions $f_1$ and $f_5$ will take part in summing up the scales, in the second parameter. We carefully group the terms contributing to $I_A^{(1)}\times I_B^{(2)}$ by summing over $k_2, k_4, k_5 \ll k_1$ and over $m_2, m_3 \ll m_1, m_4 \leq m_5$:
{\fontsize{9}{9}\begin{align*}
&T_{m_{I_A^{(1)}}, m_{I_B^{(2)}}}^{k_1, k_3; m_1, m_5}(f_1, \ldots, f_5)(x, y) = \int_{\BBR^{10}} m_{I_A^{(1)}}(\xi_1, \ldots, \xi_5) \cdot m_{I_B^{(2)}}(\eta_1, \ldots, \eta_5) \ii F(\Delta_{k_1}^{(1)} \Delta_{m_1}^{(2)} f_1 ) (\xi_1, \eta_1) \cdot \ii F(S_{k_1}^{(1)} S_{m_1}^{(2)} f_2)(\xi_2, \eta_2)  \\
&\cdot  \ii F(\Delta_{k_3}^{(1)} S_{m_1}^{(2)} f_3 )(\xi_3, \eta_3) \cdot  \ii F( S_{k_1}^{(1)} \Delta^{(2)}_{\leq m_5} f_4)(\xi_4, \eta_4) \cdot  \ii F(S_{k_1}^{(1)} \Delta^{(2)}_{m_5} f_5)(\xi_5, \eta_5) \cdot  e^{ 2 \pi ix \left(\xi_1+\ldots+\xi_5 \right)}  e^{ 2 \pi iy \left(\eta_1+\ldots+\eta_5 \right)} d \, \xi_1 \ldots d \, \xi_5 d \, \eta_1 \ldots d \, \eta_5.
\end{align*}}
As in Section \ref{sec:5lin:1param}, we restrict the symbol 
\[
\frac{|\xi_1+\ldots+\xi_5|^{\beta_1}- |\xi_1+\xi_2|^{\beta_1}}{\xi_3+\xi_4+\xi_5}= m_{C_{\beta_1}}(\xi_1+\xi_2, \xi_3+\xi_4+\xi_5)
\]
to $\pm [2^{k_1}, 2^{k_1+1}] \times [-2^{k_1-1}, 2^{k_1-1}]$ and denote its localized version by $m_{C_{\beta_1}}^{k_1, \pm}(\xi_1+ \xi_2, \xi_3+\xi_4+\xi_5)$, on which we perform a double Fourier series decomposition:
\[
m_{C_{\beta_1}}^{k_1, \pm}(\xi_1+\xi_2, \xi_3+\xi_4+\xi_5)= \sum_{L_1, L_2} C_{L_1, L_2}^{\pm} 2^{k_1(\beta_1-1)} e^{ 2 \pi i (\xi_1+\xi_2) \frac{L_1}{2^{k_1}}}  e^{ 2 \pi i (\xi_3+\xi_4+\xi_5) \frac{L_2}{2^{k_1}}}.
\]

Similarly, let $m^{m_1, \pm}_{C_{\beta_2}}(\eta_1+\eta_2, \eta_3+\eta_4+\eta_5)$ denote the symbol $m_{C_{\beta_2}}(\eta_1+\eta_2, \eta_3+\eta_4+\eta_5)$ localized on $\pm [2^{m_1}, 2^{m_1+1}] \times [-2^{m_1-1}, 2^{m_1-1}]$, where
\[
m_{C_{\beta_2}}(\eta_1+\eta_2, \eta_3+\eta_4+\eta_5) := \frac{|\eta_1+\ldots+\eta_5|^{\beta_2}- |\eta_1+\eta_2|^{\beta_2}}{\eta_3+\eta_4+\eta_5}.
\]
The double Fourier series expansion on $m^{m_1, \pm}_{C_{\beta_2}}$ yields
\[
m^{m_1, \pm}_{C_{\beta_2}}(\eta_1+\eta_2, \eta_3+\eta_4+\eta_5)= \sum_{\tilde L_1, \tilde L_2} C^{\pm}_{\tilde L_1, \tilde L_2} 2^{m_1(\beta_2-1)} e^{ 2 \pi i (\eta_1+\eta_2) \frac{\tilde L_1}{2^{m_1}}}  e^{ 2 \pi i (\eta_3+\eta_4+\eta_5) \frac{\tilde L_2}{2^{m_1}}}.
\]
In both cases, the renormalized Fourier coefficients have arbitrary decay. This implies that 
\begin{align*}
\|I_A^{(1)}\times I_B^{(2)}\|_{L^pL^q}^\tau \leq  & \sum_{L_1, L_2} |C_{L_1, L_2}^{\pm}|^\tau  \sum_{\tilde L_1, \tilde L_2} |C^{\pm}_{\tilde L_1, \tilde L_2}|^\tau \big\| D^{\alpha_1}_{(1)} D^{\alpha_2}_{(2)} (  \Delta_{k_1, \pm, \frac{L_1}{2^{k_1}}}^{(1)} \Delta_{m_1, \pm, \frac{\tilde L_1}{2^{m_1}}}^{(2)}  f_1 \cdot S_{k_1, \frac{L_1}{2^{k_1}}}^{(1)} S_{m_1, \frac{\tilde L_1}{2^{m_1}}}^{(2)} f_2 ) \big\|_{p_{1, 2}}^\tau \\
& \big\|   \Delta_{k_3, \frac{L_2}{2^{k_1}}}^{(1)} S_{m_1, \frac{\tilde L_2}{2^{m_1}}}^{(2)}  f_3  \big\|_{L^{p_3}L^{q_3}}^\tau \big\| D^{\gamma_1}_{(1)} D^{\gamma_2}_{(2)} (  S_{k_1, \frac{L_2}{2^{k_1}}}^{(1)} \Delta_{\leq m_5, \frac{\tilde L_2}{2^{m_1}}}^{(2)}  f_4 \cdot S_{k_1, \frac{L_2}{2^{k_1}}}^{(1)} \Delta_{m_5, \frac{\tilde L_2}{2^{m_1}}}^{(2)} f_5 ) \big\|_{p_{4, 5}}^\tau.
\end{align*}

Now we use the boundedness of flag paraproducts of lower complexity: more concretely,
the biparameter variant\footnote{The general statement will be presented in Proposition \ref{prop:biparameter_induction}.} of Lemma \ref{lemma:multipliers:simpleParaproduct} which describes the localized version of Oh and Wu \cite{OhWu} with the additional observation that $D^{\gamma_2}_{(2)}$ derivatives will be attached to $f_5$:
\begin{align*}
\big\| D^{\gamma_1}_{(1)} D^{\gamma_2}_{(2)} (  S_{k_1, \frac{L_2}{2^{k_1}}}^{(1)} \Delta_{\leq m_5, \frac{\tilde L_2}{2^{m_1}}}^{(2)}  f_4 \cdot S_{k_1, \frac{L_2}{2^{k_1}}}^{(1)} \Delta_{m_5, \frac{\tilde L_2}{2^{m_1}}}^{(2)} f_5 ) \big\|_{L^{p_{4, 5}}L^{q_{4,5}}} \lesssim & \|D^{\gamma_1}_{(1)} f_4\|_{L^{p_4}L^{q_4}}  \| \Delta_{m_5}^{(2)} D^{\gamma_2}_{(2)} f_5    \|_{L^{p_5}L^{q_5}} \\ & + \| f_4\|_{L^{p_4}L^{q_4}}  \| \Delta_{m_5}^{(2)} D^{\gamma_1}_{(1)} D^{\gamma_2}_{(2)} f_5    \|_{L^{p_5}L^{q_5}}. 
\end{align*}
Here, we need to assume that $p_{4,5}>\frac{1}{1+\gamma_1}$, while $q_{4,5}$ can be any Lebesgue exponent $\geq \frac{1}{2}$. Similarly,\footnote{Here we use implicitly \eqref{eq:direction:LP:same} and \eqref{eq:trivial:modulation}} 
\[
 \big\| D^{\alpha_1}_{(1)} D^{\alpha_2}_{(2)} (  \Delta_{k_1, \pm,  \frac{L_1}{2^{k_1}}}^{(1)} \Delta_{m_1, \pm, \frac{\tilde L_1}{2^{m_1}}}^{(2)}  f_1 \cdot S_{k_1, \frac{L_1}{2^{k_1}}}^{(1)} S_{m_1, \frac{\tilde L_1}{2^{m_1}}}^{(2)} f_2 ) \big\|_{L^{p_{1, 2}}} \lesssim \| \Delta_{k_1}^{(1)} \Delta_{m_1}^{(2)}   D^{\alpha_1}_{(1)} D^{\alpha_2}_{(2)} f_1\|_{L^{p_1}L^{q_1}} \, \|f_2\|_{L^{p_2}L^{q_2}},
 \]
with $p_{1,2}, q_{1, 2} \geq {1 \over 2}$.
 
For simplicity, we denote $(F_4, F_5)$ either of the couples $(D_{(1)}^{\gamma_1} f_4,  f_5 )$ or $( f_4, D_{(1)}^{\gamma_1} f_5)$, and $F_1:= D^{\alpha_1}_{(1)} D^{\alpha_2}_{(2)} f_1$.

Since the renormalized Fourier coefficients $C_{L_1, L_2}^{\pm}$ and $C_{\tilde L_1, \tilde L_2}^{\pm}$ are summable and all the Fourier projections $P_{k, a}^{(j)}$ are bounded on $L^{p_j}L^{q_j}$ spaces with $1 \leq p_j, q_j \leq \infty$, we are left with summing 
\begin{align}
\label{eq:red:k_1m_1k_3m_5:red}
& \sum_{k_3<k_1} \sum_{m_5<m_1} 2^{k_1(\beta_1-1) \tau} 2^{k_3 \tau} 2^{m_1(\beta_2-1) \tau} 2^{m_5 \tau} \| \Delta_{k_1}^{(1)} \Delta_{m_1}^{(2)} F_1 \|_{L^{p_1}L^{q_1}}^\tau \| f_2   \|_{L^{p_2}L^{q_2}}^\tau  \| \Delta_{k_3}^{(1)} f_3   \|_{L^{p_3}L^{q_3}}^\tau   \| F_4\|_{L^{p_4}L^{q_4}}^\tau  \|\Delta^{(2)}_{m_5} D^{\gamma_2}_{(2)} F_5  \|_{L^{p_5}L^{q_5}}^\tau. 
\end{align}

Since $f_2$ and $F_4$ have no contribution in the summation, we put them on the side and estimate what is left of \eqref{eq:red:k_1m_1k_3m_5:red} as
\begin{align}
\label{eq:red:k_1m_1k_3m_5:red:min}
\sum_{k_1, m_1} \min \big(  &2^{k_1\beta_1 \tau} 2^{m_1\beta_2 \tau} \|F_1\|_{\dot B^0_{p_1, \infty} \dot B^0_{q_1, \infty}}^\tau  \|f_3\|_{\dot B^0_{p_3, \infty} L^{q_3}}^\tau  \| D_{(2)}^{\gamma_2}F_5\|_{L^{p_5} \dot B^0_{q_5, \infty}}^\tau, \\ & 2^{k_1\beta_1 \tau} 2^{ -m_1\epsilon2 \tau} \|F_1\|^\tau_{\dot B^0_{p_1, \infty} \dot B^{\beta_2}_{q_1, \infty}}  \|f_3\|^\tau_{\dot B^0_{p_3, \infty} L^{q_3}}  \| D_{(2)}^{\gamma_2}F_5\|^\tau_{L^{p_5} \dot B^{\epsilon_2}_{q_5, \infty}}, \nonumber \\
& 2^{-k_1\epsilon_1 \tau} 2^{m_1\beta_2 \tau} \|F_1\|^\tau_{\dot B^{\beta_1}_{p_1, \infty} \dot B^0_{q_1, \infty}}  \|f_3\|^\tau_{\dot B^{\epsilon_1}_{p_3, \infty} L^{q_3}}  \| D_{(2)}^{\gamma_2}F_5\|^\tau_{L^{p_5} \dot B^0_{q_5, \infty}}, \nonumber \\& 2^{-k_1\epsilon_1 \tau} 2^{-m_1\epsilon_2 \tau} \|F_1\|^\tau_{\dot B^{\beta_1}_{p_1, \infty} \dot B^{\beta_2}_{q_1, \infty}}  \|f_3\|^\tau_{\dot B^{\epsilon_1}_{p_3, \infty} L^{q_3}}  \| D_{(2)}^{\gamma_2}F_5\|^\tau_{L^{p_5} \dot B^{\epsilon_2}_{q_5, \infty}} \big). \nonumber
\end{align}

The aim here is to obtain both positive and negative powers of $2^{k_1}$ and $2^{m_1}$ respectively, which will allow to sum both the small and the large scales. Now the key observation is the following:
\begin{align}
\label{eq:optim:2param}
\sum_{k_1, m_1} \min( a_{k_1} a_{m_1} A, a_{k_1} b_{m_1} B, b_{k_1} a_{m_1} C, b_{k_1} b_{m_1} D) \leq \sum_{k_1} \min \big( a_{k_1} \sum_{m_1}   \min(a_{m_1} A, b_{m_1} B),  b_{k_1} \sum_{m_1}  \min(a_{m_1} C, b_{m_1} D ) \big). 
\end{align}

We have seen before that
\[
 \sum_{m_1}   \min(2^{a m_1} A, 2^{- b m_1} B) \leq A^{b \over a+b} B^{a \over a+b},
\]
so we have
\begin{align}
\label{eq:optim:2param:2}
&\sum_{k_1, m_1} \min( 2^{a_1 k_1} 2^{a_2 m_1} A, 2^{a_1 k_1} 2^{-b_2 m_1} B, 2^{-b_1  k_1} 2^{a_2 m_1} C, 2^{-b_1 k_1} 2^{-b_2 m_1} D) \nonumber \\
&\leq \sum_{k_1} \min(  2^{a_1  k_1}  A^{b_2 \over {a_2+b_2}} B^{a_2 \over {a_2+b_2}}, 2^{- b_1 k_1}  C^{b_2 \over {a_2+b_2}} D^{a_2 \over {a_2+b_2}}) \nonumber \\
&\leq  \big( A^{b_2 \over {a_2+b_2}} B^{a_2 \over {a_2+b_2}} \big)^{b_1 \over {a_1+b_1}} \,  \big(  C^{b_2 \over {a_2+b_2}} D^{a_2 \over {a_2+b_2}}\big)^{a_1 \over{a_1+b_1}}.
\end{align}

Up to this point, the argument is similar to Lemma 4.5 in OhWu \cite{OhWu}. It remains however to justify that we have the correct terms and the correct powers. We have obtained that $\|I_A^{(1)}\times I_B^{(2)}\|_{L^pL^q}$ is bounded by a geometric average of
\begin{align*}
&\| f_2   \|_{L^{p_2}L^{q_2}} ,  \quad  \|F_1\|_{\dot B^0_{p_1, \infty} \dot B^0_{q_1, \infty}}  \|f_3\|_{\dot B^0_{p_3, \infty} L^{q_3}}  \| D_{(2)}^{\gamma_2}F_5\|_{L^{p_5} \dot B^0_{q_5, \infty}}, \quad \|F_1\|_{\dot B^0_{p_1, \infty} \dot B^{\beta_2}_{q_1, \infty}}  \|f_3\|_{\dot B^0_{p_3, \infty} L^{q_3}}  \| D_{(2)}^{\gamma_2}F_5\|_{L^{p_5} \dot B^{\epsilon_2}_{q_5, \infty}}, \\
& \| F_4\|_{L^{p_4}L^{q_4}}, \quad \|F_1\|_{\dot B^{\beta_1}_{p_1, \infty} \dot B^0_{q_1, \infty}}  \|f_3\|_{\dot B^{\epsilon_1}_{p_3, \infty} L^{q_3}}  \| D_{(2)}^{\gamma_2}F_5\|_{L^{p_5} \dot B^0_{q_5, \infty}}, \quad \|F_1\|_{\dot B^{\beta_1}_{p_1, \infty} \dot B^{\beta_2}_{q_1, \infty}}  \|f_3\|_{\dot B^{\epsilon_1}_{p_3, \infty} L^{q_3}}  \| D_{(2)}^{\gamma_2}F_5\|_{L^{p_5} \dot B^{\epsilon_2}_{q_5, \infty}}. \label{eq:fin:bi:5}
\end{align*}

In our case, $a_1=\beta_1, a_2=\beta_2, b_1=\epsilon_1, b_2=\epsilon_2$ and
\begin{align*}
&A=\|F_1\|_{\dot B^0_{p_1, \infty} \dot B^0_{q_1, \infty}}  \|f_3\|_{\dot B^0_{p_3, \infty} L^{q_3}}  \| D_{(2)}^{\gamma_2}F_5\|_{L^{p_5} \dot B^0_{q_5, \infty}}, \quad  B=\|F_1\|_{\dot B^0_{p_1, \infty} \dot B^{\beta_2}_{q_1, \infty}}  \|f_3\|_{\dot B^0_{p_3, \infty} L^{q_3}}  \| D_{(2)}^{\gamma_2}F_5\|_{L^{p_5} \dot B^{\epsilon_2}_{q_5, \infty}}\\
&C= \|F_1\|_{\dot B^{\beta_1}_{p_1, \infty} \dot B^0_{q_1, \infty}}  \|f_3\|_{\dot B^{\epsilon_1}_{p_3, \infty} L^{q_3}}  \| D_{(2)}^{\gamma_2}F_5\|_{L^{p_5} \dot B^0_{q_5, \infty}}, \quad D= \|F_1\|_{\dot B^{\beta_1}_{p_1, \infty} \dot B^{\beta_2}_{q_1, \infty}}  \|f_3\|_{\dot B^{\epsilon_1}_{p_3, \infty} L^{q_3}}  \| D_{(2)}^{\gamma_2}F_5\|_{L^{p_5} \dot B^{\epsilon_2}_{q_5, \infty}}.
\end{align*}

Now we need to understand what happens to the $\epsilon_1$ and $\epsilon_2$ derivatives encoded in the mixed Besov and Lebesgue norms. First, we look at the interaction between $A$ and $B$; $ \|f_3\|_{\dot B^0_{p_3, \infty} L^{q_3}}$ remains unchanged, and similar to the one-parameter Leibniz rule from Section \ref{Bourgain-Li_hilow}, 
\begin{align*}
&\big( \|F_1\|_{\dot B^0_{p_1, \infty} \dot B^0_{q_1, \infty}} \,  \| D_{(2)}^{\gamma_2}F_5\|_{L^{p_5} \dot B^0_{q_5, \infty}} \big)^{\epsilon_2 \over{\beta_2+\epsilon_2}} \cdot \big( \|F_1\|_{\dot B^0_{p_1, \infty} \dot B^{\beta_2}_{q_1, \infty}} \, \| D_{(2)}^{\gamma_2}F_5\|_{L^{p_5} \dot B^{\epsilon_2}_{q_5, \infty}}  \big)^{\beta_2 \over{\beta_2 + \epsilon_2}} \\
&\lesssim \big( \|F_1\|_{\dot B^0_{p_1, \infty} \dot B^0_{q_1, \infty}} \,  \| D_{(2)}^{\gamma_2}F_5\|_{L^{p_5} \dot B^{\beta_2}_{q_5, \infty}} \big)^{\epsilon_2 \over{\beta_2+\epsilon_2}} \cdot \big( \|F_1\|_{\dot B^0_{p_1, \infty} \dot B^{\beta_2}_{q_1, \infty}} \, \| D_{(2)}^{\gamma_2}F_5\|_{L^{p_5} \dot B^{0}_{q_5, \infty}}  \big)^{\beta_2 \over{\beta_2 + \epsilon_2}}.
\end{align*}

For the $C$ and $D$ interaction, initially $ \|f_3\|_{\dot B^{\epsilon_1}_{p_3, \infty} L^{q_3}}$ remains unchanged and the Besov norm interpolation \eqref{eq:interpolation:Besov:epsilon} yields
\begin{align*}
&\big( \|F_1\|_{\dot B^{\beta_1}_{p_1, \infty} \dot B^0_{q_1, \infty}} \,  \| D_{(2)}^{\gamma_2}F_5\|_{L^{p_5} \dot B^0_{q_5, \infty}} \big)^{\epsilon_2 \over{\beta_2+\epsilon_2}} \cdot \big( \|F_1\|_{\dot B^{\beta_1}_{p_1, \infty} \dot B^{\beta_2}_{q_1, \infty}} \, \| D_{(2)}^{\gamma_2}F_5\|_{L^{p_5} \dot B^{\epsilon_2}_{q_5, \infty}}  \big)^{\beta_2 \over{\beta_2 + \epsilon_2}} \\
&\lesssim \big( \|F_1\|_{\dot B^{\beta_1}_{p_1, \infty} \dot B^0_{q_1, \infty}} \,  \| D_{(2)}^{\gamma_2}F_5\|_{L^{p_5} \dot B^{\beta_2}_{q_5, \infty}} \big)^{\epsilon_2 \over{\beta_2+\epsilon_2}} \cdot \big( \|F_1\|_{\dot B^{\beta_1}_{p_1, \infty} \dot B^{\beta_2}_{q_1, \infty}} \, \| D_{(2)}^{\gamma_2}F_5\|_{L^{p_5} \dot B^{0}_{q_5, \infty}}  \big)^{\beta_2 \over{\beta_2 + \epsilon_2}}.
\end{align*}

So \eqref{eq:optim:2param:2}, for our particular choice of $A, \ldots, D$, $a_1, \ldots, b_2$, is bounded above by
\begin{align*}
&\Big(  \big( \|f_3\|_{\dot B^0_{p_3, \infty} L^{q_3}} \, \|F_1\|_{\dot B^0_{p_1, \infty} \dot B^0_{q_1, \infty}} \,  \| D_{(2)}^{\gamma_2}F_5\|_{L^{p_5} \dot B^{\beta_2}_{q_5, \infty}} \big)^{\epsilon_2 \over{\beta_2+\epsilon_2}} \cdot \big(\|f_3\|_{\dot B^0_{p_3, \infty} L^{q_3}} \, \|F_1\|_{\dot B^0_{p_1, \infty} \dot B^{\beta_2}_{q_1, \infty}} \, \| D_{(2)}^{\gamma_2}F_5\|_{L^{p_5} \dot B^{0}_{q_5, \infty}}  \big)^{\beta_2 \over{\beta_2 + \epsilon_2}} \Big)^{\epsilon_1 \over {\beta_1+\epsilon_1}} \\
& \cdot \Big(  \big(  \|f_3\|_{\dot B^{\epsilon_1}_{p_3, \infty} L^{q_3}}\, \|F_1\|_{\dot B^{\beta_1}_{p_1, \infty} \dot B^0_{q_1, \infty}} \,  \| D_{(2)}^{\gamma_2}F_5\|_{L^{p_5} \dot B^{\beta_2}_{q_5, \infty}} \big)^{\epsilon_2 \over{\beta_2+\epsilon_2}} \cdot \big(  \|f_3\|_{\dot B^{\epsilon_1}_{p_3, \infty} L^{q_3}} \, \|F_1\|_{\dot B^{\beta_1}_{p_1, \infty} \dot B^{\beta_2}_{q_1, \infty}} \, \| D_{(2)}^{\gamma_2}F_5\|_{L^{p_5} \dot B^{0}_{q_5, \infty}}  \big)^{\beta_2 \over{\beta_2 + \epsilon_2}}   \Big)^{\beta_1 \over {\beta_1+\epsilon_1}}
\end{align*}
Notice that we removed the $\epsilon_2$ parameter from the Besov norms in the second variable, and that the derivatives have been redistributed thanks to the interpolation result \eqref{eq:interpolation:Besov}. Now we regroup the terms, and the expression above becomes
\begin{align*}
&\Big(  \big( \|f_3\|_{\dot B^0_{p_3, \infty} L^{q_3}} \, \|F_1\|_{\dot B^0_{p_1, \infty} \dot B^0_{q_1, \infty}} \,  \| D_{(2)}^{\gamma_2}F_5\|_{L^{p_5} \dot B^{\beta_2}_{q_5, \infty}} \big)^{\epsilon_1 \over{\beta_1+\epsilon_1}} \cdot\big(  \|f_3\|_{\dot B^{\epsilon_1}_{p_3, \infty} L^{q_3}}\, \|F_1\|_{\dot B^{\beta_1}_{p_1, \infty} \dot B^0_{q_1, \infty}} \,  \| D_{(2)}^{\gamma_2}F_5\|_{L^{p_5} \dot B^{\beta_2}_{q_5, \infty}} \big)^{\beta_1 \over{\beta_1 + \epsilon_1}} \Big)^{\epsilon_2 \over {\beta_2+\epsilon_2}} \\
& \cdot \Big(  \big(  \|f_3\|_{\dot B^{\epsilon_1}_{p_3, \infty} L^{q_3}}\, \|F_1\|_{\dot B^{\beta_1}_{p_1, \infty} \dot B^0_{q_1, \infty}} \,  \| D_{(2)}^{\gamma_2}F_5\|_{L^{p_5} \dot B^{\beta_2}_{q_5, \infty}} \big)^{\epsilon_1 \over{\beta_1+\epsilon_1}} \cdot \big(  \|f_3\|_{\dot B^{\epsilon_1}_{p_3, \infty} L^{q_3}} \, \|F_1\|_{\dot B^{\beta_1}_{p_1, \infty} \dot B^{\beta_2}_{q_1, \infty}} \, \| D_{(2)}^{\gamma_2}F_5\|_{L^{p_5} \dot B^{0}_{q_5, \infty}}  \big)^{\beta_1 \over{\beta_1 + \epsilon_1}} \Big)^{\beta_2 \over {\beta_2+\epsilon_2}}.
\end{align*}

On the first line, $\| D_{(2)}^{\gamma_2}F_5\|_{L^{p_5} \dot B^{\beta_2}_{q_5, \infty}}$ remains unchanged and 
\begin{align*}
& \big( \|f_3\|_{\dot B^0_{p_3, \infty} L^{q_3}} \, \|F_1\|_{\dot B^0_{p_1, \infty} \dot B^0_{q_1, \infty}}\big)^{\epsilon_1 \over{\beta_1+\epsilon_1}} \cdot\big(  \|f_3\|_{\dot B^{\epsilon_1}_{p_3, \infty} L^{q_3}}\, \|F_1\|_{\dot B^{\beta_1}_{p_1, \infty} \dot B^0_{q_1, \infty}}  \big)^{\beta_1 \over{\beta_1 + \epsilon_1}}  \\
 &\lesssim  \big( \|f_3\|_{\dot B^{\beta_1}_{p_3, \infty} L^{q_3}} \, \|F_1\|_{\dot B^0_{p_1, \infty} \dot B^0_{q_1, \infty}}\big)^{\epsilon_1 \over{\beta_1+\epsilon_1}} \cdot\big(  \|f_3\|_{\dot B^{0}_{p_3, \infty} L^{q_3}}\, \|F_1\|_{\dot B^{\beta_1}_{p_1, \infty} \dot B^0_{q_1, \infty}}  \big)^{\beta_1 \over{\beta_1 + \epsilon_1}}. 
\end{align*}

The second line can be estimated similarly; thus \eqref{eq:optim:2param:2} is bounded above by
\begin{align*}
&\Big(  \big( \|f_3\|_{\dot B^{\beta_1}_{p_3, \infty} L^{q_3}} \, \|F_1\|_{\dot B^0_{p_1, \infty} \dot B^0_{q_1, \infty}} \,  \| D_{(2)}^{\gamma_2}F_5\|_{L^{p_5} \dot B^{\beta_2}_{q_5, \infty}} \big)^{\epsilon_1 \over{\beta_1+\epsilon_1}} \cdot\big(  \|f_3\|_{\dot B^{0}_{p_3, \infty} L^{q_3}}\, \|F_1\|_{\dot B^{\beta_1}_{p_1, \infty} \dot B^0_{q_1, \infty}} \,  \| D_{(2)}^{\gamma_2}F_5\|_{L^{p_5} \dot B^{\beta_2}_{q_5, \infty}} \big)^{\beta_1 \over{\beta_1 + \epsilon_1}} \Big)^{\epsilon_2 \over {\beta_2+\epsilon_2}} \\
& \cdot \Big(  \big(  \|f_3\|_{\dot B^{0}_{p_3, \infty} L^{q_3}}\, \|F_1\|_{\dot B^{\beta_1}_{p_1, \infty} \dot B^0_{q_1, \infty}} \,  \| D_{(2)}^{\gamma_2}F_5\|_{L^{p_5} \dot B^{\beta_2}_{q_5, \infty}} \big)^{\epsilon_1 \over{\beta_1+\epsilon_1}} \cdot \big(  \|f_3\|_{\dot B^{0}_{p_3, \infty} L^{q_3}} \, \|F_1\|_{\dot B^{\beta_1}_{p_1, \infty} \dot B^{\beta_2}_{q_1, \infty}} \, \| D_{(2)}^{\gamma_2}F_5\|_{L^{p_5} \dot B^{0}_{q_5, \infty}}  \big)^{\beta_1 \over{\beta_1 + \epsilon_1}} \Big)^{\beta_2 \over {\beta_2+\epsilon_2}}
\end{align*}
which can be further estimated by
\begin{align*}
&\|f_3\|_{\dot B^{\beta_1}_{p_3, \infty} L^{q_3}} \, \|F_1\|_{\dot B^0_{p_1, \infty} \dot B^0_{q_1, \infty}} \,  \| D_{(2)}^{\gamma_2}F_5\|_{L^{p_5} \dot B^{\beta_2}_{q_5, \infty}} + \|f_3\|_{\dot B^{0}_{p_3, \infty} L^{q_3}}\, \|F_1\|_{\dot B^{\beta_1}_{p_1, \infty} \dot B^0_{q_1, \infty}} \,  \| D_{(2)}^{\gamma_2}F_5\|_{L^{p_5} \dot B^{\beta_2}_{q_5, \infty}} \\
& +   \|f_3\|_{\dot B^{0}_{p_3, \infty} L^{q_3}}\, \|F_1\|_{\dot B^{\beta_1}_{p_1, \infty} \dot B^0_{q_1, \infty}} \,  \| D_{(2)}^{\gamma_2}F_5\|_{L^{p_5} \dot B^{\beta_2}_{q_5, \infty}}  +\|f_3\|_{\dot B^{0}_{p_3, \infty} L^{q_3}} \, \|F_1\|_{\dot B^{\beta_1}_{p_1, \infty} \dot B^{\beta_2}_{q_1, \infty}} \, \| D_{(2)}^{\gamma_2}F_5\|_{L^{p_5} \dot B^{0}_{q_5, \infty}}. 
\end{align*}

\medskip

\item $I_A^{(1)}\times I_A^{(2)})$ This case is simpler than the previous one, which is why we will only briefly present the arguments. The multiplier $m_{I_A^{(1)}}(\xi_1, \ldots, \xi_5) \cdot m_{I_A^{(2)}}(\eta_1, \ldots, \eta_5)$ is given by 
\begin{align*}
&\Big( \frac{|\xi_1+\ldots+\xi_5|^{\beta_1}- |\xi_1+\xi_2|^{\beta_1}}{\xi_3+\xi_4+\xi_5} \cdot \xi_3 \cdot |\xi_1+\xi_2|^{\alpha_1} |\xi_4+\xi_5|^{\gamma_1} \Big) \cdot \Big( \frac{|\eta_1+\ldots+\eta_5|^{\beta_2}- |\eta_1+\eta_2|^{\beta_2}}{\eta_3+\eta_4+\eta_5} \cdot \eta_3 \cdot |\eta_1+\eta_2|^{\alpha_2} |\eta_4+\eta_5|^{\gamma_2} \Big). 
\end{align*}
After summing over $k_2, k_4, k_5 \ll k_1$ and $m_2, m_4, m_5 \ll m_1$, the associated operator from \eqref{5lin:freq:rep:bi} becomes
\begin{align*}
&T_{m_{I_A^{(1)}}, m_{I_A^{(2)}}}^{k_1, k_3; m_1, m_3}(f_1, \ldots, f_5)(x, y) = \int_{\BBR^{10}} m_{I_A^{(1)}}(\xi_1, \ldots, \xi_5) \cdot m_{I_A^{(2)}}(\eta_1, \ldots, \eta_5) \ii F(\Delta_{k_1}^{(1)} \Delta_{m_1}^{(2)} f_1 ) (\xi_1, \eta_1) \cdot \ii F(S_{k_1}^{(1)} S_{m_1}^{(2)} f_2)(\xi_2, \eta_2)  \\
&\cdot  \ii F(\Delta_{k_3}^{(1)} \Delta_{m_3}^{(2)} f_3 )(\xi_3, \eta_3) \cdot  \ii F( S_{k_1}^{(1)} S_{m_1}^{(2)} f_4)(\xi_4, \eta_4) \cdot  \ii F(S_{k_1}^{(1)} S_{m_1}^{(2)} f_5)(\xi_5, \eta_5) \cdot  e^{ 2 \pi ix \left(\xi_1+\ldots+\xi_5 \right)}  e^{ 2 \pi iy \left(\eta_1+\ldots+\eta_5 \right)} d \, \xi_1 \ldots d \, \xi_5 d \, \eta_1 \ldots d \, \eta_5.
\end{align*}

Notice that the operator is symmetric in the first and second parameter, in the sense that the conical frequency regions are described by similar inequalities and the multipliers are similar. We continue with the usual Fourier series decomposition, which allows to regard $T_{m_{I_A^{(1)}}, m_{I_A^{(2)}}}(f_1, \ldots, f_5)$ as a superposition of (modulated) terms of the form 
\begin{align*}
2^{k_1(\beta_1-1)} 2^{k_3}\, 2^{m_1(\beta_2-1)} 2^{m_3} & \big( \Delta_{k_1}^{(1)} \Delta_{m_1}^{(2)}  D^{\alpha_1}_{(1)} D^{\alpha_2}_{(2)} f_1 \big)(x, y) \cdot S_{k_1}^{(1)} S_{m_1}^{(2)} f_2(x, y) \\
& \cdot  \Delta_{k_3}^{(1)} \Delta_{m_3}^{(2)} f_3  (x, y) \cdot D^{\gamma_1}_{(1)} D^{\gamma_2}_{(2)} \big( S_{k_1}^{(1)} S_{m_1}^{(2)} f_4 \cdot S_{k_1}^{(1)} S_{m_1}^{(2)} f_5 \big)(x, y).
\end{align*}

When each of these terms is estimated in $\| \cdot \|_{L^pL^q}^\tau$, they are bounded above by
\begin{align*}
& \, 2^{k_1(\beta_1-1) \tau} 2^{k_3 \tau} \, 2^{m_1(\beta_2-1) \tau} 2^{m_3 \tau} \|\Delta_{k_1}^{(1)} \Delta_{m_1}^{(2)} D^{\alpha_1}_{(1)} D^{\alpha_2}_{(2)} f_1\|_{L^{p_1}L^{q_1}}^\tau \, \|S_{k_1}^{(1)} S_{m_1}^{(2)} f_2\|_{L^{p_2}L^{q_2}}^\tau \\
& \qquad \|  \Delta_{k_3}^{(1)} \Delta_{m_3}^{(2)} f_3\|_{L^{p_3}L^{q_3}}^\tau \, \| D^{\gamma_1}_{(1)} D^{\gamma_2}_{(2)} \big( S_{k_1}^{(1)} S_{m_1}^{(2)} f_4 \cdot S_{k_1}^{(1)} S_{m_1}^{(2)} f_5 \big)\|_{L^{p_{4,5}}L^{q_{4,5}}}^\tau \\
\lesssim  & 2^{k_1(\beta_1-1) \tau} 2^{k_3 \tau} \, 2^{m_1(\beta_2-1) \tau} 2^{m_3 \tau}  \|\Delta_{k_1}^{(1)} \Delta_{m_1}^{(2)}  D^{\alpha_1}_{(1)} D^{\alpha_2}_{(2)}  f_1\|_{L^{p_1}L^{q_1}}^\tau \,  \|  \Delta_{k_3}^{(1)} \Delta_{m_3}^{(2)} f_3\|_{L^{p_3}L^{q_3}}^\tau \\
& \qquad \| f_2\|_{L^{p_2}L^{q_2}}^\tau \big(  \| D^{\gamma_1}_{(1)} D^{\gamma_2}_{(2)}  f_4\|_{L^{p_4}L^{q_4}} \,  \| f_5\|_{L^{p_5}L^{q_5}} + \ldots+ \| f_4\|_{L^{p_4}L^{q_4}} \,  \| D^{\gamma_1}_{(1)} D^{\gamma_2}_{(2)}  f_5\|_{L^{p_5}L^{q_5}} \big)^\tau
\end{align*}

The summation over $k_3< k_1, m_3< m_1$ will only affect the first line in the term above, and it can be estimated by
\begin{align*}
\sum_{k_1, m_1} \min \big(  &2^{k_1\beta_1 \tau} 2^{m_1\beta_2 \tau} \|D^{\alpha_1}_{(1)} D^{\alpha_2}_{(2)}  f_1\|_{\dot B^0_{p_1, \infty} \dot B^0_{q_1, \infty}}^\tau  \|f_3\|_{\dot B^0_{p_3, \infty} \dot B^0_{q_3, \infty}}^\tau , 2^{k_1\beta_1 \tau} 2^{ -m_1\epsilon_2 \tau} \|D^{\alpha_1}_{(1)} D^{\alpha_2}_{(2)}  f_1\|^\tau_{\dot B^0_{p_1, \infty} \dot B^{\beta_2}_{q_1, \infty}}  \|f_3\|^\tau_{\dot B^0_{p_3, \infty} \dot B^{\epsilon_2}_{p_3, \infty}}, \\
& 2^{-k_1\epsilon_1 \tau} 2^{m_1\beta_2 \tau} \|D^{\alpha_1}_{(1)} D^{\alpha_2}_{(2)} f_1\|^\tau_{\dot B^{\beta_1}_{p_1, \infty} \dot B^0_{q_1, \infty}}  \|f_3\|^\tau_{\dot B^{\epsilon_1}_{p_3, \infty} \dot B^0_{p_3, \infty}}, 2^{-k_1\epsilon_1 \tau} 2^{-m_1\epsilon_2 \tau} \|D^{\alpha_1}_{(1)} D^{\alpha_2}_{(2)}  f_1\|^\tau_{\dot B^{\beta_1}_{p_1, \infty} \dot B^{\beta_2}_{q_1, \infty}}  \|f_3\|^\tau_{\dot B^{\epsilon_1}_{p_3, \infty} \dot B^{\epsilon_2}_{q_3, \infty}} \big).
\end{align*}

The procedure described in treating $I_A^{(1)} \times I_B^{(2)}$ will eventually yield that this is majorized by
\begin{align*}
& \big(  \|D^{\alpha_1}_{(1)} D^{\alpha_2}_{(2)} f_1 \|_{\dot B^0_{p_1, \infty} \dot B^0_{q_1, \infty}}^\tau \|f_3\|_{\dot B^{\beta_1}_{p_3, \infty} \dot B^{\beta_2}_{q_3, \infty}}^\tau \big)^{{\epsilon_1 \over{\beta_1+\epsilon_1}} \cdot {\epsilon_2 \over {\beta_2+\epsilon_2}}}  \cdot\big( \|D^{\alpha_1}_{(1)} D^{\alpha_2}_{(2)} f_1 \|_{\dot B^{\beta_1}_{p_1, \infty} \dot B^0_{q_1, \infty}} \|f_3\|_{\dot B^0_{p_3, \infty} \dot B^{\beta_2}_{q_3, \infty}}^\tau\big)^{{\beta_1 \over{\beta_1 + \epsilon_1}} \cdot {\epsilon_2 \over {\beta_2+\epsilon_2}}} \\
& \cdot  \big(\|D^{\alpha_1}_{(1)} D^{\alpha_2}_{(2)} f_1 \|_{\dot B^{0}_{p_1, \infty} \dot B^{\beta_2}_{q_1, \infty}} \|f_3\|_{\dot B^{\beta_1}_{p_3, \infty} \dot B^{0}_{q_3, \infty}}^\tau\big)^{{\epsilon_1 \over{\beta_1+\epsilon_1}} \cdot {\beta_2 \over {\beta_2+\epsilon_2}}} \cdot \big( \|D^{\alpha_1}_{(1)} D^{\alpha_2}_{(2)} f_1 \|_{\dot B^{\beta_1}_{p_1, \infty} \dot B^{\beta_2}_{q_1, \infty}}  \|f_3\|_{\dot B^{0}_{p_3, \infty} \dot B^{0}_{q_3, \infty}}^\tau  \big)^{{\beta_1 \over{\beta_1 + \epsilon_1}} \cdot {\beta_2 \over {\beta_2+\epsilon_2}}}.
\end{align*}

\medskip

\item $I_A^{(1)} \times I_C^{(2)})$ We recall that the symbol $m_{I_A^{(1)}}(\xi_1, \ldots, \xi_5) \cdot m_{I_C^{(2)}}(\eta_1, \ldots, \eta_5)$ is
\begin{align*}
&\Big( \frac{|\xi_1+\ldots+\xi_5|^{\beta_1}- |\xi_1+\xi_2|^{\beta_1}}{\xi_3+\xi_4+\xi_5} \cdot \xi_3 \cdot |\xi_1+\xi_2|^{\alpha_1} |\xi_4+\xi_5|^{\gamma_1} \Big) \cdot \Big( \frac{|\eta_1+\eta_2|^{\alpha_2+\beta_2}- |\eta_1|^{\alpha_2+\beta_2}}{\eta_2} \cdot \eta_2 \cdot |\eta_4+\eta_5|^{\gamma_2} \Big)
\end{align*}
and thus the associated operator, obtained after a suitable regrouping of the terms, is
\begin{align*}
&T_{m_{I_A^{(1)}}, m_{I_C^{(2)}}}(f_1, \ldots, f_5)(x, y) = \int_{\BBR^{10}} m_{I_A^{(1)}}(\xi_1, \ldots, \xi_5) \cdot m_{I_C^{(2)}}(\eta_1, \ldots, \eta_5) \ii F(\Delta_{k_1}^{(1)} \Delta_{m_1}^{(2)} f_1 ) (\xi_1, \eta_1) \cdot \ii F(S_{k_1}^{(1)} \Delta_{m_2}^{(2)} f_2)(\xi_2, \eta_2)  \\
&\cdot  \ii F(\Delta_{k_3}^{(1)} S_{m_1}^{(2)} f_3 )(\xi_3, \eta_3) \cdot  \ii F( S_{k_1}^{(1)} S^{(2)}_{m_1} f_4)(\xi_4, \eta_4) \cdot  \ii F(S_{k_1}^{(1)} S^{(2)}_{m_1} f_5)(\xi_5, \eta_5) \cdot  e^{ 2 \pi ix \left(\xi_1+\ldots+\xi_5 \right)}  e^{ 2 \pi iy \left(\eta_1+\ldots+\eta_5 \right)} d \, \xi_1 \ldots d \, \xi_5 d \, \eta_1 \ldots d \, \eta_5.
\end{align*}

Thanks to the usual Fourier series decomposition, the commutator estimates and result on lower complexity flags localized in frequency, we are left with summing 
\begin{align*}
& \sum_{k_3< k_1} \sum_{m_2<m_1} 2^{k_1(\beta_1-1) \tau} 2^{k_3 \tau} 2^{k_1 \alpha_1 \tau}\, 2^{m_1(\alpha_2+\beta_2-1) \tau} 2^{m_2 \tau} \|\Delta_{k_1}^{(1)} \Delta_{m_1}^{(2)} f_1\|_{L^{p_1}L^{q_1}}^\tau \, \|S_{k_1}^{(1)} \Delta_{m_2}^{(2)} f_2\|_{L^{p_2}L^{q_2}}^\tau \\
& \qquad \qquad \|  \Delta_{k_3}^{(1)} S_{m_1}^{(2)} f_3\|_{L^{p_3}L^{q_3}}^\tau \, \| D^{\gamma_1}_{(1)} D^{\gamma_2}_{(2)} \big( S_{k_1}^{(1)} S_{m_1}^{(2)} f_4 \cdot S_{k_1}^{(1)} S_{m_1}^{(2)} f_5 \big)\|_{L^{p_{4,5}}L^{q_{4,5}}}^\tau \\
&\lesssim \sum_{k_3< k_1} \sum_{m_2<m_1} 2^{k_1(\beta_1-1) \tau} 2^{k_3 \tau} \, 2^{m_1(\alpha_2+\beta_2-1) \tau} 2^{m_2 \tau}  \|\Delta_{k_1}^{(1)} \Delta_{m_1}^{(2)} D^{\alpha_1}_{(1)} f_1\|_{L^{p_1}L^{q_1}}^\tau \, \| \Delta_{m_2}^{(2)} f_2\|_{L^{p_2}L^{q_2}}^\tau  \|  \Delta_{k_3}^{(1)}  f_3\|_{L^{p_3}L^{q_3}}^\tau  \\
&\quad \cdot \Big( \|  D^{\gamma_1}_{(1)} D^{\gamma_2}_{(2)} f_4\|_{L^{p_4}L^{q_4}} \, \| f_5\|_{L^{p_5}L^{q_5}} + \ldots + \|f_4\|_{L^{p_4}L^{q_4}} \, \|  D^{\gamma_1}_{(1)} D^{\gamma_2}_{(2)} f_5\|_{L^{p_5}L^{q_5}}  \Big)^\tau.
\end{align*}

Setting aside the term involving  the $D^{\gamma_1}_{(1)} D^{\gamma_2}_{(2)}$ derivatives applied to (projections of) the functions $f_4$ and $f_5$, the term above is bounded by
\begin{align*}
\sum_{k_1, m_1} \min \big(  &2^{k_1\beta_1 \tau} 2^{m_1(\alpha_2+\beta_2) \tau} \|D^{\alpha_1}_{(1)} f_1\|_{\dot B^0_{p_1, \infty} \dot B^0_{q_1, \infty}}^\tau  \| f_2\|_{L^{p_2} \dot B^0_{q_2, \infty}}^\tau  \|f_3\|_{\dot B^0_{p_3, \infty} L^{q_3}}^\tau, \\
&2^{k_1\beta_1 \tau} 2^{ -m_1 \epsilon_2 \tau} \|D^{\alpha_1}_{(1)} f_1\|^\tau_{\dot B^0_{p_1, \infty} \dot B^{\alpha_2+\beta_2}_{q_1, \infty}}  \| f_2\|_{L^{p_2} \dot B^{\epsilon_2}_{q_2, \infty}}^\tau  \|f_3\|^\tau_{\dot B^0_{p_3, \infty} L^{q_3}}, \\
& 2^{-k_1\epsilon_1 \tau} 2^{m_1(\alpha_2+\beta_2) \tau} \|D^{\alpha_1}_{(1)} f_1\|^\tau_{\dot B^{\beta_1}_{p_1, \infty} \dot B^0_{q_1, \infty}}  \| f_2\|_{L^{p_2} \dot B^0_{q_2, \infty}}^\tau  \|f_3\|^\tau_{\dot B^{\epsilon_1}_{p_3, \infty} L^{q_3}} , \\
& 2^{-k_1\epsilon_1 \tau} 2^{-m_1\epsilon_2 \tau} \| D^{\alpha_1}_{(1)} f_1\|^\tau_{\dot B^{\beta_1}_{p_1, \infty} \dot B^{\alpha_2+\beta_2}_{q_1, \infty}}  \| f_2\|_{L^{p_2} \dot B^{\epsilon_2}_{q_2, \infty}}^\tau \|f_3\|^\tau_{\dot B^{\epsilon_1}_{p_3, \infty} L^{q_3}} \big).
\end{align*}

Eventually, we get
\begin{align*}
& \big(  \|D^{\alpha_1}_{(1)} f_1\|_{\dot B^0_{p_1, \infty} \dot B^0_{q_1, \infty}}^\tau  \| f_2\|_{L^{p_2} \dot B^{\alpha_2+\beta_2}_{q_2, \infty}}^\tau  \|f_3\|_{\dot B^{\beta_1}_{p_3, \infty} L^{q_3}}^\tau \big)^{{\epsilon_1 \over{\beta_1+\epsilon_1}} \cdot {\epsilon_2 \over {\beta_2+\epsilon_2}}}  \cdot\big( \|D^{\alpha_1}_{(1)} f_1\|_{\dot B^{\beta_1}_{p_1, \infty} \dot B^0_{q_1, \infty}}^\tau  \| f_2\|_{L^{p_2} \dot B^{\alpha_2+\beta_2}_{q_2, \infty}}^\tau  \|f_3\|_{\dot B^0_{p_3, \infty} L^{q_3}}^\tau\big)^{{\beta_1 \over{\beta_1 + \epsilon_1}} \cdot {\epsilon_2 \over {\beta_2+\epsilon_2}}} \\
& \cdot  \big( \|D^{\alpha_1}_{(1)} f_1\|_{\dot B^0_{p_1, \infty} \dot B^{\alpha_2+\beta_2}_{q_1, \infty}}^\tau  \| f_2\|_{L^{p_2} \dot B^0_{q_2, \infty}}^\tau  \|f_3\|_{\dot B^{\beta_1}_{p_3, \infty} L^{q_3}}^\tau\big)^{{\epsilon_1 \over{\beta_1+\epsilon_1}} \cdot {\beta_2 \over {\beta_2+\epsilon_2}}} \cdot \big( \|D^{\alpha_1}_{(1)} f_1\|_{\dot B^{\beta_1}_{p_1, \infty} \dot B^{\alpha_2+\beta_2}_{q_1, \infty}}^\tau  \| f_2\|_{L^{p_2} \dot B^0_{q_2, \infty}}^\tau  \|f_3\|_{\dot B^0_{p_3, \infty} L^{q_3}}^\tau  \big)^{{\beta_1 \over{\beta_1 + \epsilon_1}} \cdot {\beta_2 \over {\beta_2+\epsilon_2}}}.
\end{align*}

\medskip

\item $I_A^{(1)} \times I_D^{(2)})$ The symbol $m_{I_A^{(1)}}(\xi_1, \ldots, \xi_5) \cdot m_{I_D^{(2)}}(\eta_1, \ldots, \eta_5)$ is
\begin{align*}
&\Big( \frac{|\xi_1+\ldots+\xi_5|^{\beta_1}- |\xi_1+\xi_2|^{\beta_1}}{\xi_3+\xi_4+\xi_5} \cdot \xi_3 \cdot |\xi_1+\xi_2|^{\alpha_1} |\xi_4+\xi_5|^{\gamma_1} \Big) \cdot \big( |\eta_1|^{\alpha_2+\beta_2} \cdot |\eta_4+\eta_5|^{\gamma_2} \big) 
\end{align*}
and the operator associated to it, obtained after summing in $k_2, k_4, k_5 \ll k_1$ and $m_2, m_3, m_4, m_5 \ll m_1$, is
\begin{align*}
&T_{m_{I_A^{(1)}}, m_{I_D^{(2)}}}^{k_1, k_3; m_1}(f_1, \ldots, f_5)(x, y) = \int_{\BBR^{10}} m_{I_A^{(1)}}(\xi_1, \ldots, \xi_5) \cdot m_{I_D^{(2)}}(\eta_1, \ldots, \eta_5) \ii F(\Delta_{k_1}^{(1)} \Delta_{m_1}^{(2)} f_1 ) (\xi_1, \eta_1) \cdot \ii F(S_{k_1}^{(1)} S_{m_1}^{(2)} f_2)(\xi_2, \eta_2)  \\
&\cdot  \ii F(\Delta_{k_3}^{(1)} S_{m_1}^{(2)} f_3 )(\xi_3, \eta_3) \cdot  \ii F( S_{k_1}^{(1)} S^{(2)}_{m_1} f_4)(\xi_4, \eta_4) \cdot  \ii F(S_{k_1}^{(1)} S^{(2)}_{m_1} f_5)(\xi_5, \eta_5) \cdot  e^{ 2 \pi ix \left(\xi_1+\ldots+\xi_5 \right)}  e^{ 2 \pi iy \left(\eta_1+\ldots+\eta_5 \right)} d \, \xi_1 \ldots d \, \xi_5 d \, \eta_1 \ldots d \, \eta_5.
\end{align*}

In the second parameter, we need to switch the order of summation: each $S_{m_1}^{(2)} f_l$ will be written as 
\[
S_{m_1}^{(2)} f_l= f_l - \Delta_{\succ   m_1}^{(2)} f_l,\footnote{As in the one-parameter setting, $\displaystyle \Delta^{(j)}_{\succ m} f_l := \sum_{\tilde{m} > m-3} \Delta^{(j)}_{\tilde{m}} f_l$, and has the same properties as $\Delta_{\succ m} f_l$ defined in \eqref{projection_<>}.} 
\qquad \text{ for all } 2 \leq l \leq 5.
\]
Then $T_{m_{I_A^{(1)}}, m_{I_D^{(2)}}}^{k_1, k_3; m_1}(f_1, \ldots, f_5)$ becomes
\begin{align}
&\int_{\BBR^{10}} m_{I_A^{(1)}}(\xi_1, \ldots, \xi_5) \cdot m_{I_D^{(2)}}(\eta_1, \ldots, \eta_5) \ii F(\Delta_{k_1}^{(1)} \Delta_{m_1}^{(2)} f_1 ) (\xi_1, \eta_1) \cdot \ii F(S_{k_1}^{(1)}  f_2)(\xi_2, \eta_2) \label{eq:bi:param:I_AxI_D:1} \\
&\cdot  \ii F(\Delta_{k_3}^{(1)}  f_3 )(\xi_3, \eta_3) \cdot  \ii F( S_{k_1}^{(1)}f_4)(\xi_4, \eta_4) \cdot  \ii F(S_{k_1}^{(1)} )(\xi_5, \eta_5) \cdot  e^{ 2 \pi ix \left(\xi_1+\ldots+\xi_5 \right)}  e^{ 2 \pi iy \left(\eta_1+\ldots+\eta_5 \right)} d \, \xi \nonumber \\
& -  \int_{\BBR^{10}} m_{I_A^{(1)}}(\xi_1, \ldots, \xi_5) \cdot m_{I_D^{(2)}}(\eta_1, \ldots, \eta_5) \ii F(\Delta_{k_1}^{(1)} \Delta_{m_1}^{(2)} f_1 ) (\xi_1, \eta_1) \cdot \ii F(S_{k_1}^{(1)} \Delta_{\succ   m_1}^{(2)} f_2)(\xi_2, \eta_2) \label{eq:bi:param:I_AxI_D:2}  \\
&\cdot  \ii F(\Delta_{k_3}^{(1)} f_3 )(\xi_3, \eta_3) \cdot  \ii F( S_{k_1}^{(1)}  f_4)(\xi_4, \eta_4) \cdot  \ii F(S_{k_1}^{(1)}  f_5)(\xi_5, \eta_5) \cdot  e^{ 2 \pi ix \left(\xi_1+\ldots+\xi_5 \right)}  e^{ 2 \pi iy \left(\eta_1+\ldots+\eta_5 \right)} d \, \xi \nonumber \\
&  -  \int_{\BBR^{10}} m_{I_A^{(1)}}(\xi_1, \ldots, \xi_5) \cdot m_{I_D^{(2)}}(\eta_1, \ldots, \eta_5) \ii F(\Delta_{k_1}^{(1)} \Delta_{m_1}^{(2)} f_1 ) (\xi_1, \eta_1) \cdot \ii F(S_{k_1}^{(1)} f_2)(\xi_2, \eta_2) \label{eq:bi:param:I_AxI_D:3}  \\
&\cdot  \ii F(\Delta_{k_3}^{(1)} f_3 )(\xi_3, \eta_3) \cdot  \ii F( S_{k_1}^{(1)} \Delta_{\succ   m_1}^{(2)}  f_4)(\xi_4, \eta_4) \cdot  \ii F(S_{k_1}^{(1)}  f_5)(\xi_5, \eta_5) \cdot  e^{ 2 \pi ix \left(\xi_1+\ldots+\xi_5 \right)}  e^{ 2 \pi iy \left(\eta_1+\ldots+\eta_5 \right)} d \, \xi \nonumber \\
& + \text{   similar terms}. \nonumber
\end{align}

If we sum in $m_1$, the first term becomes 
\begin{align*}
&\int_{\BBR^{10}} m_{I_A^{(1)}}(\xi_1, \ldots, \xi_5) \cdot |\eta_4 + \eta_5|^{\gamma_2} \ii F(\Delta_{k_1}^{(1)} D^{\alpha_2+\beta_2}_{(2)} f_1 ) (\xi_1, \eta_1) \cdot \ii F(S_{k_1}^{(1)}  f_2)(\xi_2, \eta_2)  \cdot\ii F(\Delta_{k_3}^{(1)}  f_3 )(\xi_3, \eta_3) \\
& \qquad  \cdot  \ii F( S_{k_1}^{(1)}f_4)(\xi_4, \eta_4) \cdot  \ii F(S_{k_1}^{(1)} f_5)(\xi_5, \eta_5) \cdot  e^{ 2 \pi ix \left(\xi_1+\ldots+\xi_5 \right)}  e^{ 2 \pi iy \left(\eta_1+\ldots+\eta_5 \right)} d \, \xi.
\end{align*}
So besides the $D^{\gamma_2}_{(2)}$ derivatives acting on $f_4$ and $f_5$ (thus an object of lower complexity), we have fundamentally a one-parameter Leibniz rule. We use Fourier series for decomposing the symbol $m_{I_A^{(1)}}(\xi_1, \ldots, \xi_5)$, and the boundedness of \eqref{eq:bi:param:I_AxI_D:1} in $\| \cdot  \|_{L^pL^q}^\tau$ is a consequence of
\begin{align*}
&\sum_{k_3<k_1} 2^{k_1(\beta_1-1) \tau} 2^{k_3 \tau} 2^{k_1 \alpha_1 \tau} \|\Delta_{k_1}^{(1)}  D^{\alpha_2+\beta_2}_{(2)} f_1\|_{L^{p_1}L^{q_1}}^\tau \, \|S_{k_1}^{(1)}f_2\|_{L^{p_2}L^{q_2}}^\tau  \Delta_{k_3}^{(1)} f_3\|_{L^{p_3}L^{q_3}}^\tau \, \| D^{\gamma_1}_{(1)} D^{\gamma_2}_{(2)} \big( S_{k_1}^{(1)}  f_4 \cdot S_{k_1}^{(1)} f_5 \big)\|_{L^{p_{4,5}}L^{q_{4,5}}}^\tau \\
&\lesssim \sum_{k_3< k_1} 2^{k_1(\beta_1-1) \tau} 2^{k_3 \tau}  \|\Delta_{k_1}^{(1)} D^{\alpha_1}_{(1)}  D^{\alpha_2+\beta_2}_{(2)} f_1\|_{L^{p_1}L^{q_1}}^\tau \, \|f_2\|_{L^{p_2}L^{q_2}}^\tau  \|  \Delta_{k_3}^{(1)}  f_3\|_{L^{p_3}L^{q_3}}^\tau  \\
&\quad \cdot \Big( \|  D^{\gamma_1}_{(1)} D^{\gamma_2}_{(2)} f_4\|_{L^{p_4}L^{q_4}} \, \| f_5\|_{L^{p_5}L^{q_5}} + \ldots + \|f_4\|_{L^{p_4}L^{q_4}} \, \|  D^{\gamma_1}_{(1)} D^{\gamma_2}_{(2)} f_5\|_{L^{p_5}L^{q_5}}  \Big)^\tau
\end{align*}

Only the functions $f_1$ and $f_3$ will participate in the summation, and the optimization resembles the one-parameter case discussed in Section \ref{Bourgain-Li_hilow}:
\begin{align*}
&\sum_{k_1}  \min \big(  2^{k_1\beta_1 \tau}  \|D^{\alpha_1}_{(1)}D^{\alpha_2+\beta_2}_{(2)} f_1\|_{\dot B^0_{p_1, \infty} L^{q_1}}^\tau \|f_3\|_{\dot B^0_{p_3, \infty} L^{q_3}}^\tau  ,  2^{-k_1\epsilon_1 \tau} \|D^{\alpha_1}_{(1)} D^{\alpha_2+\beta_2}_{(2)} f_1\|^\tau_{\dot B^{\beta_1}_{p_1, \infty} L^{q_1}} \|f_3\|^\tau_{\dot B^{\epsilon_1}_{p_3, \infty} L^{q_3}}  \big) \\
&\lesssim  \big( \|D^{\alpha_1}_{(1)}D^{\alpha_2+\beta_2}_{(2)} f_1\|_{\dot B^0_{p_1, \infty} L^{q_1}}^\tau \|f_3\|_{\dot B^{\beta_1}_{p_3, \infty} L^{q_3}}^\tau  \big)^{\epsilon_1 \over{\beta_1+\epsilon_1}} \cdot \big(   \|D^{\alpha_1}_{(1)}D^{\alpha_2+\beta_2}_{(2)} f_1\|_{\dot B^{\beta_1}_{p_1, \infty} L^{q_1}}^\tau \|f_3\|_{\dot B^0_{p_3, \infty} L^{q_3}}^\tau  \big)^{\beta_1 \over{\beta_1+\epsilon_1}}.
\end{align*}

We return to \eqref{eq:bi:param:I_AxI_D:2}, for which the summation in $m_1$ is performed outside the $\|\cdot \|^\tau_{L^pL^q}$ quasi-norms:
\begin{align*}
&\sum_{k_3<k_1} \sum_{m_1} 2^{k_1(\beta_1-1) \tau} 2^{k_3 \tau} 2^{k_1 \alpha_1 \tau} 2^{m_1 (\alpha_2+\beta_2) \tau} \|\Delta_{k_1}^{(1)} \Delta_{m_1}^{(2)} f_1\|_{L^{p_1}L^{q_1}}^\tau \, \|S_{k_1}^{(1)} \Delta_{\succ  m_1}^{(2)} f_2\|_{L^{p_2}L^{q_2}}^\tau \\
& \qquad \qquad \|  \Delta_{k_3}^{(1)} f_3\|_{L^{p_3}L^{q_3}}^\tau \, \| D^{\gamma_1}_{(1)} D^{\gamma_2}_{(2)} \big( S_{k_1}^{(1)}  f_4 \cdot S_{k_1}^{(1)} f_5 \big)\|_{L^{p_{4,5}}L^{q_{4,5}}}^\tau \\
&\lesssim \sum_{k_3< k_1}  \sum_{m_1} 2^{k_1(\beta_1-1) \tau} 2^{k_3 \tau} 2^{m_1 (\alpha_2+\beta_2) \tau} \|\Delta_{k_1}^{(1)} D^{\alpha_1}_{(1)}  D^{\alpha_2+\beta_2}_{(2)} f_1\|_{L^{p_1}L^{q_1}}^\tau \, \|\Delta_{\succ  m_1}^{(2)} f_2\|_{L^{p_2}L^{q_2}}^\tau  \|  \Delta_{k_3}^{(1)}  f_3\|_{L^{p_3}L^{q_3}}^\tau  \\
&\quad \cdot \Big( \|  D^{\gamma_1}_{(1)} D^{\gamma_2}_{(2)} f_4\|_{L^{p_4}L^{q_4}} \, \| f_5\|_{L^{p_5}L^{q_5}} + \ldots + \|f_4\|_{L^{p_4}L^{q_4}} \, \|  D^{\gamma_1}_{(1)} D^{\gamma_2}_{(2)} f_5\|_{L^{p_5}L^{q_5}}  \Big)^\tau.
\end{align*}

Using the observation that
\[
\|\Delta_{\succ  m_1}^{(2)} f_2\|_{L^{p_2}L^{q_2}} \leq \min\big( \|f_2\|_{L^{p_2}L^{q_2}} , 2^{- m_1 \epsilon} \|f_2\|_{L^{p_2} \dot B^{\epsilon}_{q_2, \infty}} \big),
\]
we notice that the term on the first line is majorized by
{\fontsize{9}{9}\begin{align*}
\sum_{k_1, m_1}  \min \big(  &2^{k_1\beta_1 \tau} 2^{m_1(\alpha_2+\beta_2) \tau} \|f_1\|_{\dot B^0_{p_1, \infty} \dot B^0_{q_1, \infty}}^\tau  \| f_2\|_{L^{p_2} L^{q_2}}^\tau  \|f_3\|_{\dot B^0_{p_3, \infty} L^{q_3}}^\tau  , 2^{k_1\beta_1 \tau} 2^{ -m_1 \epsilon_2 \tau} \|f_1\|^\tau_{\dot B^0_{p_1, \infty} \dot B^{\alpha_2+\beta_2}_{q_1, \infty}}  \| f_2\|_{L^{p_2} \dot B^{\epsilon_2}_{q_2, \infty}}^\tau  \|f_3\|^\tau_{\dot B^0_{p_3, \infty} L^{q_3}}, \\
& 2^{-k_1\epsilon_1 \tau} 2^{m_1(\alpha_2+\beta_2) \tau} \|f_1\|^\tau_{\dot B^{\beta_1}_{p_1, \infty} \dot B^0_{q_1, \infty}}  \| f_2\|_{L^{p_2} L^{q_2}}^\tau  \|f_3\|^\tau_{\dot B^{\epsilon_1}_{p_3, \infty} L^{q_3}} , 2^{-k_1\epsilon_1 \tau} 2^{-m_1\epsilon_2 \tau} \|f_1\|^\tau_{\dot B^{\beta_1}_{p_1, \infty} \dot B^{\alpha_2+\beta_2}_{q_1, \infty}}  \| f_2\|_{L^{p_2} \dot B^{\epsilon_2}_{q_2, \infty}}^\tau \|f_3\|^\tau_{\dot B^{\epsilon_1}_{p_3, \infty} L^{q_3}} \big).
\end{align*}}

This will eventually produce the expected term -- the computations follow the usual pattern.


Finally, we take a quick look at \eqref{eq:bi:param:I_AxI_D:3} as well, since the structure of the subtree associated to $D^{\gamma_2}_{(2)} (f_4 \cdot f_5)$ becomes a part of the analysis. Without loss of generality (since the other case is similar), we further restrict the symbol to the conical region $|\eta_4| \leq |\eta_5|$. Using the usual Fourier series expansion in the first variable in order to tensorize the symbol, we are led ultimately to estimating\footnote{Several similar terms need to be considered.}
{\fontsize{9}{9}
\begin{align*}
\sum_{\substack{k_3 \ll k_1 \\ m_1, m_5}} 2^{(\beta_1-1)k_1 \tau} 2^{k_3 \tau} 2^{m_1(\alpha_2+\beta_2)} \|\Delta_{k_1}^{(1)} \Delta_{m_1}^{(2)} D^{\alpha_1}_{(1)} f_1 \|_{L^{p_1}L^{q_1}}^\tau \|f_2\|_{L^{p_2}L^{q_2}}^\tau  \| \Delta_{k_3}^{(1)} f_3\|_{L^{p_3}L^{q_3}}^\tau  \|  D_{\gamma_2}^{(2)}\left(S_{k_1}^{(1)} \Delta_{\succ  m_1}^{(2)} \Delta_{\leq m_5}^{(2)} D^{\gamma_1}_{(1)} f_4 S_{k_1}^{(1)} \Delta_{m_5}^{(2)} f_5\right)\|_{L^{p_{4,5}}L^{q_{4,5}}}^\tau. & 
\end{align*}}
The only way $\Delta_{\succ  m_1}^{(2)} S_{k_5}^{(2)}$ is non-zero is  if $m_5 \succ m_1$ so that we can restrict the above expression to the sum over $m_5 \succ m_1$. We also invoke the localized Leibniz rule: 
\begin{equation*}
 \|  D_{\gamma_2}^{(2)}\left(S_{k_1}^{(1)} \Delta_{\succ  m_1}^{(2)} \Delta_{\leq m_5}^{(2)} D^{\gamma_1}_{(1)} f_4 \cdot S_{k_1}^{(1)} \Delta_{m_5}^{(2)} f_5\right)\|_{L^{p_{4,5}}L^{q_{4,5}}} \lesssim \| S_{k_1}^{(1)} \Delta_{\succ  m_1}^{(2)}    D^{\gamma_1}_{(1)} f_4\|_{L^{p_4}L^{q_4}}   \| S_{k_1}^{(1)}   \Delta_{m_5}^{(2)}  D^{\gamma_2}_{(2)} f_5\|_{L^{p_5}L^{q_5}}.
\end{equation*}

This means that the term above can be further estimated by
\begin{align*}
\sum_{\substack{k_3 \ll k_1 \\ m_5 \succ  m_1}} 2^{(\beta_1-1)k_1 \tau} 2^{k_3 \tau} 2^{m_1(\alpha_2+\beta_2)} \|\Delta_{k_1}^{(1)} \Delta_{m_1}^{(2)} D^{\alpha_1}_{(1)} f_1 \|_{L^{p_1}L^{q_1}}^\tau \|f_2\|_{L^{p_2}L^{q_2}}^\tau  \| \Delta_{k_3}^{(1)} f_3\|_{L^{p_3}L^{q_3}}^\tau  \|  D^{\gamma_1}_{(1)} f_4\|_{L^{p_4}L^{q_4}}^\tau    \|  \Delta_{m_5}^{(2)}  D^{\gamma_2}_{(2)} f_5\|_{L^{p_5}L^{q_5}}^\tau.
\end{align*}
We put $f_2$ and $f_4$ aside, and what is left will be bounded (through the usual process) by
{\fontsize{9}{9}\begin{align*}
& \big( \|D^{\alpha_1}_{(1)}  f_1\|^\tau_{\dot B^{0}_{p_1, \infty} \dot B^{\alpha_2+\beta_2}_{q_1, \infty}} \|f_3\|^\tau_{\dot B^{\beta_1}_{p_3, \infty} L^{q_3}}  \| D^{\gamma_2}_{(2)} f_5\|_{L^{p_5} \dot B^{0}_{q_5, \infty}}^\tau \big)^{{\epsilon_1 \over{\beta_1+\epsilon_1}} \cdot {\epsilon_2 \over {\beta_2+\epsilon_2}}}  \cdot\big( \|D^{\alpha_1}_{(1)}  f_1\|^\tau_{\dot B^{\beta_1}_{p_1, \infty} \dot B^{\alpha_2+\beta_2}_{q_1, \infty}} \|f_3\|^\tau_{\dot B^{0}_{p_3, \infty} L^{q_3}}  \| D^{\gamma_2}_{(2)} f_5\|_{L^{p_5} \dot B^{0}_{q_5, \infty}}^\tau  \big)^{{\beta_1 \over{\beta_1 + \epsilon_1}} \cdot {\epsilon_2 \over {\beta_2+\epsilon_2}}} \\
& \cdot  \big(\|D^{\alpha_1}_{(1)}  f_1\|^\tau_{\dot B^{0}_{p_1, \infty} \dot B^{0}_{q_1, \infty}}  \|f_3\|^\tau_{\dot B^{\beta_1}_{p_3, \infty} L^{q_3}}    \| D^{\gamma_2}_{(2)} f_5\|_{L^{p_5} \dot B^{\alpha_2+\beta_2}_{q_5, \infty}}^\tau  \big)^{{\epsilon_1 \over{\beta_1+\epsilon_1}} \cdot {\beta_2 \over {\beta_2+\epsilon_2}}} \cdot \big( \|D^{\alpha_1}_{(1)}  f_1\|^\tau_{\dot B^{\beta_1}_{p_1, \infty} \dot B^{0}_{q_1, \infty}} \|f_3\|^\tau_{\dot B^{0}_{p_3, \infty} L^{q_3}}  \| D^{\gamma_2}_{(2)} f_5\|_{L^{p_5} \dot B^{\alpha_2+\beta_2}_{q_5, \infty}}^\tau  \big)^{{\beta_1 \over{\beta_1 + \epsilon_1}} \cdot {\beta_2 \over {\beta_2+\epsilon_2}}}.
\end{align*}}
The remaining terms  can be treated in a similar way.

\medskip
\item $I_B^{(1)} \times I_A^{(2)})$ The case $I_B^{(1)} \times I_A^{(2)}$ is symmetric to $I_A^{(1)} \times I_B^{(2)}$.
\medskip
\item $I_B^{(1)}\times I_B^{(2)})$ The frequency symbol $m_{I_B^{(1)}}(\xi_1, \ldots, \xi_5) \cdot m_{I_B^{(2)}}(\eta_1, \ldots, \eta_5)$ is described by
\begin{align*}
\Big( \frac{|\xi_1+\ldots+\xi_5|^{\beta_1}- |\xi_1+\xi_2|^{\beta_1}}{\xi_3+\xi_4+\xi_5} \cdot (\xi_4+\xi_5) \cdot |\xi_1+\xi_2|^{\alpha_1} |\xi_4+\xi_5|^{\gamma_1} \Big) \cdot \Big( \frac{|\eta_1+\ldots+\eta_5|^{\beta_2}- |\eta_1+\eta_2|^{\beta_2}}{\eta_3+\eta_4+\eta_5} \cdot (\eta_4+\eta_5) \cdot |\eta_1+\eta_2|^{\alpha_2} |\eta_4+\eta_5|^{\gamma_2} \Big). 
\end{align*}

The ``low scales'' correspond to $\xi_4+\xi_5$ in the first parameter and to $\eta_4+\eta_5$ in the second one; for the purpose of deciding which functions will be involved in the optimization part, we need to decide which of $|\xi_4|$ and $|\xi_5|$ is larger (similarly for $|\eta_4|$ and $|\eta_5|$). We assume that $|\xi_5| \leq |\xi_4|$ and $|\eta_4| \leq |\eta_5|$, which is one of the more convoluted situations. 

The associated operator of interest is
\begin{align*}
&T_{m_{I_B^{(1)}}, m_{I_B^{(2)}}}^{k_1, k_4; m_1, m_5}(f_1, \ldots, f_5)(x, y) = \int_{\BBR^{10}} m_{I_B^{(1)}}(\xi_1, \ldots, \xi_5) \cdot m_{I_B^{(2)}}(\eta_1, \ldots, \eta_5) \ii F(\Delta_{k_1}^{(1)} \Delta_{m_1}^{(2)} f_1 ) (\xi_1, \eta_1) \cdot \ii F(S_{k_1}^{(1)} S_{m_1}^{(2)} f_2)(\xi_2, \eta_2)  \\
&\cdot  \ii F(S_{k_1}^{(1)} S_{m_1}^{(2)} f_3 )(\xi_3, \eta_3) \cdot  \ii F( \Delta_{k_4}^{(1)} \Delta^{(2)}_{\leq m_5} f_4)(\xi_4, \eta_4) \cdot  \ii F(\Delta_{\leq k_4}^{(1)} \Delta^{(2)}_{m_5} f_5)(\xi_5, \eta_5) \cdot  e^{ 2 \pi ix \left(\xi_1+\ldots+\xi_5 \right)}  e^{ 2 \pi iy \left(\eta_1+\ldots+\eta_5 \right)} d \, \xi_1 \ldots d \, \xi_5 d \, \eta_1 \ldots d \, \eta_5.
\end{align*}

The usual double Fourier series decomposition of the frequency-localized commutator symbols $m_{c_{\beta_1}}(\xi_1+\xi_2, \xi_3+\xi_4+\xi_5)$ and $m_{c_{\beta_2}}(\eta_1+\eta_2, \eta_3+\eta_4+\eta_5)$ allows to reduce the problem concerning the boundedness of $T_{m_{I_B^{(1)}}, m_{I_B^{(2)}}}^{k_1, k_4; m_1, m_5}(f_1, \ldots, f_5)$ to subtrees . Using results concerning bi-parameter paraproducts (flags of lower complexity), we deduce
\begin{align*}
&\|T_{m_{I_B^{(1)}}, m_{I_B^{(2)}}}^{k_1, k_4; m_1, m_5} (f_1, \ldots, f_5)\|_{L^pL^q}^\tau \lesssim 2^{k_1(\beta_1-1) \tau} 2^{k_4 \tau} 2^{k_1 \alpha_1 \tau} 2^{k_4 \gamma_1 \tau} 2^{m_1(\beta_2-1) \tau} 2^{m_5 \tau} 2^{m_1 \alpha_2 \tau} 2^{m_5 \gamma_2 \tau} \\
& \qquad \cdot \| \Delta_{k_1}^{(1)} \Delta_{m_1}^{(2)} f_1  \|_{L^{p_1} L^{q_1}}^\tau \,  \| S_{k_1}^{(1)} S_{m_1}^{(2)} f_2 \|_{L^{p_2} L^{q_2}}^\tau \,  \| S_{k_1}^{(1)} S_{m_1}^{(2)}  f_3 \|_{L^{p_3} L^{q_3}}^\tau \,  \| \Delta_{k_4}^{(1)} \Delta^{(2)}_{\leq m_5} f_4 \|_{L^{p_4} L^{q_4}}^\tau \, \| \Delta_{\leq k_4}^{(1)} \Delta^{(2)}_{m_5} f_5 \|_{L^{p_5} L^{q_5}}^\tau \\
&\lesssim 2^{k_1(\beta_1-1) \tau} 2^{k_4 \tau}  2^{m_1(\beta_2-1) \tau} 2^{m_5 \tau}  \| \Delta_{k_1}^{(1)} \Delta_{m_1}^{(2)} D^{\alpha_1}_{(1)} D^{\alpha_2}_{(2)} f_1  \|_{L^{p_1} L^{q_1}}^\tau \,  \|f_2 \|_{L^{p_2} L^{q_2}}^\tau \,  \| f_3 \|_{L^{p_3} L^{q_3}}^\tau \,  \| \Delta_{k_4}^{(1)}  D^{\gamma_1}_{(1)} f_4 \|_{L^{p_4} L^{q_4}}^\tau \, \| \Delta^{(2)}_{m_5} D^{\gamma_2}_{(1)} f_5 \|_{L^{p_5} L^{q_5}}^\tau.
\end{align*}

Summing now in $k_4 < k_1$, $m_5<m_1$, we obtain the desired upper bound.
\medskip
\item $I_B^{(1)} \times I_C^{(2)})$ The symbol symbol $m_{I_B^{(1)}}(\xi_1, \ldots, \xi_5) \cdot m_{I_C^{(2)}}(\eta_1, \ldots, \eta_5)$ is
\begin{align*}
& \Big( \frac{|\xi_1+\ldots+\xi_5|^{\beta_1}- |\xi_1+\xi_2|^{\beta_1}}{\xi_3+\xi_4+\xi_5} \cdot (\xi_4+\xi_5) \cdot |\xi_1+\xi_2|^{\alpha_1} |\xi_4+\xi_5|^{\gamma_1} \Big)  \cdot \Big( \frac{|\eta_1+\eta_2|^{\alpha_2+\beta_2}- |\eta_1|^{\alpha_2+\beta_2}}{\eta_2} \cdot \eta_2 \cdot |\eta_4+\eta_5|^{\gamma_2} \Big).
\end{align*}
We want to study the multiplier
\begin{align*}
&T_{m_{I_B^{(1)}}, m_{I_C^{(2)}}}^{k_1, k_4; m_1, m_2}(f_1, \ldots, f_5)(x, y) = \int_{\BBR^{10}} m_{I_B^{(1)}}(\xi_1, \ldots, \xi_5) \cdot m_{I_C^{(2)}}(\eta_1, \ldots, \eta_5) \ii F(\Delta_{k_1}^{(1)} \Delta_{m_1}^{(2)} f_1 ) (\xi_1, \eta_1) \cdot \ii F(S_{k_1}^{(1)} \Delta_{m_2}^{(2)} f_2)(\xi_2, \eta_2)  \\
&\cdot  \ii F(S_{k_1}^{(1)} S_{m_1}^{(2)} f_3 )(\xi_3, \eta_3) \cdot  \ii F( \Delta_{k_4}^{(1)} S_{m_1}^{(2)} f_4)(\xi_4, \eta_4) \cdot  \ii F(\Delta_{\leq k_4}^{(1)} S_{m_1}^{(2)} f_5)(\xi_5, \eta_5) \cdot  e^{ 2 \pi ix \left(\xi_1+\ldots+\xi_5 \right)}  e^{ 2 \pi iy \left(\eta_1+\ldots+\eta_5 \right)} d \, \xi_1 \ldots d \, \xi_5 d \, \eta_1 \ldots d \, \eta_5.
\end{align*}
As before, we assume without loss of generality that $|\xi_5| \leq |\xi_4|$. After invoking the Fourier series decomposition, $T_{m_{I_B^{(1)}}, m_{I_C^{(2)}}}^{k_1, k_4; m_1, m_2}(f_1, \ldots, f_5)$ tensorizes as superpositions of the form
{\fontsize{8.5}{8.5}
\begin{align*}
&2^{k_1(\beta_1-1)} 2^{k_4} 2^{m_1(\alpha_2+\beta_2-1)} 2^{m_2} \, \big\|D^{\alpha_1}_{(1)} \left(\Delta_{k_1}^{(1)} \Delta_{m_1}^{(2)} f_1S_{k_1}^{(1)} \Delta_{m_2}^{(2)} f_2\right)\big\|_{L^{p_{1,2}}L^{q_{1,2}}}\|S_{k_1}^{(1)} S_{m_1}^{(2)} f_3\|_{L^{p_3}L^{q_3}} \big\|D^{\gamma_1}_{(1)}D^{\gamma_2}_{(2)} \left(  \Delta_{k_4}^{(1)}S_{m_1}^{(2)}  f_4  \Delta_{\leq k_4}^{(1)} S_{m_1}^{(2)} f_5   \right)\big\|_{L^{p_{4,5}}L^{q_{4,5}}}.
\end{align*}}

These imply that $\|I_B^{(1)} \times I_C^{(2)}\|_{L^p L^q}^\tau$ is bounded by
\begin{align*}
&\sum_{k_4< k_1} \sum_{m_2<m_1} 2^{k_1(\beta_1-1) \tau} 2^{k_4 \tau} 2^{m_1(\alpha_2+\beta_2-1) \tau} 2^{m_2 \tau} \| \Delta_{k_1}^{(1)} \Delta_{m_1}^{(2)} D^{\alpha_1}_{(1)} f_1  \|_{L^{p_1}L^{q_1}}^\tau  \| \Delta_{m_2}^{(2)} f_2 \|_{L^{p_2}L^{q_2}}^\tau  \| f_3  \|_{L^{p_3}L^{q_3}}^\tau  \\
& \qquad\big(   \|  \Delta_{k_4}^{(1)} D^{\gamma_1}_{(1)} D^{\gamma_2}_{(2)} f_4 \|_{L^{p_1}L^{q_1}}   \| f_5 \|_{L^{p_5}L^{q_5}} +  \|  \Delta_{k_4}^{(1)} D^{\gamma_1}_{(1)} f_4 \|_{L^{p_4}L^{q_4}}  \| D^{\gamma_2}_{(2)} f_5 \|_{L^{p_5}L^{q_5}} \big)^\tau.
\end{align*}

If $(F_4, F_5)$ denotes either of the couples $(D^{\gamma_1}_{(1)} D^{\gamma_2}_{(2)} f_4, f_5)$ or $(D^{\gamma_1}_{(1)} f_4, D^{\gamma_2}_{(2)} f_5)$, we are left with bounding 
{\fontsize{9}{9}\begin{align*}
\sum_{k_1, m_1} \min \big(  &2^{k_1\beta_1 \tau} 2^{m_1(\alpha_2+\beta_2) \tau} \|f_1\|_{\dot B^0_{p_1, \infty} \dot B^0_{q_1, \infty}}^\tau  \| f_2\|_{L^{p_2} \dot B^0_{q_2, \infty}}^\tau  \|F_4\|_{\dot B^0_{p_4, \infty} L^{q_4}}^\tau  , 2^{k_1\beta_1 \tau} 2^{ -m_1 \epsilon_2 \tau} \|f_1\|^\tau_{\dot B^0_{p_1, \infty} \dot B^{\alpha_2+\beta_2}_{q_1, \infty}}  \| f_2\|_{L^{p_2} \dot B^{\epsilon_2}_{q_2, \infty}}^\tau  \|F_4\|^\tau_{\dot B^0_{p_4, \infty} L^{q_4}}, \\
& 2^{-k_1\epsilon_1 \tau} 2^{m_1(\alpha_2+\beta_2) \tau} \|f_1\|^\tau_{\dot B^{\beta_1}_{p_1, \infty} \dot B^0_{q_1, \infty}}  \| f_2\|_{L^{p_2} \dot B^0_{q_2, \infty}}^\tau  \|F_4\|^\tau_{\dot B^{\epsilon_1}_{p_4, \infty} L^{q_4}} , 2^{-k_1\epsilon_1 \tau} 2^{-m_1\epsilon_2 \tau} \|f_1\|^\tau_{\dot B^{\beta_1}_{p_1, \infty} \dot B^{\alpha_2+\beta_2}_{q_1, \infty}}  \| f_2\|_{L^{p_2} \dot B^{\epsilon_2}_{q_2, \infty}}^\tau \|F_4\|^\tau_{\dot B^{\epsilon_1}_{p_4,\infty} L^{q_4}} \big).
\end{align*}}
Per usual, interpolation and regrouping of the terms produces a desired upper bound.

\medskip
\item $I_D^{(1)} \times I_D^{(2)})$  We want to estimate the operator 
\begin{align*}
&T_{m_{I_D^{(1)}}, m_{I_D^{(2)}}}^{k_1; m_1}(f_1, \ldots, f_5)(x, y) = \int_{\BBR^{10}} m_{I_D^{(1)}}(\xi_1, \ldots, \xi_5) \cdot m_{I_D^{(2)}}(\eta_1, \ldots, \eta_5) \ii F(\Delta_{k_1}^{(1)} \Delta_{m_1}^{(2)} f_1 ) (\xi_1, \eta_1) \cdot \ii F(S_{k_1}^{(1)} S_{m_1}^{(2)} f_2)(\xi_2, \eta_2)  \\
&\cdot  \ii F(S_{k_1}^{(1)} S_{m_1}^{(2)} f_3 )(\xi_3, \eta_3) \cdot  \ii F( S_{k_1}^{(1)} S_{m_1}^{(2)} f_4)(\xi_4, \eta_4) \cdot  \ii F(S_{k_1}^{(1)} S_{m_1}^{(2)} f_5)(\xi_5, \eta_5) \cdot  e^{ 2 \pi ix \left(\xi_1+\ldots+\xi_5 \right)}  e^{ 2 \pi iy \left(\eta_1+\ldots+\eta_5 \right)} d \, \xi_1 \ldots d \, \xi_5 d \, \eta_1 \ldots d \, \eta_5
\end{align*}
of symbol $m_{I_D^{(1)}}(\xi_1, \ldots, \xi_5) \cdot m_{I_D^{(2)}}(\eta_1, \ldots, \eta_5)$, which writes as
\begin{align*}
|\xi_1|^{\alpha_1+\beta_1} \cdot |\xi_4+\xi_5|^{\gamma_1} \cdot |\eta_1|^{\alpha_2+\beta_2} \cdot |\eta_4+\eta_5|^{\gamma_2}. 
\end{align*}

In fact, $T_{m_{I_D^{(1)}}, m_{I_D^{(2)}}}^{k_1; m_1}(f_1, \ldots, f_5)$ is equal to 
\begin{equation}
\label{eq:I_D;I_D}
\Delta_{k_1}^{(1)} \Delta_{m_1}^{(2)}  D^{\alpha_1+\beta_1}_{(1)} D^{\alpha_2+\beta_2}_{(2)} f_1(x, y) \cdot S_{k_1}^{(1)} S_{m_1}^{(2)} f_2(x, y)\cdot S_{k_1}^{(1)} S_{m_1}^{(2)} f_3(x, y ) \cdot D^{\gamma_1}_{(1)} D^{\gamma_2}_{(2)} \big( S_{k_1}^{(1)}S_{m_1}^{(2)}  f_4  \cdot S_{k_1}^{(1)} S_{m_1}^{(2)} f_5 \big)(x, y).
\end{equation}

Next, we write every $S_{k_1}^{(1)}S_{m_1}^{(2)}  F$ as
\[
S_{k_1}^{(1)}S_{m_1}^{(2)} F= F - \Delta_{\succ  k_1}^{(1)}F - \Delta_{\succ  m_1}^{(2)}  F +  \Delta_{\succ  k_1}^{(1)}  \Delta_{\succ  m_1}^{(2)} F,
\]
for $F$ being any of the functions $f_2, f_3, f_4$ or $f_5$. When plugging this in \eqref{eq:I_D;I_D}, we obtain four types of terms.
\begin{itemize}
\item[-] The first will simply produce 
\begin{equation}
\label{eq:ID:ID:no:proj}
\Delta_{k_1}^{(1)} \Delta_{m_1}^{(2)}  D^{\alpha_1+\beta_1}_{(1)} D^{\alpha_2+\beta_2}_{(2)} f_1(x, y) \cdot f_2(x, y)\cdot f_3(x, y ) \cdot D^{\gamma_1}_{(1)} D^{\gamma_2}_{(2)} \big( f_4  \cdot   f_5 \big)(x, y)
\end{equation}
and the summation in $k_1$ and $m_1$ -- that needs to be performed before taking the $\| \cdot \|_{L^pL^q}$ norms -- yields
\[
D^{\alpha_1+\beta_1}_{(1)} D^{\alpha_2+\beta_2}_{(2)} f_1(x, y) \cdot f_2(x, y)\cdot f_3(x, y ) \cdot D^{\gamma_1}_{(1)} D^{\gamma_2}_{(2)} \big( f_4  \cdot  f_5 \big)(x, y),
\]
for which we invoke a mixed-norm H\"older's inequality.
\item[-] There is at most one function ``hit'' by the $\Delta_{\succ  k_1}^{(1)}$ projection, and all the functions are unaffected by $\Delta_{\succ  m_1}^{(2)}$: for example
\[
\Delta_{k_1}^{(1)} \Delta_{m_1}^{(2)}  D^{\alpha_1+\beta_1}_{(1)} D^{\alpha_2+\beta_2}_{(2)} f_1(x, y) \Delta_{\succ  k_1}^{(1)} f_2(x, y)\cdot f_3(x, y ) \cdot D^{\gamma_1}_{(1)} D^{\gamma_2}_{(2)} \big( f_4  \cdot f_5 \big)(x, y)
\]
or the more involved 
\[
\Delta_{k_1}^{(1)} \Delta_{m_1}^{(2)}  D^{\alpha_1+\beta_1}_{(1)} D^{\alpha_2+\beta_2}_{(2)} f_1(x, y) f_2(x, y)\cdot f_3(x, y ) \cdot D^{\gamma_1}_{(1)} D^{\gamma_2}_{(2)} \big( f_4  \cdot \Delta_{\succ  k_1}^{(1)}  f_5 \big)(x, y)
\]
which requires a further cone decomposition in the subtree corresponding to $D^{\gamma_1}_{(1)} D^{\gamma_2}_{(2)} \big( f_4  \cdot f_5 \big)$.

In either case, we first sum in $m_1$ to obtain the ``full function'' $\Delta_{k_1}^{(1)} D^{\alpha_1+\beta_1}_{(1)} D^{\alpha_2+\beta_2}_{(2)} f_1(x, y)$, and from there on we continue as in the one-parameter situation $I_D$.

\item[-] None of the functions $f_2, f_3, f_4, f_5$ are affected by $\Delta_{\succ  k_1}^{(1)}$, but at most one of them is hit by $\Delta_{\succ  m_1}^{(2)}$; this situation is symmetric to the previous one.

\item[-] At least one of the functions is hit by $\Delta_{\succ  k_1}^{(1)}$, and at least one (possibly a different one) by $\Delta_{\succ  m_1}^{(2)}$; say for example that we have 
\begin{equation}
\label{eq:bi:p:last}
 2^{k_1(\alpha_1+\beta_1)} 2^{m_1(\alpha_2+\beta_2)}  \Delta_{k_1}^{(1)} \Delta_{m_1}^{(2)} f_1(x, y) f_2(x, y)\cdot \Delta_{\succ  m_1}^{(2)} f_3(x, y ) \cdot D^{\gamma_1}_{(1)} D^{\gamma_2}_{(2)} \big(\Delta_{\succ  k_1}^{(1)}   f_4  \cdot f_5 \big)(x, y).
\end{equation}
Then we can write $\Delta_{\succ  m_1}^{(2)} f_3(x, y )$ as $\sum_{m_3} \Delta_{m_3}^{(2)} \Delta_{\succ  m_1}^{(2)} f_3(x, y)$ and notice that the only non-zero terms correspond to $m_3 \succ m_1 $. Similarly, we re-decompose 
\[
D^{\gamma_1}_{(1)} D^{\gamma_2}_{(2)} \big(\Delta_{\succ  k_1}^{(1)}   f_4  \cdot f_5 \big)(x, y)= \sum_{k_4, k_5} D^{\gamma_1}_{(1)} D^{\gamma_2}_{(2)} \big(\Delta_{\succ  k_1}^{(1)} \Delta_{k_4}^{(1)}  f_4  \cdot  \Delta_{k_5}^{(1)}  f_5 \big)(x, y),
\]
which we further restrict to the region $|\xi_5| \leq |\xi_4|$; then we need to sum 
\[
\sum_{k_4 \geq  k_5} D^{\gamma_1}_{(1)} D^{\gamma_2}_{(2)} \big(\Delta_{\succ  k_1}^{(1)} \Delta_{k_4}^{(1)}  f_4  \cdot  \Delta_{k_5}^{(1)}  f_5 \big)(x, y),
\]
which can be reduced to 
\[
\sum_{\substack{k_4: k_4 \succ k_1}} D^{\gamma_1}_{(1)} D^{\gamma_2}_{(2)} \big(\Delta_{\succ  k_1}^{(1)} \Delta_{k_4}^{(1)}  f_4  \cdot  S_{k_4}^{(1)}  f_5 \big)(x, y).
\]
All these produce 
\begin{align*}
\sum_{\substack{ k_4 \succ k_1 \\ m_3 \succ m_1}}  2^{k_1\beta_1 \tau} 2^{m_1 \beta_2 \tau} \| \Delta_{k_1}^{(1)} \Delta_{m_1}^{(2)} f_1\|_{L^{p_1}L^{q_1}}^\tau \|f_2\|_{L^{p_2}L^{q_2}}^\tau  \|\Delta_{m_3}^{(2)}  f_3\|_{L^{p_3}L^{q_3}}^\tau \| D^{\gamma_1}_{(1)} D^{\gamma_2}_{(2)} \big( \Delta_{k_4}^{(1)}  f_4 \cdot S_{k_4}^{(1)} f_5 \big)\|_{L^{p_{4,5}}L^{q_{4,5}}}^\tau,
\end{align*}
which is by now a usual estimate. 
\end{itemize}

\item The remaining terms,  although not perfectly identical to the ones discussed above, can be treated in a similar way; the details are left to the reader.

\end{itemize}

\medskip

\subsection{Study of ``diagonal'' conical regions}\label{sec:2:param:5:flag:several:max} 
Since the strategy used is the same, we will not repeat the computations. We emphasize however that this is the situation where the Fourier coefficients corresponding to $|\xi_1+\ldots+\xi_5|^{\beta_1}$ (or to $|\eta_1+\ldots+\eta_5|^{\beta_2}$), localized to suitable frequency intervals, will only have limited decay. This forces the conditions
\begin{equation}
\label{eq:mixed:bi:cond}
p>\max \big ({1 \over {1+\beta_1}}, {1 \over {1+\beta_2}} \big), \qquad q>{1 \over {1+\beta_2}}
\end{equation}
on the Lebesgue exponents of the target space, as we will shortly see.

To take an example, we assume that $|\xi_1| \sim |\xi_5|, |\xi_2|, |\xi_3|, |\xi_4| \ll |\xi_1|$, and in the second parameter $|\eta_1| \sim |\eta_2|, |\eta_3|, |\eta_4|, |\eta_5| \ll |\eta_1|$. Then \eqref{5lin:freq:rep:bi} will be (morally) replaced by
\begin{align}
\label{5lin:2comp:sc:bi}
&\sum_{k, m } \int_{\BBR^{10}}\big( |\xi_1+\ldots+\xi_5|^{\beta_1} |\xi_1+\xi_2|^{\alpha_1} |\xi_4+\xi_5|^{\gamma_1}  \big) \cdot \big( |\eta_1+\ldots+\eta_5|^{\beta_2} |\eta_1+\eta_2|^{\alpha_2} |\eta_4+\eta_5|^{\gamma_2}  \big) \ii F (\Delta_{k}^{(1)}\Delta_{m}^{(2)} f_1)(\xi_1, \eta_1) \\
& \cdot \ii F (S_k^{(1)}\Delta_{m}^{(2)} f_2)(\xi_2, \eta_2) \cdot \ldots \cdot  \ii F (\Delta_{k}^{(1)} S_{m}^{(2)} f_5)(\xi_5, \eta_5) e^{2 \pi i x(\xi_1+\ldots+\xi_5)} e^{2 \pi i y(\eta_1+\ldots+\eta_5)} d \xi_1 \ldots d\xi_5 d \eta_1 \ldots d\eta_5 \nonumber
\end{align}

Since 
\[
|\xi_1+\ldots+\xi_5| \leq C 2^{k}, \qquad |\eta_1+\ldots+\eta_5| \leq C 2^{m},
\]
as discussed in Section \ref{Bourgain-Li_hilow} -- equation \eqref{eq:Fourier:dec:diagonal}, we have that 
\[
|\xi_1+\ldots+\xi_5|^{\beta_1} \phi(2^{-k} (\xi_1+\ldots+\xi_5))= \sum_{L \in \BBZ} C_{L} 2^{k \beta_1} e^{2\pi i \frac{ L}{2^{k}} (\xi_1+\ldots+\xi_5)},
\]
where 
\begin{equation*}
\label{eq:limited:decay}
|C_{L}| \lesssim \frac{1}{(1+| L|)^{1+\beta_1}}.
\end{equation*}
Similarly,
 \[
|\eta_1+\ldots+\eta_5|^{\beta_2} \phi(2^{-m} (\eta_1+\ldots+\eta_5))= \sum_{\tilde L \in \BBZ} C_{\tilde L} 2^{m \beta_2} e^{2\pi i \frac{\tilde L}{2^{m}} (\eta_1+\ldots+\eta_5)},
\]
where 
\begin{equation*}
|C_{\tilde L}| \lesssim \frac{1}{(1+|\tilde L|)^{1+\beta_2}}.
\end{equation*}

So \eqref{5lin:2comp:sc:bi} becomes a sum over $k, m \in \BBZ$ of terms of the form
\begin{align} \label{5lin:cond:exp:1}
2^{k \beta_1} 2^{m \beta_2}  \sum_{L, \tilde L} C_L C_{\tilde L}  D^{\alpha_1}_{(1)}D^{\alpha_2}_{(2)} \big( \Delta_{k}^{(1)} \Delta_{m}^{(2)} f_1 \cdot S_{k}^{(1)} \Delta_{m}^{(2)} f_2   \big)(x+{ L \over {2^k}}, y +  {\tilde L \over {2^m}} ) S_{k}^{(1)} S_{m}^{(2)} f_3(x+{ L \over {2^k}}, y +  {\tilde L \over {2^m}} ) & \\
\cdot D^{\gamma_1}_{(1)}D^{\gamma_2}_{(2)} \big( S_{k}^{(1)} S_{m}^{(2)} f_4 \cdot \Delta_{k}^{(1)} S_{m}^{(2)} f_5   \big)(x+{ L \over {2^k}}, y +  {\tilde L \over {2^m}} ) \nonumber. &
\end{align}

Notice that in this context, we prefer to write
 \[
  D^{\alpha_1}_{(1)}D^{\alpha_2}_{(2)} \big( \Delta_{k, { L \over {2^k}}}^{(1)} \Delta_{m, {\tilde L \over {2^m}}}^{(2)} f_1 \cdot S_{k, { L \over {2^k}}}^{(1)} \Delta_{m, {\tilde L \over {2^m}}}^{(2)} f_2   \big)(x, y)
\]  
as
\[
D^{\alpha_1}_{(1)}D^{\alpha_2}_{(2)} \big( \Delta_{k}^{(1)} \Delta_{m}^{(2)} f_1 \cdot S_{k}^{(1)} \Delta_{m}^{(2)} f_2   \big)(x+{ L \over {2^k}}, y +  {\tilde L \over {2^m}} ).
\]
This allows us to first integrate in $y$ without taking into account the effect of the modulation in the $L$ parameter, which only acts in the first variable.

Once this clarified, we return to \eqref{5lin:cond:exp:1} and further write it as
\[
2^{k \beta_1} 2^{m \beta_2}   \sum_{L, \tilde L \in \BBZ} C_L C_{\tilde L} F^{k, m}_{L, \tilde L}(x,y):= 2^{k \beta_1} 2^{m \beta_2}   \sum_{L, \tilde L \in \BBZ} C_L C_{\tilde L} F^{k, m} (x+{ L \over {2^k}}, y +  {\tilde L \over {2^m}} ) .
\]

The delicate point about the constraints on $p$ and $q$ appears here. Previously in  Section \ref{sec:5:flag:bi:param:k_1:m_1}, the Fourier coefficients had arbitrary decay and we used the estimate
\[
\big\| \sum_{L, \tilde L \in \BBZ} C_L C_{\tilde L} F_{L, \tilde L}^{k, m}    \big\|_{L^pL^q}^\tau \leq \sum_{L, \tilde L \in \BBZ} |C_L|^\tau  |C_{\tilde L}|^\tau  \big\| F_{L, \tilde L}^{k, m}    \big\|_{L^pL^q}^\tau; 
\]
however, this would require that
\[
p, q>\max \big ({1 \over {1+\beta_1}}, {1 \over {1+\beta_2}} \big),
\]
which is stronger than the announced \eqref{eq:mixed:bi:cond}.

Instead, we use that, for $q_0 \leq \min(1, q)$, $\|\cdot  \|_q^{q_0}$ is subadditive, and thus 
\begin{align}
\label{trick:summation:bi}
\big\| \sum_{L, \tilde L \in \BBZ} C_L C_{\tilde L} F_{L, \tilde L}^{k, m}    \big\|_{L^pL^q}^\tau & \lesssim \sum_{L \in \BBZ} |C_L|^\tau \Big( \int_{\BBR} \Big( \sum_{\tilde L \in \BBZ} |C_{\tilde L}|^{q_0} \big( \int_{\BBR} \big| F^{k, m}(x+{ L \over {2^k}}, y +  {\tilde L \over {2^m}} )  \big|^q dy \big)^{{{q_0} \over q}}        \Big)^{{p \over {q_0}}}  dx  \Big)^{\tau \over p} \\
&\lesssim \sum_{L \in \BBZ} |C_L|^\tau \Big( \int_{\BBR} \Big( \sum_{\tilde L \in \BBZ} |C_{\tilde L}|^{q_0} \big( \int_{\BBR} \big| F^{k, m}(x, y)  \big|^q dy \big)^{{{q_0} \over q}}   \Big)^{{p \over {q_0}}}  dx  \Big)^{\tau \over p}. \nonumber
\end{align}

At this point, it is important to notice that  
\begin{align*}
\|F^{k, m}(x, \cdot) \|_{L^q_y} \lesssim \big \|  D^{\alpha_1}_{(1)}D^{\alpha_2}_{(2)} \big( \Delta_{k}^{(1)} \Delta_{m}^{(2)} f_1 \cdot S_{k}^{(1)} \Delta_{m}^{(2)} f_2   \big )(x, \cdot )  \big\|_{L^{q_{1,2}}_y} \, \big\| S_{k}^{(1)} S_{m}^{(2)} f_3   \big\|_{L^{q_3}_y} & \\ \big \| D^{\gamma_1}_{(1)}D^{\gamma_2}_{(2)} \big( S_{k}^{(1)} S_{m}^{(2)} f_4 \cdot \Delta_{k}^{(1)} S_{m}^{(2)} f_5   \big)(x, \cdot )  \big\|_{L^{q_{4,5}}_y} . &
\end{align*}

So provided that
\[
\sum_{L \in \BBZ} |C_L|^\tau < \infty, \qquad \sum_{\tilde L \in \BBZ} |C_{\tilde L}|^{q_0}< \infty,
\]
which amounts to conditions \eqref{eq:mixed:bi:cond} holding, we have that \eqref{5lin:2comp:sc:bi}, estimated in $\| \cdot \|_{L^pL^q}^\tau$ is bounded above by
\begin{align*}
\sum_{k, m} 2^{k \beta_1 \tau} 2^{m \beta_2 \tau}  \big \|  D^{\alpha_1}_{(1)}D^{\alpha_2}_{(2)} \big( \Delta_{k}^{(1)} \Delta_{m}^{(2)} f_1 \cdot S_{k}^{(1)} \Delta_{m}^{(2)} f_2   \big )\big\|_{L^{p_{1,2}}L^{q_{1,2}}}^\tau \, \big\| S_{k}^{(1)} S_{m}^{(2)} f_3   \big\|_{L^{p_3}L^{q_3}}^\tau & \\ \big \| D^{\gamma_1}_{(1)}D^{\gamma_2}_{(2)} \big( S_{k}^{(1)} S_{m}^{(2)} f_4 \cdot \Delta_{k}^{(1)} S_{m}^{(2)} f_5   \big)\big\|_{L^{p_{4,5}}L^{q_{4,5}}}^\tau. 
\end{align*}

From here on the argument follows the usual strategy: using the boundedness of the lower complexity flag paraproducts -- in this case the mixed norm estimates for frequency-localized bi-parameter paraproducts, we obtain that the expression above is further bounded by
\begin{align*}
\|f_3\|_{L^{p_3}L^{q_3}}^\tau  \|D^{\gamma_2}_{(2)} f_4\|_{L^{p_4}L^{q_4}}^\tau  \Big( \big\|  D^{\alpha_1}_{(1)}D^{\alpha_2}_{(2)} f_1   \big\|_{\dot B^0_{p_1, \infty} \dot B^0_{q_1, \infty}} \|f_2\|_{L^{p_2} \dot B^{\beta_2}_{q_1, \infty} } \big\|  D^{\gamma_1}_{(1)} f_5  \big\|_{\dot B^{\beta_1}_{p_5, \infty} L^{q_5} } \Big)^{\tau \over 4} & \\
\Big( \big\|  D^{\alpha_1}_{(1)}D^{\alpha_2}_{(2)} f_1   \big\|_{\dot B^{\beta_1}_{p_1, \infty} \dot B^0_{q_1, \infty}} \|f_2\|_{L^{p_2} \dot B^{\beta_2}_{q_1, \infty} } \big\|  D^{\gamma_1}_{(1)} f_5  \big\|_{\dot B^{0}_{p_5, \infty} L^{q_5} } \Big)^{\tau \over 4} \Big( \big\|  D^{\alpha_1}_{(1)}D^{\alpha_2}_{(2)} f_1   \big\|_{\dot B^0_{p_1, \infty} \dot B^{\beta_2}_{q_1, \infty}} \|f_2\|_{L^{p_2} \dot B^{0}_{q_1, \infty} } \big\|  D^{\gamma_1}_{(1)} f_5  \big\|_{\dot B^{\beta_1}_{p_5, \infty} L^{q_5} } \Big)^{\tau \over 4} &\\
\Big( \big\|  D^{\alpha_1}_{(1)}D^{\alpha_2}_{(2)} f_1   \big\|_{\dot B^{\beta_1}_{p_1, \infty} \dot B^{\beta_2}_{q_1, \infty}} \|f_2\|_{L^{p_2} \dot B^{0}_{q_1, \infty} } \big\|  D^{\gamma_1}_{(1)} f_5  \big\|_{\dot B^{0}_{p_5, \infty} L^{q_5} } \Big)^{\tau \over 4} &
\end{align*}
+ a similar term, in which the $D^{\gamma_2}_{(2)}$ derivatives acts on the function $f_5$.

The remaining cases can be treated in a similar way; the main idea to bear in mind is that a lower decay of the Fourier coefficients requires a regrouping of the information as in \eqref{trick:summation:bi}, so that the derivatives acting on the exterior variables will not affect the Lebesgue exponents corresponding to inner variables.

\section{Generic flag: an inductive argument}\label{generic_induction}

In this section we provide an inductive argument -- based on the complexity of the rooted tree -- that allows to prove the general result of Theorem \ref{thm:main}. Our approach integrates many of the ideas already presented: one starts by decomposing the frequency space into cones,\footnote{This is the usual paraproduct decomposition.} and then further into Whitney rectangles; if the cone is so that the output variable is away form the origin, the symbol smoothly restricted to the Whitney rectangles/cubes will be split as ``commutator" $+$ ``derivative acting on a lower number of functions''. Next, a  Fourier series expansion on each Whitney cube/rectangle is used in order to tensorize the information contained in the root symbol, obtaining in this way similar objects associated to rooted trees of lower complexity. From here, one proceeds as in Section \ref{Bourgain-Li_hilow}, although in case of multi-parameter flag Leibniz rules one needs to track more carefully the distribution of derivatives, encoded in various types of mixed Lebesgue and Besov norms.

We will address the various difficulties one at a time. First, in Section \ref{sec:one:param:generic:ind} we present the inductive argument in the one-parameter case, with emphasis on the splitting of the root symbol (depending on the type of cone we are looking at), and the necessary inductive statements that allow to reduce the complexity of the rooted tree. In Section \ref{subsection_bi_leibniz}, the bi-parameter case is presented; the process of splitting the root symbol, already used in the previous section, needs to be performed in each parameter separately, which increases the number of cases to be considered. Similarly, we will have a variety of necessary inductive statements, depending on the tree structures and the configurations of functions that appear in the summation over the scales\footnote{As in Sections \ref{Bourgain-Li_hilow}, \ref{sec:5lin:1param} and \ref{sec:2param:5linflag}, there will always be two functions involved in the summation over the scales step -- in each parameter.} step. Once acquainted with the splitting of the root symbols and the reduction of the tree's complexity when several parameters are involved, it remains to check that the end result -- now expressed as a geometric mean of mixed Lebesgue and Besov norms -- indeed corresponds to the desired distribution of derivatives. This last step is carried out in Section \ref{generic_optim_interp}.

In what follows, our analysis will be performed in dimension one; as discussed in Remark \ref{remark:higher:dim}, the employed strategy is easily adaptable to higher dimensions. In the one-parameter case, presented in Section \ref{sec:one:param:generic:ind}, we will assume the target space norm $\| \cdot\|_{L^r}$ to be subadditive: when $r<1$, the subadditivity is achieved by considering $\| \cdot\|_{L^r}^\tau$ with $\tau \leq \min(1, r)$. Similarly, in the mixed-norm multi-parameter case we would need to work with $\| \cdot\|_{L^{\vec r}}^\tau$ with $\tau \leq \min(1, r^1, \ldots, r^N)$ in order to obtain subadditivity; the more involved conditions on the Lebesgue exponents expressed in \eqref{cond:thm:2} of Theorem \ref{thm:main} require a more careful analysis, which was detailed in Section \ref{sec:2:param:5:flag:several:max}. In an attempt to remove unnecessary technicalities burdening the notation, we will also assume in Section \ref{subsection_bi_leibniz} that $\| \cdot\|_{L^{\vec r}}$ is subadditive.

\subsection{One-parameter flag Leibniz rule}\label{sec:one:param:generic:ind} 
We provide a proof for the one-parameter Leibniz rule corresponding to an arbitrary $n$-linear flag in dimension one using an inductive argument. In what follows, we use the notation introduced in Section \ref{strategy}.
Let $\mathcal{G}$ be a tree of arbitrary complexity. 
Due to the paraproduct decomposition described in Section \ref{sec:differences:Coifman-Meyer}, the frequency space is split into conical regions, which are generically of two types:
\begin{equation}
\label{eq:cone:ums}
R_{l_0}:=\{ (\xi_1, \ldots, \xi_n) : |\xi_{l_0}| \gg |\xi_l|  \text{    for all  } 1 \leq l \neq l_0 \leq n  \}
\end{equation}
and 
\begin{equation}
\label{eq:cone:nonums}
\tilde R_{l_1, l_2}:=\{ (\xi_1, \ldots, \xi_n) :  |\xi_{l_1}| \sim |\xi_{l_2}| \geq  |\xi_l|  \text{    for all  } 1 \leq l  \leq n  \}.
\end{equation}

We will introduce the maps $\mathfrak{M}$ and $\mathfrak{m}$ defined on the collection of conical regions:
\begin{align}\label{index:major}
\mathfrak{M}(R) := & \{1 \leq l \leq n: (\xi_1, \ldots, \xi_n) \in R,\ \ |\xi_l| \sim \max_{1 \leq l' \leq n} |\xi_{l'}| \}, \\
\mathfrak{m}(R) := & \{1 \leq l \leq n: (\xi_1, \ldots, \xi_n) \in R,\ \ |\xi_l| \ll \max_{1 \leq l' \leq n} |\xi_{l'}| \}, \nonumber
\end{align}
where $R$ is a conical region of the form \eqref{eq:cone:ums} or \eqref{eq:cone:nonums}.
The definition of conical regions thus implies that
$$
\mathcal{L}(\mathfrak{r}_{\mathcal{G}})= \{ 1, \ldots, n \} = \mathfrak{M}(R) \cup \mathfrak{m}(R).
$$

Since the frequency space can be decomposed into finitely many such regions, it suffices to derive the same bound for our multilinear expression localized on a fixed conical region in frequency. Let us denote by $T^R_{\mathcal{G}}$ the multilinear operator smoothly restricted to a cone $R$. Then for $k \in \BBZ$ and $L \in \BBR$, we define the projection operators $P_k$ and $P_{k, L}$\footnote{We recall that $P_{k, L}$ is simply a frequency modulation of $P_k$ -- see \eqref{eq:def:mod:proj}.}:
\begin{equation*}
P_k(l) := 
\begin{cases}
\Delta_k  \ \ \text{if} \ \  l \in \mathfrak{M}(R) \\
S_k  \ \  \ \text{if} \ \  l \in \mathfrak{m}(R),
\end{cases} \qquad \text{and} \qquad
P_{k,L}(l) := 
\begin{cases}
\Delta_{k,L}  \ \ \text{if} \ \  l \in \mathfrak{M}(R) \\
S_{k,L}  \ \ \ \text{if} \ \  l \in \mathfrak{m}(R).
\end{cases}
\end{equation*}
The projection operators themselves depend on the conical region $R$; this will be omitted from the notation, but it should be implicit in the analysis. 

If we look at the cone $R_{l_0}$ described in \eqref{eq:cone:ums}, we notice that $\mathfrak{M}(R_{l_0})= \{l_0\}$ and $\mathfrak{m}(R_{l_0})= \{1, \ldots, n\} \setminus \{l_0\}$. Moreover, it can be represented as a union of Whitney cubes:
\begin{equation}\label{local:whitney_cube_1max}
R_{l_0}= \bigcup_{k_{l_0}}R^{\pm}_{k_{l_0}}:= \bigcup_{k_{l_0}}  \{(\xi_1, \ldots, \xi_n): \xi_{l_0} \sim \pm 2^{k_{l_0}}, |\xi_l| \ll 2^{k_{l_0}} \ \ \text{for} \ \ l \neq l_0 \},
\end{equation}
which corresponds exactly to the projection operators applied to the leaves: 
$$\Delta_{k_{l_0}, \pm}f_{l_0} \ \ \text{and}  \ \ S_{k_{l_0}}f_l \ \ \text{for} \ \ l \neq l_0.$$ 
On the fixed conical region $R_{l_0}$ (as defined in \eqref{local:whitney_cube_1max}), we will refer to $k_{l_0}$ as $k_{\max}$ since it is naturally associated to the variable $\xi_{l_0}$ and the function $f_{l_0}$. Notice that $R_{l_0}$ becomes the union of all Whitney cubes at scale $k_{\max}$.

We can therefore denote by 
$$T_{\mathcal{G}}^{R}\left((P_{k_{\max}}f_l)_{1 \leq l \leq n}\right)$$
the multi-linear expression $T_{\mathcal{G}}(f_1, \ldots, f_n)$ localized on the union of Whitney cubes at scale $k_{\max}$ in the conical region $R$. 
$T_{\mathcal{G}}(f_1, \ldots, f_n)$ restricted to the entire conical region can then be represented as
\begin{equation}\label{leibniz_cone}
\sum_{k_{\max} \in \mathbb{Z}}T^{R}_{\mathcal{G}}\left((P_{k_{\max}} f_l)_{1 \leq l \leq n}\right).
\end{equation}

In our inductive process, operators associated to subtrees of $\mathcal{G}$ will play an important role; hence, for any vertex $v$, we denote by $\mathcal{G}^v$ the subtree of $\mathcal G$ rooted in $v$. We will need to consider also paraproduct decompositions on these subtrees. For any non-leaf vertex $v \in \mathcal{V}$, which becomes the root of the subtree $\mathcal{G}^v$, the new conical region associated to the subtree will be clarified and the corresponding Whitney cubes will be specified by $k_{\max}(v)$. 
 For simplicity of notation, we will use the abbreviation $k_{\max} = k_{\max}(\mathfrak{r}_{\mathcal{G}})$ to denote the maximal scale involved in the definition of $T_{\mathcal{G}}$ restricted to a certain cone. 

We observe that when the operator $T_{\mathcal{G}}$ is localized on a conical region $R$ such that $\mathfrak{M}(R) \cap \mathcal{L}(v) \neq \emptyset$ for some $v \in \mathcal{V}$, then $T_{\mathcal{G}^v}$ -- the operator associated to the subtree $\mathcal{G}^v$ -- is also automatically restricted to the conical region 
\begin{equation*}
R(v) := \{(\xi_l)_{l \in \mathcal{L}(v)}: |\xi_{l}| \gg |\xi_{l'}| \ \  \text{for} \ \ l \in  \mathcal{L}(v) \cap \mathfrak{M}(R),\ \  l' \in \mathcal{L}(v)\setminus \mathfrak{M}(R)  \}.
\end{equation*} 
Such localization on the subtree imposed by the conical decomposition for the original tree will be repetitively used in our inductive process.

The following notation will be useful in the formulation of induction. For any vertex $v \in \mathcal{V}$ which generates the subtree $\mathcal{G}^v$, define $\mathcal{V}^v$ to be the set of non-leaf vertices associated with $\mathcal{G}^v$. If $v$ is a leaf, then $\mathcal{G}^v = \{v\}$ and $\mathcal{V}^v = \emptyset$.

We recall that the properties \ref{deriv:distrib:i} and \ref{deriv:distrib:ii} from Section \ref{introduction} were important in describing the derivative distribution function $\delta: \mathcal V \to \mathcal L_{\mathcal G}$, which has to agree with the composition law. In order to better understand the behavior of $\delta$ when restricted to subtrees, we need to define first the collection of non-leaf vertices in the path of $f_l$ (for $1 \leq l \leq n$) to the root $v$ in the subtree $\mathcal{G}^v$:
\begin{align*}
\mathcal{V}^{v}_l:= &  \{w: w \in \mathcal{V}^{v}, l \in \mathcal{L}(w) \}, 
\end{align*}
and also its complement with respect to the subtree $\mathcal{G}^{v}$:
\begin{align*}
 (\mathcal{V}^v_l)^c:= &  \mathcal{V}^v \setminus \mathcal{V}^v_l.
\end{align*}
With some abuse of notation, if $v= f_l$ is a leaf, then $ \mathcal{V}^{v}_l = \emptyset$. 

We include a figure to illustrate the notation: in the first figure, the path highlighted in red represents the path from the vertex $v$ to the leaf $l$ and $\mathcal{V}^v_{l}$ indeed corresponds to the collection of the non-leaf vertices along the red path, namely $\{\bar{v}_1, \bar{v}_2, \ldots, \bar{v}_M\}$, as indicated in Figure \ref{fig:paths}(\subref{fig:path}). 

We define the common vertices shared by the paths of $f_{l_1}$ and $f_{l_2}$ by
\begin{align*}
 \mathcal{V}^{v}_{l_1,l_2} := & \mathcal{V}^{v}_{l_1} \cap \mathcal{V}^{v}_{l_2},
\end{align*}
which can be represented as an ordered set starting from the root $v$ and ending with the vertex denoted by $v^{l_1,l_2}$\footnote{The vertex $v^{l_1,l_2}$ represents the last common ancestor of $l_1$ and $l_2$ in $\mathcal{G}$.}:
$$
\mathcal{V}^v_{l_1,l_2} = \{v=: \tilde{v}_1, \tilde{v}_2, \ldots , v^{l_1,l_2} =: \tilde{v}_M \}.
$$
in the sense that the latter element is a direct descendant of the former, which further implies that
\begin{equation} \label{leaf_sequence}
\mathcal{L}(v^{l_1,l_2}) = \mathcal{L}(\tilde{v}_M) \subseteq \mathcal{L}(\tilde{v}_{M-1}) \subseteq \ldots \subseteq \mathcal{L}(\tilde{v}_2) \subseteq \mathcal{L}(\tilde{v}_1) = \mathcal{L}(v).
\end{equation}
Moreover, the definitions of the common path and of $v^{l_1,l_2}$ indicate the existence of two vertices $w^{l_1}, w^{l_2}$ such that 
$w^{l_1}, w^{l_2}$ are direct descendants of $v^{l_1,l_2}$ with $w^{l_1} \neq w^{l_2}$ and the subtree stemming from $w^{l_i}$, denoted by $\mathcal{G}^{w^{l_i}}$, contains $l_i$ as its leaf (for $i=1, 2$). Equivalently, $l_1 \in \mathcal{L}(w^{l_1})$ and $l_2 \in \mathcal{L}(w^{l_2})$. 

We clarify the notation through Figure \ref{fig:paths}(\subref{fig:common:path}) -- the common path from $v$ to $l_1$ and to $l_2$ is marked in red and $\mathcal{V}^{v}_{l_1,l_2}$ is the collection of all the non-leaf vertices along this path. The subtrees $\mathcal{G}^{w^{l_1}}$ and $\mathcal{G}^{w^{l_1}}$ are highlighted in blue and green respectively. 

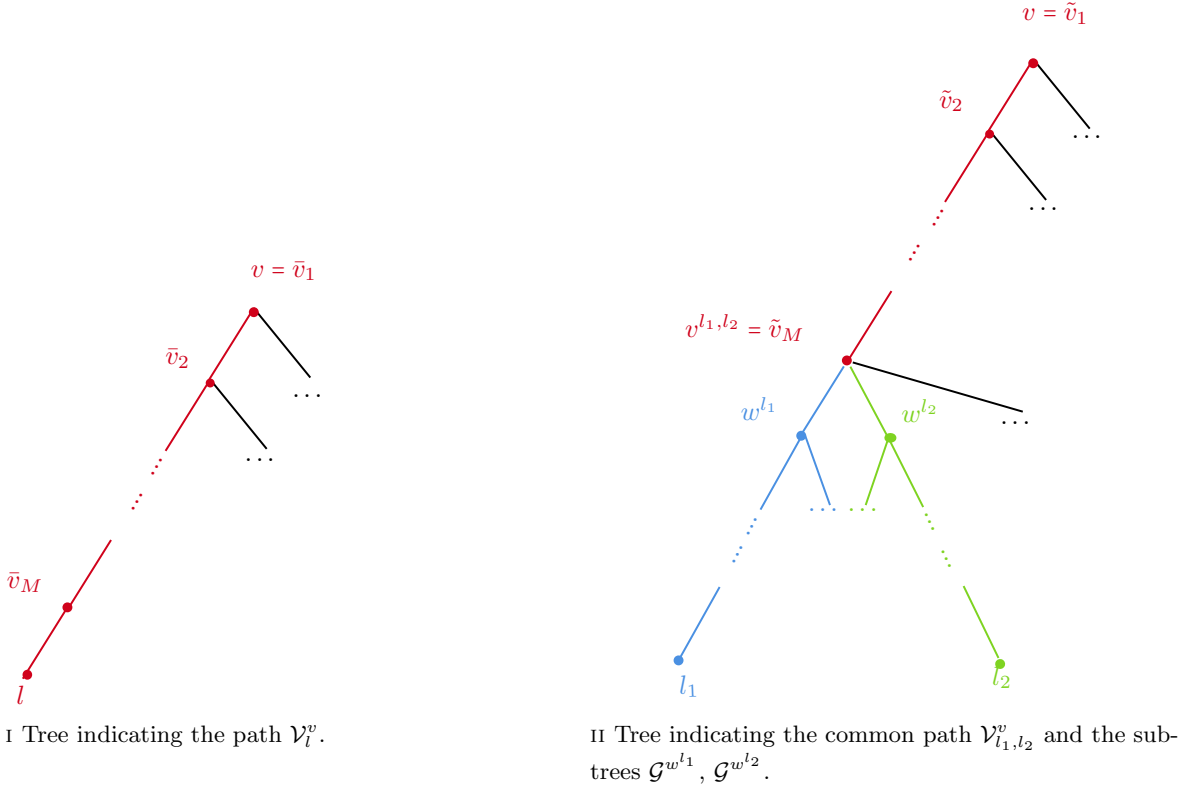
\begin{figure}[!htbp]
\begin{subfigure}[t]{.45\textwidth}
  \centering

\tikzset{every picture/.style={line width=0.75pt}} 

\begin{tikzpicture}[x=0.75pt,y=0.75pt,yscale=-1,xscale=1]

\draw [color={rgb, 255:red, 208; green, 2; blue, 27 }  ,draw opacity=1 ]   (178,1288.4) -- (155.5,1324.4) ;
\draw [color={rgb, 255:red, 208; green, 2; blue, 27 }  ,draw opacity=1 ]   (156,1324.4) -- (133.5,1360.4) ;
\draw    (178,1288.4) -- (206.5,1323.4) ;
\draw    (156,1324.4) -- (184.5,1359.4) ;
\draw [color={rgb, 255:red, 208; green, 2; blue, 27 }  ,draw opacity=1 ]   (84,1439.4) -- (61.5,1475.4) ;
\draw [color={rgb, 255:red, 208; green, 2; blue, 27 }  ,draw opacity=1 ]   (106,1405.4) -- (83.5,1441.4) ;
\draw  [color={rgb, 255:red, 208; green, 2; blue, 27 }  ,draw opacity=1 ][fill={rgb, 255:red, 208; green, 2; blue, 27 }  ,fill opacity=1 ] (176,1290.4) .. controls (176,1289.3) and (176.9,1288.4) .. (178,1288.4) .. controls (179.1,1288.4) and (180,1289.3) .. (180,1290.4) .. controls (180,1291.5) and (179.1,1292.4) .. (178,1292.4) .. controls (176.9,1292.4) and (176,1291.5) .. (176,1290.4) -- cycle ;
\draw  [color={rgb, 255:red, 208; green, 2; blue, 27 }  ,draw opacity=1 ][fill={rgb, 255:red, 208; green, 2; blue, 27 }  ,fill opacity=1 ] (157.75,1326.15) .. controls (157.75,1325.18) and (156.97,1324.4) .. (156,1324.4) .. controls (155.03,1324.4) and (154.25,1325.18) .. (154.25,1326.15) .. controls (154.25,1327.12) and (155.03,1327.9) .. (156,1327.9) .. controls (156.97,1327.9) and (157.75,1327.12) .. (157.75,1326.15) -- cycle ;
\draw  [color={rgb, 255:red, 208; green, 2; blue, 27 }  ,draw opacity=1 ][fill={rgb, 255:red, 208; green, 2; blue, 27 }  ,fill opacity=1 ] (82,1439.4) .. controls (82,1438.3) and (82.9,1437.4) .. (84,1437.4) .. controls (85.1,1437.4) and (86,1438.3) .. (86,1439.4) .. controls (86,1440.5) and (85.1,1441.4) .. (84,1441.4) .. controls (82.9,1441.4) and (82,1440.5) .. (82,1439.4) -- cycle ;
\draw  [color={rgb, 255:red, 208; green, 2; blue, 27 }  ,draw opacity=1 ][fill={rgb, 255:red, 208; green, 2; blue, 27 }  ,fill opacity=1 ] (61.86,1473.4) .. controls (61.86,1472.3) and (62.72,1471.4) .. (63.79,1471.4) .. controls (64.85,1471.4) and (65.71,1472.3) .. (65.71,1473.4) .. controls (65.71,1474.5) and (64.85,1475.4) .. (63.79,1475.4) .. controls (62.72,1475.4) and (61.86,1474.5) .. (61.86,1473.4) -- cycle ;

\draw (175,1263.4) node [anchor=north west][inner sep=0.75pt]  [color={rgb, 255:red, 208; green, 2; blue, 27 }  ,opacity=1 ]  {$v=\bar{v}_{1}$};
\draw (132,1307.4) node [anchor=north west][inner sep=0.75pt]  [color={rgb, 255:red, 208; green, 2; blue, 27 }  ,opacity=1 ]  {$\bar{v}_{2}$};
\draw (129.85,1360.87) node [anchor=north west][inner sep=0.75pt]  [color={rgb, 255:red, 208; green, 2; blue, 27 }  ,opacity=1 ,rotate=-30.04]  {$\vdots $};
\draw (119.85,1378.87) node [anchor=north west][inner sep=0.75pt]  [color={rgb, 255:red, 208; green, 2; blue, 27 }  ,opacity=1 ,rotate=-30.04]  {$\vdots $};
\draw (196,1330.4) node [anchor=north west][inner sep=0.75pt]    {$\dotsc $};
\draw (172,1362.4) node [anchor=north west][inner sep=0.75pt]    {$\dotsc $};
\draw (56.75,1477.4) node [anchor=north west][inner sep=0.75pt]  [color={rgb, 255:red, 208; green, 2; blue, 27 }  ,opacity=1 ]  {$l$};
\draw (52,1420.4) node [anchor=north west][inner sep=0.75pt]  [color={rgb, 255:red, 208; green, 2; blue, 27 }  ,opacity=1 ]  {$\bar{v}_{M}$};
\end{tikzpicture}
\caption{Tree indicating the path $\mathcal{V}^{v}_{l}$.}
\label{fig:path}
\end{subfigure}
 \hfill    
\begin{subfigure}[t]{.45\textwidth}
  \centering  
  \tikzset{every picture/.style={line width=0.75pt}} 

\begin{tikzpicture}[x=0.75pt,y=0.75pt,yscale=-1,xscale=1]

\draw [color={rgb, 255:red, 208; green, 2; blue, 27 }  ,draw opacity=1 ]   (331,1547.4) -- (308.5,1583.4) ;
\draw [color={rgb, 255:red, 208; green, 2; blue, 27 }  ,draw opacity=1 ]   (309,1583.4) -- (286.5,1619.4) ;
\draw    (331,1547.4) -- (359.5,1582.4) ;
\draw    (309,1583.4) -- (337.5,1618.4) ;
\draw [color={rgb, 255:red, 74; green, 144; blue, 226 }  ,draw opacity=1 ]   (172.84,1813.75) -- (152.12,1850.8) ;
\draw [color={rgb, 255:red, 208; green, 2; blue, 27 }  ,draw opacity=1 ]   (259.5,1664.4) -- (237,1700.4) ;
\draw  [color={rgb, 255:red, 208; green, 2; blue, 27 }  ,draw opacity=1 ][fill={rgb, 255:red, 208; green, 2; blue, 27 }  ,fill opacity=1 ] (329,1549.4) .. controls (329,1548.3) and (329.9,1547.4) .. (331,1547.4) .. controls (332.1,1547.4) and (333,1548.3) .. (333,1549.4) .. controls (333,1550.5) and (332.1,1551.4) .. (331,1551.4) .. controls (329.9,1551.4) and (329,1550.5) .. (329,1549.4) -- cycle ;
\draw  [color={rgb, 255:red, 208; green, 2; blue, 27 }  ,draw opacity=1 ][fill={rgb, 255:red, 208; green, 2; blue, 27 }  ,fill opacity=1 ] (310.75,1585.15) .. controls (310.75,1584.18) and (309.97,1583.4) .. (309,1583.4) .. controls (308.03,1583.4) and (307.25,1584.18) .. (307.25,1585.15) .. controls (307.25,1586.12) and (308.03,1586.9) .. (309,1586.9) .. controls (309.97,1586.9) and (310.75,1586.12) .. (310.75,1585.15) -- cycle ;
\draw  [color={rgb, 255:red, 208; green, 2; blue, 27 }  ,draw opacity=1 ][fill={rgb, 255:red, 208; green, 2; blue, 27 }  ,fill opacity=1 ] (235,1699.4) .. controls (235,1698.3) and (235.9,1697.4) .. (237,1697.4) .. controls (238.1,1697.4) and (239,1698.3) .. (239,1699.4) .. controls (239,1700.5) and (238.1,1701.4) .. (237,1701.4) .. controls (235.9,1701.4) and (235,1700.5) .. (235,1699.4) -- cycle ;
\draw [color={rgb, 255:red, 74; green, 144; blue, 226 }  ,draw opacity=1 ]   (235.5,1702.4) -- (213,1738.4) ;
\draw  [color={rgb, 255:red, 74; green, 144; blue, 226 }  ,draw opacity=1 ][fill={rgb, 255:red, 74; green, 144; blue, 226 }  ,fill opacity=1 ] (212,1737.4) .. controls (212,1736.3) and (212.9,1735.4) .. (214,1735.4) .. controls (215.1,1735.4) and (216,1736.3) .. (216,1737.4) .. controls (216,1738.5) and (215.1,1739.4) .. (214,1739.4) .. controls (212.9,1739.4) and (212,1738.5) .. (212,1737.4) -- cycle ;
\draw [color={rgb, 255:red, 126; green, 211; blue, 33 }  ,draw opacity=1 ]   (238.77,1702.55) -- (259.02,1740.6) ;
\draw  [color={rgb, 255:red, 126; green, 211; blue, 33 }  ,draw opacity=1 ][fill={rgb, 255:red, 126; green, 211; blue, 33 }  ,fill opacity=1 ] (257.49,1740.03) .. controls (256.36,1739.41) and (256.1,1738.2) .. (256.92,1737.33) .. controls (257.74,1736.47) and (259.32,1736.27) .. (260.45,1736.9) .. controls (261.58,1737.52) and (261.84,1738.73) .. (261.02,1739.6) .. controls (260.2,1740.46) and (258.62,1740.66) .. (257.49,1740.03) -- cycle ;
\draw  [color={rgb, 255:red, 74; green, 144; blue, 226 }  ,draw opacity=1 ][fill={rgb, 255:red, 74; green, 144; blue, 226 }  ,fill opacity=1 ] (150.13,1850.9) .. controls (150.07,1849.8) and (150.92,1848.86) .. (152.03,1848.8) .. controls (153.13,1848.75) and (154.07,1849.6) .. (154.12,1850.7) .. controls (154.17,1851.81) and (153.32,1852.75) .. (152.22,1852.8) .. controls (151.12,1852.85) and (150.18,1852) .. (150.13,1850.9) -- cycle ;
\draw [color={rgb, 255:red, 126; green, 211; blue, 33 }  ,draw opacity=1 ]   (295.88,1813.27) -- (313.48,1849.57) ;
\draw  [color={rgb, 255:red, 126; green, 211; blue, 33 }  ,draw opacity=1 ][fill={rgb, 255:red, 126; green, 211; blue, 33 }  ,fill opacity=1 ] (313.45,1854.42) .. controls (312.45,1853.94) and (312.04,1852.75) .. (312.52,1851.75) .. controls (313,1850.76) and (314.19,1850.34) .. (315.19,1850.82) .. controls (316.18,1851.3) and (316.6,1852.5) .. (316.12,1853.49) .. controls (315.64,1854.49) and (314.44,1854.9) .. (313.45,1854.42) -- cycle ;
\draw    (240,1700.4) -- (325.73,1725.25) ;
\draw [color={rgb, 255:red, 74; green, 144; blue, 226 }  ,draw opacity=1 ]   (214.17,1737.64) -- (193.45,1774.69) ;
\draw [color={rgb, 255:red, 74; green, 144; blue, 226 }  ,draw opacity=1 ]   (216,1737.4) -- (228.5,1772.4) ;
\draw [color={rgb, 255:red, 126; green, 211; blue, 33 }  ,draw opacity=1 ]   (257.49,1740.03) -- (246.5,1772.4) ;
\draw [color={rgb, 255:red, 126; green, 211; blue, 33 }  ,draw opacity=1 ]   (259.02,1740.6) -- (275.5,1773.4) ;

\draw (324,1517.4) node [anchor=north west][inner sep=0.75pt]  [color={rgb, 255:red, 208; green, 2; blue, 27 }  ,opacity=1 ]  {$v=\tilde{v}_{1}$};
\draw (282,1561.4) node [anchor=north west][inner sep=0.75pt]  [color={rgb, 255:red, 208; green, 2; blue, 27 }  ,opacity=1 ]  {$\tilde{v}_{2}$};
\draw (283.85,1619.87) node [anchor=north west][inner sep=0.75pt]  [color={rgb, 255:red, 208; green, 2; blue, 27 }  ,opacity=1 ,rotate=-30.04]  {$\vdots $};
\draw (271.85,1637.87) node [anchor=north west][inner sep=0.75pt]  [color={rgb, 255:red, 208; green, 2; blue, 27 }  ,opacity=1 ,rotate=-30.04]  {$\vdots $};
\draw (349,1584.4) node [anchor=north west][inner sep=0.75pt]    {$\dotsc $};
\draw (313,1728.4) node [anchor=north west][inner sep=0.75pt]    {$\dotsc $};
\draw (143.68,1855.27) node [anchor=north west][inner sep=0.75pt]  [color={rgb, 255:red, 74; green, 144; blue, 226 }  ,opacity=1 ,rotate=-357.2]  {$ \begin{array}{l}
l_{1}\\
\end{array}$};
\draw (154,1674.4) node [anchor=north west][inner sep=0.75pt]  [color={rgb, 255:red, 208; green, 2; blue, 27 }  ,opacity=1 ]  {$v^{l_{1} ,l_{2}} =\tilde{v}_{M}$};
\draw (182,1716.4) node [anchor=north west][inner sep=0.75pt]  [color={rgb, 255:red, 74; green, 144; blue, 226 }  ,opacity=1 ]  {$w^{l_{1}}$};
\draw (263,1717.4) node [anchor=north west][inner sep=0.75pt]  [color={rgb, 255:red, 126; green, 211; blue, 33 }  ,opacity=1 ]  {$w^{l_{2}}$};
\draw (189.5,1775.59) node [anchor=north west][inner sep=0.75pt]  [color={rgb, 255:red, 74; green, 144; blue, 226 }  ,opacity=1 ,rotate=-27.24]  {$\vdots $};
\draw (182.34,1790.06) node [anchor=north west][inner sep=0.75pt]  [color={rgb, 255:red, 74; green, 144; blue, 226 }  ,opacity=1 ,rotate=-27.24]  {$\vdots $};
\draw (273.23,1776.18) node [anchor=north west][inner sep=0.75pt]  [color={rgb, 255:red, 126; green, 211; blue, 33 }  ,opacity=1 ,rotate=-335.18]  {$\vdots $};
\draw (281.68,1794.5) node [anchor=north west][inner sep=0.75pt]  [color={rgb, 255:red, 126; green, 211; blue, 33 }  ,opacity=1 ,rotate=-335.18]  {$\vdots $};
\draw (308.71,1852.33) node [anchor=north west][inner sep=0.75pt]  [color={rgb, 255:red, 126; green, 211; blue, 33 }  ,opacity=1 ,rotate=-359.32]  {$l_{2}$};
\draw (327,1620.4) node [anchor=north west][inner sep=0.75pt]    {$\dotsc $};
\draw (216,1772.4) node [anchor=north west][inner sep=0.75pt]  [color={rgb, 255:red, 74; green, 144; blue, 226 }  ,opacity=1 ]  {$\dotsc $};
\draw (236,1772.4) node [anchor=north west][inner sep=0.75pt]  [color={rgb, 255:red, 126; green, 211; blue, 33 }  ,opacity=1 ]  {$\dotsc $};
\end{tikzpicture}

\caption{Tree indicating the common path $\mathcal{V}^{v}_{l_1,l_2}$ and the subtrees $\mathcal{G}^{w^{l_1}}$, $\mathcal{G}^{w^{l_2}}$.}
\label{fig:common:path}
\end{subfigure}
\caption{Figures illustrating the notation.}
\label{fig:paths}
\end{figure}

Define the sum of derivatives in the downward path from the vertex $v$ to the leaf $f_{l}$ by
\begin{equation*}
\beta(v, l):= \sum_{w \in \mathcal{V}^{v}_{l}} \beta^w.
\end{equation*}
If $v = f_l$ is a leaf, then $\beta(v,l)  = 0$. The property \ref{deriv:distrib:ii} of the derivative distribution function can also be written as follows: if $\delta(v) = l$ for some $v \in \mathcal{V}$, then following the notation in Figure \ref{fig:paths}(\subref{fig:path}), $\delta(\bar{v}_2) = \ldots = \delta(\bar{v}_M) = l$. Equivalently, $\delta$ restricted to the non-leaf vertices $\mathcal{V}^v$ of the subtree $\mathcal{G}^v$ satisfies
 \begin{equation} \label{delta:dominating}
\left(\restr{\delta}{\mathcal{V}^v}\right)^{-1}(l) = \beta(v, l).
 \end{equation}
For any subset of non-leaf vertices $\mathcal{W} \subseteq \mathcal{V}$, we can denote by $\mathcal{D}(\mathcal{W})$ the collection of the maps $\restr{\delta}{\mathcal{W}}$ defined on $\mathcal{W}$ satisfying the conditions \ref{deriv:distrib:i} and \ref{deriv:distrib:ii}. On the complement $(\mathcal{V}^{v}_{l})^c$, we thus have the condition that $\delta_{(\mathcal{V}^{v}_{l})^c} \in \mathcal{D}((\mathcal{V}^{v}_{l})^c)$. We will consider $\restr{\delta}{\mathcal{W}} \in \mathcal{D}(\mathcal{W})$ as a default condition for the restricted map $\restr{\delta}{\mathcal{W}}$ with $\mathcal{W} \subseteq \mathcal{V}$, which will be omitted oftentimes for the simplicity of notation.

With the notation set, we are now ready to present the proof of Theorem \ref{thm:main}, in the one-parameter case. Instead of performing the induction solely on the conclusion statement, we will proceed with multiple inductive statements that are useful in deducing \eqref{induction_result}. Let $R$ be a conical region of the form \eqref{eq:cone:ums} or \eqref{eq:cone:nonums}. Without loss of generality, we will focus on $T^{R}_{\mathcal{G}}\left((P_{k_{\max}}f_l)_{1 \leq l \leq n}\right)$ and abbreviate as $T_{\mathcal{G}}\left((P_{k_{\max}}f_l)_{1 \leq l \leq n}\right)$. The inductive statements, which can be seen as a generalization of \eqref{result:Leibniz:Whitney} in Lemma \ref{lemma:multipliers:simpleParaproduct}, are the following:

\begin{proposition}\label{prop:1param:ind:statements}
Suppose that all the Lebesgue exponents in the inductive statement satisfy the condition described in Theorem \ref{thm:main}, and that $T_{\mathcal G}$ is restricted to a cone denoted by R. 
\begin{enumerate}
\item \label{prop:whitney:1}
Suppose that $l_0 \in \mathfrak{M}(R)$. Then 
\begin{equation} \label{induction_whitney_cube_statement}
 \left\|T_{\mathcal{G}}\left((P_{k_{\max}}f_l)_{1 \leq l \leq n}\right)\right\|_{r} \lesssim 2^{k_{\max}\cdot\beta(\mathfrak{r}_{\mathcal{G}}, l_0)}\|\Delta_{k_{\max}}f_{l_0}\|_{p_{l_0}}\sum_{\restr{\delta}{(\mathcal{V}_{l_0}^{\mathfrak{r}_{\mathcal{G}}})^c}}\prod_{\substack{ \\ l \neq l_0}}\|D^{\delta^{-1}(l)}f_{l}\|_{p_l};
\end{equation}
\item \label{prop:whitney:2}
Suppose that $l_1, l_2 \in \mathfrak{M}(R)$ with $l_1 \neq l_2$. Then
\begin{equation} \label{induction_whitney_cube_statement-2max}
\left\|T_{\mathcal{G}}\left((P_{k_{\max}}f_l)_{1 \leq l \leq n}\right)\right\|_{r} \lesssim 2^{k_{\max}\cdot\beta(\mathfrak{r}_{\mathcal{G}}, v^{l_1, l_2})}\|\Delta_{k_{\max}}D^{\beta(w^{l_1}, l_1)}f_{l_1}\|_{p_{l_1}}\|\Delta_{k_{\max}}D^{\beta(w^{l_2}, l_2)}f_{l_2}\|_{p_{l_2}}\sum_{\restr{\delta}{(\mathcal{V}_{l_1}^{\mathfrak{r}_{\mathcal{G}}})^c \setminus \mathcal{V}^{w^{l_2}}_{l_2}}}\prod_{\substack{ \\ l \neq l_1, l_2}}\|D^{\delta^{-1}(l)}f_{l}\|_{p_l};
\end{equation}
\item \label{prop:whitney:3}
\begin{equation}\label{induction_cone_statement}
\big\|\sum_{k_{\max}}T_{\mathcal{G}}\left((P_{k_{\max}} f_l)_{1\leq l \leq n}\right)\big\|_{r} \lesssim \sum_{\delta}\prod_{l=1}^n\|D^{\delta^{-1}(l)}f_l\|_{p_l}.
\end{equation}
\end{enumerate}
\end{proposition}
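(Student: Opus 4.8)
\textbf{Proof plan for Proposition \ref{prop:1param:ind:statements}.}
The three statements will be proved simultaneously by a single induction on the complexity of the rooted tree $\mathcal G$. The base case is complexity $1$: then $T_{\mathcal G}(f_1,\ldots,f_n) = D^{\beta^{\mathfrak r_{\mathcal G}}}(f_1\cdots f_n)$, so \eqref{prop:whitney:1} and \eqref{prop:whitney:2} reduce (after noting $\beta(w^{l_i},l_i)=0$ since $w^{l_i}=f_{l_i}$) to the fixed-scale estimates \eqref{result:Leibniz:Whitney} and \eqref{result:local:diag} of Lemma \ref{lemma:multipliers:simpleParaproduct}, while \eqref{induction_cone_statement} follows by summing over $k_{\max}$ exactly as in the proof of \eqref{Leib_one_paraproduct:strategy} assuming that lemma --- optimizing the resulting $\min$ of two geometric terms, invoking the Besov interpolation \eqref{eq:interpolation:Besov:epsilon} and the observation \eqref{eq:obs:besov}. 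This already contains all the mechanics; the induction propagates them through the tree.

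For the inductive step I would fix a cone $R$ of the form \eqref{eq:cone:ums} or \eqref{eq:cone:nonums} and split the root symbol $|\sum_{i}\xi_i|^{\beta^{\mathfrak r_{\mathcal G}}}$ as in Section \ref{strategy}, Step \eqref{dec:root:symbol}: writing $m$ as the child symbol plus commutators according to which direct descendant subtree contains the maximal frequencies, and tracking the root derivative $D^{\beta^{\mathfrak r_{\mathcal G}}}$ as it descends toward $f_{l_0}$ (resp.\ $f_{l_1},f_{l_2}$), exactly as done for the five-linear flag in Section \ref{sec:5lin:1param}. This exhibits $T_{\mathcal G}^R((P_{k_{\max}}f_l)_l)$, after a Fourier series expansion (Step \eqref{dec:Fourier:series}) on the relevant Whitney cubes producing rapidly decaying coefficients in the commutator case and the limited-decay coefficients \eqref{eq:limited:decay} in the diagonal case, as a superposition of \emph{tensorized} products of modulated Littlewood--Paley pieces of $f_{l_0}$ (resp.\ $f_{l_1}$, resp.\ the root factor carrying the fast frequency) times operators $T_{\mathcal G^v}$ associated to the proper direct-descendant subtrees, localized to the induced cones $R(v)$ --- this is Step \eqref{dec:tensorization:subtrees}. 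To each such lower-complexity factor $T_{\mathcal G^{w}}$ I apply the appropriate part of the inductive hypothesis: if the maximal-scale variable in the subtree coincides with the globally maximal variable, part \eqref{prop:whitney:1} or \eqref{prop:whitney:2} at fixed scale; otherwise part \eqref{prop:whitney:3} after the scales in that subtree are summed. Together with H\"older's inequality (distributing $1/r=\sum 1/p_l$ among the tensor factors), \eqref{eq:direction:LP:same} and \eqref{eq:trivial:modulation} to remove modulations and directional projections, and the fast summability of the Fourier coefficients (or the condition $1/r<(d+\beta^{\mathfrak r_{\mathcal G}})/d$ making $\sum_L|C_L|^\tau<\infty$ in the diagonal case), this yields \eqref{induction_whitney_cube_statement} and \eqref{induction_whitney_cube_statement-2max} with the derivative count $\beta(\mathfrak r_{\mathcal G},l_0)$ (resp.\ $\beta(\mathfrak r_{\mathcal G},v^{l_1,l_2})$, $\beta(w^{l_i},l_i)$) read off from the composition law \eqref{delta:dominating} and properties \ref{deriv:distrib:i}--\ref{deriv:distrib:ii}.

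Granting \eqref{prop:whitney:1} and \eqref{prop:whitney:2}, statement \eqref{prop:whitney:3} for $\mathcal G$ is obtained by summing over $k_{\max}\in\BBZ$. In the one-maximum cone $R_{l_0}$ one also needs the change-of-summation-order maneuver of term $I_D$ in Section \ref{sec:5lin:1param}: rewrite $S_{k_{\max}}f_l = f_l-\Delta_{\succ k_{\max}}f_l$ for the non-maximal leaves, peel off the ``full function'' term (bounded by H\"older plus the statement applied to the subtrees), and for the remainder use \eqref{eq:switch}-type bounds $\|\Delta_{\succ k_{\max}}f_l\|_{p_l}\le\min(\|f_l\|_{p_l},2^{-\epsilon k_{\max}}\|f_l\|_{\dot B^\epsilon_{p_l,\infty}})$ to produce a $\min$ of a positive and a negative power of $2^{k_{\max}}$, then optimize and interpolate via \eqref{eq:interpolation:Besov:epsilon} --- here, when the $\Delta_{\succ k_{\max}}$-factor lies in a different subtree than $f_{l_0}$, one inserts an extra paraproduct decomposition (as in $I_D$ of Section \ref{sec:5lin:1param}) so that the emerging derivative distribution agrees with composition of Leibniz rules and admits a valid $\delta\in\mathcal D(\mathcal V)$. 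Running this for every cone $R$ and summing the finitely many contributions gives \eqref{induction_result}. The main obstacle is bookkeeping rather than analytic: verifying that at each stage the derivatives are attributed to leaves in a way compatible with \ref{deriv:distrib:i}--\ref{deriv:distrib:ii}, i.e.\ that the geometric mean produced by optimization genuinely unpacks into a sum over admissible $\delta\in\mathcal D(\mathcal V)$ --- this is where the path notation $\mathcal V^v_l$, $\mathcal V^v_{l_1,l_2}$, $v^{l_1,l_2}$, $w^{l_i}$ and the identity \eqref{delta:dominating} do the work, and the precise combinatorial check that ``$\restr{\delta}{(\mathcal V^{\mathfrak r_{\mathcal G}}_{l_0})^c}$ ranges over $\mathcal D((\mathcal V^{\mathfrak r_{\mathcal G}}_{l_0})^c)$'' is the delicate point deferred to Section \ref{generic_optim_interp}.
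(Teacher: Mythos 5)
Your proposal correctly identifies the overall architecture — a single induction on tree complexity proving all three statements together, with the base case delegated to Lemma~\ref{lemma:multipliers:simpleParaproduct} — and you have all the right tools in your toolbox (Fourier series, high--low switch, optimization, Besov interpolation). But the roles you assign these tools do not match the paper's proof, and one of the assignments hides a genuine gap.

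The paper does \emph{not} use the commutator splitting of the root symbol to prove parts~\eqref{prop:whitney:1} and \eqref{prop:whitney:2}. Those are fixed-$k_{\max}$ statements, so there is no summation over $k_{\max}$ and hence no need for the $2^{k_{\max}(\beta-1)}2^{k_{\text{other}}}$ gain that the commutator buys. Instead, the root symbol $|\sum_l\xi_l|^{\beta^{\mathfrak r_{\mathcal G}}}$ is smoothly restricted to $[-n2^{k_{\max}}, n2^{k_{\max}}]$ and expanded in a single ``diagonal''-style Fourier series with limited-decay coefficients $|C_L|\lesssim (1+|L|)^{-(1+\beta^{\mathfrak r_{\mathcal G}})}$, exactly as in \eqref{eq:Fourier:dec:diagonal}. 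This tensorizes $T_{\mathcal G}$ into a product $2^{k_{\max}\beta^{\mathfrak r_{\mathcal G}}}\prod_i T_{\mathcal G^{v_i}}((P_{k_{\max}}f_l)_{l\in\mathcal L(v_i)})$; one applies the inductive statement \eqref{prop:whitney:1} (or \eqref{prop:whitney:2}) to the subtree containing $l_0$ (or $l_1,l_2$) and the global Leibniz rule \eqref{induction_cone_statement} to the other subtrees, and the scale factors $2^{k_{\max}\beta^{\mathfrak r_{\mathcal G}}}\cdot 2^{k_{\max}\beta(v_{i_0},l_0)}=2^{k_{\max}\beta(\mathfrak r_{\mathcal G},l_0)}$ recombine via the path identity. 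No commutator. Your route of ``child symbol plus commutator'' for \eqref{prop:whitney:1} introduces a second scale that then has to be summed away, which is both unnecessary and harder to make rigorous, since the right-hand side of \eqref{induction_whitney_cube_statement} has no gain structure into which the commutator's two-scale estimate naturally fits.

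The commutator splitting \emph{is} the key device, but only for part~\eqref{prop:whitney:3}, and only in the off-diagonal cones \eqref{eq:cone:ums}. There the root symbol is written as $\mathcal E_1+\mathcal E_2$ in \eqref{symbol_1parameter_uni_maximal}: the commutator piece $\mathcal E_1$ is handled by the double Fourier series with rapidly decaying coefficients, producing the $2^{k_1(\beta-1)}2^{k_{\max}(v_2)}$ gain that makes the double sum over scales converge; the child-symbol piece $\mathcal E_2$ is the one that needs the high--low switch, precisely because after the split it factorizes cleanly and the $I_D$-type reordering makes sense. Your proposal for \eqref{prop:whitney:3} tries to skip the split and instead ``sum \eqref{prop:whitney:1}/\eqref{prop:whitney:2} over $k_{\max}$'' together with a high--low switch performed directly on the unsplit operator. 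This does not close: the single-scale bound $2^{k_{\max}\beta(\mathfrak r_{\mathcal G},l_0)}\|\Delta_{k_{\max}}f_{l_0}\|_{p_{l_0}}\cdots$ carries no decaying factor in $k_{\max}$ to make the sum converge, and the ``full function'' term you want to peel off, $T_{\mathcal G}(\Delta_{k_{\max}}f_{l_0},(f_l)_{l\neq l_0})$, does not factor through $f_{l_0}\cdot\prod_{l\neq l_0}f_l$ because the full root symbol $|\sum_l\xi_l|^\beta$ is still sitting inside. The high--low switch is only legitimate once the commutator has been subtracted. Finally, note that in the diagonal cones \eqref{eq:cone:nonums} statement \eqref{prop:whitney:3} follows by directly summing \eqref{induction_whitney_cube_statement-2max} over $k_{\max}$ (the two $\Delta_{k_{\max}}$-factors on $f_{l_1}$ and $f_{l_2}$ supply the convergence), which means your induction relies on \eqref{prop:whitney:2} for the \emph{same} complexity as $\mathcal G$, not just strictly lower complexity — a dependency worth making explicit in the ordering of your argument within each level of the induction.
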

\begin{remark}
\textbf{(i)}
Induction statements \eqref{prop:whitney:1} and \eqref{prop:whitney:2} describe the estimates for the multi-linear expression localized to a union of Whitney cubes at scale $k_{\max}$ in the cone $R$, depending on the configuration of $\mathfrak{M}(R)$. In particular, \eqref{induction_whitney_cube_statement-2max} corresponds to the case when the Whitney cubes are located in a cone $\tilde R_{l_1, l_2}$ of type \eqref{eq:cone:nonums}; \eqref{induction_whitney_cube_statement} on the other hand holds both for cones of type \eqref{eq:cone:ums} or \eqref{eq:cone:nonums}. Since \eqref{prop:whitney:2} describes a special case of \eqref{prop:whitney:1}, it is not surprising to observe that the expression on the right hand side of \eqref{induction_whitney_cube_statement-2max} can be majorized by the right hand side of \eqref{induction_whitney_cube_statement}.\\
\textbf{(ii)}
We would like to emphasize that \eqref{induction_cone_statement} for all possible paraproduct regions implies precisely the conclusion of Theorem  \ref{thm:main}:
\begin{equation}\label{leibniz_global}
\left\|T_{\mathcal{G}}\left((f_l)_{1 \leq l \leq n}\right)\right\|_{r} \lesssim \sum_{\delta}\prod_{l=1}^n\|D^{\delta^{-1}(l)}f_l\|_{p_l}.
\end{equation}
\end{remark}
\begin{proof}
As mention earlier, we will only focus on the case $r \geq 1$. We proceed by induction on the tree structure: the base case corresponds to trees of complexity 1, thus
the base cases for \eqref{induction_whitney_cube_statement} and \eqref{induction_whitney_cube_statement-2max} are verified by Lemma \ref{lemma:multipliers:simpleParaproduct}.\footnote{Although the lemma indicates the bi-linear case, it can be easily extended to the $n$-linear setting} The base case for \eqref{induction_cone_statement} is simply the Leibniz rule corresponding to paraproducts and thus is proven to be true -- see Section \ref{Bourgain-Li_hilow}. We would like to prove \eqref{induction_whitney_cube_statement}, \eqref{induction_whitney_cube_statement-2max} and \eqref{induction_cone_statement} for a tree of complexity $\mathcal{C}$ assuming that \eqref{induction_whitney_cube_statement}, \eqref{induction_whitney_cube_statement-2max} and \eqref{induction_cone_statement} hold for any tree of \textit{any} lower complexity(that is, of complexity $\mathcal{C}-1, \ldots, 1$). 
\begin{enumerate}[leftmargin=*]
\item
We first verify the inductive statement \eqref{induction_whitney_cube_statement}. Denote by $(v_i)_{i=1}^{n_1}$ the vertices of depth 1 in $\mathcal{G}$, and let $m_{\beta^{\mathfrak{r}_{\mathcal{G}}}}$ be the symbol defined by 
$$
m_{\beta^{\mathfrak{r}_{\mathcal{G}}}}(\sum_{l=1}^n \xi_l) := |\sum_{l =1}^n \xi_l|^{\beta^{\mathfrak{r}_{\mathcal{G}}}}.
$$
The multilinear expression $ T_{\mathcal{G}}$ can be rewritten as
\begin{align} \label{induction_whitney_op}
& T_{\mathcal{G}}\left((P_{k_{\max}}f_l)_{1 \leq l \leq n}\right) = T_{m^{k_{\max}}_{\beta^{\mathfrak{r}_{\mathcal{G}}}}}\Big(T_{\mathcal{G}^{v_1}} \big((P_{k_{\max}} f_l)_{\substack{ \\ l \in \mathcal{L}(v_1)}}\big), \ldots, T_{\mathcal{G}^{v_{n_1}}} \big((P_{k_{\max}} f_l)_{\substack{ \\ l \in \mathcal{L}(v_{n_1}) }}\big)  \Big),
\end{align} 
where $T_m$ generically denotes a multilinear operator associated to a symbol $m$.

We recall that the paraproduct decomposition yields the localization to the region $R_{k_{\max}}$ defined by 
\begin{equation} \label{definition_localization}
R_{k_{\max}}:= \{(\xi_1, \ldots, \xi_n): |\xi_l| \sim 2^{k_{\max}} \ \ \text{for} \ \  l \in \mathfrak{M}(R) \ \  \text{and} \ \ |\xi_l| \ll 2^{k_{\max}} \ \ \text{for} \ \  l \in \mathfrak{m}(R) \},
\end{equation}
on which one has $|\sum_{l=1}^n \xi_l| \lesssim n 2^{k_{\max}}$. We can thus smoothly restrict the symbol $m_{\beta^{\mathfrak{r}_{\mathcal{G}}}}(\sum_{l=1}^n \xi_l)$ to the interval $[-n2^{k_{\max}}, n2^{k_{\max}}]$ and denote it by $m^{k_{\max}}_{\beta^{\mathfrak{r}_{\mathcal{G}}}}(\sum_{l=1}^n \xi_l)$. We perform a Fourier series decomposition on the symbol as in Section \ref{Bourgain-Li_hilow} (Fourier series decomposition for the ``diagonal" term):
\begin{align*}
m^{k_{\max}}_{\beta^{\mathfrak{r}_{\mathcal{G}}}} (\sum_{l=1}^n\xi_l)= (n2^{k_{\max}})^{ \beta^{\mathfrak{r}_{\mathcal{G}}}} \sum_{L \in \mathbb{Z}}C_L e^{2\pi i \frac{L}{n2^{k_{\max}}} \sum_{l=1}^n \xi_l},
\end{align*}
where the renormalized Fourier coefficients satisfy the decaying condition
\begin{align}\label{fourier_coeff_decay_induction_1parameter}
|C_L| \lesssim \frac{1}{(1+|L|)^{1+\beta^{\mathfrak{r}_{\mathcal{G}}}}}.
\end{align}
As a result, one can rewrite (\ref{induction_whitney_op}) (up to a constant depending implicitly on $n$ and $\beta^{\mathfrak{r}_{\mathcal{G}}}$) as
\begin{align} \label{induction_whitney_fourier}
\sum_{L \in \mathbb{Z}} C_L 2^{k_{\max}\cdot\beta^{\mathfrak{r}_{\mathcal{G}}}} \prod_{i=1}^{n_1}T_{\mathcal{G}^{v_i}} \Big((P_{k_{\max}, \frac{L}{n2^{k_{\max}}}}f_{l})_{\substack{\\ l \in \mathcal{L}(v_i)}} \Big),
\end{align}
which can be estimated by
\begin{align}  \label{induction_whitney_fourier_0}
\|(\ref{induction_whitney_fourier})\|_{r} \lesssim \sum_{L \in \mathbb{Z}} |C_L| 2^{k_{\max}\cdot\beta^{\mathfrak{r}_{\mathcal{G}}}} \prod_{i=1}^{n_1} \Big\|T_{\mathcal{G}^{v_i}} \big((P_{k_{\max}, \frac{L}{n2^{k_{\max}}}}f_{l})_{\substack{\\ l \in \mathcal{L}(v_i)}} \big) \Big\|_{p_{v_i}}.
\end{align}
We observe that for each $1 \leq i \leq n_1$, the following identity holds:
\begin{equation} \label{identity_translation}
T_{\mathcal{G}^{v_i}} \big((P_{k_{\max}, \frac{L}{n2^{k_{\max}}}}f_{l})_{\substack{\\ l \in \mathcal{L}(v_i)}} \big)(x) = T_{\mathcal{G}^{v_i}} \big((P_{k_{\max}}f_{l})_{\substack{\\ l \in \mathcal{L}(v_i)}} \big)(x+\frac{L}{n2^{k_{\max}}})
\end{equation}
and the translation invariance of the measure yields
\begin{equation*}
\big\|T_{\mathcal{G}^{v_i}} \big((P_{k_{\max}, \frac{L}{n2^{k_{\max}}}}f_{l})_{\substack{\\ l \in \mathcal{L}(v_i)}} \big) \big\|_{p_{v_i}} = \big\|T_{\mathcal{G}^{v_i}} \big((P_{k_{\max}}f_{l})_{\substack{\\ l \in \mathcal{L}(v_i)}} \big)\big\|_{p_{v_i}}.
\end{equation*}
Due to the decay of the Fourier coefficients \eqref{fourier_coeff_decay_induction_1parameter},\footnote{When $r <1$, we would use subaddtivity to deduce
\begin{equation*}
\|(\ref{induction_whitney_fourier})\|_r^r \lesssim \sum_{L \in \mathbb{Z}} |C_L|^r \cdot 2^{k_{\max}\beta^{\mathfrak{r}_{\mathcal{G}}}r} \prod_{i=1}^{n_1} \Big\|T_{\mathcal{G}^{v_i}} \big((P_{k_{\max}, \frac{L}{n2^{k_{\max}}}}f_{l})_{\substack{\\ l \in \mathcal{L}(v_i)}} \big) \Big\|_{p_{v_i}}^r =  \sum_{L \in \mathbb{Z}} |C_L|^r \cdot 2^{k_{\max}\beta^{\mathfrak{r}_{\mathcal{G}}}r} \prod_{i=1}^{n_1} \big\|T_{\mathcal{G}^{v_i}} \big((P_{k_{\max}}f_{l})_{\substack{\\ l \in \mathcal{L}(v_i)}} \big) \big\|_{p_{v_i}}^r,
\end{equation*}
where the Fourier coefficients satisfy the decay condition (\ref{fourier_coeff_decay_induction_1parameter}). It is natural to impose the condition $$r(1+\beta^{\mathfrak{r}_{\mathcal{G}}})>1.$$ For the same reason, for any $v \in \mathcal{L}(\mathfrak{r}_{\mathcal{G}})$, there is an associated differential operator $D^{\beta^v}$ whose Fourier series decomposition yields Fourier coefficients with limited decay, thus imposing the condition on the Lebesgue exponent 
$$
p_v(1+\beta^v) >1.
$$
} (\ref{induction_whitney_fourier_0}) is majorized by
\begin{equation}\label{induction_whitney_total}
 2^{k_{\max}\cdot\beta^{\mathfrak{r}_{\mathcal{G}}}} \prod_{i=1}^{n_1} \Big\|T_{\mathcal{G}^{v_i}} \big((P_{k_{\max}}f_{l})_{\substack{\\ l \in \mathcal{L}(v_i) }}  \big) \Big\|_{p_{v_i}}.
\end{equation}

A simple observation is that there exists some $1 \leq i_0 \leq n_1$ such that $\mathcal{L}(v_{i_0}) \cap \mathfrak{M}(R) \neq \emptyset$, in which case the subtree $\mathcal{G}^{v_{i_0}}$ is automatically restricted to a conical region and $k_{\max} = k_{\max}(v_{i_0})$ specifies the union of Whitney cubes at scale $k_{\max}$ in such a cone. Assume without loss of generality that $i_0 = 1$. We now apply the inductive hypothesis (\ref{induction_whitney_cube_statement}):
\begin{align} \label{induction_whitney_major}
\big\|T_{\mathcal{G}^{v_1}} \big((P_{k_{\max}}f_{l})_{\substack{\\ l \in \mathcal{L}(v_1)}}  \big)\Big\|_{{p}_{v_1}} 
 \lesssim & 2^{k_{\max}\cdot\beta(v_1, l_0)}\|\Delta_{k_{\max}}f_{l_0}\|_{p_{l_0}}\sum_{\restr{\delta}{(\mathcal{V}_{l_0}^{v_1})^c}}\prod_{\substack{l \in \mathcal{L}(v_1) \\ l \neq l_0}}\|D^{\delta^{-1}(l)}f_{l}\|_{p_l}.
\end{align}
Meanwhile, we can invoke the inductive hypothesis (\ref{induction_cone_statement}) and thus (\ref{leibniz_global}) to deduce that for $i \neq 1$,
\begin{align} \label{induction_whitney_minor}
& \big\|T_{\mathcal{G}^{v_i}} \big((P_{k_{\max}}f_{l})_{\substack{\\ l \in \mathcal{L}(v_i) }} \big)\big\|_{p_{v_i}}\lesssim  \sum_{\restr{\delta}{\mathcal{V}^{v_i}}}\prod_{l \in \mathcal{L}(v_i)}\|D^{\delta^{-1}(l)}f_l\|_{p_l}.
\end{align}
Combining the estimates (\ref{induction_whitney_major}) and (\ref{induction_whitney_minor}), we derive the following desired estimate for (\ref{induction_whitney_total}):
\begin{align*}
2^{k_{\max}\cdot \beta(\mathfrak{r}_{\mathcal{G}}, l_0)}\|\Delta_{k_{\max}}f_{l_0}\|_{p_{l_0}}\sum_{\restr{\delta}{(\mathcal{V}_{l_0}^{\mathfrak{r}_{\mathcal{G}}})^c}}\prod_{\substack{ \\ l \neq l_0}}\|D^{\delta^{-1}(l)}f_{l}\|_{p_l}.
\end{align*}
\item 
To show the second inductive statement (\ref{induction_whitney_cube_statement-2max}), we use the Fourier series decomposition for the root symbol applied in the proof of (\ref{induction_whitney_cube_statement}) and obtain (\ref{induction_whitney_fourier}), whose $L^r$ norm can be estimated by (\ref{induction_whitney_total}). There are two possibilities with respect to the positions of $l_1, l_2$:
\begin{enumerate}[leftmargin=*]
\item 
$l_1, l_2 \in \mathcal{L}(v_{i_0})$ for some $1 \leq i_0 \leq n_1$ (or equivalently $l_1, l_2$ are leaves of the same subtree rooted in $v_{i_0}$, a direct descendant of the root $r_{\mathcal G}$). We now invoke the inductive hypothesis (\ref{induction_whitney_cube_statement-2max}) on $T_{\mathcal{G}^{v_{i_0}}}$:
{\fontsize{8}{8}
\begin{align}\label{induction_cube_2max_proof}
\big\|T_{\mathcal{G}^{v_{i_0}}}\big((P_{k_{\max}}f_l)_{l \in \mathcal{L}(v_{i_0})}\big)\big\|_{p_{v_{i_0}}} \lesssim 2^{k_{\max}\cdot\beta(v_{i_0}, v^{l_1, l_2})}\|\Delta_{k_{\max}}D^{\beta(w^{l_1}, l_1)}f_{l_1}\|_{p_{l_1}}\|\Delta_{k_{\max}}D^{\beta(w^{l_2}, l_2)}f_{l_2}\|_{p_{l_2}}\sum_{\restr{\delta}{(\mathcal{V}_{l_1}^{v_{i_0}})^c \setminus \mathcal{V}^{w^{l_2}}_{l_2}}}\prod_{\substack{ l \in \mathcal{L}(v_{i_0}) \\ l \neq l_1, l_2}}\|D^{\delta^{-1}(l)}f_{l}\|_{p_l}. 
\end{align}}
We can also apply the inductive hypothesis (\ref{leibniz_global}) to derive (\ref{induction_whitney_minor}) for $i \neq i_0$. Combining (\ref{induction_cube_2max_proof}) and (\ref{induction_whitney_minor}), we conclude with (\ref{induction_whitney_cube_statement-2max}).

\item 
$l_1 \in \mathcal{L}(v_{i_1})$ and $l_2 \in \mathcal{L}(v_{i_2})$ for $1\leq i_1, i_2 \leq n_1$ and $i_1 \neq i_2$ (or equivalently $l_1, l_2$ are leaves of two different subtrees stemming from direct descendants of the root). In this case, the common path is
$
\mathcal{V}^{\mathfrak{r}_{\mathcal{G}}}_{l_1,l_2} = \{\mathfrak{r}_{\mathcal{G}} \},
$
which means that  
\begin{equation} \label{end_common_precise}
v^{l_1,l_2} = \mathfrak{r}_{\mathcal{G}}.
\end{equation}
Moreover, the assumption about the positions of $l_1$ and $l_2$ gives the precise vertices $w^{l_1}$ and $w^{l_2}$:
\begin{equation}\label{branch_out_root}
w^{l_1} = v_{i_1}  \ \ \text{and} \ \  w^{l_2} = v_{i_2}.
\end{equation}
The inductive hypothesis (\ref{induction_whitney_cube_statement}) can be applied to 
$$
T_{\mathcal{G}^{v_{i_1}}}\big((P_{k_{\max}}f_l)_{l \in \mathcal{L}(v_{i_1})}\big) \ \  \text{and} \ \ T_{\mathcal{G}^{v_{i_2}}}\big((P_{k_{\max}}f_l)_{l \in \mathcal{L}(v_{i_2})}\big).
$$
In particular, 
\begin{align*}
\big\|T_{\mathcal{G}^{v_{i_1}}}\big((P_{k_{\max}}f_l)_{l \in \mathcal{L}(v_{i_1})}\big) \big\|_{p_{v_{i_1}}} \lesssim & 2^{k_{\max}\cdot\beta(v_{i_1}, l_1)}\|\Delta_{k_{\max}}f_{l_1}\|_{p_{l_1}}\sum_{\restr{\delta}{(\mathcal{V}_{l_1}^{v_{i_1}})^c}}\prod_{\substack{l \in \mathcal{L}(v_{i_1}) \\ l \neq l_1}}\|D^{\delta^{-1}(l)}f_{l}\|_{p_l} \\
\lesssim & \|\Delta_{k_{\max}}D^{\beta(v_{i_1}, l_1)}f_{l_1}\|_{p_{l_1}}\sum_{\restr{\delta}{(\mathcal{V}_{l_1}^{v_{i_1}})^c}}\prod_{\substack{l \in \mathcal{L}(v_{i_1}) \\ l \neq l_1}}\|D^{\delta^{-1}(l)}f_{l}\|_{p_l},
\end{align*}
where the last inequality follows from (\ref{eq:obs:besov}). A similar reasoning gives 
\begin{align*}
\big\|T_{\mathcal{G}^{v_{i_2}}}\big((P_{k_{\max}}f_l)_{l \in \mathcal{L}(v_{i_2})}\big) \big\|_{p_{v_{i_2}}} \lesssim &  \|\Delta_{k_{\max}}D^{\beta(v_{i_2}, l_2)}f_{l_2}\|_{p_{l_2}}\sum_{\restr{\delta}{(\mathcal{V}_{l_2}^{v_{i_2}})^c}}\prod_{\substack{l \in \mathcal{L}(v_{i_2}) \\ l \neq l_2}}\|D^{\delta^{-1}(l)}f_{l}\|_{p_l}.
\end{align*}
For $i\neq i_1, i_2$, we use the estimate (\ref{induction_whitney_minor}) implied from the inductive hypothesis (\ref{leibniz_global}). As a consequence, (\ref{induction_whitney_total}) can be majorized by
\begin{align*}
2^{k_{\max}\beta^{\mathfrak{r}_{\mathcal{G}}}}\|\Delta_{k_{\max}}D^{\beta(v_{i_1}, l_1)}f_{l_1}\|_{p_{l_1}}  \|\Delta_{k_{\max}}D^{\beta(v_{i_2}, l_2)}f_{l_2}\|_{p_{l_2}}\sum_{\restr{\delta}{(\mathcal{V}_{l_1}^{\mathfrak{r}_{\mathcal{G}}})^c \setminus \mathcal{V}^{v_{i_2}}_{l_2}}}\prod_{\substack{ \\ l \neq l_1, l_2}}\|D^{\delta^{-1}(l)}f_{l}\|_{p_l},
\end{align*}
which agrees with (\ref{induction_whitney_cube_statement-2max}) due to the interpretation of notations (\ref{end_common_precise}) and (\ref{branch_out_root}).
\end{enumerate}
\vskip .1in
\item
We will now prove the third inductive statement corresponding to \eqref{induction_cone_statement}; in this case we need to take into account the more precise structure of the conical region $T_{\mathcal G}$ is restricted to.\begin{enumerate}[leftmargin=*]
\item
\textbf{Case 1: The conical region $R$ is of the form \eqref{eq:cone:ums}.} \\
 Suppose that 
$$\mathfrak{M}(R) = \{ l_0\}, \ \  \mathfrak{m}(R)= \{1 \leq l \leq n: l \neq l_0\}, 
$$ and $l_0 \in \mathcal{L}(v_{i_0})$ for some $1 \leq i_0 \leq n_1$. 
Then $\displaystyle \sum_{k_{\max}} T_{\mathcal{G}}\big((P_{k_{\max}} f_l)_{1 \leq l \leq n}\big)$, or more precisely  $$ \sum_{k_{\max}} T^{R}_{\mathcal{G}}\big((P_{k_{\max}} f_l)_{1 \leq l \leq n}\big)$$ concerns the frequency space localized to the conical region \eqref{eq:cone:ums}, on which we apply the \textit{splitting of the root symbol} step introduced in Section \ref{Bourgain-Li_hilow} and used in Section 3.1:
\begin{align}
|\sum_{l =1}^n\xi_l|^{\beta^{\mathfrak{r}_{\mathcal{G}}}}
= & \underbrace{\sum_{i \neq i_0}   m_{C_\beta^{\mathfrak{r_{\mathcal{G}}}}}\Big(\sum_{l \in \mathcal{L}(v_{i_0})}\xi_l, \sum_{l \nin \mathcal{L}(v_{i_0})}\xi_l\Big)\cdot \sum_{l \in \mathcal{L}(v_i)}\xi_l}_{\mathcal{E}_1} +  \underbrace{|\sum_{l \in \mathcal{L}(v_{i_0})} \xi_l|^{\beta^{\mathfrak{r}_{\mathcal{G}}}}}_{\mathcal{E}_2},\label{symbol_1parameter_uni_maximal} 
\end{align}
where 
\begin{align*}
m^{}_{C_\beta^{\mathfrak{r_{\mathcal{G}}}}}\left(\tilde{\xi}_1, \tilde{\xi}_2\right) := \frac{\displaystyle |\tilde{\xi}_1 + \tilde{\xi}_2|^{\beta^{\mathfrak{r}_{\mathcal{G}}}} - |\tilde{\xi}_1|^{\beta^{\mathfrak{r}_{\mathcal{G}}}}}{\displaystyle \tilde{\xi}_2}.
\end{align*}
\\
\noindent
\textbf{Estimate of $\mathcal{E}_1$:}
\vskip .1 in
The symbol denoted by $\mathcal{E}_1$ generates a commutator whose treatment builds on the approach described in Section \ref{Bourgain-Li_hilow}. In order to perform the double Fourier series decomposition on one Whitney cube at a time, we need to further decompose $R_{k_{l_0}}$ \eqref{local:whitney_cube_1max} -- the union of Whitney cubes at scale $k_{l_0}$. In particular, 
$$
R_{k_{l_0}} = R_{k_{l_0}}^+ \cup R_{k_{l_0}}^-,
$$
where
\begin{align*}
R_{k_{l_0}}^+ := \{(\xi_1, \ldots, \xi_n): \xi_{l_0} \sim 2^{k_{l_0}}, |\xi_l| \ll 2^{k_{l_0}}\}, \quad R_{k_{l_0}}^- := \{(\xi_1, \ldots, \xi_n): \xi_{l_0} \sim -2^{k_{l_0}}, |\xi_l| \ll 2^{k_{l_0}}\}.
\end{align*}
We restrict the symbol $m^{}_{C_\beta^{\mathfrak{r_{\mathcal{G}}}}}$ to $R_{k_{l_0}}^+$ and $R_{k_{l_0}}^-$ and denote them by $m^{k_{l_0},+}_{C_\beta^{\mathfrak{r_{\mathcal{G}}}}}$ and $m^{k_{l_0},-}_{C_\beta^{\mathfrak{r_{\mathcal{G}}}}}$ respectively. 
Then the double Fourier series decomposition yields
 \begin{align} 
  m^{k_{l_0},\pm}_{C_\beta^{\mathfrak{r_{\mathcal{G}}}}}\Big(\sum_{l \in \mathcal{L}(v_{i_0})}\xi_l, \sum_{l \nin \mathcal{L}(v_{i_0})}\xi_l\Big)= \sum_{L_1, L_2 \in \mathbb{Z}} C^{\pm}_{L_1, L_2} 2^{k_{l_0}(\beta^{\mathfrak{r}_{\mathcal{G}}}-1)} e^{2 \pi i L_1 \frac{\sum_{l \in \mathcal{L}(v_{i_0})}\xi_l} {2^{k_{l_0}}}} \, e^{2 \pi i L_2 \frac{\sum_{l \nin \mathcal{L}(v_{i_0})} \xi_l}{2^{k_{l_0}}}},\label{commutator_1parameter_fourier_decomp+} 
 \end{align}
which is essentially the same as \eqref{eq:FS:5lin:comm} with a few natural adjustments: replacing $1$ by $l_0$ and $i_0$, $\xi_1 + \xi_2$ by $\sum_{l \in \mathcal{L}(v_{i_0})} \xi_l$ and $\xi_3 + \xi_4 + \xi_5$ by $\sum_{l \nin \mathcal{L}(v_{i_0})} \xi_l$. We remark that all the renormalized Fourier coefficients involved decay rapidly. 

Without loss of generality, assume that 
$$
\mathfrak{M}(R) = \{1\}, \ \ \mathfrak{m}(R)= \{1 \leq l \leq n: l \neq 1\}.
$$
We will use $k_1$ specifically instead of $k_{\max}$ and denote by $R_{k_1}$ the union of Whitney cubes at scale $k_1$ in the cone $R$.
For any fixed $L \in \mathbb{Z}$,
$$P_{k_{1},L}(1) = \Delta_{k_{1},L}, \ \ P_{k_1,L}(l) = S_{k_{1},L} \ \ \text{for} \ \  l \neq 1.$$

As a consequence of the above two steps, we can rewrite the symbol $\mathcal{E}_1$ as 
 $$\mathcal{E}_1 =: \sum_{i\neq i_0} \mathcal{E}_1^i.$$ We focus on $\mathcal{E}_1^2$ and consider the multiplier associated to it:
 \begin{align} \label{E_12:pre_loc}
 & \sum_{k_1}T_{\mathcal{E}_1^2}\Big(T_{\mathcal{G}^{v_1}} \big(\Delta_{k_1}f_1, (S_{k_{1}} f_l)_{\substack{ \\ 1 \neq l \in \mathcal{L}(v_1)}}\big), T_{\mathcal{G}^{v_{2}}} \big((S_{k_{1}} f_l)_{\substack{ \\ l \in \mathcal{L}(v_{2}) }}\big), \ldots, T_{\mathcal{G}^{v_{n_1}}} \big((S_{k_{1}} f_l)_{\substack{ \\ l \in \mathcal{L}(v_{n_1}) }}\big)  \Big).
 \end{align}
For the multilinear expression associated to the subtree $\mathcal{G}^{v_2}$, namely
 $$
 T_{\mathcal{G}^{v_2}}\left((S_{k_1}f_l)_{l \in \mathcal{L}(v_2)}\right),
 $$
we will further perform the paraproduct decomposition so as to focus on a fixed conical region denoted by $R(v_2)$.
Let $\mathfrak{M}(R(v_2))$ denote a subset of $\mathcal{L}(v_2)$ defined similarly to \eqref{index:major} and $\mathfrak{m}(R(v_2)) = \mathcal{L}(v_2) \setminus \mathfrak{M}(R(v_2))$. Furthermore, we define
$$ \quad P_{k_{\max}(v_2)}(l) = \Delta_{k_{\max}(v_2)}, \ \text{if} \ l \in \mathcal{L}(v_2) \cap \mathfrak{M}(R(v_2)) \ \text{and} \ \ P_{k_{\max},(v_2)}(l) = S_{k_{\max}(v_2)}  \ \text{if} \  l \in \mathcal{L}(v_2) \cap \mathfrak{m}(R(v_2)).$$
The localized multilinear expression can then be written as
\begin{equation} \label{local:subtree}
 \sum_{k_{\max}(v_2)}T^{R(v_2)}_{\mathcal{G}^{v_2}}\big((P_{k_{\max}(v_2)}S_{k_1}f_{l})_{\substack{\\ l \in \mathcal{L}(v_2)}} \big).
\end{equation}

Since $\Delta_{k_{\max}(v_2)} S_{k_1} \nequiv 0$ if and only if $k_{\max(v_2)} \ll k_1$, we can restrict the sum in \eqref{local:subtree}:
 $$ \sum_{k_{\max}(v_2):k_{\max}(v_2) \ll k_1}T^{R(v_2)}_{\mathcal{G}^{v_2}}\big((P_{k_{\max}(v_2)}f_{l})_{\substack{\\ l \in \mathcal{L}(v_2)}} \big),$$
which abbreviates as 
 $$ \sum_{k_{\max}(v_2): k_{\max}(v_2) \ll k_1}T_{\mathcal{G}^{v_2}}\big((P_{k_{\max}(v_2)}f_{l})_{\substack{\\ l \in \mathcal{L}(v_2)}} \big).$$
 As a consequence, \eqref{E_12:pre_loc} takes the form
 \begin{align} \label{decomp:pararprod:minorcone}
  &  \sum_{k_{\max}(v_2) \ll k_1}T_{\mathcal{E}_1^2}\Big(T_{\mathcal{G}^{v_1}} \big(\Delta_{k_1}f_1,(S_{k_{1}} f_l)_{\substack{ \\ 1\neq l \in \mathcal{L}(v_1)}}\big), T_{\mathcal{G}^{v_2}}\big((P_{k_{\max}(v_2)}f_{l})_{\substack{\\ l \in \mathcal{L}(v_2)}} \big), \ldots, T_{\mathcal{G}^{v_{n_1}}} \big((S_{k_{1}} f_l)_{\substack{ \\ l \in \mathcal{L}(v_{n_1}) }}\big)  \Big).
 \end{align}
By applying the Fourier series \eqref{commutator_1parameter_fourier_decomp+} to \eqref{decomp:pararprod:minorcone}, the latter becomes a sum of terms of the form
\begin{equation}
\label{commutator_fourier_series}
\begin{aligned}
 \sum_{L_1, L_2\in \mathbb{Z}}C_{L_1, L_2}^{\pm}\sum_{k_{\max}(v_2) \ll k_1} & 2^{k_{\max}(v_2)} 2^{k_1 (\beta^{\mathfrak{r}_{\mathcal{G}}}-1)}T_{\mathcal{G}^{v_1}}\Big(\Delta_{k_1,\pm, \frac{L_1}{2^{k_1}}}f_1,  (S_{k_1, \frac{L_1}{2^{k_1}}}f_l)_{\substack{l \in \mathcal{L}(v_1) \\ l \neq 1}})\Big) \\ & T_{\mathcal{G}^{v_2}}\Big((P_{k_{\max}(v_2), \frac{L_2}{2^{k_1}} }f_{l})_{\substack{\\ l \in \mathcal{L}(v_2)}} \Big)  \prod_{i=3}^{n_1} T_{\mathcal{G}^{v_{i}}}\Big((S_{k_1, \frac{L_2}{2^{k_1}}}f_l)_{l \in \mathcal{L}(v_i)}\Big)
\end{aligned}
\end{equation}

Notice that the multipliers generated by $\mathcal{E}_1^i$, for $i >2$, behave analogously. By an observation similar to \eqref{identity_translation} and thanks to the decay of the $C_{L_1, L_2}^{\pm}$ coefficients, the $L^r$ norm of \eqref{commutator_fourier_series} can be estimated by
\begin{equation}\label{induction_a}
\begin{aligned}
 \sum_{k_{\max}(v_2) \ll k_1} & 2^{k_{\max}(v_2)} 2^{k_1 \cdot (\beta^{\mathfrak{r}_{\mathcal{G}}}-1)} \Big\|T_{\mathcal{G}^{v_1}}\big(\Delta_{k_1,+}f_1,  (S_{k_1}f_l)_{\substack{l \in \mathcal{L}(v_1) \\ l \neq 1}})\big)\Big\|_{{p}_{v_1}} \\ & \qquad \Big\|T_{\mathcal{G}^{v_2}}\big((P_{k_{\max}(v_2)}f_{l})_{\substack{\\ l \in \mathcal{L}(v_2)}}\big)\Big\|_{{p}_{v_2}}\prod_{i=3}^{n_1} \left\|T_{\mathcal{G}^{v_{i}}}\left((S_{k_1}f_l)_{l \in \mathcal{L}(v_i)}\right) \right\|_{p_{v_i}}.
\end{aligned}
\end{equation}

Since $\mathcal{G}^{v_i}$ for $1\leq i \leq n_1$ are subtrees of lower complexity, we invoke the inductive hypothesis (\ref{induction_whitney_cube_statement}) and \eqref{eq:direction:LP:same}, so that
\begin{align}
& \Big\|T_{\mathcal{G}^{v_1}}\big(\Delta_{k_1,+}f_1,  (S_{k_1}f_l)_{\substack{l \in \mathcal{L}(v_1) \\ l \neq 1}})\big)\Big\|_{p_{v_1}} \lesssim \|\Delta_{k_1} D^{\beta(v_1, 1)}f_1\|_{p_1}\sum_{\restr{\delta}{(\mathcal{V}_1^{v_1})^c}}\prod_{\substack{l\in \mathcal{L}(v_1) \\ l \neq 1}}\|D^{\delta^{-1}(l)}f_{l}\|_{p_l} \label{inductive_hyp_1}
\end{align}
and
\begin{align}
 \Big\|T_{\mathcal{G}^{v_2}}\big((P_{k_{\max}(v_2)}f_{l})_{\substack{\\ l \in \mathcal{L}(v_2)}}\big)\Big\|_{p_{v_2}} \lesssim \|\Delta_{k_{\max}(v_2)}D^{\beta(v_2,l_0)}f_{l_0}\|_{\vec{p_{l_0}}}\sum_{\restr{\delta}{(\mathcal{V}_{l_0}^{v_2})^c}}\prod_{\substack{ \\ l \neq l_0}}\|D^{\delta^{-1}(l)}f_{l}\|_{p_l}. \label{inductive_hyp_minor}
\end{align}

Moreover, the inductive hypotheses (\ref{induction_cone_statement}) and thus (\ref{leibniz_global}) generate
\begin{align}\label{inductive_hyp_rest}
& \left\|T_{\mathcal{G}^{v_{i}}}\left((S_{k_1}f_l)_{l \in \mathcal{L}(v_i)}\right) \right\|_{p_{v_i}} 
 \lesssim \sum_{\substack{\restr{\delta}{\mathcal{V}^{v_i}}}}\prod_{l \in \mathcal{L}(v_i)}\|D^{\delta^{-1}(l)}f_{l}\|_{{p_{l}}} \ \ \text{for}\ \  i \neq 1,2. 
\end{align}
Applying the estimates (\ref{inductive_hyp_1}), (\ref{inductive_hyp_minor}) and (\ref{inductive_hyp_rest}) to (\ref{induction_a}), we deduce that
\begin{align*}
&\sum_{k_{\max}(v_2) \ll k_1}2^{k_{\max}(v_2)}2^{k_1\cdot (\beta^{\mathfrak{r}_{\mathcal{G}}} - 1)} \cdot \|\Delta_{k_1}D^{ \beta(v_1,1)}f_1\|_{p_1} \|\Delta_{k_{\max}(v_2)}D^{ \beta(v_2,l_0)}f_{l_0}\|_{{p_{l_0}}}  \sum_{\substack{\restr{\delta}{\mathcal{V}\setminus \{\mathfrak{r}_{\mathcal{G}}\} \setminus \mathcal{V}^{v_1}_{1} \setminus \mathcal{V}^{v_2}_{l_0}}}}\prod_{l \neq 1, l_0}\|D^{\delta^{-1}(l)}f_{l}\|_{p_l}.
\end{align*}
We notice that
\begin{align*}
& \sum_{k_{\max}(v_2) \ll k_1}2^{k_{\max}(v_2)}2^{k_1\cdot (\beta^{\mathfrak{r}_{\mathcal{G}}} - 1)} \cdot \|\Delta_{k_1}D^{ \beta(v_1,1)}f_1\|_{p_1} \|\Delta_{k_{\max}(v_2)}D^{\beta(v_2,l_0)}f_{l_0}\|_{{p_{l_0}}} \\
= & \sum_{k_{\max}(v_2) \ll k_1}2^{k_{\max}(v_2)\cdot (1-\epsilon)}2^{k_1\cdot (\beta^{\mathfrak{r}_{\mathcal{G}}} - 1)}\|\Delta_{k_1}D^{\beta(v_1,1)}f_1\|_{p_1}2^{k_{\max}(v_2) \cdot \epsilon} \|\Delta_{k_{\max}(v_2)}D^{ \beta(v_2,l_0)}f_{l_0}\|_{{p_{l_0}}}\\
\end{align*}
for any $0 < \epsilon < \min (1,  \beta^{\mathfrak{r}_{\mathcal{G}}})$. We can then distribute the derivatives in various ways to bound the above expression by
\begin{equation}
\label{eq:besov:ill:1}
 \sum_{k_1}\min\Big (2^{k_1\beta^{\mathfrak{r}_{\mathcal{G}}}}\|D^{\beta(v_1,1)}f_1\|_{\dot{B}^0_{p_1,\infty}}\|D^{\beta(v_2,l_0)}f_{l_0}\|_{\dot{B}^{0}_{{p_{l_0}},\infty}}, 2^{-k_1\epsilon} \|D^{\beta(v_1,1)}f_1\|_{\dot{B}^{\beta^{\mathfrak{r}_{\mathcal{G}}}}_{p_1,\infty}}\|D^{\beta(v_2,l_0)}f_{l_0}\|_{\dot{B}^{\epsilon}_{{p_{l_0}},\infty}} \Big).
\end{equation}
By further optimizing in $k_1$, this becomes
\begin{equation}
\label{eq:besov:ill:2}
\big(\|D^{\beta(v_1,1)}f_1\|_{\dot{B}^{0}_{p_1,\infty}}\|D^{ \beta(v_2,l_0)}f_{l_0}\|_{\dot{B}^{0}_{{p_{l_0}},\infty}}\big)^{\frac{\epsilon}{\beta^{\mathfrak{r}_{\mathcal{G}}} + \epsilon}}\big(\|D^{\beta(v_1,1)}f_1\|_{\dot{B}^{\beta^{\mathfrak{r}_{\mathcal{G}}}}_{p_1,\infty}}\|D^{ \beta(v_2,l_0)}f_{l_0}\|_{\dot{B}^{\epsilon}_{p_{{l_0}},\infty}}\big)^{\frac{{\beta^{\mathfrak{r}_{\mathcal{G}}}}}{\beta^{\mathfrak{r}_{\mathcal{G}}} + \epsilon}}.
\end{equation}
Now we invoke the interpolation inequality for Besov norms \eqref{eq:interpolation:Besov:epsilon} -- which is essentially a redistribution of the derivatives -- to end up with
\begin{equation}
\label{eq:besov:ill:3}
\begin{aligned}
&\big(\|D^{\beta(v_1,1)}f_1\|_{\dot{B}^{0}_{p_1,\infty}}\|D^{\beta(v_2,l_0)}f_{l_0}\|_{\dot{B}^{\beta^{\mathfrak{r}_{\mathcal{G}}}}_{{p_{l_0}},\infty}}\big)^{\frac{\epsilon}{\beta^{\mathfrak{r}_{\mathcal{G}}} + \epsilon}}\big(\|D^{\beta(v_1,1)}f_1\|_{\dot{B}^{\beta^{\mathfrak{r}_{\mathcal{G}}}}_{p_1,\infty}}\|D^{ \beta(v_2,l_0)}f_{l_0}\|_{\dot{B}^{0}_{p_{{l_0}},\infty}}\big)^{\frac{{\beta^{\mathfrak{r}_{\mathcal{G}}}}}{\beta^{\mathfrak{r}_{\mathcal{G}}} + \epsilon}}\\
\lesssim & \|D^{\beta^{\mathfrak{r}_{\mathcal{G}}} + \beta(v_1,1)}f_1\|_{p_1}\|D^{\beta(v_2,l_0)}f_{l_0}\|_{{p_{l_0}}} +  \|D^{ \beta(v_1,1)}f_1\|_{p_1}\|D^{\beta^{\mathfrak{r}_{\mathcal{G}}}+\beta(v_2,l_0)}f_{l_0}\|_{{p_{l_0}}}.
\end{aligned}
\end{equation}
\vskip .15in
\textbf{Estimate of $\mathcal{E}_2$:}
\vskip .1in
\noindent
The multiplier generated by the symbol $|\sum_{l \in \mathcal{L}(v_{i_0})} \xi_l|^{\beta^{\mathfrak{r}_{\mathcal{G}}}}$ localized on the conical region \eqref{eq:cone:ums}  with $l_0 = 1$ is
\begin{align} \label{induction_b}
\sum_{k_1}\Big(D^{\beta^{\mathfrak{r}_{\mathcal{G}}}}T_{\mathcal{G}^{v_1}}\big(\Delta_{k_1}f_1, (S_{k_1}f_l)_{\substack{l \in \mathcal{L}(v_1) \\ l \neq 1}}\big)\Big) \prod_{i=2}^{n_1}T_{\mathcal{G}^{v_i}} \big((S_{k_1} f_l)_{l \in \mathcal{L}(v_i)} \big).
\end{align}
We first simplify \eqref{induction_b}  using the high-low switch technique discussed in Sections \ref{Bourgain-Li_hilow} and \ref{sec:5lin:1param}. Denote by $\tilde{\mathcal{G}}^{v_1}$ the tree having the same structure as $\mathcal{G}^{v_1}$ with the derivative $\beta^{v_1}$ replaced by $\beta^{v_1}+ \beta^{\mathfrak{r}_{\mathcal{G}}}$ so that \eqref{induction_b} can be rewritten as
\begin{align} \label{induction_b_simplified}
& \sum_{k_1}\big(T_{\tilde{\mathcal{G}}^{v_1}}\big(\Delta_{k_1}f_1, (S_{k_1}f_l)_{\substack{l \in \mathcal{L}(v_1) \\ l \neq 1}}\big)\big) \prod_{i\neq 1}T_{\mathcal{G}^{v_i}} \left((S_{k_1} f_l)_{l \in \mathcal{L}(v_i)} \right) 
\end{align}
We then perform a finer\footnote{Notice that as we perform this step, we also restrict ourselves to certain conical regions associated to each subtree $\mathcal{G}^{v_i}$, for $i \neq 1$.} paraproduct decompositions on the functions in the subtrees $\mathcal{G}^{v_i}$ for $i \neq 1$ so that \eqref{induction_b_simplified} can be written as a finite sum of terms with the following form:
\begin{align}\label{induction_b_LP}
& \sum_{k_1}\Big(T_{\tilde{\mathcal{G}}^{v_1}}\big(\Delta_{k_1}f_1, (S_{k_1}f_l)_{\substack{l \in \mathcal{L}(v_1) \\ l \neq 1}}\big)\Big) \prod_{i\neq 1}\sum_{k_{\max}(v_i)}T_{\mathcal{G}^{v_i}} \Big((S_{k_1}P_{k_{\max}(v_i)} f_l)_{\substack{ \\ l \in \mathcal{L}(v_i)}} \Big), \nonumber \\
= & \sum_{k_1}\Big(T_{\tilde{\mathcal{G}}^{v_1}}\big(\Delta_{k_1}f_1, (S_{k_1}f_l)_{\substack{l \in \mathcal{L}(v_1) \\ l \neq 1}}\big)\Big) \prod_{i\neq 1}\sum_{k_{\max}(v_i) \ll k_1}T_{\mathcal{G}^{v_i}} \left((P_{k_{\max}(v_i)} f_l)_{\substack{\\ l \in \mathcal{L}(v_i)}} \right).
\end{align}
We notice that the equation holds because the conical decomposition on the subtree $\mathcal{G}^{v_2}$ gives $\Delta_{k_{\max}(v_2)}$ for $l \in \mathfrak{M}(R(v_2))$ and $S_{k_1}\Delta_{k_{\max}(v_i)}  \neq\nvdash 0$ if and only if $k_{\max}(v_i) \ll k_1$.
We can now apply the high-low switch to swap the role of $k_1$ and $k_{\max}(v_i)$ and rewrite (\ref{induction_b_LP}) as
\begin{align*}
& \sum_{k_1}\big(T_{\tilde{\mathcal{G}}^{v_1}}\big(\Delta_{k_1}f_1, (S_{k_1}f_l)_{\substack{l \in \mathcal{L}(v_1) \\ l \neq 1}}\big)\big) \prod_{i\neq 1}\sum_{k_{\max}(v_i)}T_{\mathcal{G}^{v_i}} \big((P_{k_{\max}(v_i)} f_l)_{\substack{l \in \mathcal{L}(v_i) \\ }}\big)  \\
&- \sum_{k_{\max}(v_2) \succ k_1}\big(T_{\tilde{\mathcal{G}}^{v_1}}\big(\Delta_{k_1}f_1, (S_{k_1}f_l)_{\substack{l \in \mathcal{L}(v_1) \\ l \neq 1}}\big)\big)T_{\mathcal{G}^{v_2}} \big((P_{k_{\max}(v_2)} f_l)_{\substack{l \in \mathcal{L}(v_2) \\ }}\big) \cdot \prod_{i\neq 1,2}\sum_{k_{\max}(v_i)}T_{\mathcal{G}^{v_i}} \big((P_{k_{\max}(v_i)} f_l)_{\substack{l \in \mathcal{L}(v_i)}} \big)\\
&  \pm \text{similar terms} := I - II \pm \text{similar terms}.
\end{align*}
The first term $I$ can be estimated by using inductive hypothesis (\ref{induction_cone_statement}) on
\begin{align*}
 \sum_{k_1}\big(T_{\tilde{\mathcal{G}}^{v_1}}\big(\Delta_{k_1}f_1, (S_{k_1}f_l)_{\substack{l \in \mathcal{L}(v_1) \\ l \neq 1}}\big)\big),
\end{align*}
and on 
\begin{align*}
\sum_{k_{\max}(v_i)}T_{\mathcal{G}^{v_i}} \big((P_{k_{\max}(v_i)} f_l)_{\substack{l \in \mathcal{L}(v_i)}} \big)
\end{align*}
for $i \neq 1$.
The second term $II$ requests a more careful treatment. We recall that
\begin{align*}
T_{\tilde{\mathcal{G}}^{v_1}}\big(\Delta_{k_1}f_1, (S_{k_1}f_l)_{\substack{l \in \mathcal{L}(v_1) \\ l \neq 1}}\big)= D^{\beta^{\mathfrak{r}_{\mathcal{G}}}}T_{\mathcal{G}^{v_1}}\big(\Delta_{k_1}f_1, (S_{k_1}f_l)_{\substack{l \in \mathcal{L}(v_1) \\ l \neq 1}}\big), 
\end{align*}
where $\tilde{\mathcal{G}}^{v_1}$ is a tree with the differential operator associated to the root $v_1$ being 
$$
D^{\beta^{v_1} + \beta^{\mathfrak{r}_{\mathcal{G}}}}.
$$

We invoke the inductive hypothesis \eqref{inductive_hyp_1}:
\begin{align*}
\big\|T_{\tilde{\mathcal{G}}^{v_1}}\big(\Delta_{k_1}f_1, (S_{k_1}f_l)_{\substack{l \in \mathcal{L}(v_1) \\ l \neq 1}}\big)  \big\|_{p_{v_1}} \lesssim 2^{k_1\beta^{\mathfrak{r}_{\mathcal{G}}}} \|\Delta_{k_1}D^{\beta(v_1,1)} f_1\|_{p_1}\sum_{\restr{\delta}{(\mathcal{V}_1^{v_1})^c}}\prod_{\substack{l\in \mathcal{L}(v_1) \\ l \neq 1}}\|D^{\delta^{-1}(l)}f_{l}\|_{p_l}.
\end{align*}
When combined with (\ref{inductive_hyp_minor}), we deduce that
\begin{align*}
\|II\|_{r}\lesssim &  \sum_{ k_{\max}(v_2) \succ k_1}2^{k_1 \beta^{\mathfrak{r}_{\mathcal{G}}}}\|\Delta_{k_1}D^{\beta(v_1,1)} f_1\|_{p_1}\|\Delta_{k_{\max}(v_2)}D^{ \beta(v_2,l_0)}f_{l_0}\|_{{p_{l_0}}}\sum_{\restr{\delta}{(\mathcal{V}_1^{v_1})^c \cup (\mathcal{V}_{l_0}^{v_2})^c}}\prod_{\substack{\\ l \neq 1, l_0}}\|D^{\delta^{-1}(l)}f_{l}\|_{p_l}.
\end{align*}
A similar computation specified in Section \ref{sec:5lin:1param} yields
\begin{align}
 &\sum_{k_{\max}(v_2) \succ k_1}2^{k_1 \beta^{\mathfrak{r}_{\mathcal{G}}}}\|\Delta_{k_1}D^{\beta(v_1,1)} f_1\|_{p_1}\|\Delta_{k_{\max}(v_2)}D^{ \beta(v_2,l_0)}f_{l_0}\|_{p_{l_0}} \lesssim \sum_{k_{\max}(v_2)}2^{k_{\max}(v_2) \beta^{\mathfrak{r}_{\mathcal{G}}}}\|D^{\beta(v_1,1)} f_1\|_{\dot{B}^0_{p_1}}\|D^{ \beta(v_2,l_0)}f_{l_0}\|_{\dot{B}^0_{p_{l_0}}}. \label{induction_hilow}
 \end{align}
Meanwhile, (\ref{induction_hilow}) can also be estimated by
 \begin{align*}
 & \sum_{k_{\max}(v_2) \succ k_1}2^{k_1 (\beta^{\mathfrak{r}_{\mathcal{G}}}- \epsilon)}2^{-k_{\max}(v_2) \beta^{\mathfrak{r}_{\mathcal{G}}}} \big(2^{k_1 \epsilon}\|\Delta_{k_1}D^{\beta(v_1,1)} f_1\|_{p_1}\big) \big(2^{k_{\max}(v_2) \beta^{\mathfrak{r}_{\mathcal{G}}}}\|\Delta_{k_{\max}(v_2)}D^{\beta(v_2,l_0)}f_{l_0}\|_{p_{l_0}}\big)\\
\qquad \lesssim & \sum_{k_{\max}(v_2)}2^{-k_{\max}(v_2) \epsilon} \|D^{\beta(v_1,1)} f_1\|_{\dot{B}^\epsilon_{p_1}}\|D^{\beta(v_2,l_0)}f_{l_0}\|_{\dot{B}^{\beta^{\mathfrak{r}_{\mathcal{G}}}}_{p_{l_0}}}.
\end{align*}
The optimization and interpolation can be applied to conclude that $\|II\|_{r}$ is bounded above by
\begin{align*}
\big(\|D^{\beta^{\mathfrak{r}_{\mathcal{G}}} + \beta(v_1,1)}f_1\|_{p_1}\|D^{\beta(v_2,l_0)}f_{l_0}\|_{p_{l_0}} + \|D^{\beta(v_1,1)}f_1\|_{p_1}\|D^{\beta^{\mathfrak{r}_{\mathcal{G}}} + \beta(v_2,l_0)}f_{l_0}\|_{p_{l_0}} \big)\sum_{\restr{\delta}{(\mathcal{V}_1^{v_1})^c \cup (\mathcal{V}_{l_0}^{v_2})^c}}\prod_{\substack{l\in \mathcal{L}(v_1) \\ l \neq 1, l_0}}\|D^{\delta^{-1}(l)}f_{l}\|_{p_l}.
\end{align*}
We thus have arrived at the expression \eqref{induction_cone_statement} claimed in the inductive statement. With this, we end the proof of Proposition \ref{prop:1param:ind:statements}, if $R$ is of the form \eqref{eq:cone:ums}.

\vskip .15in
\item
\textbf{Case 2: The conical region $R$ is of the form \eqref{eq:cone:nonums}.}\\
We notice that (\ref{induction_cone_statement}) is a direct consequence of (\ref{induction_whitney_cube_statement-2max}) for the given tree $\mathcal{G}$. In particular, we apply (\ref{induction_whitney_cube_statement-2max}) on the union of Whitney cubes at a fixed scale to derive 
\begin{align}\label{two_maximal_final}
& \big\|\sum_{k_{\max}}T_{\mathcal{G}}\left((P_{k_{\max}}f_l)_{1 \leq l \leq n}\right) \big\|_r \lesssim  \sum_{k_{\max}}\left\|T_{\mathcal{G}}\left((P_{k_{\max}}f_l)_{1 \leq l \leq n}\right)\right\|_{r} \nonumber\\
\lesssim& \sum_{k_{\max}} 2^{k_{\max}\cdot\beta(\mathfrak{r}_{\mathcal{G}}, v^{l_1, l_2})}\|\Delta_{k_{\max}}D^{\beta(w^{l_1}, l_1)}f_{l_1}\|_{p_{l_1}}\|\Delta_{k_{\max}}D^{\beta(w^{l_2}, l_2)}f_{l_2}\|_{p_{l_2}}\sum_{\restr{\delta}{(\mathcal{V}_{l_1}^{\mathfrak{r}_{\mathcal{G}}})^c \setminus \mathcal{V}^{w^{l_2}}_{l_2}}}\prod_{\substack{ \\ l \neq l_1, l_2}}\|D^{\delta^{-1}(l)}f_{l}\|_{p_l}.
\end{align}
We then distribute the derivatives as before:
\begin{align}\label{two_maximal_optimized}
& \sum_{k_{\max}} 2^{k_{\max}\cdot\beta(\mathfrak{r}_{\mathcal{G}}, v^{l_1, l_2})}\|\Delta_{k_{\max}}D^{\beta(w^{l_1}, l_1)}f_{l_1}\|_{p_{l_1}}\|\Delta_{k_{\max}}D^{\beta(w^{l_2}, l_2)}f_{l_2}\|_{p_{l_2}} \nonumber\\
 \leq & \sum_{k_{\max}} \min\big(2^{k_{\max} \beta(\mathfrak{r}_{\mathcal{G}}, v^{l_1,l_2})}\|D^{\beta(w^{l_1},l_1)}f_{l_1}\|_{\dot{B}^0_{p_{l_1}}}\| \|D^{\beta(w^{l_2},l_2)} f_{l_2}\|_{\dot{B}^0_{p_{l_2}}}, \nonumber \\
 &\ \ \quad \quad \quad \quad  2^{-k_{\max} \beta(\mathfrak{r}_{\mathcal{G}}, v^{l_1,l_2})}\|D^{\beta(w^{l_1},l_1)}f_{l_1}\|_{\dot{B}^{\beta(\mathfrak{r}_{\mathcal{G}}, v^{l_1,l_2})}_{p_{l_1}}} \|D^{\beta(w^{l_2},l_2)} f_{l_2}\|_{\dot{B}^{\beta(\mathfrak{r}_{\mathcal{G}}, v^{l_1,l_2})}_{p_{l_2}}}\big) \nonumber\\
 \lesssim & \|D^{\beta(\mathfrak{r}_{\mathcal{G}},l_1)}f_{l_1}\|_{p_{l_1}} \| D^{\beta(w^{l_2},l_2)} f_{l_2}\|_{p_{l_2}} + \|D^{\beta(w^{l_1},l_1)}f_{l_1}\|_{p_{l_1}} \| D^{\beta(\mathfrak{r}_{\mathcal{G}},l_2)} f_{l_2}\|_{p_{l_2}}.
\end{align}
We plug (\ref{two_maximal_optimized}) into (\ref{two_maximal_final}) and obtain the estimate (\ref{induction_cone_statement}) claimed in the inductive statement.

\end{enumerate}
\end{enumerate}
\end{proof}

\subsection{Bi-parameter flag Leibniz rule}\label{subsection_bi_leibniz}
We will extend our inductive argument to bi-parameter flag Leibniz rules of arbitrary complexity in dimension\footnote{As mentioned in the beginning of Section \ref{generic_induction}, the methods are adaptable to higher dimensions, in a straightforward manner.} one. We follow the same notation as before, except for the addition of subscription to indicate which parameter is involved. For example, for any $v \in \mathcal{V}$, $\beta^v_j$ represents the derivative for the $j$-th parameter (for $j = 1,2$). As in the one-parameter setting, the frequency space for each parameter can be decomposed into conical regions of the form \eqref{eq:cone:ums} or \eqref{eq:cone:nonums}. We correspondingly define the maps $\mathfrak{M}_j$ and $\mathfrak{m}_j$ for $j = 1, 2$ on the collection of conical regions for the $j$-th parameter:
{\fontsize{9}{9}
\begin{align*}
\begin{matrix}
\mathfrak{M}_1(R):= \{1 \leq l \leq n:(\xi_1, \ldots, \xi_n) \in R,\ \ |\xi_l| \sim \max_{1 \leq l' \leq n} |\xi_{l'}|\},  & \mathfrak{m}_1(R) :=  \{1 \leq l \leq n: (\xi_1, \ldots, \xi_n) \in R,\ \ |\xi_l| \ll \max_{1 \leq l' \leq n} |\xi_{l'}|\},\\
\mathfrak{M}_2(R'):= \{1 \leq l \leq n: (\eta_1, \ldots, \eta_n) \in R',\ \ |\eta_l| \sim \max_{1 \leq l' \leq n} |\eta_{l'}|\},    & \mathfrak{m}_2(R') :=  \{1 \leq l \leq n: (\eta_1, \ldots, \eta_n) \in R',\ \ |\eta_l| \ll \max_{1 \leq l' \leq n} |\eta_{l'}|\},
\end{matrix}
\end{align*}}where $R$ denotes a conical region for the first parameter and $R'$ a conical region for the second parameter.

Fix any integer $k$; we define the projections $P^{(j)}_k$ ($j = 1,2$) depending on $l$ as follows:
\begin{equation*}
P^{(1)}_k(l) := 
\begin{cases}
\Delta^{(1)}_k  \ \ \text{if} \ \  l \in \mathfrak{M}_1(R) \\
S^{(1)}_k  \ \ \text{if} \ \  l \in \mathfrak{m}_1(R),
\end{cases} \qquad \text{and} \qquad P^{(2)}_{k}(l) := 
\begin{cases}
\Delta^{(2)}_{k}  \ \ \text{if} \ \  l \in \mathfrak{M}_2(R') \\
S^{(2)}_{k}  \ \ \text{if} \ \  l \in \mathfrak{m}_2(R').
\end{cases}
\end{equation*}
Similarly, we define $P^{(j)}_{k,L}$ ($j = 1,2$) for fixed $k \in \mathbb{Z}$ and $L \in \BBR$ by
\begin{equation*}
P^{(1)}_{k,L}(l) := 
\begin{cases}
\Delta^{(1)}_{k,L}  \ \ \text{if} \ \  l \in \mathfrak{M}_1(R) \\
S^{(1)}_{k,L}  \ \ \text{if} \ \  l \in \mathfrak{m}_1(R),
\end{cases} \qquad \text{and} \qquad P^{(2)}_{k,L}(l) := 
\begin{cases}
\Delta^{(2)}_{k,L}  \ \ \text{if} \ \  l \in \mathfrak{M}_2(R') \\
S^{(2)}_{k,L}  \ \ \text{if} \ \  l \in \mathfrak{m}_2(R').
\end{cases}
\end{equation*}
Let $k_{\max}$ and $m_{\max}$ specify the Whitney cubes at scales $k_{\max}$ and $m_{\max}$ in the cone $R$ for the first parameter and $R'$ for the second parameter. 
We can thus express the multilinear expression localized to a union of Whitney rectangles\footnote{In this case, a Whitney rectangle is simply the product of two Whitney cubes, one in each parameter.} at the scale $k_{\max} \times m_{\max}$ in the conical region $R \times R'$ as 
$$
T_{\mathcal{G}}^{R\times R'}\big((P^{(1)}_{k_{\max}}P^{(2)}_{m_{\max}} f_l)_{1 \leq l \leq n}\big).
$$
Since we will always focus on a certain conical region, the above expression will be abbreviated as  
$$
T_{\mathcal{G}}\big((P^{(1)}_{k_{\max}}P^{(2)}_{m_{\max}} f_l)_{1 \leq l \leq n}\big).
$$
The multilinear expression restricted to this cone can then be written as
$$
\sum_{k_{\max}, m_{\max} \in \mathbb{Z}}T_{\mathcal{G}}^{}\big((P^{(1)}_{k_{\max}}P^{(2)}_{m_{\max}} f_l)_{1 \leq l \leq n}\big).
$$

Similarly to the one-parameter analysis, the cone decomposition can also be applied to a subtree $\mathcal{G}^v$ with $v \in \mathcal{V}$ for both parameters and let $k_{\max}(v)$ and $m_{\max}(v)$ specify the Whitney cubes in these conical regions for $\mathcal{G}^v$. We will follow the abbreviation in the one-parameter setting so that  $k_{\max}$ and $m_{\max}$ refer to $k_{\max}(\mathfrak{r}_{\mathcal{G}})$ and $m_{\max}(\mathfrak{r}_{\mathcal{G}})$ respectively.

Define the sum of partial derivatives from the vertex $v$ to the leaf $f_l$ by 
\begin{align*}
 \beta_j(v, l):= \sum_{w \in \mathcal{V}^{v}_{l}} \beta_j^w, \ \  \text{for}\ \  j = 1,2.
\end{align*}
With some abuse of notation, if $v = f_l$ is a leaf, then $ \beta_j(v, l) = 0$.

Our goal is to prove bi-parameter versions of the inductive statements in Proposition \ref{prop:1param:ind:statements}. The statements \eqref{prop:2param:ind:1}, \eqref{prop:2param:ind:2} and \eqref{prop:2param:ind:3} below describe estimates for the multi-linear expression $T_{\mathcal{G}}(P^{(1)}_{k_{\max}}P^{(2)}_{m_{\max}} f_l)_{1 \leq l \leq n }$ on a union of Whitney rectangles (the product of two Whitney cubes) at a fixed scale $k_{\max} \times m_{\max}$ localized to a conical region (the product of two conical regions, one for each parameter); several possibilities need to be investigated:
\begin{itemize}
\item
the Whitney cubes can be in any conical region for both parameters;
\item
for at least one parameter its Whitney cube is located in a ``diagonal'' conical region;
\item
the Whitney cubes are in diagonal conical regions for both parameters. 
\end{itemize}

One observes that  \eqref{prop:2param:ind:1} - \eqref{prop:2param:ind:3} impose  conditions from weak to strong so that  \eqref{prop:2param:ind:3} implies  \eqref{prop:2param:ind:2} and  \eqref{prop:2param:ind:2} leads to  \eqref{prop:2param:ind:1}. The statements  \eqref{prop:2param:ind:4} and  \eqref{prop:2param:ind:5} concern Leibniz rules when the frequency space for one parameter is localized on a conical region and on a union of Whitney cubes at a fixed scale for the other parameter.  \eqref{prop:2param:ind:4} corresponds to the case when the Whitney cubes for one parameter are in any conical region while  \eqref{prop:2param:ind:5} describes the case when the Whitney cubes lie in a diagonal conical region.  \eqref{prop:2param:ind:6} is the Leibniz rule when the frequency spaces for both parameters are localized on a conical region.

\begin{proposition} \label{prop:biparameter_induction}
Suppose that all the Lebesgue exponents in the inductive statement satisfy the condition described in Theorem \ref{thm:main} and that $T_{\mathcal G}$ is restricted to the cone denoted by $R$ for the first parameter and $R'$ for the second parameter. 
\begin{enumerate}[leftmargin=*]
\item \label{prop:2param:ind:1}
Suppose that $l_0 \in \mathfrak{M}_1(R)$ and $l'_0 \in  \mathfrak{M}_2(R')$.  
\begin{enumerate}
\item
If $l_0 = \l'_0$, then
\begin{align} \label{induction_2parameter_whitney_one}
& \|T_{\mathcal{G}}(P^{(1)}_{k_{\max}}P^{(2)}_{m_{\max}} f_l)_{1 \leq l \leq n }\|_{\vec{r}} \lesssim   2^{k_{\max}\cdot\beta_1(\mathfrak{r}_{\mathcal{G}}, l_0)}2^{m_{\max}\cdot\beta_2(\mathfrak{r}_{\mathcal{G}}, l_0)}\|\Delta^{(1)}_{k_{\max}}\Delta^{(2)}_{m_{\max}}f_{l_0}\|_{\vec{p}_{l_0}}\sum_{\restr{\delta_1 \otimes \delta_2}{(\mathcal{V}_{l_0}^{\mathfrak{r}_{\mathcal{G}}})^c}}\prod_{\substack{ \\ l \neq l_0}}\|D_{(1)}^{\delta_1^{-1}(l)}D_{(2)}^{\delta_2^{-1}(l)}f_{l}\|_{\vec{p_l}}.
\end{align}
\item
If $l_0 \neq l'_0$, then
\begin{align} \label{induction_2parameter_whitney_two}
\|T_{\mathcal{G}}(P^{(1)}_{k_{\max}}P^{(2)}_{m_{\max}} f_l)_{1 \leq l \leq n }\|_{\vec{r}} 
&\lesssim  2^{k_{\max}\cdot\beta_1(\mathfrak{r}_{\mathcal{G}}, l_0)}2^{m_{\max}\cdot\beta_2(\mathfrak{r}_{\mathcal{G}}, l'_0)}  \nonumber\\&\cdot \sum_{\substack{\restr{\delta_1}{(\mathcal{V}_{l_0}^{\mathfrak{r}_{\mathcal{G}}})^c} \\ \restr{\delta_2}{(\mathcal{V}_{l'_0}^{\mathfrak{r}_{\mathcal{G}}})^c}}} \|\Delta^{(1)}_{k_{\max}}D_{(2)}^{\delta_2^{-1}(l_0)}f_{l_0}\|_{\vec{p}_{l_0}}\|\Delta^{(2)}_{m_{\max}}D_{(1)}^{\delta_1^{-1}(l'_0)}f_{l'_0}\|_{\vec{p}_{l'_0}} \prod_{\substack{ \\ l \neq l_0, l'_0}}\|D_{(1)}^{\delta_1^{-1}(l)}D_{(2)}^{\delta_2^{-1}(l)}f_{l}\|_{\vec{p_l}}.
\end{align}
\end{enumerate}
\item  \label{prop:2param:ind:2}
Suppose that $l_1, l_2 \in \mathfrak{M}_1(R)$ with $l_1 \neq l_2$ and $l_0' \in  \mathfrak{M}_2(R')$.
\begin{enumerate}
\item \label{prop:2param:ind:2:a}
If $l_1 = l_0'$, then
\begin{align}\label{induction_2parameter_whitney_2:1}
 \|T_{\mathcal{G}}(P^{(1)}_{k_{\max}}P^{(2)}_{m_{\max}} f_l)_{1 \leq l \leq n }\|_{\vec{r}} \lesssim &2^{k_{\max}\beta_1(\mathfrak{r}_{\mathcal{G}}, v^{l_1,l_2})}2^{m_{\max}\beta_2(\mathfrak{r}_{\mathcal{G}}, l_1)}\|\Delta^{(1)}_{k_{\max}}\Delta^{(2)}_{m_{\max}}D_{(1)}^{\beta_1(w^{l_1},l_1)}f_{l_1}\|_{\vec{p}_{l_1}} \nonumber\\
 & \cdot \sum_{\substack{\restr{\delta_1}{(\mathcal{V}_{l_1}^{\mathfrak{r}_{\mathcal{G}}})^c\setminus \mathcal{V}^{w^{l_2}}_{l_2}} \\ \restr{\delta_2}{(\mathcal{V}_{l_1}^{\mathfrak{r}_{\mathcal{G}}})^c}}}\|\Delta^{(1)}_{k_{\max}}D_{(1)}^{\beta_1(w^{l_2},l_2)}D_{(2)}^{\delta_2^{-1}(l_2)}f_{l_2}\|_{\vec{p}_{l_2}}\prod_{\substack{ \\ l \neq l_1,l_2}}\|D_{(1)}^{\delta_1^{-1}(l)}D_{(2)}^{\delta_2^{-1}(l)}f_{l}\|_{\vec{p_l}}.
\end{align}
\item \label{prop:2param:ind:2:b}
If $l_1, l_2 \neq l_0'$, then
\begin{align*}
  \|T_{\mathcal{G}}(P^{(1)}_{k_{\max}} & P^{(2)}_{m_{\max}} f_l)_{1 \leq l \leq n }\|_{\vec{r}} \lesssim  2^{k_{\max}\beta_1(\mathfrak{r}_{\mathcal{G}}, v^{l_1,l_2})}2^{m_{\max}\beta_2(\mathfrak{r}_{\mathcal{G}}, l_0')} \prod_{\substack{ \\ l \neq l_1,l_2}}\|D_{(1)}^{\delta_1^{-1}(l)}D_{(2)}^{\delta_2^{-1}(l)}f_{l}\|_{\vec{p_l}} \\
  & \cdot\sum_{\substack{\restr{\delta_1}{(\mathcal{V}_{l_1}^{\mathfrak{r}_{\mathcal{G}}})^c\setminus \mathcal{V}^{w^{l_2}}_{l_2}} \\ \restr{\delta_2}{(\mathcal{V}_{l_0'}^{\mathfrak{r}_{\mathcal{G}}})^c}}}\|\Delta^{(1)}_{k_{\max}}D_{(1)}^{\beta_1(w^{l_1},l_1)}D_{(2)}^{\delta_2^{-1}(l_1)}f_{l_1}\|_{\vec{p}_{l_1}} \|\Delta^{(1)}_{k_{\max}}D_{(1)}^{\beta_1(w^{l_2},l_2)}D_{(2)}^{\delta_2^{-1}(l_2)}f_{l_2}\|_{\vec{p}_{l_2}}\|\Delta^{(2)}_{m_{\max}}D_{(1)}^{\delta_1^{-1}(l_0')}f_{l_0'}\|_{p_{l_0'}}.
\end{align*}
\end{enumerate}
\item \label{prop:2param:ind:3}
Suppose that $l_1, l_2 \in \mathfrak{M}_1(R)$ with $l_1 \neq l_2$ and $l_1', l_2' \in  \mathfrak{M}_2(R')$ with $l_1'\neq l_2'$.
\begin{enumerate}
\item
If $l_1 = l_1'$ and $l_2 = l_2'$, then
\begin{align*}
 \|T_{\mathcal{G}}(P^{(1)}_{k_{\max}}P^{(2)}_{m_{\max}} f_l)_{1 \leq l \leq n }\|_{\vec{r}} \lesssim & 2^{k_{\max}\beta_1(\mathfrak{r}_{\mathcal{G}}, v^{l_1,l_2})}2^{m_{\max}\beta_2(\mathfrak{r}_{\mathcal{G}}, v^{l_1,l_2})}  \sum_{\substack{\restr{\delta_1 \otimes \delta_2}{(\mathcal{V}_{l_1}^{\mathfrak{r}_{\mathcal{G}}})^c\setminus \mathcal{V}^{w^{l_2}}_{l_2}} \\ }}\prod_{\substack{ \\ l \neq l_1,l_2}}\|D_{(1)}^{\delta_1^{-1}(l)}D_{(2)}^{\delta_2^{-1}(l)}f_{l}\|_{\vec{p_l}} \\
 &\cdot \|\Delta^{(1)}_{k_{\max}}\Delta^{(2)}_{m_{\max}}D_{(1)}^{\beta_1(w^{l_1},l_1)}D_{(2)}^{\beta_2(w^{l_1},l_1)}f_{l_1}\|_{\vec{p}_{l_1}}\|\Delta^{(1)}_{k_{\max}}\Delta^{(2)}_{m_{\max}}D_{(1)}^{\beta_1(w^{l_2},l_2)}D_{(2)}^{\beta_2(w^{l_2},l_2)}f_{l_2}\|_{\vec{p}_{l_2}}.
\end{align*}
\item 
If $l_1 = l_1'$ and $l_2 \neq l_2'$, then
\begin{align*}
& \|T_{\mathcal{G}}(P^{(1)}_{k_{\max}}P^{(2)}_{m_{\max}} f_l)_{1 \leq l \leq n }\|_{\vec{r}} \lesssim  2^{k_{\max}\beta_1(\mathfrak{r}_{\mathcal{G}}, v^{l_1,l_2})}2^{m_{\max}\beta_2(\mathfrak{r}_{\mathcal{G}}, v^{l_1,l_2'})}\|\Delta^{(1)}_{k_{\max}}\Delta^{(2)}_{m_{\max}}D_{(1)}^{\beta_1(w^{l_1},l_1)}D_{(2)}^{\beta_2(w^{l_1},l_1)}f_{l_1}\|_{\vec{p}_{l_1}} \\
& \qquad \cdot\sum_{\substack{\restr{\delta_1 }{(\mathcal{V}_{l_1}^{\mathfrak{r}_{\mathcal{G}}})^c\setminus \mathcal{V}^{w^{l_2}}_{l_2}} \\ \restr{\delta_2}{(\mathcal{V}_{l_1}^{\mathfrak{r}_{\mathcal{G}}})^c\setminus \mathcal{V}^{w^{l_2'}}_{l_2'}} }}\|\Delta^{(1)}_{k_{\max}}D_{(1)}^{\beta_1(w^{l_2},l_2)}D_{(2)}^{\delta_2^{-1}(l_2)}f_{l_2}\|_{\vec{p}_{l_2}} \|\Delta^{(2)}_{m_{\max}}D_{(1)}^{\delta_1^{-1}(l_2')}D_{(2)}^{\beta_2(w^{l_2'},l_2')} f_{l_2'}\|_{p_{l_2'}}
\prod_{\substack{ \\ l \neq l_1,l_2, l_2'}}\|D_{(1)}^{\delta_1^{-1}(l)}D_{(2)}^{\delta_2^{-1}(l)}f_{l}\|_{\vec{p_l}}.
\end{align*}
\item
If $l_1 \neq l_1'$ and $l_2 \neq l_2'$, then
\begin{align*}
\|T_{\mathcal{G}}(P^{(1)}_{k_{\max}}P^{(2)}_{m_{\max}} f_l)_{1 \leq l \leq n }\|_{\vec{r}}
\lesssim & 2^{k_{\max}\beta_1(\mathfrak{r}_{\mathcal{G}}, v^{l_1,l_2})}2^{m_{\max}\beta_2(\mathfrak{r}_{\mathcal{G}}, v^{l'_1,l_2'})} \prod_{\substack{ \\ l \neq l_1,l_2, l_1', l_2'}}\|D_{(1)}^{\delta_1^{-1}(l)}D_{(2)}^{\delta_2^{-1}(l)}f_{l}\|_{\vec{p_l}} \\
& \cdot \sum_{\substack{\restr{\delta_1 }{(\mathcal{V}_{l_1}^{\mathfrak{r}_{\mathcal{G}}})^c\setminus \mathcal{V}^{w^{l_2}}_{l_2}} 
\\ \restr{\delta_2}{(\mathcal{V}_{l_1'}^{\mathfrak{r}_{\mathcal{G}}})^c\setminus \mathcal{V}^{w^{l_2'}}_{l_2'}} }}\|\Delta^{(1)}_{k_{\max}}D_{(1)}^{\beta_1(w^{l_1},l_1)}D_{(2)}^{\delta_2^{-1}(l_1)}f_{l_1}\|_{\vec{p}_{l_1}} \|\Delta^{(2)}_{m_{\max}}D_{(1)}^{\delta_1^{-1}(l_1')}D_{(2)}^{\beta_2(w^{l_1'},l_1')} f_{l_1'}\|_{p_{l_1'}}\\
&\cdot \|\Delta^{(1)}_{k_{\max}}D_{(1)}^{\beta_1(w^{l_2},l_2)}D_{(2)}^{\delta_2^{-1}(l_2)}f_{l_2}\|_{\vec{p}_{l_2}} \|\Delta^{(2)}_{m_{\max}}D_{(1)}^{\delta_1^{-1}(l_2')}D_{(2)}^{\beta_2(w^{l_2'},l_2')} f_{l_2'}\|_{p_{l_2'}}.
\end{align*}
\end{enumerate}
\item \label{prop:2param:ind:4}
Suppose that $l_0 \in  \mathfrak{M}_1(R)$. 
Then
\begin{align} \label{induction_2parameter_fix1scale}
& \Big\|\sum_{m_{\max} \in \mathbb{Z}}T_{\mathcal{G}}(P^{(1)}_{k_{\max}}P^{(2)}_{m_{\max}} f_l)_{1 \leq l \leq n }\Big\|_{\vec{r}} \lesssim  2^{k_{\max}\cdot\beta_1(\mathfrak{r}_{\mathcal{G}}, l_0)}\sum_{\substack{\delta_2 \\ \restr{\delta_1}{(\mathcal{V}_{l_0}^{\mathfrak{r}_{\mathcal{G}}})^c}}}\|\Delta^{(1)}_{k_{\max}}D^{\delta_2^{-1}(l_0)}_{(2)}f_{l_0}\|_{\vec{p}_{l_0}}\prod_{\substack{ \\ l \neq l_0}}\|D_{(1)}^{\delta_1^{-1}(l)}D_{(2)}^{\delta_2^{-1}(l)}f_{l}\|_{\vec{p_l}}.
\end{align}
\item \label{prop:2param:ind:5}
Suppose that $l_1, l_2 \in  \mathfrak{M}_1(R)$ with $l_1 \neq l_2$. Then 
\begin{align*}
 \Big\|\sum_{m_{\max}\in \mathbb{Z}}T_{\mathcal{G}}(P^{(1)}_{k_{\max}}P^{(2)}_{m_{\max}} f_l)_{1 \leq l \leq n }\Big\|_{\vec{r}}  \lesssim &  2^{k_{\max}\cdot\beta_1(\mathfrak{r}_{\mathcal{G}}, v^{l_1,l_2})} \prod_{\substack{ \\ l \neq l_1,l_2}}\|D_{(1)}^{\delta_1^{-1}(l)}D_{(2)}^{\delta_2^{-1}(l)}f_{l}\|_{\vec{p_l}}\\ 
 & \cdot \sum_{\substack{\delta_2 \\ \restr{\delta_1}{(\mathcal{V}_{l_1}^{\mathfrak{r}_{\mathcal{G}}})^c \setminus \mathcal{V}^{w^{l_2}}_{l_2}}}}\|\Delta^{(1)}_{k_{\max}}D_{(1)}^{\beta_1(w^{l_1},l_1)}D^{\delta_2^{-1}(l_1)}_{(2)}f_{l_1}\|_{\vec{p}_{l_1}}\|\Delta^{(1)}_{k_{\max}}D_{(1)}^{\beta_1(w^{l_2},l_2)}D^{\delta_2^{-1}(l_2)}_{(2)}f_{l_2}\|_{\vec{p}_{l_2}}.
\end{align*}
\item \label{prop:2param:ind:6}
If we sum over the whole conical regions, we have
\begin{align}\label{induction_2parameter_cone_statement}
& \Big\|\sum_{\substack{k_{\max}\in \mathbb{Z} \\ m_{\max}\in \mathbb{Z}}}T_{\mathcal{G}}(P^{(1)}_{k_{\max}}P^{(2)}_{m_{\max}} f_l)_{1 \leq l \leq n }\Big\|_{\vec{r}} \lesssim  \sum_{\delta}\prod_{l}\|D_{(1)}^{\delta_1^{-1}(l)}D_{(2)}^{\delta_2^{-1}(l)}f_l\|_{\vec{p_l}}.
\end{align}
\end{enumerate}
\end{proposition}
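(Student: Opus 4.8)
\textbf{Proof plan for Proposition \ref{prop:biparameter_induction}.}

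The plan is to prove all the statements \eqref{prop:2param:ind:1}--\eqref{prop:2param:ind:6} simultaneously by a double induction on the complexity $\mathcal C$ of the rooted tree $\mathcal G$, mimicking the one-parameter argument of Proposition \ref{prop:1param:ind:statements}, but running the splitting of the root symbol and the Fourier series decomposition \emph{independently in each of the two parameters}. The base case ($\mathcal C=1$) is the localized bi-parameter Leibniz rule for paraproducts, which is the mixed-norm frequency-localized version of Oh--Wu \cite{OhWu} already invoked in Section \ref{sec:2param:5linflag}; the statements with two maximal functions in a parameter reduce at complexity $1$ to the corresponding "diagonal" estimates. For the inductive step, fix $\mathcal G$ of complexity $\mathcal C$ and assume all six statements hold for every tree of complexity $\le \mathcal C-1$. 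As in the one-parameter case, the key algebraic move is to write, with $(v_i)_{i=1}^{n_1}$ the depth-$1$ vertices,
\[
T_{\mathcal G}\big((P^{(1)}_{k_{\max}}P^{(2)}_{m_{\max}} f_l)_{1\le l\le n}\big)
= T_{m_{\beta_1}^{\mathfrak r}\otimes m_{\beta_2}^{\mathfrak r}}\Big( \big(T_{\mathcal G^{v_i}}((P^{(1)}_{k_{\max}}P^{(2)}_{m_{\max}} f_l)_{l\in\mathcal L(v_i)})\big)_{i=1}^{n_1}\Big),
\]
where the root symbol is a tensor product over the two parameters. One then treats each parameter's root symbol exactly as in Section \ref{sec:one:param:generic:ind}: if the output frequency variable of parameter $j$ is comparable to $2^{k_{\max}}$ (resp.\ $2^{m_{\max}}$) --- i.e.\ we are in a ``diagonal'' cone for that parameter --- one applies a Fourier series expansion of $|\sum_l \xi_l^j|^{\beta_j^{\mathfrak r}}\phi$ with \emph{limited decay} of coefficients (hence the conditions \eqref{cond:thm:1}, \eqref{cond:thm:2} on $r^j$, $p_v^j$); if parameter $j$ is in a cone of type \eqref{eq:cone:ums}, one splits $|\sum_l\xi^j_l|^{\beta_j^{\mathfrak r}} = \mathcal E_1^{(j)}+\mathcal E_2^{(j)}$ as in \eqref{symbol_1parameter_uni_maximal}, with $\mathcal E_1^{(j)}$ producing commutators (double Fourier series with rapid decay on Whitney rectangles, as in \eqref{commutator_1parameter_fourier_decomp+} and Remark \ref{remark:higher:dim} in higher dimensions) and $\mathcal E_2^{(j)}$ requiring the high--low switch of scales. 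Performing the Fourier series in parameter $1$ and in parameter $2$ decouples the operator into a superposition of products of operators $T_{\mathcal G^{v_i}}$ on the subtrees, modulated by frequency translations that are absorbed harmlessly via \eqref{eq:trivial:modulation}, reducing $\mathcal G$ of complexity $\mathcal C$ to the subtrees $\mathcal G^{v_i}$ of complexity $\le \mathcal C-1$, to which the appropriate inductive statement applies in each parameter.

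After this tensorization, the remaining work is bookkeeping: one identifies, in each parameter separately, whether a given direct-descendant subtree $\mathcal G^{v_i}$ contains one or two of the "maximal" leaves of that parameter, and invokes \eqref{prop:2param:ind:1}, \eqref{prop:2param:ind:2}, or the complexity-lowered global statement \eqref{prop:2param:ind:6}/\eqref{induction_2parameter_cone_statement} accordingly; this produces a product over the subtrees of expressions involving localized mixed Besov--Lebesgue norms, together with powers of $2^{k_{\max}}$ and $2^{m_{\max}}$. The sum over the residual internal scales $k_{\max}(v_i)\ll k_{\max}$, $m_{\max}(v_i)\ll m_{\max}$ is carried out using the two-dimensional optimization lemma \eqref{eq:optim:2param}--\eqref{eq:optim:2param:2} from Section \ref{sec:5:flag:bi:param:k_1:m_1}, which extracts both positive and negative powers of $2^{k_{\max}}$, $2^{m_{\max}}$ and allows summation over the small and the large scales. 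The $\mathcal E_2^{(j)}$ (high--low switch) terms are handled as in $I_D$ of Section \ref{sec:5lin:1param} and the one-parameter $\mathcal E_2$ estimate: the leading term gives the genuine lower-complexity Leibniz rule via \eqref{induction_2parameter_cone_statement}, while the error terms carry a projection $\Delta^{(j)}_{\succ k}$ on some descendant leaf, which is controlled using $\|\Delta^{(j)}_{\succ k}F\|_{\vec p}\lesssim \min(\|F\|_{\vec p}, 2^{-\epsilon k}\|F\|_{\dot B^\epsilon})$; when the affected leaf lies in a different subtree than the maximal leaf, an extra paraproduct decomposition inside that subtree is needed so that the derivatives distribute according to the composition law (exactly as in the $I_D$ and $II_C$ discussions of Section \ref{sec:5lin:1param}). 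Finally, statement \eqref{prop:2param:ind:6} for a cone of the form \eqref{eq:cone:nonums} in one or both parameters follows directly from the corresponding two-maximal statements \eqref{prop:2param:ind:2}, \eqref{prop:2param:ind:3}, \eqref{prop:2param:ind:5} summed over the fixed scales and optimized as in \eqref{two_maximal_optimized}.

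The last ingredient --- verifying that the end result really is a geometric mean of the \emph{correct} mixed Besov and Lebesgue norms, with the derivatives redistributed so as to match the admissible distributions $\delta_1\otimes\delta_2\in\mathcal D^N$ --- is deferred to Section \ref{generic_optim_interp}; it uses repeatedly the interpolation inequality \eqref{eq:interpolation:Besov:epsilon}, applied one parameter at a time, to remove the auxiliary $\epsilon_1,\epsilon_2$ derivatives, and the observation \eqref{eq:obs:besov} that $\|D^{s}_{(j)}\cdot\|_{\vec p}$ dominates the corresponding Besov norm. The main obstacle, and the reason the statement is split into six sub-cases rather than proved in one stroke, is precisely the combinatorial explosion of configurations: in the bi-parameter setting the set of "maximal" leaves can differ between the two parameters, a leaf that does not participate in the scale summation for parameter $1$ may still participate for parameter $2$, and the subtree structure relative to the last-common-ancestor vertices $v^{l_1,l_2}$ must be tracked in both parameters at once. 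Keeping the derivative-distribution maps $\delta_1$, $\delta_2$ consistent with conditions \ref{deriv:distrib:i}--\ref{deriv:distrib:ii} on every subtree, through all these cases, is the delicate part; the analytic estimates themselves are, by contrast, routine extensions of the one-parameter argument once the tensorization has been set up.
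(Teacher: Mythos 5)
Your proposal is correct and follows essentially the same route as the paper: a single induction on the tree's complexity carrying all six sub-statements simultaneously, with the root symbol split (per parameter) into a commutator part plus a lower-complexity part, tensorized via Fourier series on Whitney cubes/rectangles, and concluded by the two-dimensional optimization and Besov interpolation of Sections \ref{sec:5:flag:bi:param:k_1:m_1} and \ref{generic_optim_interp}. The case structure (same-subtree vs.\ different-subtree positions of the maximal leaves, commutator $\times$ commutator, commutator $\times$ diagonal, diagonal $\times$ diagonal with high--low switch) and the role of the semi-localized statements \eqref{prop:2param:ind:4}--\eqref{prop:2param:ind:5} as intermediate inductive hypotheses all match what the paper does.
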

\begin{remark}
\textbf{(i)}
Observe that the cases summarized in  \eqref{prop:2param:ind:2} and  \eqref{prop:2param:ind:3} are not exhaustive but typical: other cases not explicitly stated (such as $l_2 = l_0'$ in  \eqref{prop:2param:ind:2} and $l_1= l_2'$, $l_2 \neq l_1'$ in  \eqref{prop:2param:ind:3} can be estimated analogously.\\
\noindent
\textbf{(ii)}
Due to symmetry, the term
\begin{align*}
& \sum_{k_{\max}\in \mathbb{Z}}T_{\mathcal{G}}(P^{(1)}_{k_{\max}}P^{(2)}_{m_{\max}} f_l)_{1 \leq l \leq n }
\end{align*}
is to be treated in the same way as statements \eqref{prop:2param:ind:4} or  \eqref{prop:2param:ind:5} -- based on the structure of the conical regions.\\
\textbf{(iii)}
We realize that the semi-localized operator
\begin{align}\label{induction_2parameter_semilocal_to_est}
& T_{\mathcal{G}}\big((P^{(1)}_{k_{\max}}f_{l})_{\substack{\\1 \leq  l \leq n}}\big) 
\end{align}
can be written as a finite sum of terms appearing in the laft-hand side of \eqref{induction_2parameter_fix1scale}, so that it satisfies the same estimate:
\begin{align}  \label{induction_2parameter_semilocal}
& \|\eqref{induction_2parameter_semilocal_to_est}\|_{\vec{r}} \lesssim 2^{k_{\max}\cdot\beta_1(\mathfrak{r}_{\mathcal{G}}, l_0)}\sum_{\substack{\delta_2 \\ \restr{\delta_1}{(\mathcal{V}_{l_0}^{\mathfrak{r}_{\mathcal{G}}})^c}}}\|\Delta^{(1)}_{k_{\max}}D^{\delta_2^{-1}(l_0)}_{(2)}f_{l_0}\|_{\vec{p}_{l_0}}\prod_{\substack{ \\ l \neq l_0}}\|D_{(1)}^{\delta_1^{-1}(l)}D_{(2)}^{\delta_2^{-1}(l)}f_{l}\|_{\vec{p_l}}.
\end{align}
By symmetry, a similar estimate holds for $
 T_{\mathcal{G}}\big((P^{(2)}_{m_{\max}}f_{l})_{\substack{\\ 1 \leq l \leq n }}\big). 
$
\\
\textbf{(iv)}
We notice that as before \eqref{induction_2parameter_cone_statement} generates the following global Leibniz rule:
\begin{equation}\label{leibniz_2parameter_global}
\big\|T_{\mathcal{G}}\big((f_l)_{1 \leq l \leq n}\big)\big\|_{\vec{r}} \lesssim \sum_{\delta}\prod_{l}\|D_{(1)}^{\delta_1^{-1}(l)}D_{(2)}^{\delta_2^{-1}(l)}f_l\|_{\vec{p_l}}. 
\end{equation}
and the Leibniz rule corresponding to a cone localization in the first parameter:
\begin{equation}\label{leibniz_2parameter_semi_global}
\Big\|\sum_{k_{\max} \in \mathbb{Z}}T_{\mathcal{G}}\left((P^{(1)}_{k_{\max}}f_l)_{1 \leq l \leq n}\right)\Big\|_{\vec{r}} \lesssim \sum_{\delta}\prod_{l}\|D_{(1)}^{\delta_1^{-1}(l)}D_{(2)}^{\delta_2^{-1}(l)}f_l\|_{\vec{p_l}}. 
\end{equation}
\end{remark}

Due to the range of different conical regions for each parameter, and hence to the various ways the root symbol can split, more auxiliary inductive statements (necessary for proving Theorem \ref{thm:main} in the bi-parameter case) appear. The strategy of the proof is however the same as in the previous Section \ref{sec:one:param:generic:ind}, and we will especially focus on two aspects: the \textit{splitting of the root symbol} (when the corresponding cone is of the type \eqref{eq:cone:ums}) and the \textit{Fourier series decompositions}, which allows to systematically reduce the estimation to subtrees of lower complexity. Because of this, the proof of many of these auxiliary statements will be left to the reader.

\begin{proof}[Proof of Proposition \ref{prop:biparameter_induction}]
\noindent
\textbf{(1)}
We first illustrate the proof for \eqref{prop:2param:ind:1}, namely (\ref{induction_2parameter_whitney_one}) and (\ref{induction_2parameter_whitney_two}), focusing on the second case (\ref{induction_2parameter_whitney_two}) since (\ref{induction_2parameter_whitney_one}) follows a similar and indeed simpler argument. We observe that the base case for (\ref{induction_2parameter_whitney_two}) can be verified easily by using the Fourier series decomposition and extending the argument in the one-parameter setting. 

We assume that all the inductive statements hold for trees of \textit{all} lower complexities. We recall that the multilinear expression $T_{\mathcal{G}}(P^{(1)}_{k_{\max}}P^{(2)}_{m_{\max}} f_l)_{1 \leq l \leq n }$ yields the localization to the frequency region $R_{k_{\max}} \times R'_{m_{\max}}$, where
\begin{align} 
R_{k_{\max}}:= \{(\xi_1, \ldots, \xi_n): |\xi_l| \sim 2^{k_{\max}} \ \ \text{for} \ \  l \in \mathfrak{M}_1(R) \ \  \text{and} \ \ |\xi_l| \ll 2^{k_{\max}} \ \ \text{for} \ \  l \in  \mathfrak{m}_1(R) \}, \label{loc:sym_diff_xi} \\
R'_{m_{\max}}:= \{(\eta_1, \ldots, \eta_n): |\eta_l| \sim 2^{m_{\max}} \ \ \text{for} \ \  l \in  \mathfrak{M}_2(R') \ \  \text{and} \ \ |\eta_l| \ll 2^{m_{\max}} \ \ \text{for} \ \  l \in  \mathfrak{m}_2(R') \}, \label{loc:sym_diff_eta}
\end{align}
so that $|\sum_{l=1}^n \xi_l| \leq n 2^{k_{\max}}$ and $|\sum_{l=1}^n \eta_l| \leq n 2^{m_{\max}}$. As in the one-parameter setting, we smoothly restrict the symbol 
\begin{equation} \label{symbol:biparameter_xi}
m_{\beta_1^{\mathfrak{r}_{\mathcal{G}}}}(\sum_{l=1}^n \xi_l):= |\sum_{l =1}^n \xi_l|^{\beta_1^{\mathfrak{r}_{\mathcal{G}}}} 
\end{equation}
to the interval $[-n2^{k_{\max}}, n2^{k_{\max}}]$ and denote it by $m^{k_{\max}}_{\beta_1^{\mathfrak{r}_{\mathcal{G}}}}(\sum_{l=1}^n\xi_l)$. Similarly, we denote by $m^{m_{\max}}_{\beta_2^{\mathfrak{r}_{\mathcal{G}}}}(\sum_{l=1}^n \eta_l)$ the symbol 
\begin{equation} \label{symbol:biparameter_eta}
m_{\beta_2^{\mathfrak{r}_{\mathcal{G}}}}(\sum_{l=1}^n \eta_l) :=  |\sum_{l =1}^n \eta_l|^{\beta_2^{\mathfrak{r}_{\mathcal{G}}}}
\end{equation}
localized to the interval $[-n2^{m_{\max}}, n2^{m_{\max}}]$. We undertake the Fourier series decomposition of the localized symbols 
\begin{align}
& m^{k_{\max}}_{\beta_1^{\mathfrak{r}_{\mathcal{G}}}}(\sum_{l=1}^n \xi_l) = 
(2^{k_{\max}})^ {\beta_1^{\mathfrak{r}_{\mathcal{G}}}} \sum_{L \in \mathbb{Z}}C_{L} e^{2\pi i \frac{L}{n2^{k_{\max}}} \sum_{l=1}^n \xi_l}, \label{symbol_parameter1_fourier_decomp}\\
& m^{m_{\max}}_{\beta_2^{\mathfrak{r}_{\mathcal{G}}}} (\sum_{l=1}^n \eta_l)= 
(2^{m_{\max}})^{\beta_2^{\mathfrak{r}_{\mathcal{G}}}} \sum_{L' \in \mathbb{Z}}C_{L'} e^{2\pi i \frac{L'}{n2^{m_{\max}}} \sum_{l=1}^n \eta_l},
\end{align}
where the (renormalized) Fourier coefficients satisfy the decaying conditions
\begin{align*}
|C_{L}| \lesssim & \frac{1}{(1+|L|)^{1+\beta_1^{\mathfrak{r}_{\mathcal{G}}}}}, \qquad  |C_{L'}| \lesssim \frac{1}{(1+|L'|)^{1+\beta_2^{\mathfrak{r}_{\mathcal{G}}}}}.
\end{align*}
By applying the Fourier series representations on the multiplier, we indeed obtain
\begin{align}\label{induction_2parameter_Fourier_decomp}
& T_{\mathcal{G}}(P^{(1)}_{k_{\max}}P^{(2)}_{m_{\max}} f_l)_{1 \leq l \leq n }(x,y) =  \sum_{L,L' \in \mathbb{Z}} C_{L}C_{L'} 2^{k_{\max}\cdot\beta_1^{\mathfrak{r}_{\mathcal{G}}}}2^{m_{\max}\cdot\beta_2^{\mathfrak{r}_{\mathcal{G}}}}\cdot  \prod_{i=1}^{n_1}T_{\mathcal{G}^{v_i}}\left((P^{(1)}_{k_{\max}}P^{(2)}_{m_{\max}}f_l)_{l \in \mathcal{L}(v_i)}\right)(x +  \frac{L}{n2^{k_{\max}}}, y + \frac{L'}{n2^{m_{\max}}}).
\end{align}
Therefore \eqref{induction_2parameter_Fourier_decomp} in its $\|\cdot\|_{\vec r}$ norm\footnote{We recall that if $\|\cdot\|_{\vec r}$ is not subadditive (i.e. if $r^1<1$ or $r^2<1$), we need to use instead $\|\cdot\|_{\vec r}^\tau$ with $\tau \leq \min(1, r^1, r^2)$. The analysis is similar to the reasoning in Section \ref{sec:2:param:5:flag:several:max}; in particular, we derive analogous estimate to \eqref{trick:summation:bi}, which generate the appropriate conditions on the Lebesgue exponents \eqref{cond:thm:1}.} can be majorized by
\begin{equation} \label{induction_2parameter_Fourier_decomp_0}
 2^{k_{\max}\cdot\beta_1^{\mathfrak{r}_{\mathcal{G}}}}2^{m_{\max}\cdot\beta_2^{\mathfrak{r}_{\mathcal{G}}}}\prod_{i=1}^{n_1}\|T_{\mathcal{G}^{v_i}}\left((P^{(1)}_{k_{\max}}P^{(2)}_{m_{\max}}f_l)_{l \in \mathcal{L}(v_i)}\right)\|_{\vec{p}_{v_i}}.
\end{equation}
We notice that there are 2 possibilities for the tree structure with respect to $l_0$ and $l'_0$ where $l_0 \in \mathfrak{M}_1(R), l'_0 \in  \mathfrak{M}_2(R')$ with $l_0 \neq l'_0$:
\vskip .05in
\begin{enumerate}[label=(\roman*), leftmargin=*]
\item \label{prop:2param:1case1}
$l_0, l'_0$ belong to the same subtree $\mathcal{V}^{v_{i_0}}$ for some $1 \leq i_0 \leq n_1$. Assume without loss of generality that $l_0, l'_0 \in \mathcal{L}(v_1)$. 
\vskip .05in
\noindent
\item  \label{prop:2param:1case2}
$l_0 \in \mathcal{L}(v_{i_0})$ and $l'_0 \in \mathcal{L}(v_{i'_0})$ for some $i_0 \neq i'_0$. Assume that $i_0 =1 $ and $i'_0 = 2$.
\end{enumerate}
\vskip .05in
In Case \ref{prop:2param:1case1}, one observes that $\mathfrak{M}_1(R) \cap \mathcal{L}(v_1) \neq \emptyset$ and $\mathfrak{M}_2(R') \cap \mathcal{L}(v_1) \neq \emptyset$. This implies that the subtree $\mathcal{G}^{v_1}$ is automatically restricted to the conical regions
\begin{align}
R(v_1) := & \{(\xi_l)_{l \in \mathcal{L}(v_1)}: |\xi_{l}| \gg |\xi_{l'}| \ \  \text{for} \ \ l \in  \mathcal{L}(v_1) \cap \mathfrak{M}_1(R),\ \  l' \in \mathcal{L}(v_1)\setminus \mathfrak{M}_1(R) \}; \label{subcone_automatic:1}\\
R'(v_1) := & \{(\eta_l)_{l \in \mathcal{L}(v_1)}: |\eta_{l}| \gg |\eta_{l'}| \ \  \text{for} \ \ l \in  \mathcal{L}(v_1)\cap \mathfrak{M}_2(R'),\ \  l' \in \mathcal{L}(v_1)\setminus \mathfrak{M}_2(R') \}.\label{subcone_automatic:2}
\end{align}
Furthermore, $k_{\max}= k_{\max}(v_1) $ and $m_{\max} = m_{\max}(v_1)$ specify the Whitney cubes in the cones \eqref{subcone_automatic:1} and \eqref{subcone_automatic:2}. One can then invoke the inductive hypothesis (\ref{induction_2parameter_whitney_two}) on $T_{\mathcal{G}^{v_1}}$ localized on Whitney cubes (at fixed scales) for both parameters: 
\begin{align} \label{indcuction_2parameter_whitney_two_hyp}
& \big\|T_{\mathcal{G}^{v_1}}\big((P^{(1)}_{k_{\max}}P^{(2)}_{m_{\max}}f_l)_{l \in \mathcal{L}(v_1)}\big)\big\|_{\vec{p}_{v_1}} \nonumber \\
  \lesssim  & 2^{k_{\max}\cdot\beta_1(v_1, l_0)}2^{m_{\max}\cdot\beta_2(v_1, l'_0)}\sum_{\substack{\restr{\delta_1}{(\mathcal{V}_{l_0}^{v_1})^c} \\ \restr{\delta_2}{(\mathcal{V}_{l'_0}^{v_1})^c}}}\|\Delta^{(1)}_{k_{\max}}D_{(2)}^{\delta_2^{-1}(l_0)}f_{l_0}\|_{\vec{p}_{l_0}}\|\Delta^{(2)}_{m_{\max}}D_{(1)}^{\delta_1^{-1}(l'_0)}f_{l'_0}\|_{\vec{p}_{l'_0}} \prod_{\substack{ \\ l \neq l_0, l'_0}}\|D_{(1)}^{\delta_1^{-1}(l)}D_{(2)}^{\delta_2^{-1}(l)}f_{l}\|_{\vec{p_l}}.
\end{align}
Meanwhile the inductive hypothesis \eqref{induction_2parameter_cone_statement} and thus (\ref{leibniz_2parameter_global}) can be invoked to estimate $T_{\mathcal{G}^{v_i}}$ for $i \neq 1$:
\begin{align}\label{leibniz_2parameter_subtree}
& \big\|T_{\mathcal{G}^{v_i}}\left((P^{(1)}_{k_{\max}}P^{(2)}_{m_{\max}}f_l)_{l \in \mathcal{L}(v_i)}\right)\big\|_{\vec{p}_{v_i}} \lesssim  \sum_{\restr{\delta}{\mathcal{V}^{v_i}}}\prod_{l \in \mathcal{L}(v_i)} \|D_{(1)}^{\delta_1^{-1}(l)}D_{(2)}^{\delta_2^{-1}(l)}f_l\|_{\vec{p}_l}.
\end{align}
By applying the estimates (\ref{indcuction_2parameter_whitney_two_hyp}) and (\ref{leibniz_2parameter_subtree}) to (\ref{induction_2parameter_Fourier_decomp}), we obtain the desired estimate claimed in the inductive statement (\ref{induction_2parameter_whitney_two}).

In Case \ref{prop:2param:1case2}, we define for $l \in \mathcal{L}(v_1)$
\begin{equation*}
\tilde{f_l} := P^{(2)}_{m_{\max}} f_l, 
\end{equation*}
and for $l \in \mathcal{L}(v_2)$
\begin{equation*}
\tilde{f_l} := P^{(1)}_{k_{\max}} f_l.
\end{equation*}
Then we apply the inductive hypothesis ((\ref{induction_2parameter_fix1scale}) and thus) (\ref{induction_2parameter_semilocal}) to estimate 
\begin{align} \label{induction_2parameter_use_1}
& \big\|T_{\mathcal{G}^{v_1}}\big((P^{(1)}_{k_{\max}}P^{(2)}_{m_{\max}}f_l)_{l \in \mathcal{L}(v_1)}\big)\big\|_{\vec{p}_{v_1}} =  \big\|T_{\mathcal{G}^{v_1}}\big((P^{(1)}_{k_{\max}}\tilde{f}_{l})_{\substack{\\ l \in \mathcal{L}(v_1)}}\big)\big\|_{\vec{p}_{v_1}} \nonumber \\
\lesssim & 2^{k_{\max}\cdot\beta_1(v_1, l_0)}\sum_{\substack{\restr{\delta_2}{\mathcal{V}^{v_1}} \\ \restr{\delta_1}{(\mathcal{V}_{l_0}^{v_1})^c} \\ }}\|\Delta^{(1)}_{k_{\max}}D_{(2)}^{\delta_2^{-1}(l_0)}f_{l_0}\|_{\vec{p}_{l_0}}\prod_{\substack{l \in \mathcal{L}(v_1) \\ l \neq l_0}}\|D_{(1)}^{\delta_1^{-1}(l)}D_{(2)}^{\delta_2^{-1}(l)}f_{l}\|_{\vec{p_l}}, 
\end{align}
and 
\begin{align}\label{induction_2parameter_use_2}
&  \big\|T_{\mathcal{G}^{v_2}}\big((P^{(1)}_{k_{\max}}P^{(2)}_{m_{\max}}f_l)_{l \in \mathcal{L}(v_2)}\big)\big\|_{\vec{p}_{v_2}} =  \big\|T_{\mathcal{G}^{v_2}}\big((P^{(2)}_{m_{\max}}\tilde{f}_{l})_{\substack{\\ l \in \mathcal{L}(v_2)}}\big)\big\|_{\vec{p}_{v_2}} \nonumber \\
\lesssim & 2^{m_{\max}\cdot\beta_2(v_2, l'_0)}\sum_{\substack{\restr{\delta_1}{\mathcal{V}^{v_2}} \\ \restr{\delta_2}{(\mathcal{V}_{l'_0}^{v_2})^c} \\ }}\|\Delta^{(2)}_{m_{\max}}D_{(1)}^{\delta_1^{-1}(l'_0)}f_{l'_0}\|_{\vec{p}_{l'_0}}\prod_{\substack{l \in \mathcal{L}(v_2) \\ l \neq l'_0}}\|D_{(1)}^{\delta_1^{-1}(l)}D_{(2)}^{\delta_2^{-1}(l)}f_{l}\|_{\vec{p_l}}. 
\end{align}
We also recall the inductive hypothesis (\ref{induction_2parameter_cone_statement}) and hence (\ref{leibniz_2parameter_global}) to the other subtrees corresponding to the multilinear forms for $i \neq 1,2$ and obtain the estimate (\ref{leibniz_2parameter_subtree}). Plugging the estimates (\ref{induction_2parameter_use_1}), (\ref{induction_2parameter_use_2}) and (\ref{leibniz_2parameter_subtree}) into (\ref{induction_2parameter_Fourier_decomp}), we derive the estimate specified on the right hand side of (\ref{induction_2parameter_whitney_two}) as desired.

\vskip .1in
\noindent
\textbf{(2)}
For the inductive statement \eqref{prop:2param:ind:2}, we will provide a proof for Case \eqref{prop:2param:ind:2:a}. Case \eqref{prop:2param:ind:2:b} can be verified using a similar (although not perfectly identical) argument -- the hypotheses taking part in the inductive argument are different for the two cases. The base case concerns the estimate for
\begin{equation*}
D^{\beta_1}_{(1)}D^{\beta_2}_{(2)}\big((P^{(1)}_{k_{\max}}P^{(2)}_{m_{\max}} f_l)_{1 \leq l \leq n}\big).
\end{equation*}
The Fourier series decomposition of the symbol gives
\begin{equation*}
 \sum_{L,L' \in \mathbb{Z}} C_{L}C_{L'} 2^{k_{\max}\beta_1}2^{m_{\max}\beta_2} \prod_{i=1}^{n}P^{(1)}_{k_{\max}}P^{(2)}_{m_{\max}}f_l \big( x +  \frac{L}{n2^{k_{\max}}}, y + \frac{L'}{n2^{m_{\max}}} \big)
\end{equation*}
whose $L^{\vec r}$ norm for $r^1, r^2 \geq 1$ \footnote{The estimate in $L^{\vec r}$ norm when $r^1, r^2 \geq 1$ doesn't hold requests appropriate conditions on the Lebesgue exponents --  see \eqref{cond:thm:1}.} can be majorized by
\begin{equation*}
2^{k_{\max}\beta_1}2^{m_{\max}\beta_2} \prod_{i=1}^{n}\|P^{(1)}_{k_{\max}}P^{(2)}_{m_{\max}}f_l\|_{\vec{p}_l}
\end{equation*}
due to H\"older's inequality. We recall that in the base case $v^{l_1,l_2} $ is the root and $w^{l_1} = l_1$, $w^{l_2} = l_2$ so that 
\begin{align*}
& \beta_1(\mathfrak{r}_{\mathcal{G}}, v^{l_1,l_2})= \beta_1, \ \ \beta_2(\mathfrak{r}_{\mathcal{G}}, l_1) = \beta_2,\\
& \beta_1(w^{l_1},l_1) = 0, \ \ \beta_2(w^{l_2}, l_2) = 0.
\end{align*}
We have thus verified the base case for (\ref{induction_2parameter_whitney_2:1}).

To prove the inductive statement, we apply the Fourier series decomposition on the root symbol as before to tensorize the operator into operators associated to subtrees and obtain (\ref{induction_2parameter_Fourier_decomp}); its $L^{\vec r}$ norm can now be estimated by (\ref{induction_2parameter_Fourier_decomp_0}). There are 2 possible positions for $l_1, l_2 \in \mathfrak{M}_1(R)$ and $l_0' \in  \mathfrak{M}_2(R')$ with $l_1 = l_0'$ and $l_1 \neq l_2$.
\vskip .1in
\noindent
\begin{enumerate}[label=(\roman*), leftmargin=*]
\item \label{prop:2param:2case1}
$l_1, l_2 \in \mathcal{L}(v_{i_0})$. Assume without loss of generality that $i_0 = 1$.
\vskip .05in
\noindent
\item  \label{prop:2param:2case2}
$l_1\in \mathcal{L}(v_{i_0})$ and $l_2 \in \mathcal{L}(v_{i'_0})$ with $i_0 \neq i_0'$. Assume that $i_0 =1$ and $i_0' = 2$.
\end{enumerate}

\vskip .1in

For Case \ref{prop:2param:2case1}, we deduce that the multilinear expression associated to the subtree $\mathcal{G}^{v_1}$ is automatically restricted to the cones $R(v_1)$ and $R'(v_1)$ of the form \eqref{subcone_automatic:1} and \eqref{subcone_automatic:2}. By assumption,  
$l_1, l_2 \in \mathfrak{M}_1(R(v_1))$ and $l_0' = l_1 \in \mathfrak{M}_2(R'(v_1))$. Let
$k_{\max} = k_{\max}(v_1)$ and $m_{\max} = m_{\max}(v_1)$ indicate the Whitney cubes in the cones $R(v_1)$ and $R'(v_1)$. By the inductive hypothesis (\ref{induction_2parameter_whitney_2:1}),
\begin{align} \label{leibniz_rule_2parameter_major_fixedscale}
 \|T_{\mathcal{G}^{v_1}}(P^{(1)}_{k_{\max}}P^{(2)}_{m_{\max}} f_l)_{l \in \mathcal{L}(v_1)}\|_{\vec{p}_{v_1}} \lesssim & 2^{k_{\max}\beta_1(v_1, v^{l_1,l_2})}2^{m_{\max}\beta_2(v_1, l_1)} \prod_{\substack{l \in \mathcal{L}(v_1) \\ l \neq l_1,l_2}}\|D_{(1)}^{\delta_1^{-1}(l)}D_{(2)}^{\delta_2^{-1}(l)}f_{l}\|_{\vec{p_l}} \\
& \cdot \|\Delta^{(1)}_{k_{\max}}\Delta^{(2)}_{m_{\max}}D_{(1)}^{\beta_1(w^{l_1},l_1)}f_{l_1}\|_{\vec{p}_{l_1}} \sum_{\substack{\restr{\delta_1}{(\mathcal{V}_{l_1}^{v_1})^c\setminus \mathcal{V}^{w^{l_2}}_{l_2}} \nonumber\\ \restr{\delta_2}{(\mathcal{V}_{l_1}^{v_1})^c}}}\|\Delta^{(1)}_{k_{\max}}D_{(1)}^{\beta_1(w^{l_2},l_2)}D_{(2)}^{\delta_2^{-1}(l_2)}f_{l_2}\|_{\vec{p}_{l_2}}.
\end{align}

We can invoke the inductive hypothesis (\ref{leibniz_2parameter_global}) -- assumed to hold for trees of lower complexities -- to estimate $\|T_{\mathcal{G}^{v_i}}(P^{(1)}_{k_{\max}}P^{(2)}_{m_{\max}} f_l)_{l \in \mathcal{L}(v_i)}\|_{\vec{p}_{v_i}}$ for $i \neq 1$; we obtain the bound
\begin{equation} \label{leibniz_rule_2parameter_subtree}
 \sum_{\restr{\delta}{\mathcal{V}^{v_i}}}\prod_{l \in \mathcal{L}(v_i)} \|D_{(1)}^{\delta_1^{-1}(l)}D_{(2)}^{\delta_2^{-1}(l)}f_l\|_{\vec{p}_l}.
\end{equation}
By plugging the estimates (\ref{leibniz_rule_2parameter_major_fixedscale})  and (\ref{leibniz_rule_2parameter_subtree}) into (\ref{induction_2parameter_Fourier_decomp_0}), we conclude with (\ref{induction_2parameter_whitney_2:1}).
\vskip .1in

In Case \ref{prop:2param:2case2}, the localization of the original $T_{\mathcal{G}}$ to the cones $R$ and $R'$ imposes a similar restriction to conical regions on $T_{\mathcal{G}^{v_1}}$ and $T_{\mathcal{G}^{v_2}}$. More precisely,  let $R(v_1)$ and  $R'(v_1)$ denote the conical regions for the subtrees $\mathcal{G}^{v_1}$ for the first and second parameters respectively as before. Let $R(v_2)$ represent the conical region for the subtree $\mathcal{G}^{v_2}$ for the first parameter. 

Then we have
$l_1 \in \mathfrak{M}_1(R(v_1))$, $l_2 \in \mathfrak{M}_1(R(v_2))$ and $l_0' \in \mathfrak{M}_2(R'(v_1))$ with $l_1 = l_0'$. Also, $k_{\max} = k_{\max}(v_1)$ and $m_{\max} = m_{\max}(v_1)$ indicate the Whitney cubes in the cones $R(v_1)$ and $R'(v_1)$ for the first and second parameters. The inductive hypothesis (\ref{induction_2parameter_whitney_one}) describes exactly the estimate for the subtree $\mathcal{G}^{v_1}$: 
\begin{align} \label{induction_2parameter_whitney_2a_major}
 \|T_{\mathcal{G}^{v_1}}(P^{(1)}_{k_{\max}}P^{(2)}_{m_{\max}} f_l)_{l \in \mathcal{L}(v_1)}\|_{\vec{p}_{v_1}} \lesssim  & 2^{k_{\max}\beta_1(v_1, l_1)}2^{m_{\max}\beta_2(v_1, l_1)}\|\Delta^{(1)}_{k_{\max}}\Delta^{(2)}_{m_{\max}}f_{l_1}\|_{\vec{p}_{l_1}}\sum_{\restr{\delta_1 \otimes \delta_2}{(\mathcal{V}_{l_1}^{v_1})^c}}\prod_{\substack{ l \in \mathcal{L}(v_1) \\ l \neq l_1}}\|D_{(1)}^{\delta_1^{-1}(l)}D_{(2)}^{\delta_2^{-1}(l)}f_{l}\|_{\vec{p_l}} \nonumber \\
 \lesssim & 2^{m_{\max}\beta_2(v_1, l_1)}\|\Delta^{(1)}_{k_{\max}}\Delta^{(2)}_{m_{\max}}D_{(1)}^{\beta_1(v_1, l_1)}f_{l_1}\|_{\vec{p}_{l_1}}\sum_{\restr{\delta_1 \otimes \delta_2}{(\mathcal{V}_{l_1}^{v_1})^c}}\prod_{\substack{ l \in \mathcal{L}(v_1) \\ l \neq l_1}}\|D_{(1)}^{\delta_1^{-1}(l)}D_{(2)}^{\delta_2^{-1}(l)}f_{l}\|_{\vec{p_l}} ,
\end{align}
where the last inequality follows from (\ref{eq:obs:besov}). 

Due to the localization to the cone $R(v_2)$ for the first parameter, we also apply the (corollary of the) inductive hypothesis -- (\ref{induction_2parameter_semilocal}) -- to derive the following estimate
for the subtree $\mathcal{G}^{v_2}$: 
\begin{align} \label{induction_2parameter_whitney_2a_major_2}
 \|T_{\mathcal{G}^{v_2}}(P^{(1)}_{k_{\max}}P^{(2)}_{m_{\max}} f_l)_{l \in \mathcal{L}(v_2)}\|_{\vec{p}_{v_2}} 
 \lesssim & \sum_{\substack{\restr{\delta_2}{\mathcal{V}^{v_2}} \\ \restr{\delta_1}{(\mathcal{V}_{l_2}^{v_2})^c}}}\|\Delta^{(1)}_{k_{\max}}D_{(1)}^{\beta_1(v_2, l_1)}D^{\delta_2^{-1}(l_2)}_{(2)}f_{l_2}\|_{\vec{p}_{l_2}}\prod_{\substack{ l \in \mathcal{L}(v_2) \\ l \neq l_2}}\|D_{(1)}^{\delta_1^{-1}(l)}D_{(2)}^{\delta_2^{-1}(l)}f_{l}\|_{\vec{p_l}}.
\end{align}
Last but not least, we use the bound (\ref{leibniz_rule_2parameter_subtree}) for $\|T_{\mathcal{G}^{v_i}}(P^{(1)}_{k_{\max}}P^{(2)}_{m_{\max}} f_l)_{l \in \mathcal{L}(v_i)}\|_{\vec{p}_{v_i}}$, $i \neq 1,2$.  With the application of (\ref{induction_2parameter_whitney_2a_major}), (\ref{induction_2parameter_whitney_2a_major_2}) and (\ref{leibniz_rule_2parameter_subtree}) to (\ref{induction_2parameter_Fourier_decomp_0}), we complete the proof of the inductive statement (\ref{induction_2parameter_whitney_2:1}).
\vskip .1in
\noindent
\textbf{(6)}
The base case of the statement (\ref{induction_2parameter_cone_statement}), corresponding to a tree of complexity $1$, is contained in \cite{OhWu}; in Section \ref{sec:2param:5linflag}, trees of complexity $2$ were treated.

We will focus on the case when the multilinear expression is localized on conical regions of type \eqref{eq:cone:ums} for both parameters, as this is the situation which requires the use of commutators -- the tools that allow to depart from the usual methods relying on Coifman-Meyer multipliers. The other cases follow similar arguments with application of possibly different inductive hypotheses. The treatment presented here resembles the proof for the bi-parameter $5$-linear flag Leibniz rule presented in Section \ref{sec:5:flag:bi:param:k_1:m_1}. 

For the symbol in each parameter, we independently carry out the procedure described in the one-parameter setting to derive a similar expression to (\ref{commutator_fourier_series}). One will first \textit{split the root symbol} and introduce appropriate commutators in both parameters. Let $l_0,   l'_0 \in \mathcal{L}(\mathfrak{r}_{\mathcal{G}})$ denote the indices such that 
\begin{equation} \label{local:conical_biparameter}
\mathfrak{M}_1(R) = \{l_0\}, \ \  \mathfrak{M}_2(R')= \{l'_0\},
\end{equation}
and suppose 
$$l_0 \in \mathcal{L}(v_{i_0}),  \ \ l'_0 \in \mathcal{L}(v_{i'_0}).$$
Then the root symbols (\eqref{symbol:biparameter_xi} and \eqref{symbol:biparameter_eta}) localized to the conical regions specified by \eqref{local:conical_biparameter} are decomposed as follows: 
\begin{align}\label{root_symbol_decomp_commutator}
m_{\beta_1^{\mathfrak{r}_{\mathcal{G}}}}(\sum_{l=1}^n \xi_l)
= &  \sum_{\tilde{i} \neq i_0}   \underbrace{m^{k_{l_0}}_{C_{\beta_1^{\mathfrak{r_{\mathcal{G}}}}}}\big(\sum_{l \in \mathcal{L}(v_{i_0})}\xi_l, \sum_{l \nin \mathcal{L}(v_{i_0})}\xi_l\big)\cdot \sum_{l \in \mathcal{L}(v_{\tilde{i}})}\xi_l }_{\mathcal{A}_1}+  \underbrace{|\sum_{l \in \mathcal{L}(v_{i_0})} \xi_l|^{\beta^{\mathfrak{r}_{\mathcal{G}}}} }_{\mathcal{A}_2}, \nonumber \\
m_{\beta_2^{\mathfrak{r}_{\mathcal{G}}}}(\sum_{l=1}^n \eta_l)
= &  \sum_{\tilde{i}' \neq i'_0}   \underbrace{m^{m_{l'_0}}_{C_{\beta_2^{\mathfrak{r_{\mathcal{G}}}}}}\big(\sum_{l \in \mathcal{L}(v_{i'_0})}\eta_l, \sum_{l \nin \mathcal{L}(v_{i'_0})}\eta_l\big)\cdot \sum_{l \in \mathcal{L}(v_{\tilde{i}'})}\eta_l }_{\mathcal{B}_1} + \underbrace{ |\sum_{l \in \mathcal{L}(v_{i'_0})} \eta_l|^{\beta_2^{\mathfrak{r}_{\mathcal{G}}}} }_{\mathcal{B}_2},
\end{align}
where 
\begin{align}\label{symbol:commutator_biparameter}
m^{}_{C_{\beta_1^{\mathfrak{r_{\mathcal{G}}}}}}\left(\tilde{\xi}_1, \tilde{\xi}_2\right) := &\frac{\displaystyle |\tilde{\xi}_1 + \tilde{\xi}_2|^{\beta_1^{\mathfrak{r}_{\mathcal{G}}}} - |\tilde{\xi}_1|^{\beta_1^{\mathfrak{r}_{\mathcal{G}}}}}{\displaystyle \tilde{\xi}_2}, \ \ m^{}_{C_{\beta_2^{\mathfrak{r_{\mathcal{G}}}}}}\left(\tilde{\eta}_1, \tilde{\eta}_2\right) := \frac{\displaystyle |\tilde{\eta}_1 + \tilde{\eta}_2|^{\beta_2^{\mathfrak{r}_{\mathcal{G}}}} - |\tilde{\eta}_1|^{\beta_2^{\mathfrak{r}_{\mathcal{G}}}}}{\displaystyle \tilde{\eta}_2}.
\end{align}
Recall that $R_{k_{\max}}$ and $R'_{m_{\max}}$ are defined in \eqref{loc:sym_diff_xi} and \eqref{loc:sym_diff_eta} and due to the assumption \eqref{local:conical_biparameter} on $\mathfrak{M}_1(R)$ and $ \mathfrak{M}_2(R')$, they take the form
\begin{align}
R_{k_{\max}} =&  R_{k_{l_0}} = \{(\xi_1, \ldots, \xi_n): |\xi_{l_0}| \sim 2^{k_{l_0}}, |\xi_l| \ll 2^{k_{l_0}} \ \ \text{for}\ \ l \neq l_0 \},  \\
R'_{m_{\max}} =&  R'_{m_{l'_0}} = \{(\eta_1, \ldots, \eta_n): |\eta_{l'_0}| \sim 2^{m_{l'_0}}, |\eta_l| \ll 2^{m_{l'_0}} \ \ \text{for}\ \ l \neq l'_0 \}. 
\end{align}
We notice that $R_{k_{l_0}}$ and $R'_{m_{l'_0}}$ can be decomposed as Whitney cubes on which we will perform double Fourier series decompositions. In particular,
\begin{align*}
R_{k_{l_0}} =& R_{k_{l_0}}^+ \cup R_{k_{l_0}}^-, \ \  R'_{m_{l'_0}} = (R'_{m_{l'_0}})^+ \cup (R'_{m_{l'_0}})^-, 
\end{align*}
where each Whitney cube is defined by
\begin{align}
R_{k_{l_0}}^+ := \{(\xi_1, \ldots, \xi_n): \xi_{l_0} \sim 2^{k_{l_0}}, |\xi_l| \ll 2^{k_{l_0}} \ \ \text{for}\ \ l \neq l_0 \}, \ \ &R_{k_{l_0}}^- := \{(\xi_1, \ldots, \xi_n): \xi_{l_0} \sim -2^{k_{l_0}}, |\xi_l| \ll 2^{k_{l_0}} \ \ \text{for}\ \ l \neq l_0 \}, \label{local:symbol_whitney_cube_xi}\\
R_{m_{l'_0}}^+ := \{(\eta_1, \ldots, \eta_n): \eta_{l'_0} \sim 2^{m_{l'_0}}, |\eta_l| \ll 2^{m_{l'_0}} \ \ \text{for}\ \ l \neq l'_0 \}, \ \ &R_{m_{l'_0}}^- := \{(\eta_1, \ldots, \eta_n): \eta_{l'_0} \sim -2^{m_{l'_0}}, |\eta_l| \ll 2^{m_{l'_0}} \ \ \text{for}\ \ l \neq l'_0 \}.\label{local:symbol_whitney_cube_eta}
\end{align}

As previously, we smoothly restrict the symbol $m^{}_{C_{\beta_1^{\mathfrak{r_{\mathcal{G}}}}}}$ \eqref{symbol:commutator_biparameter} to the Whitney cube $R^+_{k_{l_0}}$ and to $R^-_{k_{l_0}}$ \eqref{local:symbol_whitney_cube_xi} and denote the localized symbols by $m^{k_{l_0},+}_{C_{\beta_1^{\mathfrak{r_{\mathcal{G}}}}}}$ and $m^{k_{l_0},-}_{C_{\beta_1^{\mathfrak{r_{\mathcal{G}}}}}}$ respectively. Similarly, the symbol $m^{}_{C_{\beta_2^{\mathfrak{r_{\mathcal{G}}}}}}$ restricted to $(R'_{m_{l'_0}})^{\pm}$
 regions in \eqref{local:symbol_whitney_cube_eta} 
are denoted by
$m^{m_{l'_0},\pm}_{C_{\beta_2^{\mathfrak{r_{\mathcal{G}}}}}}$. 

We use Fourier series decomposition to rewrite the symbol $\displaystyle m^{k_{l_0},\pm}_{C_{\beta_1^{\mathfrak{r_{\mathcal{G}}}}}}\big(\sum_{l \in \mathcal{L}(v_{i_0})}\xi_l, \sum_{l \nin \mathcal{L}(v_{i_0})}\xi_l\big)$, which is indeed \eqref{commutator_1parameter_fourier_decomp+} 
with $\beta^{\mathfrak{r}_{\mathcal{G}}}$ replaced by $\beta_1^{\mathfrak{r}_{\mathcal{G}}}$. Similarly, 
\begin{align*}
m^{m_{l'_0},\pm}_{C_{\beta_2^{\mathfrak{r_{\mathcal{G}}}}}}\big(\sum_{l \in \mathcal{L}(v_{i'_0})}\eta_l, \sum_{l \nin \mathcal{L}(v_{i'_0})}\eta_l\big)  = \sum_{L'_1, L'_2} C^{\pm}_{L'_1, L'_2} 2^{m_{l'_0}(\beta_2^{\mathfrak{r}_{\mathcal{G}}}-1)} e^{2 \pi i L'_1 \frac{\sum_{l \in \mathcal{L'}(v_{i'_0})}\eta_l} {2^{m_{l'_0}}}} \, e^{2 \pi i L'_2 \frac{\sum_{l \nin \mathcal{L}(v_{i'_0})} \eta_l}{2^{m_{l'_0}}}}. 
\end{align*}
\vskip .1in
The estimate for the multiplier in the biparameter setting concerns the different combinations of the symbols involving the commutators and the symbols for differential operators on subtrees of lower complexity:
$$
\mathcal{A}_1 \cdot \mathcal{B}_1, \ \ \mathcal{A}_1 \cdot \mathcal{B}_2,\ \  \mathcal{A}_2 \cdot \mathcal{B}_1, \ \ \mathcal{A}_2 \cdot \mathcal{B}_2.
$$
where $\mathcal{A}_1$ and $\mathcal{B}_1$ are symbols for commutators while $\mathcal{A}_2$ and $\mathcal{B}_2$ represent symbols for differential operators on subtrees. There are 3 possibilities with respect to the relation between $l_0$ and $l'_0$:
\begin{enumerate}[label=(\roman*), leftmargin=*]
\item \label{prop:2param:6case1}
$l_0 = l'_0$; \\                                            
\item \label{prop:2param:6case2}
$l_0 \neq l'_0$ and $l_0, l'_0 \in \mathcal{L}(v_{i_0})$ for some $i_0$. Assume that $i_0 = 1$; \\
\item \label{prop:2param:6case3}
$l_0 \in \mathcal{L}(v_{i_0})$ and $l'_0 \in \mathcal{L}(v_{i'_0})$ for some $i_0 \neq i'_0$. Assume that $i_0 =1 $ and $i'_0 = 2$. 
\end{enumerate}

Different possibilities generate multipliers that are analogous to the operators discussed in Section \ref{sec:5:flag:bi:param:k_1:m_1}. In the generic induction, estimates for those multipliers are reduced to estimates on subtrees that request various inductive hypotheses. Since the procedure of reduction to subtree estimates is similar in all cases and the computations after the application of the inductive hypotheses are analogous, we will focus on the proof in Case \ref{prop:2param:6case1}.

When $l_0 = l'_0$, assume without loss of generality that $l_0 =1 \in \mathcal{L}(v_1)$ and thus $i_0= 1$. The multipliers involved are listed and estimated as follows. 
\vskip .1in
\begin{enumerate} [leftmargin=*]
\item[$\bullet$] \underline{estimating $\mathcal{A}_1 \cdot \mathcal{B}_1$ \eqref{root_symbol_decomp_commutator}:}

Due to the assumption, we will refer to $k_{\max}$ as $k_1$ and $m_{\max}$ as $m_1$. When $\tilde{i} = \tilde{i}' = 2$ in (\ref{root_symbol_decomp_commutator}), the multiplier takes the form:  
\begin{align}\label{commu_commu_fourier}
\sum_{\substack{L_1, L_2 \in \mathbb{Z} \\ L'_1, L'_2 \in \mathbb{Z}}}C^{\pm}_{L_1, L_2}  C^{\pm}_{L'_1, L'_2} &  \sum_{\substack{k_{\max}(v_2) \ll k_1\\ m_{\max}(v_2) \ll m_1}}  2^{k_{\max}(v_2)} 2^{k_1 \cdot (\beta_1^{\mathfrak{r}_{\mathcal{G}}}-1)} 2^{m_{\max}(v_2)} 2^{m_1 \cdot(\beta_2^{\mathfrak{r}_{\mathcal{G}}}-1)}\cdot \nonumber \\
& T_{\mathcal{G}^{v_1}}\Big(\Delta^{(1)}_{k_1,\pm}\Delta^{(2)}_{m_1,\pm}f_1,  (S^{(1)}_{k_1}S^{(2)}_{m_1}f_l)_{\substack{l \in \mathcal{L}(v_1) \\ l \neq 1}})\Big)(x+ \frac{L_1}{2^{k_1}}, y + \frac{L'_1}{2^{m_1}}) \cdot\nonumber \\
&  T_{\mathcal{G}^{v_2}}\Big((P^{(1)}_{k_{\max}(v_2)}P^{(2)}_{m_{\max}(v_2)}f_{l})_{\substack{\\ l \in \mathcal{L}(v_2)}}\Big)(x+ \frac{L_2}{2^{k_1}} , y + \frac{L'_2}{2^{m_1}}) \cdot \nonumber\\
& \prod_{i=3}^{n_1} T_{\mathcal{G}^{v_{i}}}\left((S^{(1)}_{k_1 }S^{(2)}_{m_1}f_l)_{l \in \mathcal{L}(v_i)}\right)(x + \frac{L_2}{2^{k_1}}, y + \frac{L'_2}{2^{m_1}}).
\end{align}
The case when $\tilde{i} \neq \tilde{i}'$ follows a similar argument and will not be discussed in details. 

Due to the decay of the Fourier coefficients, the $L^{\vec r}$ norm of (\ref{commu_commu_fourier}) can be bounded by 
\begin{align}\label{2parameter_max_same_cc}
\sum_{\substack{k_{\max}(v_2) \ll k_1 \\ m_{\max}(v_2) \ll m_1}}  2^{k_{\max}(v_2)} 2^{k_1 \cdot (\beta_1^{\mathfrak{r}_{\mathcal{G}}}-1)} 2^{m_{\max}(v_2)} 2^{m_1 \cdot(\beta_2^{\mathfrak{r}_{\mathcal{G}}}-1)} \big\|T_{\mathcal{G}^{v_1}}\big(\Delta^{(1)}_{k_1,+}\Delta^{(2)}_{m_1,+}f_1,  (S^{(1)}_{k_1}S^{(2)}_{m_1}f_l)_{\substack{l \in \mathcal{L}(v_1) \\ l \neq 1}})\big)\big\|_{\vec{p}_{v_1}} &\nonumber\\
 \cdot \big\|T_{\mathcal{G}^{v_2}}\big((P^{(1)}_{k_{\max}(v_2)}P^{(2)}_{k_{\max}(v_2)}f_{l})_{\substack{\\ l \in \mathcal{L}(v_2)}}\big)\big\|_{\vec{p}_{v_2}} \prod_{i=3}^{n_1} \big\|T_{\mathcal{G}^{v_{i}}}\big((S^{(1)}_{k_1}S^{(2)}_{m_1}f_l)_{l \in \mathcal{L}(v_i)}\big)\big\|_{\vec{p}_{v_i}}.&
\end{align}
We can apply the inductive hypothesis (\ref{induction_2parameter_whitney_one}) on $T_{\mathcal{G}^{v_1}}\big(\Delta^{(1)}_{k_1,\pm}\Delta^{(2)}_{m_1,\pm}f_1,  (S^{(1)}_{k_1}S^{(2)}_{m_1}f_l)_{\substack{l \in \mathcal{L}(v_1) \\ l \neq 1}})\big)$ and obtain
{\fontsize{8}{8}
\begin{align}\label{est:inductive_G(v_1)}
\big\|T_{\mathcal{G}^{v_1}}\big(\Delta^{(1)}_{k_1,\pm}\Delta^{(2)}_{m_1,\pm}f_1,  (S^{(1)}_{k_1}S^{(2)}_{m_1}f_l)_{\substack{l \in \mathcal{L}(v_1) \\ l \neq 1}})\big)\big\|_{\vec{p}_{v_1}} \lesssim &
\|\Delta^{(1)}_{k_{\max}}\Delta^{(2)}_{m_{\max}}D^{\beta_1(v_1, 1)}_{(1)}D^{\beta_2(v_1, 1)}_{(2)}\Delta^{(1)}_{k_1,\pm}\Delta^{(2)}_{m_1,\pm}f_{1}\|_{\vec{p}_{1}}\sum_{\restr{\delta_1 \otimes \delta_2}{(\mathcal{V}_{1}^{v_1})^c}}\prod_{\substack{ \\ l \neq 1}}\|D_{(1)}^{\delta_1^{-1}(l)}D_{(2)}^{\delta_2^{-1}(l)}f_{l}\|_{\vec{p_l}} \nonumber \\
\lesssim & \|\Delta^{(1)}_{k_{\max}}\Delta^{(2)}_{m_{\max}}D^{\beta_1(v_1, 1)}_{(1)}D^{\beta_2(v_1, 1)}_{(2)}f_{1}\|_{\vec{p}_{1}}\sum_{\restr{\delta_1 \otimes \delta_2}{(\mathcal{V}_{1}^{v_1})^c}}\prod_{\substack{ \\ l \neq 1}}\|D_{(1)}^{\delta_1^{-1}(l)}D_{(2)}^{\delta_2^{-1}(l)}f_{l}\|_{\vec{p_l}}.
\end{align}}
Meanwhile, we further decompose the multilinear expression associated to the subtree $\mathcal{G}^{v_2}$ and denote by $R(v_2)$ and $R'(v_2)$ the conical regions for the first and second parameters. We apply (\ref{induction_2parameter_whitney_one}) or (\ref{induction_2parameter_whitney_two}) depending on the type of conical regions on
$$
T^{R(v_2) \times R'(v_2)}_{\mathcal{G}^{v_2}}\big((P^{(1)}_{k_{\max}(v_2)}P^{(2)}_{k_{\max}(v_2)}f_{l})_{\substack{\\ l \in \mathcal{L}(v_2)}}\big).
$$
We also invoke (\ref{leibniz_2parameter_global}) on 
$$
T_{\mathcal{G}^{v_{i}}}\left((S^{(1)}_{k_1}S^{(2)}_{m_1}f_l)_{l \in \mathcal{L}(v_i)}\right)
$$
for $i \neq 1, 2$. Suppose that $\tilde{l} \in \mathfrak{M}_1(R(v_2))$ and $\tilde{l}' \in \cap \mathfrak{M}_2(R'(v_2))$ with $\tilde{l} \neq \tilde{l}'$. Then \eqref{induction_2parameter_whitney_two} together with other inductive hypotheses and the estimates \eqref{est:inductive_G(v_1)} and \eqref{eq:direction:LP:same} imply that
\begin{align*}
\|(\ref{2parameter_max_same_cc})\|_{\vec{r}} \lesssim \sum_{\substack{\restr{\delta_1}{\mathcal{V} \setminus \{\mathfrak{r}_{\mathcal{G}}\} \setminus \mathcal{V}^{v_1}_{1} \setminus \mathcal{V}^{v_2}_{\tilde{l}}} \\ \restr{\delta_2}{\mathcal{V} \setminus \{\mathfrak{r}_{\mathcal{G}}\} \setminus \mathcal{V}^{v_1}_{1} \setminus \mathcal{V}^{v_2}_{\tilde{l}'}}}} \prod_{\substack{ \\ l \neq 1, \tilde{l}, \tilde{l}'}}\|D_{(1)}^{\delta_1^{-1}(l)}D_{(2)}^{\delta_2^{-1}(l)}f_{l}\|_{\vec{p_l}} \cdot  \sum_{\substack{k_{\max}(v_2) \ll k_1 \\ m_{\max}(v_2) \ll m_1}}  2^{k_{\max}(v_2)} 2^{k_1 \cdot (\beta_1^{\mathfrak{r}_{\mathcal{G}}}-1)} 2^{m_{\max}(v_2)} 2^{m_1 \cdot(\beta_2^{\mathfrak{r}_{\mathcal{G}}}-1)}\cdot &\\
 \|\Delta^{(1)}_{k_{\max}}\Delta^{(2)}_{m_{\max}}D^{\beta_1(v_1, 1)}_{(1)}D^{\beta_2(v_1, 1)}_{(2)}f_{1}\|_{\vec{p}_{1}}\cdot \|\Delta^{(1)}_{k_{\max}(v_2)}D^{\beta_1(v_2, \tilde{l})}_{(1)}D_{(2)}^{\delta_2^{-1}(\tilde{l})}f_{\tilde{l}}\|_{\vec{p}_{\tilde{l}}}\|\Delta^{(2)}_{m_{\max}(v_2)}D_{(1)}^{\delta_1^{-1}(\tilde{l}')}D^{\beta_2(v_2, \tilde{l}')}_{(2)}f_{\tilde{l}'}\|_{\vec{p}_{\tilde{l}'}}.&
\end{align*}
The similar estimate developed in Section \ref{sec:5:flag:bi:param:k_1:m_1} can be applied to distribute derivatives as follows: for any fixed $\restr{\delta_1}{\mathcal{V} \setminus \{\mathfrak{r}_{\mathcal{G}}\} \setminus \mathcal{V}^{v_1}_{1} \setminus \mathcal{V}^{v_2}_{\tilde{l}}}$ and $\restr{\delta_2}{\mathcal{V} \setminus \{\mathfrak{r}_{\mathcal{G}}\} \setminus \mathcal{V}^{v_1}_{1} \setminus \mathcal{V}^{v_2}_{\tilde{l}'}}$, the inner sum can be bounded by
\begin{align*}
\sum_{\substack{k_1 \\  m_1}}  \min\big(&2^{k_1 \beta_1^{\mathfrak{r}_{\mathcal{G}}}} 2^{m_1\beta_2^{\mathfrak{r}_{\mathcal{G}}}} \|D^{\beta_1(v_1, 1)}_{(1)}D^{\beta_2(v_1, 1)}_{(2)}f_{1}\|_{\dot{B}^0_{p^1_1, \infty}\dot{B}^0_{p^2_1, \infty}} \|D^{\beta_1(v_2, \tilde{l})}_{(1)}D_{(2)}^{\delta_2^{-1}(\tilde{l})}f_{\tilde{l}}\|_{\dot{B}^0_{p^1_{\tilde{l}}, \infty}L^{p^2_{\tilde{l}}}}\|D_{(1)}^{\delta_1^{-1}(\tilde{l}')}D^{\beta_2(v_2, \tilde{l}')}_{(2)}f_{\tilde{l}'}\|_{L^{p^1_{\tilde{l}'}}\dot{B}^0_{p^2_{\tilde{l}'}, \infty}}, \\
& 2^{k_1 \beta_1^{\mathfrak{r}_{\mathcal{G}}}} 2^{-m_1\epsilon_2}\|D^{\beta_1(v_1, 1)}_{(1)}D^{\beta_2(v_1, 1)}_{(2)}f_{1}\|_{\dot{B}^0_{p^1_1, \infty}\dot{B}^{\beta_2^{\mathfrak{r}_{\mathcal{G}}}}_{p^2_1, \infty}} \|D^{\beta_1(v_2, \tilde{l})}_{(1)}D_{(2)}^{\delta_2^{-1}(\tilde{l})}f_{\tilde{l}}\|_{\dot{B}^0_{p^1_{\tilde{l}}, \infty}L^{p^2_{\tilde{l}}}}\|D_{(1)}^{\delta_1^{-1}(\tilde{l}')}D^{\beta_2(v_2, \tilde{l}')}_{(2)}f_{\tilde{l}'}\|_{L^{p^1_{\tilde{l}'}}\dot{B}^{\epsilon_2}_{p^2_{\tilde{l}'}, \infty}},\\
& 2^{-k_1 \epsilon_1} 2^{m_1\beta_2^{\mathfrak{r}_{\mathcal{G}}}} \|D^{\beta_1(v_1, 1)}_{(1)}D^{\beta_2(v_1, 1)}_{(2)}f_{1}\|_{\dot{B}^{\beta_1^{\mathfrak{r}_{\mathcal{G}}}}_{p^1_1, \infty}\dot{B}^0_{p^2_1, \infty}} \|D^{\beta_1(v_2, \tilde{l})}_{(1)}D_{(2)}^{\delta_2^{-1}(\tilde{l})}f_{\tilde{l}}\|_{\dot{B}^{\epsilon_1}_{p^1_{\tilde{l}}, \infty}L^{p^2_{\tilde{l}}}}\|D_{(1)}^{\delta_1^{-1}(\tilde{l}')}D^{\beta_2(v_2, \tilde{l}')}_{(2)}f_{\tilde{l}'}\|_{L^{p^1_{\tilde{l}'}}\dot{B}^0_{p^2_{\tilde{l}'}, \infty}},\\
&  2^{-k_1 \epsilon_1} 2^{-m_1\epsilon_2} \|D^{\beta_1(v_1, 1)}_{(1)}D^{\beta_2(v_1, 1)}_{(2)}f_{1}\|_{\dot{B}^{\beta_1^{\mathfrak{r}_{\mathcal{G}}}}_{p^1_1, \infty}\dot{B}^{\beta_2^{\mathfrak{r}_{\mathcal{G}}}}_{p^2_1, \infty}} \|D^{\beta_1(v_2, \tilde{l})}_{(1)}D_{(2)}^{\delta_2^{-1}(\tilde{l})}f_{\tilde{l}}\|_{\dot{B}^{\epsilon_1}_{p^1_{\tilde{l}}, \infty}L^{p^2_{\tilde{l}}}}\|D_{(1)}^{\delta_1^{-1}(\tilde{l}')}D^{\beta_2(v_2, \tilde{l}')}_{(2)}f_{\tilde{l}'}\|_{L^{p^1_{\tilde{l}'}}\dot{B}^{\epsilon_2}_{p^2_{\tilde{l}'}, \infty}} \big)
\end{align*}
for $0 < \epsilon_j < \min(1, \beta_j^{\mathfrak{r}_{\mathcal{G}}})$, $j = 1,2$. By the optimization and interpolation procedure specified in Section \ref{generic_optim_interp}, we attain the right hand side of the inductive statement (\ref{induction_2parameter_cone_statement}).
 \vskip .1in

\item[$\bullet$] \underline{estimating $\mathcal{A}_1 \cdot \mathcal{B}_2$ \eqref{root_symbol_decomp_commutator}:}

The multipliers generated by $\mathcal{A}_1 \cdot \mathcal{B}_2$ are similar to the ones generated by $\mathcal{A}_2 \cdot \mathcal{B}_1$; by symmetry it will be enough to focus on the former. The symbol $\mathcal{A}_1\cdot \mathcal{B}_2$ with $\tilde{i} = 2$ in (\ref{root_symbol_decomp_commutator}) generates the multiplier
\begin{align*}
\sum_{\substack{L_1, L_2 \in \mathbb{Z} \\ }}C^{\pm}_{L_1, L_2} \sum_{\substack{k_{\max}(v_2) \ll k_1\\ m_1}} & 2^{k_{\max}(v_2)} 2^{k_1 \cdot (\beta_1^{\mathfrak{r}_{\mathcal{G}}}-1)} D_{(2)}^{\beta_2^{\mathfrak{r}_{\mathcal{G}}}}T_{\mathcal{G}^{v_1}}\Big(\Delta^{(1)}_{k_1,\pm, \frac{L_1}{2^{k_1}}}\Delta^{(2)}_{m_1}f_1,  (S^{(1)}_{k_1, \frac{L_1}{2^{k_1}}}S^{(2)}_{m_1}f_l)_{\substack{l \in \mathcal{L}(v_1) \\ l \neq 1}})\Big)\cdot \\
& T_{\mathcal{G}^{v_2}}\Big((P^{(1)}_{k_{\max}(v_2), \frac{L_2}{2^{k_1}} }S^{(2)}_{m_1}f_{l})_{\substack{\\ l \in \mathcal{L}(v_2)}}\Big)\prod_{i=3}^{n_1} T_{\mathcal{G}^{v_{i}}}\Big((S^{(1)}_{k_1, \frac{L_2}{2^{k_1}}}S^{(2)}_{m_1}f_l)_{l \in \mathcal{L}(v_i)}\Big).
\end{align*}
Let $\tilde{\mathcal{G}}^{v_1}$ denote the tree having the same structure as $\mathcal{G}^{v_1}$, except that the differential operator associated to the vertex $v_1$ is replaced by $D_{(1)}^{\beta_1^{v_1}}D_{(2)}^{\beta_2^{v_1} + \beta_2^{\mathfrak{r}_{\mathcal{G}}}}$. As before, the estimate for the above term is the same as the simpler term when $L_i= 0$ for $i = 1, 2$:

\begin{align*}
\sum_{\substack{k_{\max}(v_2) \ll k_1\\ m_1}} 2^{k_{\max}(v_2)} 2^{k_1 \cdot (\beta_1^{\mathfrak{r}_{\mathcal{G}}}-1)} \cdot & T_{\tilde{\mathcal{G}}^{v_1}}\Big(\Delta^{(1)}_{k_1,\pm}\Delta^{(2)}_{m_1}f_1,  (S^{(1)}_{k_1}S^{(2)}_{m_1}f_l)_{\substack{l \in \mathcal{L}(v_1) \\ l \neq 1}})\Big)\cdot T_{\mathcal{G}^{v_2}}\left((P^{(1)}_{k_{\max}(v_2)}S^{(2)}_{m_1}f_{l})_{\substack{\\ l \in \mathcal{L}(v_2)}}\right) \\
\cdot& \prod_{i=3}^{n_1} T_{\mathcal{G}^{v_{i}}}\left((S^{(1)}_{k_1}S^{(2)}_{m_1}f_l)_{l \in \mathcal{L}(v_i)}\right).
\end{align*}
As in the one-parameter setting, we perform finer paraproduct decompositions on the subtrees $\mathcal{G}^{v_i}$, $i \neq 1$, for the second parameter:
\begin{align} \label{commu_diff_before_hilow}
\sum_{\substack{k_{\max}(v_2) \ll k_1\\  m_1 \\m_{\max}(v_2) }} & 2^{k_{\max}(v_2)} 2^{k_1 \cdot (\beta_1^{\mathfrak{r}_{\mathcal{G}}}-1)} T_{\tilde{\mathcal{G}}^{v_1}}\Big(\Delta^{(1)}_{k_1,\pm}\Delta^{(2)}_{m_1}f_1,  (S^{(1)}_{k_1}S^{(2)}_{m_1}f_l)_{\substack{l \in \mathcal{L}(v_1) \\ l \neq 1}})\Big)\cdot \nonumber\\
& T_{\mathcal{G}^{v_2}}\Big((P^{(1)}_{k_{\max}(v_2)}P^{(2)}_{m_{\max}(v_2)}S_{m_1}^{(2)}f_{l})_{\substack{\\ l \in \mathcal{L}(v_2)}}\Big) \prod_{i=3}^{n_1} \sum_{m_{\max}(v_i)}T_{\mathcal{G}^{v_{i}}}\Big((S^{(1)}_{k_1}P^{(2)}_{m_{\max}(v_i)}S_{m_1}^{(2)}f_l)_{l \in \mathcal{L}(v_i)}\Big).
\end{align}
Due to the observation that $\Delta_{m_{\max}(v_i)}^{(2)}S_{m_1}^{(2)} \nequiv 0 $ if only if $m_{\max}(v_i) \ll m_1$, (\ref{commu_diff_before_hilow}) can be simplified as
\begin{align*}
 \sum_{\substack{k_{\max}(v_2) \ll k_1\\  \\m_{\max}(v_2) \ll m_1}}& 2^{k_{\max}(v_2)} 2^{k_1 \cdot (\beta_1^{\mathfrak{r}_{\mathcal{G}}}-1)} T_{\tilde{\mathcal{G}}^{v_1}}\Big(\Delta^{(1)}_{k_1,\pm}\Delta^{(2)}_{m_1}f_1,  (S^{(1)}_{k_1}S^{(2)}_{m_1}f_l)_{\substack{l \in \mathcal{L}(v_1) \\ l \neq 1}})\Big)\cdot \\
& T_{\mathcal{G}^{v_2}}\big((P^{(1)}_{k_{\max}(v_2)}P^{(2)}_{m_{\max}(v_2)}f_{l})_{\substack{\\ l \in \mathcal{L}(v_2)}}\big) \prod_{i=3}^{n_1} \sum_{m_{\max}(v_i): m_{\max}(v_i) \ll m_1}T_{\mathcal{G}^{v_{i}}}\big((S^{(1)}_{k_1}P^{(2)}_{m_{\max}(v_i)}f_l)_{l \in \mathcal{L}(v_i)}\big).
\end{align*}
We then apply the high-low switch technique to reduce the expression above to a sum of terms that can be estimated using the inductive hypotheses. In particular, 
\begin{align*}
& \sum_{\substack{k_{\max}(v_2) \ll k_1\\ m_1 \\ m_{\max}(v_2)}}  2^{k_{\max}(v_2)} 2^{k_1 (\beta_1^{\mathfrak{r}_{\mathcal{G}}}-1)} T_{\tilde{\mathcal{G}}^{v_1}}\Big(\Delta^{(1)}_{k_1,\pm}\Delta^{(2)}_{m_1}f_1,  (S^{(1)}_{k_1}S^{(2)}_{m_1}f_l)_{\substack{l \in \mathcal{L}(v_1) \\ l \neq 1}})\Big) T_{\mathcal{G}^{v_2}}\Big((P^{(1)}_{k_{\max}(v_2)}P^{(2)}_{m_{\max}(v_2)}f_{l})_{\substack{\\ l \in \mathcal{L}(v_2)}}\Big)\\
&\quad \quad \quad \quad \quad \cdot \prod_{i=3}^{n_1} \sum_{m_{\max}(v_i)}T_{\mathcal{G}^{v_{i}}}\left((S^{(1)}_{k_1}P^{(2)}_{m_{\max}(v_i)}f_l)_{l \in \mathcal{L}(v_i)}\right)  \\
& -\sum_{\substack{k_{\max}(v_2) \ll k_1\\ m_{\max(v_2)} \succ m_1 \\ }}  2^{k_{\max}(v_2)} 2^{k_1 (\beta_1^{\mathfrak{r}_{\mathcal{G}}}-1)} T_{\tilde{\mathcal{G}}^{v_1}}\Big(\Delta^{(1)}_{k_1,\pm}\Delta^{(2)}_{m_1}f_1,  (S^{(1)}_{k_1}S^{(2)}_{m_1}f_l)_{\substack{l \in \mathcal{L}(v_1) \\ l \neq 1}})\Big)T_{\mathcal{G}^{v_2}}\Big((P^{(1)}_{k_{\max}(v_2)}P^{(2)}_{m_{\max}(v_2)}f_{l})_{\substack{\\ l \in \mathcal{L}(v_2)}}\Big) \\
&\quad \quad \quad \quad \quad \cdot \prod_{i=3}^{n_1}\sum_{m_{\max}(v_i)} T_{\mathcal{G}^{v_{i}}}\big((S^{(1)}_{k_1}P^{(2)}_{m_{\max}(v_i)}f_l)_{l \in \mathcal{L}(v_i)}\big) \\
& \pm \text{similar terms} : = I - II \pm \text{similar terms}. 
\end{align*}

We will elaborate on the estimates for the first term denoted by $I$ and the second term denoted by $II$.
For $I$, we first recall the inductive hypothesis (\ref{induction_2parameter_fix1scale}) that allows to control, for   $k_1$ and $k_{\max}(v_2)$ fixed,
\[
\quad \sum_{m_1} T_{\tilde{\mathcal{G}}^{v_1}}\big(\Delta^{(1)}_{k_1,\pm}\Delta^{(2)}_{m_1}f_1,  (S^{(1)}_{k_1}S^{(2)}_{m_1}f_l)_{\substack{l \in \mathcal{L}(v_1) \\ l \neq 1}})\big)  \qquad \text{and} \qquad \sum_{m_{\max}(v_2)}T_{\mathcal{G}^{v_2}}\big((P^{(1)}_{k_{\max}(v_2)}P^{(2)}_{m_{\max}(v_2)}f_{l})_{\substack{\\ l \in \mathcal{L}(v_2)}}\big).
\]
For $i \geq 3$ and $l \in \mathcal{L}(v_i)$, define
\begin{equation*}
\tilde{f}_l := S^{(1)}_{k_1} f_l.
\end{equation*}
and apply the inductive hypothesis (\ref{leibniz_2parameter_semi_global}) to 
$$
\sum_{m_{\max}(v_i)}T_{\mathcal{G}^{v_{i}}}\big((P^{(2)}_{m_{\max}(v_i)}\tilde{f}_l)_{l \in \mathcal{L}(v_i)}\big).
$$
Suppose that $l_0 \in \mathfrak{M}_1(R(v_2))$. Combining the estimates from the inductive hypotheses, we derive that

\begin{align*}
\|I\|_{\vec{r}} \lesssim & \sum_{\substack{\delta_2 \\ \restr{\delta_1}{\mathcal{V} \setminus \{\mathfrak{r}_{\mathcal{G}}\} \setminus \mathcal{V}^{v_1}_1\setminus \mathcal{V}^{v_2}_{l_0} }}} \prod_{\substack{ \\ l \neq 1, l_0}}\|D_{(1)}^{\delta_1^{-1}(l)}D_{(2)}^{\delta_2^{-1}(l)}f_{l}\|_{\vec{p_l}}\sum_{k_{\max}(v_2) \ll k_1}  2^{k_{\max}(v_2)} 2^{k_1 (\beta_1^{\mathfrak{r}_{\mathcal{G}}}-1)} \\
& \qquad \cdot  \|\Delta^{(1)}_{k_{1}}D_{(1)}^{\beta_1(v_1,1)}D^{\delta_2^{-1}(1)}_{(2)}f_{1}\|_{\vec{p}_{1}} \|\Delta^{(1)}_{k_{\max}(v_2)}D^{\beta_1(v_2,\tilde{l})}_{(1)}D^{\delta_2^{-1}(\tilde{l})}_{(2)}f_{\tilde{l}}\|_{\vec{p}_{\tilde{l}}}. 
\end{align*}
Now we fix $\delta_2$ and $\restr{\delta_1}{\mathcal{V} \setminus \{\mathfrak{r}_{\mathcal{G}}\} \setminus \mathcal{V}^{v_1}_1\setminus \mathcal{V}^{v_2}_{l_0} }$ and distribute the partial derivatives in the first parameter:
\begin{align*}
 \sum_{k_{\max}(v_2) \ll k_1}  & 2^{k_{\max}(v_2)} 2^{k_1 (\beta_1^{\mathfrak{r}_{\mathcal{G}}}-1)}  \|\Delta^{(1)}_{k_{1}}D_{(1)}^{\beta_1(v_1,1)}D^{\delta_2^{-1}(1)}_{(2)}f_{1}\|_{\vec{p}_{1}} \|\Delta^{(1)}_{k_{\max}(v_2)}D^{\beta_1(v_2,\tilde{l})}_{(1)}D^{\delta_2^{-1}(\tilde{l})}_{(2)}f_{\tilde{l}}\|_{\vec{p}_{\tilde{l}}} \\
\lesssim & \sum_{k_1} \min\Big(2^{k_1 \beta_1^{\mathfrak{r}_{\mathcal{G}}}}  \|D^{\beta_1(v_1, 1)}_{(1)}D^{\delta_2^{-1}(1)}_{(2)}f_{1}\|_{\dot{B}^0_{p^1_1, \infty}L^{p^2_1}} \|D^{\beta_1(v_2, \tilde{l})}_{(1)}D_{(2)}^{\delta_2^{-1}(\tilde{l})}f_{\tilde{l}}\|_{\dot{B}^0_{p^1_{\tilde{l}}, \infty}L^{p^2_{\tilde{l}}}}, \\
& \ \  \quad \quad\quad  2^{-k_1 \epsilon_1}  \|D^{\beta_1(v_1, 1)}_{(1)}D^{\delta_2^{-1}(1)}_{(2)}f_{1}\|_{\dot{B}^{\beta_1^{\mathfrak{r}_{\mathcal{G}}}}_{p^1_1, \infty}L^{p^2_1}} \|D^{\beta_1(v_2, \tilde{l})}_{(1)}D_{(2)}^{\delta_2^{-1}(\tilde{l})}f_{\tilde{l}}\|_{\dot{B}^{\epsilon_1}_{p^1_{\tilde{l}}, \infty}L^{p^2_{\tilde{l}}}}\Big);
\end{align*}
using the optimization and interpolation described in Section \ref{generic_optim_interp}, we deduce the inductive statement (\ref{induction_2parameter_cone_statement}). The term $II$ requires the same inductive hypothesis (\ref{leibniz_2parameter_semi_global}) on 
$$
\sum_{m_{\max}(v_i)}T_{\mathcal{G}^{v_{i}}}\big((S^{(1)}_{k_1}P^{(2)}_{m_{\max}(v_i)}f_l)_{l \in \mathcal{L}(v_i)}\big)
$$
for $i \geq 3$, while (\ref{induction_2parameter_whitney_one}) yields estimates on $T_{\tilde{\mathcal{G}}^{v_1}}\big(\Delta^{(1)}_{k_1,+}\Delta^{(2)}_{m_1}f_1,  (S^{(1)}_{k_1}S^{(2)}_{m_1}f_l)_{\substack{l \in \mathcal{L}(v_1) \\ l \neq 1}})\big)$.

Recall that $\tilde{l} \in \mathfrak{M}_1(R(v_2))$ and further assume that $\tilde{l}' \in \mathfrak{M}_2(R'(v_2))$ with 
$\tilde{l} \neq \tilde{l}'$.\footnote{If $l = \tilde{l}'$, then the inductive hypothesis (\ref{induction_2parameter_whitney_one}) will be used instead.} Then (\ref{induction_2parameter_whitney_two}) is applicable to
$$
T_{\mathcal{G}^{v_2}}\big((P^{(1)}_{k_{\max}(v_2)}P^{(2)}_{m_{\max}(v_2)}f_{l})_{\substack{\\ l \in \mathcal{L}(v_2)}}\big).$$
As a consequence,
\begin{align*}
 \|II\|_{\vec{r}} \lesssim &  \sum_{\substack{\restr{\delta_1}{\mathcal{V} \setminus \{\mathfrak{r}_{\mathcal{G}}\} \setminus \mathcal{V}^{v_1}_1 \setminus \mathcal{V}^{v_2}_{\tilde{l}} } \\ \restr{\delta_2}{\mathcal{V}}\setminus \{\mathfrak{r}_{\mathcal{G}} \}  \setminus \mathcal{V}^{v_2}_{\tilde{l}'} }}\prod_{l \neq 1, \tilde{l}, \tilde{l}'} \|D^{\delta_1^{-1}(l)}_{(1)} D^{\delta_2^{-1}(l)}_{(2)}f_l \|_{\vec{p}_l}  \sum_{\substack{k_{\max}(v_2) \ll k_1\\ m_{\max(v_2)} > m_1 \\ }}  2^{k_{\max}(v_2)} 2^{k_1 (\beta_1^{\mathfrak{r}_{\mathcal{G}}}-1)} 2^{m_1\beta_2^{\mathfrak{r}_{\mathcal{G}}}}\\
&\ \ \quad \quad \quad \quad \|\Delta^{(1)}_{k_{1}}\Delta^{(2)}_{m_1}D_{(1)}^{\beta_1(v_1,1)}D^{\beta_2(v_1,1)}_{(2)}f_{1}\|_{\vec{p}_{1}} \|\Delta^{(1)}_{k_{\max}(v_2)}D^{\beta_1(v_2,\tilde{l})}_{(1)}D^{\delta_2^{-1}(\tilde{l})}_{(2)}f_{\tilde{l}}\|_{\vec{p}_{\tilde{l}}} \|\Delta^{(2)}_{m_{\max}(v_2)}D^{\delta_1^{-1}(\tilde{l}')}_{(2)}D^{\beta_2(v_2,\tilde{l}')}_{(2)}f_{\tilde{l}'}\|_{\vec{p}_{\tilde{l}'}}.
\end{align*}
In the inner sum the partial derivatives can be appropriately distributed as before:
{\fontsize{8}{8}
\begin{align*}
& \sum_{\substack{k_{\max}(v_2) \ll k_1\\ m_{\max(v_2)} > m_1 \\ }}  2^{k_{\max}(v_2)} 2^{k_1 (\beta_1^{\mathfrak{r}_{\mathcal{G}}}-1)} 2^{m_1\beta_2^{\mathfrak{r}_{\mathcal{G}}}}\|\Delta^{(1)}_{k_{1}}\Delta^{(2)}_{m_1}D_{(1)}^{\beta_1(v_1,1)}D^{\beta_2(v_1,1)}_{(2)}f_{1}\|_{\vec{p}_{1}} \\
&\quad \quad \quad \quad \quad \quad   \|\Delta^{(1)}_{k_{\max}(v_2)}D^{\beta_1(v_2,\tilde{l})}_{(1)}D^{\delta_2^{-1}(\tilde{l})}_{(2)}f_{\tilde{l}}\|_{\vec{p}_{\tilde{l}}} \|\Delta^{(2)}_{m_{\max}(v_2)}D^{\delta_1^{-1}(\tilde{l}')}_{(2)}D^{\beta_2(v_2,\tilde{l}')}_{(2)}f_{\tilde{l}'}\|_{\vec{p}_{\tilde{l}'}} \\
& \lesssim \sum_{\substack{k_1 \\ m_{\max}(v_2)}} \min(2^{k_1\beta_1^{\mathfrak{r}_{\mathcal{G}}}} 2^{m_{\max}(v_2)\beta_2^{\mathfrak{r}_{\mathcal{G}}}}\|D_{(1)}^{\beta_1(v_1,1)}D^{\beta_2(v_1,1)}_{(2)}f_{1}\|_{\dot{B}^{0}_{p_1^1, \infty} \dot{B}^{0}_{p_1^2, \infty}} \|D^{\beta_1(v_2,\tilde{l})}_{(1)}D^{\delta_2^{-1}(\tilde{l})}_{(2)}f_{\tilde{l}}\|_{\dot{B}^{0}_{p_{\tilde{l}}^1, \infty} L^{p_1^2}} \|D^{\delta_1^{-1}(\tilde{l}')}_{(2)}D^{\beta_2(v_2,\tilde{l}')}_{(2)}f_{\tilde{l}'}\|_{L^{p_{\tilde{l}'}^1} \dot{B}^{0}_{p_{\tilde{l}'}^2, \infty}},\\
& \quad \quad \quad \quad \quad \quad 2^{k_1\beta_1^{\mathfrak{r}_{\mathcal{G}}}} 2^{-m_{\max}(v_2)\epsilon_2}\|D_{(1)}^{\beta_1(v_1,1)}D^{\beta_2(v_1,1)}_{(2)}f_{1}\|_{\dot{B}^{0}_{p_1^1, \infty} \dot{B}^{\epsilon_2}_{p_1^2, \infty}} \|D^{\beta_1(v_2,\tilde{l})}_{(1)}D^{\delta_2^{-1}(\tilde{l})}_{(2)}f_{\tilde{l}}\|_{\dot{B}^{0}_{p_{\tilde{l}}^1, \infty} L^{p_1^2}} \|D^{\delta_1^{-1}(\tilde{l}')}_{(2)}D^{\beta_2(v_2,\tilde{l}')}_{(2)}f_{\tilde{l}'}\|_{L^{p_{\tilde{l}'}^1} \dot{B}^{\beta_2^{\mathfrak{r}_{\mathcal{G}}}}_{p_{\tilde{l}'}^2, \infty}},\\
& \quad \quad \quad \quad \quad \quad 2^{-k_1\epsilon_1} 2^{m_{\max}(v_2)\beta_2^{\mathfrak{r}_{\mathcal{G}}}}\|D_{(1)}^{\beta_1(v_1,1)}D^{\beta_2(v_1,1)}_{(2)}f_{1}\|_{\dot{B}^{\beta_1^{\mathfrak{r}_{\mathcal{G}}}}_{p_1^1, \infty} \dot{B}^{0}_{p_1^2, \infty}} \|D^{\beta_1(v_2,\tilde{l})}_{(1)}D^{\delta_2^{-1}(\tilde{l})}_{(2)}f_{\tilde{l}}\|_{\dot{B}^{\epsilon_1}_{p_{\tilde{l}}^1, \infty} L^{p_1^2}} \|D^{\delta_1^{-1}(\tilde{l}')}_{(2)}D^{\beta_2(v_2,\tilde{l}')}_{(2)}f_{\tilde{l}'}\|_{L^{p_{\tilde{l}'}^1} \dot{B}^{0}_{p_{\tilde{l}'}^2, \infty}},\\
& \quad \quad \quad \quad \quad \quad 2^{-k_1\epsilon_1} 2^{-m_{\max}(v_2)\epsilon_2}\|D_{(1)}^{\beta_1(v_1,1)}D^{\beta_2(v_1,1)}_{(2)}f_{1}\|_{\dot{B}^{\beta_1^{\mathfrak{r}_{\mathcal{G}}}}_{p_1^1, \infty} \dot{B}^{\epsilon_2}_{p_1^2, \infty}} \|D^{\beta_1(v_2,\tilde{l})}_{(1)}D^{\delta_2^{-1}(\tilde{l})}_{(2)}f_{\tilde{l}}\|_{\dot{B}^{\epsilon_1}_{p_{\tilde{l}}^1, \infty} L^{p_1^2}} \|D^{\delta_1^{-1}(\tilde{l}')}_{(2)}D^{\beta_2(v_2,\tilde{l}')}_{(2)}f_{\tilde{l}'}\|_{L^{p_{\tilde{l}'}^1} \dot{B}^{\beta_2^{\mathfrak{r}_{\mathcal{G}}}}_{p_{\tilde{l}'}^2, \infty}}),
\end{align*}}
to which we can apply optimization and interpolation to obtain the desired estimates described in the right hand side of (\ref{induction_2parameter_cone_statement}). 
\vskip .1in
\item[$\bullet$] \underline{estimating $\mathcal{A}_2 \cdot \mathcal{B}_2$ \eqref{root_symbol_decomp_commutator}:}

The terms generated by $\mathcal{A}_2 \cdot \mathcal{B}_2$ take the form
\begin{align*}
& \sum_{k_1, m_1}D_{(1)}^{\beta_1^{\mathfrak{r}_{\mathcal{G}}}}D_{(2)}^{\beta_2^{\mathfrak{r}_{\mathcal{G}}}}T_{\mathcal{G}^{v_1}}\big(\Delta^{(1)}_{k_1}\Delta^{(2)}_{m_1}f_1,  (S^{(1)}_{k_1}S^{(2)}_{m_1}f_l)_{\substack{l \in \mathcal{L}(v_1) \\ l \neq 1}})\big) \prod_{i=2}^{n_1} T_{\mathcal{G}^{v_{i}}}\big((S^{(1)}_{k_1}S^{(2)}_{m_1}f_l)_{l \in \mathcal{L}(v_i)}\big).
\end{align*}
Let $\tilde{\mathcal{G}}^{v_1}$ be the tree having the same configuration as $\mathcal{G}^{v_1}$, with the original differential operator associated to the vertex $v_1$ 
$$
D_{(1)}^{\beta_1^{v_1}}D_{(2)}^{\beta_2^{v_1}}
$$
replaced by 
$$
D_{(1)}^{\beta_1^{v_1} + \beta_1^{\mathfrak{r}_{\mathcal{G}}}}D_{(2)}^{\beta_2^{v_1}+ \beta_2^{\mathfrak{r}_{\mathcal{G}}}}.
$$
The treatment of the second parameter in last section (\underline{estimating $\mathcal{A}_1 \cdot \mathcal{B}_2$}) can be used in both parameters here. In particular, the finer paraproduct decompositions on both parameters yield
\begin{align} \label{indcution_2parameter_c}
& \sum_{\substack{k_1 \\ m_1 \\ }}T_{\tilde{\mathcal{G}}^{v_1}}\big(\Delta^{(1)}_{k_1}\Delta^{(2)}_{m_1}f_1,  (S^{(1)}_{k_1}S^{(2)}_{m_1}f_l)_{\substack{l \in \mathcal{L}(v_1) \\ l \neq 1}})\big) \prod_{i=2}^{n_1} \sum_{\substack{k_{\max}(v_i) \\ m_{\max}(v_i)}}T_{\mathcal{G}^{v_{i}}}\big((P_{k_{\max}(v_i)}^{(1)}P_{m_{\max}(v_i)}^{(2)}S^{(1)}_{k_1}S^{(2)}_{m_1}f_l)_{l \in \mathcal{L}(v_i)}\big) \nonumber\\
= & \sum_{\substack{k_1 \\ m_1}}T_{\tilde{\mathcal{G}}^{v_1}}\big(\Delta^{(1)}_{k_1}\Delta^{(2)}_{m_1}f_1,  (S^{(1)}_{k_1}S^{(2)}_{m_1}f_l)_{\substack{l \in \mathcal{L}(v_1) \\ l \neq 1}})\big) \prod_{i=2}^{n_1} \sum_{\substack{k_{\max}(v_i): k_{\max}(v_i) \ll k_1 \\ m_{\max}(v_i): m_{\max}(v_i)\ll m_1}}T_{\mathcal{G}^{v_{i}}}\big((P_{k_{\max}(v_i)}^{(1)}P_{m_{\max}(v_i)}^{(2)}f_l)_{l \in \mathcal{L}(v_i)}\big),
\end{align}
where the equality follows from the fact that $\Delta_{\tilde{k}}^{(j)} S_k^{(j)}\nequiv 0$ for $j = 1,2$ if and only if $\tilde{k} \ll k$. We then apply the high-low switch technique to rewrite (\ref{indcution_2parameter_c}) as
{\fontsize{8}{8}
\begin{align*}
& \sum_{\substack{k_1 \\ m_1 \\ }}T_{\tilde{\mathcal{G}}^{v_1}}\big(\Delta^{(1)}_{k_1}\Delta^{(2)}_{m_1}f_1,  (S^{(1)}_{k_1}S^{(2)}_{m_1}f_l)_{\substack{l \in \mathcal{L}(v_1) \\ l \neq 1}})\big) \prod_{i=2}^{n_1} \sum_{\substack{k_{\max}(v_i) \\ m_{\max}(v_i)}}T_{\mathcal{G}^{v_{i}}}\left((P_{k_{\max}(v_i)}^{(1)}P_{m_{\max}(v_i)}^{(2)}f_l)_{l \in \mathcal{L}(v_i)}\right) + \\
& \sum_{\substack{k_{\max}(v_2) > k_1 \\ m_{\max}(v_2) > m_1}} T_{\tilde{\mathcal{G}}^{v_1}}\big(\Delta^{(1)}_{k_1}\Delta^{(2)}_{m_1}f_1,  (S^{(1)}_{k_1}S^{(2)}_{m_1}f_l)_{\substack{l \in \mathcal{L}(v_1) \\ l \neq 1}})\big)T_{\mathcal{G}^{v_{2}}}\big((P_{k_{\max}(v_2)}^{(1)}P_{m_{\max}(v_2)}^{(2)}f_l)_{l \in \mathcal{L}(v_i)}\big) \\
& \qquad \cdot \prod_{i=3}^{n_1} \sum_{\substack{k_{\max}(v_i) \\ m_{\max}(v_i)}}T_{\mathcal{G}^{v_{i}}}\big((P_{k_{\max}(v_i)}^{(1)}P_{m_{\max}(v_i)}^{(2)}f_l)_{l \in \mathcal{L}(v_i)}\big)  \pm \text{similar terms} := I + II \pm \text{similar terms}. 
\end{align*}}
By applying the inductive hypothesis (\ref{induction_2parameter_cone_statement}) to both 
$$
\sum_{k_1,m_1}T_{\tilde{\mathcal{G}}^{v_1}}\big(\Delta^{(1)}_{k_1}\Delta^{(2)}_{m_1}f_1,  (S^{(1)}_{k_1}S^{(2)}_{m_1}f_l)_{\substack{l \in \mathcal{L}(v_1) \\ l \neq 1}})\big) \quad \text{and} \quad \sum_{\substack{k_{\max}(v_i) \\ m_{\max}(v_i)}}T_{\mathcal{G}^{v_{i}}}\big((P_{k_{\max}(v_i)}^{(1)}P_{m_{\max}(v_i)}^{(2)}f_l)_{l \in \mathcal{L}(v_i)}\big)
$$
for $i \geq 2$, we derive the estimate on the right hand side of (\ref{induction_2parameter_cone_statement}) for $I$. 

On the other hand, different inductive hypotheses are called for in dealing with different subtrees involved in $II$. More precisely, (\ref{induction_2parameter_cone_statement}) is invoked to estimate
$$
\sum_{\substack{k_{\max}(v_i)\\ m_{\max}(v_i)}}T_{\mathcal{G}^{v_{i}}}\big((P_{k_{\max}(v_i)}^{(1)}P_{m_{\max}(v_i)}^{(2)}f_l)_{l \in \mathcal{L}(v_i)}\big) 
$$
for $i \geq 3$. Meanwhile, (\ref{induction_2parameter_whitney_one}) is used for
$$
T_{\tilde{\mathcal{G}}^{v_1}}\big(\Delta^{(1)}_{k_1}\Delta^{(2)}_{m_1}f_1,  (S^{(1)}_{k_1}S^{(2)}_{m_1}f_l)_{\substack{l \in \mathcal{L}(v_1) \\ l \neq 1}})\big).
$$ 
Also, we denote by $R(v_2)$ and $R'(v_2)$ the conical regions associated to the subtree $\mathcal{G}^{v_2}$ for the first and second parameters. Suppose that $\tilde{l} \in \mathfrak{M}_1(R(v_2))$, $\tilde{l}' \in \mathfrak{M}_2(R'(v_2))$ with 
$\tilde{l} \neq \tilde{l}'$, then (\ref{induction_2parameter_whitney_two}) can be applied. Combining all the subtree estimates, we conclude that
\begin{align*}
\|II\|_{\vec{r}}  \lesssim& \sum_{\substack{\restr{\delta_1}{\mathcal{V} \setminus \{\mathfrak{r}_{\mathcal{G}}\} \setminus \mathcal{V}^{v_1}_{1} \setminus \mathcal{V}^{v_2}_{\tilde{l}}} \\ \restr{\delta_2}{\mathcal{V} \setminus \{\mathfrak{r}_{\mathcal{G}}\} \setminus \mathcal{V}^{v_1}_{1} \setminus \mathcal{V}^{v_2}_{\tilde{l}'}}}} \prod_{\substack{ \\ l \neq 1, \tilde{l}, \tilde{l}'}}\|D_{(1)}^{\delta_1^{-1}(l)}D_{(2)}^{\delta_2^{-1}(l)}f_{l}\|_{\vec{p_l}} \cdot  \sum_{\substack{k_{\max}(v_2) > k_1 \\ m_{\max}(v_2) > m_1}}  2^{k_1 \beta_1^{\mathfrak{r}_{\mathcal{G}}}} 2^{m_1 \beta_2^{\mathfrak{r}_{\mathcal{G}}}}\\
& \quad \cdot  \|\Delta^{(1)}_{k_{\max}}\Delta^{(2)}_{m_{\max}}D^{\beta_1(v_1, 1)}_{(1)}D^{\beta_2(v_1, 1)}_{(2)}f_{1}\|_{\vec{p}_{1}}\cdot \|\Delta^{(1)}_{k_{\max}(v_2)}D^{\beta_1(v_2, \tilde{l})}_{(1)}D_{(2)}^{\delta_2^{-1}(\tilde{l})}f_{\tilde{l}}\|_{\vec{p}_{\tilde{l}}}\|\Delta^{(2)}_{m_{\max}(v_2)}D_{(1)}^{\delta_1^{-1}(\tilde{l}')}D^{\beta_2(v_2, \tilde{l}')}_{(2)}f_{\tilde{l}'}\|_{\vec{p}_{\tilde{l}'}}.
\end{align*}
The distribution of partial derivatives on both parameters gives the following estimate of the inner sum with $\restr{\delta_1}{\mathcal{V} \setminus \{\mathfrak{r}_{\mathcal{G}}\} \setminus \mathcal{V}^{v_1}_{1} \setminus \mathcal{V}^{v_2}_{\tilde{l}}}$ and $\restr{\delta_2}{\mathcal{V} \setminus \{\mathfrak{r}_{\mathcal{G}}\} \setminus \mathcal{V}^{v_1}_{1} \setminus \mathcal{V}^{v_2}_{\tilde{l}'}}$ fixed: 
{\fontsize{9}{9}
\begin{align*}
 \sum_{\substack{k_{\max}(v_2) \\ m_{\max}(v_2)}} &\min(2^{k_{\max}(v_2)\beta_1^{\mathfrak{r}_{\mathcal{G}}}} 2^{m_{\max}(v_2)\beta_2^{\mathfrak{r}_{\mathcal{G}}}}\|D_{(1)}^{\beta_1(v_1,1)}D^{\beta_2(v_1,1)}_{(2)}f_{1}\|_{\dot{B}^{0}_{p_1^1, \infty} \dot{B}^{0}_{p_1^2, \infty}} \|D^{\beta_1(v_2,\tilde{l})}_{(1)}D^{\delta_2^{-1}(\tilde{l})}_{(2)}f_{\tilde{l}}\|_{\dot{B}^{0}_{p_{\tilde{l}}^1, \infty} L^{p_1^2}} \|D^{\delta_1^{-1}(\tilde{l}')}_{(2)}D^{\beta_2(v_2,\tilde{l}')}_{(2)}f_{\tilde{l}'}\|_{L^{p_{\tilde{l}'}^1} \dot{B}^{0}_{p_{\tilde{l}'}^2, \infty}},\\
& 2^{k_{\max}(v_2)\beta_1^{\mathfrak{r}_{\mathcal{G}}}} 2^{-m_{\max}(v_2)\epsilon_2}\|D_{(1)}^{\beta_1(v_1,1)}D^{\beta_2(v_1,1)}_{(2)}f_{1}\|_{\dot{B}^{0}_{p_1^1, \infty} \dot{B}^{\epsilon_2}_{p_1^2, \infty}} \|D^{\beta_1(v_2,\tilde{l})}_{(1)}D^{\delta_2^{-1}(\tilde{l})}_{(2)}f_{\tilde{l}}\|_{\dot{B}^{0}_{p_{\tilde{l}}^1, \infty} L^{p_1^2}} \|D^{\delta_1^{-1}(\tilde{l}')}_{(2)}D^{\beta_2(v_2,\tilde{l}')}_{(2)}f_{\tilde{l}'}\|_{L^{p_{\tilde{l}'}^1} \dot{B}^{\beta_2^{\mathfrak{r}_{\mathcal{G}}}}_{p_{\tilde{l}'}^2, \infty}},\\
& 2^{-k_{\max}(v_2)\epsilon_1} 2^{m_{\max}(v_2)\beta_2^{\mathfrak{r}_{\mathcal{G}}}}\|D_{(1)}^{\beta_1(v_1,1)}D^{\beta_2(v_1,1)}_{(2)}f_{1}\|_{\dot{B}^{\epsilon_1}_{p_1^1, \infty} \dot{B}^{0}_{p_1^2, \infty}} \|D^{\beta_1(v_2,\tilde{l})}_{(1)}D^{\delta_2^{-1}(\tilde{l})}_{(2)}f_{\tilde{l}}\|_{\dot{B}^{\beta_1^{\mathfrak{r}_{\mathcal{G}}}}_{p_{\tilde{l}}^1, \infty} L^{p_1^2}} \|D^{\delta_1^{-1}(\tilde{l}')}_{(2)}D^{\beta_2(v_2,\tilde{l}')}_{(2)}f_{\tilde{l}'}\|_{L^{p_{\tilde{l}'}^1} \dot{B}^{0}_{p_{\tilde{l}'}^2, \infty}},\\
& 2^{-k_{\max}(v_2)\epsilon_1} 2^{-m_{\max}(v_2)\epsilon_2}\|D_{(1)}^{\beta_1(v_1,1)}D^{\beta_2(v_1,1)}_{(2)}f_{1}\|_{\dot{B}^{\epsilon_1}_{p_1^1, \infty} \dot{B}^{\epsilon_2}_{p_1^2, \infty}} \|D^{\beta_1(v_2,\tilde{l})}_{(1)}D^{\delta_2^{-1}(\tilde{l})}_{(2)}f_{\tilde{l}}\|_{\dot{B}^{\beta_1^{\mathfrak{r}_{\mathcal{G}}}}_{p_{\tilde{l}}^1, \infty} L^{p_1^2}} \|D^{\delta_1^{-1}(\tilde{l}')}_{(2)}D^{\beta_2(v_2,\tilde{l}')}_{(2)}f_{\tilde{l}'}\|_{L^{p_{\tilde{l}'}^1} \dot{B}^{\beta_2^{\mathfrak{r}_{\mathcal{G}}}}_{p_{\tilde{l}'}^2, \infty}}),
\end{align*}}so that the optimization and interpolation can be carried out to derive the estimate on the right hand side of (\ref{induction_2parameter_cone_statement}). This completes proof of Case \ref{prop:2param:6case1} and provides a generic recipe for treating the remaining cases. 
\end{enumerate}
\end{proof}

We make a final remark on the biparameter flag Leibniz-type estimates associated to asymmetric symbols stated in Theorem \ref{thm:an:example}. The inductive procedure described in the previous section, while still applicable, requires a certain modification; we only elaborate on this: the main difference arises in the reduction of the frequency trees. We focus on the example \eqref{example:asymmetric_diff} with the frequency trees specified below:
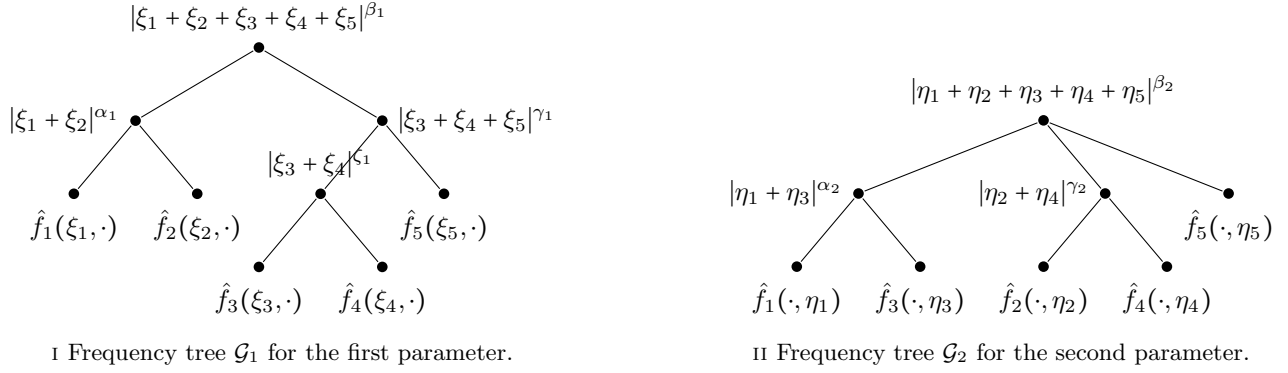
\begin{figure}[!htbp]
\begin{subfigure}[t]{.45\textwidth}
  \centering
 \begin{forest}
my treeSep
[, label={above: $|\xi_1+\xi_2+\xi_3+\xi_4+\xi_5|^{\beta_1}$}
  	[, label = {left: $|\xi_1+\xi_2|^{\alpha_1}$}
		[, label = {below: $\hat f_1(\xi_1, \cdot)$}
		]
		[, label = {below: $\hat f_2(\xi_2, \cdot)$}
		]
	]
	[,  label = {right: $|\xi_3+\xi_4+\xi_5|^{\gamma_1}$}
		[, label = {above: $|\xi_3+\xi_4|^{\zeta_1}$}
			[, label = {below: $\hat f_3(\xi_3, \cdot)$}
			]
			[, label = {below: $\hat f_4(\xi_4, \cdot)$}
			]
		]
		[, label = {below: $\hat f_5(\xi_5, \cdot)$}
		]
	]
]
\end{forest}
  \caption{Frequency tree $\mathcal{G}_1$ for the first parameter.} 
 \label{fig:assym:step1:1} 
    \end{subfigure}
 \hfill 
 \begin{subfigure}[t]{.45\textwidth}
  \centering
 \begin{forest}
my treeSep
[, label={above: $|\eta_1+\eta_2+\eta_3+\eta_4 +\eta_5|^{\beta_2}$}
	[, label = {left: $|\eta_1+\eta_3|^{\alpha_2}$}
		[, label = {below: $\hat f_1(\cdot, \eta_1)$}
		]
		[, label = {below: $\hat f_3(\cdot, \eta_3)$}
		]
	]
  	[, label = {left: $|\eta_2+\eta_4|^{\gamma_2}$}
		[, label = {below: $\hat f_2(\cdot, \eta_2)$}
		]
		[, label = {below: $\hat f_4(\cdot, \eta_4)$}
		]
	]
	[, label = {below: $\hat f_5(\cdot, \eta_5)$}
	]
]
\end{forest}
  \caption{Frequency tree $\mathcal{G}_2$ for the second parameter.} 
\label{fig:assym:step1:2} 
    \end{subfigure}
 \caption{An asymmetric bi-parameter symbol}   
\label{fig:assym:gl}    
\end{figure}  
\vskip .1in
We further assume that the multilinear expression \eqref{example:asymmetric_diff} is localized on the conical region 
$$
\{(\xi_1, \ldots, \xi_5): |\xi_1| \gg |\xi_l| \ \ \text{for} \ \ l \neq 1\} \times \{(\eta_1, \ldots, \eta_5): |\eta_4| \gg |\eta_l| \ \ \text{for} \ \ l \neq 4\}.
$$

Then we can split the root symbol into a commutator and a symbol associated to subtrees of lower complexity for both parameters. However, due to the asymmetricity, all the leaves $f_1, \ldots, f_5$ are intertwined and we cannot decouple any subsets of the leaves (previously associated to subtrees) as we did in the symmetric setting. Instead, we obtain a product of subtrees of lower complexity for both parameters as a reduction.  

In the conical region above, one term (modulo modulation and after simplifications) that appears in our estimation is the commutator tensorized with a symbol associated to a subtree: 
\begin{align} \label{multiplier:asymmtric_union}
 \sum_{\substack{k_1 \gg k_3 \\ m_3 \succ m_4 }}2^{k_1(\beta_1-1)} 2^{k_3} \int  |\xi_1+\xi_2|^{\alpha_1}|\xi_3+\xi_4+\xi_5|^{\gamma_1}|\xi_3+\xi_4|^{\zeta_1} |\eta_1+\eta_3|^{\alpha_2}|\eta_2+\eta_4|^{\gamma_2+\beta_2} &\nonumber \\
 \ii F(\Delta_{k_1,+}^{(1)}S_{m_3}^{(2)} f_1)(\xi_1, \eta_1) \ii F(S_{k_1}^{(1)}S_{m_4}^{(2)} f_2)(\xi_2, \eta_2)  \ii F(\Delta_{k_3}^{(1)}\Delta_{m_3}^{(2)} f_3)(\xi_3, \eta_3) \ii F(S_{k_3}^{(1)}\Delta_{m_4}^{(2)} f_4)(\xi_4, \eta_4) \ii F(S_{k_3}^{(1)} f_5)(\xi_5, \eta_5) & \nonumber \\
 e^{2 \pi i (x, y) \cdot (\xi_1+\xi_2+\xi_3+\xi_4+\xi_5, \eta_1+\eta_2+\eta_3+\eta_4+\eta_5)} d \xi d \eta.&
\end{align}
This hints to the necessity of establishing inductive statements associated to disjoint unions of rooted (sub)trees in both parameters, as opposed to inductive statements just for rooted subtrees. Thus the induction is performed based on the maximal complexity of the rooted trees involved.

In our example, we started with a frequency tree $\mathcal{G}_1$ of complexity $3$ in the first parameter (see Figure \ref{fig:assym:gl}(\subref{fig:assym:step1:1})), and a frequency tree $\mathcal{G}_2$ of complexity $2$ in the second parameter (see Figure \ref{fig:assym:gl}(\subref{fig:assym:step1:2})); so the maximal complexity of $\mathcal{G}_1 \times \mathcal{G}_2$ is $3$. By breaking down the root symbols $m_{\mathfrak{r}_{\mathcal{G}_1}}$ and $m_{\mathfrak{r}_{\mathcal{G}_2}}$, we are led, as suggested by \eqref{multiplier:asymmtric_union}, to considering the frequency forest $ \tilde {\mathcal{G}}_1$ of maximal complexity 2 in the first parameter (see Figure \ref{fig:assym:step2:1}), and the frequency forest $\tilde {\mathcal{G}}_2$ of maximal complexity $1$ in the second parameter (see Figure \ref{fig:assym:step2:2}). Overall, the splitting of the roots' symbols reduces the maximal complexity. More concretely, the inductive hypothesis will be applied to the expression
\begin{align}\label{induc_hypo}
& \int  |\xi_1+\xi_2|^{\alpha_1}|\xi_3+\xi_4+\xi_5|^{\gamma_1}|\xi_3+\xi_4|^{\zeta_1} |\eta_1+\eta_3|^{\alpha_2}|\eta_2+\eta_4|^{\gamma_2+\beta_2} \ii F(\Delta_{k_1}^{(1)}S_{m_3}^{(2)} f_1)(\xi_1, \eta_1) \ii F(S_{k_1}^{(1)}S_{m_4}^{(2)} f_2)(\xi_2, \eta_2)\nonumber \\
&   \ii F(\Delta_{k_3}^{(1)}\Delta_{m_3}^{(2)} f_3)(\xi_3, \eta_3) \ii F(S_{k_3}^{(1)}\Delta_{m_4}^{(2)} f_4)(\xi_4, \eta_4) \ii F(S_{k_3}^{(1)} f_5)(\xi_5, \eta_5)e^{2 \pi i (x, y) \cdot (\xi_1+\xi_2+\xi_3+\xi_4+\xi_5, \eta_1+\eta_2+\eta_3+\eta_4+\eta_5)} d \xi d \eta,
\end{align}
which is a multiplier appearing in \eqref{multiplier:asymmtric_union} with maximal complexity $2$ (attained by the subtree with root symbol $|\xi_3+\xi_4+\xi_5|^{\gamma_1}$).
\begin{figure}[!htbp]
\label{fig:asymetric}
\begin{subfigure}[t]{.45\textwidth}
  \centering
 \begin{forest}
my treeSep
  	[, label = {above: $|\xi_1+\xi_2|^{\alpha_1}$}
		[, label = {below: $\hat f_1(\xi_1, \cdot)$}
		]
		[, label = {below: $\hat f_2(\xi_2, \cdot)$}
		]
	]
\end{forest}
\end{subfigure}
\begin{subfigure}[t]{.45\textwidth}
 \centering
\begin{forest}
my treeSep
	[,  label = {above: $|\xi_3+\xi_4+\xi_5|^{\gamma_1}$}
		[, label = {left: $|\xi_3+\xi_4|^{\zeta_1}$}
			[, label = {below: $\hat f_3(\xi_3, \cdot)$}
			]
			[, label = {below: $\hat f_4(\xi_4, \cdot)$}
			]
		]
		[, label = {right: $\hat f_5(\xi_5, \cdot)$}
		]
	]
\end{forest}
 \end{subfigure}
  \caption{Frequency forest $\tilde {\mathcal{G}}_1$ for the first parameter.} 
 \label{fig:assym:step2:1}
 \end{figure}
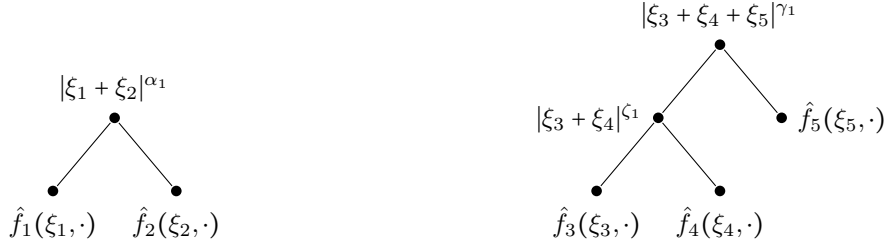

 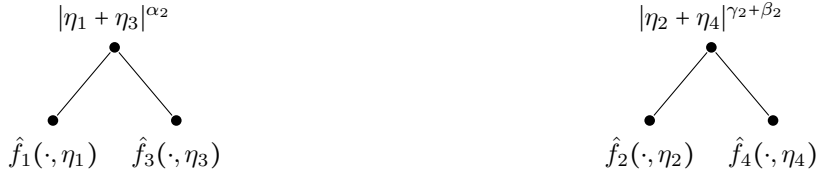
\begin{figure}
 \begin{subfigure}[t]{.45\textwidth}
  \centering
 \begin{forest}
my treeSep
	[, label = {above: $|\eta_1+\eta_3|^{\alpha_2}$}
		[, label = {below: $\hat f_1(\cdot, \eta_1)$}
		]
		[, label = {below: $\hat f_3(\cdot, \eta_3)$}
		]
	]
\end{forest}
\end{subfigure}
\begin{subfigure}[t]{.45\textwidth}
\centering
\begin{forest}
my treeSep
  	[, label = {above: $|\eta_2+\eta_4|^{\gamma_2+\beta_2}$}
		[, label = {below: $\hat f_2(\cdot, \eta_2)$}
		]
		[, label = {below: $\hat f_4(\cdot, \eta_4)$}
		]
	]
\end{forest}
    \end{subfigure}
\caption{Frequency forest $\tilde {\mathcal{G}}_2$ for the second parameter.} 
\label{fig:assym:step2:2}
\end{figure}  

The base case of such an inductive procedure involves an analysis of symbols associated to frequency forests of complexity less than or equal to 1 in each parameter. This can be verified directly by implementing the usual procedure: paraproduct decompositions, splitting of root symbols, Fourier series decompositions, optimization and Besov norm interpolation.

This ends the discussion on the example \eqref{thm:an:example}, which is generic enough to illustrate the main ingredients in the proof for multi-parameter flag Leibniz-type estimates associated to asymmetric symbols of arbitrary complexity.

\subsection{Optimization and interpolation in the $N$-parameters setting}\label{generic_optim_interp}
This section is devoted to the optimization and interpolation procedure which allows to redistribute the derivatives and produce the expected geometric and arithmetic means in the generic $N$-parameters setting. As a consequence of this procedure, we derive the desired estimates in the statements \eqref{prop:2param:ind:1}-- \eqref{prop:2param:ind:6} of Proposition \ref{prop:biparameter_induction}.

We first introduce some notation. Let $\h$ and $\el$ denote maps
\begin{align*}
& \h: i \in \{\text{Index set of the parameters}\} \mapsto l \in \{\text{Index set for the functions}\} \\
& \el: i \in \{\text{Index set of the parameters}\} \mapsto l \in \{\text{Index set for the functions}\},
\end{align*}
where the map $\h$ indicates which functions would be hit by the full order of derivatives and the map $\el$ illustrates the functions hit by the lower order of derivatives. Equations \eqref{eq:besov:ill:1}-\eqref{eq:besov:ill:3} are representative of this action, and in \eqref{eq:besov:ill:2} in particular we can see that in the region $\{ |\xi_1| \gg |\xi_2|, \ldots, |\xi_n| \}$ the function $f_1$ receives $D^{\beta^{\mathfrak{r}_{\mathcal{G}}}}$ derivatives and $f_{l_0}$ receives $D^\epsilon$ derivatives.

We denote by $\Sigma_N$ the set of length-$N$ signatures:
\begin{equation}
\label{eq:def:sign:N}
\Sigma_N:= \{ \sigma=(\varsigma_1, \ldots, \varsigma_N): \varsigma_1, \ldots, \varsigma_N \in \{ +, -  \}   \}
\end{equation}

Our index set of parameters is $\{1, \ldots, N \}$, and the index set of functions $\{ 1, \ldots, n \}$. The above convention implies that, on the specific frequency conical region to which we restrict our operator, in the first parameter the scales $k_{\h(1)}\geq k_{\el(1)}$ (and thus the functions $f_{\h(1)}$ and $f_{\el(1)}$) are involved in the estimation of the flag, in the second parameter the scales $k_{\h(2)} \geq k_{\el(2)}$ (and the functions $f_{\h(2)}$ and $f_{\el(2)}$), and so on. 

In order to simplify the notation, let us assume that in estimating an $n$-linear, $N$-parameter flag operator $T_{\mathcal G}$ associated to a rooted tree with root information $(\beta_1, \ldots, \beta_N)$ and restricted\footnote{Otherwise $T_{\mathcal G}$ will be bounded above by a some of similar terms, each to be estimated through the present analysis.} to frequency conical regions on each subtree, the functions $f_1, f_2, \ldots, f_m$ are involved. Given the discussion in the previous section, we are reduced to\footnote{Again, for simplicity, we omit the exponent $\ds\tau \leq  \min_{1 \leq l \leq N} p^l$; at this stage of the proof it plays no role.}
\begin{align*}
\|T_{\mathcal{G}}&(f_1, \ldots, f_n)\|_{L^{\vec p}} \lesssim \prod_{l=m+1}^n \|F_l\|_{L^{\vec {p_l}}} \\
\sum_{\ell_1, \ldots, \ell_N} \min( &2^{\beta_1 \ell_1} \cdot \ldots  \cdot 2^{\beta_{N-1} \ell_{N-1}} \cdot 2^{\beta_N \ell_N} \cdot \prod_{l=1}^m \|F_l\|_{\vec W_l; (+, \ldots, +, +)}, 2^{\beta_1 \ell_1} \cdot \ldots  \cdot 2^{\beta_{N-1} \ell_{N-1}} \cdot 2^{-\epsilon \ell_N} \cdot \prod_{l=1}^m \|F_l\|_{\vec W_l; (+, \ldots, +, -)} \\
&\vdots \\
&  2^{-\epsilon_1 \ell_1} \cdot \ldots  \cdot 2^{-\epsilon_{N-1} \ell_{N-1}} \cdot 2^{\beta_N \ell_N} \cdot \prod_{l=1}^m \|F_l\|_{\vec W_l; (-, \ldots, -, +)}, 2^{-\epsilon_1 \ell_1} \cdot \ldots  \cdot 2^{-\epsilon_{N-1} \ell_{N-1}} \cdot 2^{-\epsilon \ell_N} \cdot \prod_{l=1}^m \|F_l\|_{\vec W_l; (-, \ldots, -, -)} ),
\end{align*}
where $\|F_l\|_{\vec W_l; \sigma}$ -- for $\sigma \in \Sigma_N$ -- is the mixed norm
\begin{equation}
\label{eq:def:mixed:W:sigma}
\|F_l\|_{\vec W_l; \sigma} := \|F_l\|_{W_l^{1, \varsigma_1}W_l^{2, \varsigma_2} \ldots W_l^{{N-1}, \varsigma_{N-1}} W_l^{N, \varsigma_N}},
\end{equation}
and for any $1 \leq l \leq n$
\begin{equation}
\label{eq:def:deriv:subtrees}
F_l:= D^{\alpha_l^1}_{(1)} \ldots D^{\alpha_l^N}_{(N)} f_l
\end{equation}
records the derivatives picked up by the function $f_l$ on the subtrees of $\mathcal{G}$ (which are of lower complexity).

Above, we define for any $1 \leq l \leq n$ and any $1 \leq i \leq N$,
\begin{equation}
W_l^{i, \varsigma_i}:=\begin{cases} \dot B^0_{p_l^i, \infty}, \text{      if    } l= \h(i) \text{    or    } l= \el(i) \text{   and     } \varsigma_i=+\\
                                                      \dot B^{\beta_i}_{p_l^i, \infty},  \text{      if    } l= \h(i)  \text{   and     } \varsigma_i=-\\
 \dot B^{\epsilon_i}_{p_l^i, \infty},  \text{      if    } l= \el(i)  \text{   and     } \varsigma_i=- \\
 L^{p_l^i}  \text{      if    } l\notin \{ \h(i), \el(i)  \}.
\end{cases}
\end{equation}

Then it is not difficult to see\footnote{Indeed, such a result can very easily be proved via an induction argument on $N$, the number of parameters.} that 
\begin{align*}
\sum_{\ell_1, \ldots, \ell_N} \min\Big( &2^{\beta_1 \ell_1} \cdot \ldots  \cdot 2^{\beta_{N-1} \ell_{N-1}} \cdot 2^{\beta_N \ell_N} \cdot \prod_{l=1}^m \|F_l\|_{\vec W_l; (+, \ldots, +, +)}, 2^{\beta_1 \ell_1} \cdot \ldots  \cdot 2^{\beta_{N-1} \ell_{N-1}} \cdot 2^{-\epsilon \ell_N} \cdot \prod_{l=1}^m \|F_l\|_{\vec W_l; (+, \ldots, +, -)} \\
&\vdots \\
&  2^{-\epsilon_1 \ell_1} \cdot \ldots  \cdot 2^{-\epsilon_{N-1} \ell_{N-1}} \cdot 2^{\beta_N \ell_N} \cdot \prod_{l=1}^m \|F_l\|_{\vec W_l; (-, \ldots, -, +)}, 2^{-\epsilon_1 \ell_1} \cdot \ldots  \cdot 2^{-\epsilon_{N-1} \ell_{N-1}} \cdot 2^{-\epsilon \ell_N} \cdot \prod_{l=1}^m \|F_l\|_{\vec W_l; (-, \ldots, -, -)} \Big) \\
\quad \lesssim & \prod_{\sigma= (\varsigma_1, \ldots, \varsigma_N) \in \Sigma_N}  \big(  \prod_{l=1}^{m}  \|F_l\|_{\vec W_l; \sigma} \big)^{\nu_{\varsigma_1} \cdot \ldots \cdot  \nu_{\varsigma_N}},
\end{align*}
where for any $1 \leq i \leq N$, $\nu_{\varsigma_i}$ is defined by 
\begin{equation}
\nu_{\varsigma_i}:=\begin{cases} \frac{\epsilon_i}{\beta_i+\epsilon_1},  \text{      if    } \varsigma_i=+\\
\frac{\beta_i}{\beta_i+\epsilon_1},  \text{      if    } \varsigma_i=-.
\end{cases}
\end{equation}

If we denote $\nu_{\sigma}:=\nu_{\varsigma_1} \cdot \ldots \cdot  \nu_{\varsigma_N}$, we have 
\begin{align*}
\sum_{\sigma \in \Sigma_N} \nu_{\sigma}= \prod_{i=1}^N \Big( \frac{\epsilon_i}{\beta_i+\epsilon_1} + \frac{\beta_i}{\beta_i+\epsilon_1} \Big)=1.
\end{align*}

This observation and the previous notation allows us to deduce 
\begin{align}
\label{eq:est:geom:mean:mess}
\|T_{\mathcal G}(f_1, \ldots, f_n)\|_{L^{\vec p}} \lesssim  \prod_{\sigma= (\varsigma_1, \ldots, \varsigma_N) \in \Sigma_N}  \big(  \prod_{l=1}^{n}  \|F_l\|_{\vec W_l; \sigma} \big)^{\nu_{\varsigma_1} \cdot \ldots \cdot  \nu_{\varsigma_N}}.
\end{align}

Now our task is to replace $ \ds \prod_{l=1}^{n} \|F_l\|_{\vec W_l; \sigma}$ by more suitable expressions that allow to keep track of the distribution of derivatives. This will be done in $N$ steps, which corresponds to the number of parameters.

Before proceeding, we recall the interpolation result
\begin{equation}
\label{eq:optim:interp:reminder}
\|F_l\|_{\dot B^{\epsilon_1}_{p_l^1, \infty} (\tilde W_l; \tilde \sigma)} \leq \|F_l\|_{\dot B^{0}_{p_l^1, \infty} (\tilde W_l; \tilde \sigma)}^{\frac{\beta_1-\epsilon_1}{\beta_1}} \cdot \|F_l\|_{\dot B^{\beta_1}_{p_l^1, \infty} (\tilde W_l; \tilde \sigma)}^{\frac{\epsilon_1}{\beta_1}}, 
\end{equation}
where $\tilde \sigma \in \Sigma_{N-1}$ is a length-$(N-1)$ signature vector and $\tilde W_l$ is the iteration of $N-1$ vector spaces.

We want to replace $\dot B^{\epsilon_1}_{p_l^1, \infty}$ appearing in \eqref{eq:est:geom:mean:mess} either by $\dot B^{0}_{p_l^1, \infty}$ or by $\dot B^{\beta_1}_{p_l^1, \infty}$. We notice that $\dot B^{\epsilon_1}_{p_l^1, \infty}$ appears in the norm $$\|F_l\|_{\vec W_l; \sigma}$$ of precisely those $l$ satisfying $l =\el(1)$, and $\varsigma_1=-$.

This observation, together with the fact that the right-hand side of \eqref{eq:est:geom:mean:mess} equals\footnote{We also use the fact that for $ l \neq \h(1), \el(1)$, $W_l^1= L^{p_l^1}.$}
\begin{align*}
\sum_{\tilde \sigma \in \Sigma_{N-1}}   & \big(  \|F_{\h(1)}\|_{ W_{\h(1)}^{1,+}  (\tilde W_l; \tilde \sigma)} \cdot \|F_{\el(1)}\|_{ W_{\el(1)}^{1,+}  (\tilde W_l; \tilde \sigma)} \prod_{\substack{l=1 \\ l \neq \h(1), \el(1)}}^{n}  \|F_l\|_{  L^{p_l^1}   (\tilde W_l;\tilde \sigma)} \big)^{\nu_{+} \cdot \nu_{\tilde \sigma}} \\
& \cdot  \big( \|F_{\h(1)}\|_{ W_{\h(1)}^{1,-} (\tilde W_l; \tilde \sigma)} \cdot \|F_{\el(1)}\|_{ W_{\el(1)}^{1,-}(\tilde W_l; \tilde \sigma)}   \prod_{\substack{l=1 \\ l \neq \h(1), \el(1)}}^{n}  \|F_l\|_{ L^{p_l^1}   (\tilde W_l;\tilde \sigma)} \big)^{\nu_{-} \cdot  \nu_{\tilde \sigma}} \\
&= \sum_{\tilde \sigma \in \Sigma_{N-1}}   \big(  \|F_{\h(1)}\|_{ \dot B^0_{p_{\h(1)}^1, \infty}  (\tilde W_l; \tilde \sigma)} \cdot \|F_{\el(1)}\|_{\dot B^0_{p_{\el(1)}^1, \infty}   (\tilde W_l; \tilde \sigma)} \prod_{\substack{l=1 \\ l \neq \h(1), \el(1)}}^{n}  \|F_l\|_{ L^{p_l^1} (\tilde W_l;\tilde \sigma)} \big)^{\nu_{+} \cdot \nu_{\tilde \sigma}} \\
& \cdot  \big( \|F_{\h(1)}\|_{ \dot B^{\beta_1}_{p_{\h(1)}^1, \infty}  (\tilde W_l; \tilde \sigma)} \cdot \|F_{\el(1)}\|_{ \dot B^{\epsilon_1}_{p_{\h(1)}^1, \infty} (\tilde W_l; \tilde \sigma)}   \prod_{\substack{l=1 \\ l \neq \h(1), \el(1)}}^{n}  \|F_l\|_{ L^{p_l^1}   (\tilde W_l;\tilde \sigma)} \big)^{\nu_{-} \cdot  \nu_{\tilde \sigma}}, 
\end{align*}
naturally suggests the use of interpolation. If $\tilde \sigma \in \Sigma_{N-1}$ is fixed, then the expression appearing in the last display equals 
\begin{align*}
&\big( \prod_{\substack{l=1 \\ l \neq \h(1), \el(1)}}^{n}  \|F_l\|_{ L^{p_l^1}   (\tilde W_l;\tilde \sigma)} \big)^{ \nu_{\tilde \sigma}} \\
& \cdot \big( ( \|F_{\h(1)}\|_{ \dot B^0_{p_{\h(1)}^1, \infty}  (\tilde W_l; \tilde \sigma)} \cdot \|F_{\el(1)}\|_{\dot B^0_{p_{\el(1)}^1, \infty}(\tilde W_l; \tilde \sigma)} )^{\frac{\epsilon_1}{\beta_1+\epsilon_1}} 
\cdot ( \|F_{\h(1)}\|_{ \dot B^{\beta_1}_{p_{\h(1)}^1, \infty}  (\tilde W_l; \tilde \sigma)} \cdot \|F_{\el(1)}\|_{ \dot B^{\epsilon_1}_{p_{\h(1)}^1, \infty} (\tilde W_l; \tilde \sigma)}  )^{\frac{\beta_1}{\beta_1+\epsilon_1}} \big)^{\nu_{\tilde \sigma}}.
\end{align*}

The interpolation in \eqref{eq:interpolation:Besov:epsilon} and direct computations similar to \eqref{ea:conc:comm} allow to bound this expression by
\begin{align*}
&\big( \prod_{\substack{l=1 \\ l \neq \h(1), \el(1)}}^{n}  \|F_l\|_{ L^{p_l^1}   (\tilde W_l;\tilde \sigma)} \big)^{ \nu_{\tilde \sigma}} \\
& \cdot \big( ( \|F_{\h(1)}\|_{ \dot B^{0}_{p_{\h(1)}^1, \infty}  (\tilde W_l; \tilde \sigma)} \cdot \|F_{\el(1)}\|_{\dot B^{\beta_1}_{p_{\el(1)}^1, \infty}(\tilde W_l; \tilde \sigma)} )^{\frac{\epsilon_1}{\beta_1+\epsilon_1}} 
\cdot ( \|F_{\h(1)}\|_{ \dot B^{\beta_1}_{p_{\h(1)}^1, \infty}  (\tilde W_l; \tilde \sigma)} \cdot \|F_{\el(1)}\|_{ \dot B^{0}_{p_{\h(1)}^1, \infty} (\tilde W_l; \tilde \sigma)}  )^{\frac{\beta_1}{\beta_1+\epsilon_1}} \big)^{\nu_{\tilde \sigma}} \\
&= \big(  \|F_{\h(1)}\|_{ \dot B^{0}_{p_{\h(1)}^1, \infty}  (\tilde W_l; \tilde \sigma)} \cdot \|F_{\el(1)}\|_{\dot B^{\beta_1}_{p_{\el(1)}^1, \infty}(\tilde W_l; \tilde \sigma)}  \prod_{\substack{l=1 \\ l \neq \h(1), \el(1)}}^{n}  \|F_l\|_{ L^{p_l^1}   (\tilde W_l;\tilde \sigma)}   \big)^{ \nu_{(+, \tilde \sigma)}} \\
&\cdot \big(    \|F_{\h(1)}\|_{ \dot B^{\beta_1}_{p_{\h(1)}^1, \infty}  (\tilde W_l; \tilde \sigma)} \cdot \|F_{\el(1)}\|_{ \dot B^{0}_{p_{\h(1)}^1, \infty} (\tilde W_l; \tilde \sigma)}  )^{\frac{\beta_1}{\beta_1+\epsilon_1}}   \prod_{\substack{l=1 \\ l \neq \h(1), \el(1)}}^{n}  \|F_l\|_{ L^{p_l^1}   (\tilde W_l;\tilde \sigma)}  \big)^{ \nu_{(-, \tilde \sigma)}}. 
\end{align*}

In conclusion, \eqref{eq:est:geom:mean:mess} is replaced by 
\begin{align}
\label{eq:est:geom:mean:mess:1}
\|T_{\mathcal{G}}(f_1, \ldots, f_n)\|_{L^{\vec p}} \lesssim  \prod_{\sigma= (\varsigma_1, \ldots, \varsigma_N) \in \Sigma_N}  \big(  \prod_{l=1}^{n}  \|F_l\|_{\vec W_l^1; \sigma} \big)^{\nu_{\varsigma_1} \cdot \ldots \cdot  \nu_{\varsigma_N}},
\end{align}
where $\ds \vec W_l^1; \sigma: = X_l^{1, \varsigma_1} W_l^{2, \varsigma_2} \ldots W_l^{{N-1}, \varsigma_{N-1}} W_l^{N, \varsigma_N}$ and
\begin{equation}
\label{eq:def:X1}
X_l^{1, \varsigma_1}:=\begin{cases} \dot B^0_{p_l^1, \infty}, \text{      if    } l= \h(1)  \text{   and     } \varsigma_1=+\\
                                                        \dot B^{\beta_1}_{p_l^1, \infty}, \text{      if    } l= \el(1)  \text{   and     } \varsigma_1=+\\
                                                      \dot B^{\beta_1}_{p_l^1, \infty},  \text{      if    } l= \h(1)  \text{   and     } \varsigma_1=-\\
 \dot B^{0}_{p_l^1, \infty},  \text{      if    } l= \el(1)  \text{   and     } \varsigma_1=- \\
 L^{p_l^1}  \text{      if    } l\notin \{ \h(1), \el(1)  \}.
\end{cases}
\end{equation}

We iterate the procedure to obtain 
\begin{align}
\label{eq:est:geom:mean:mess:2}
\|T_{\mathcal{G}}(f_1, \ldots, f_n)\|_{L^{\vec p}} \lesssim  \prod_{\sigma= (\varsigma_1, \ldots, \varsigma_N) \in \Sigma_N}  \big(  \prod_{l=1}^{n}  \|F_l\|_{\vec W_l^2; \sigma} \big)^{\nu_{\varsigma_1} \cdot \ldots \cdot  \nu_{\varsigma_N}},
\end{align}
where $\ds \vec W_l^2; \sigma: = X_l^{1, \varsigma_1} X_l^{2, \varsigma_2} W_l^{3, \varsigma_3} \ldots W_l^{{N-1}, \varsigma_{N-1}} W_l^{N, \varsigma_N}$ and $ X_l^{2, \varsigma_2}$ is defined in a similar way to \eqref{eq:est:geom:mean:mess:2}.

After $N$ steps we conclude that 
\begin{align}
\label{eq:est:geom:mean:mess:N}
\|T_{\mathcal{G}}(f_1, \ldots, f_n)\|_{L^{\vec p}} \lesssim  \prod_{\sigma= (\varsigma_1, \ldots, \varsigma_N) \in \Sigma_N}  \big(  \prod_{l=1}^{n}  \|F_l\|_{\vec W_l^N; \sigma} \big)^{\nu_{\sigma}},
\end{align}
where $\ds \vec W_l^N; \sigma: = X_l^{1, \varsigma_1}  \ldots  X_l^{N, \varsigma_N}$ and
\begin{equation}
\label{eq:def:Xi}
X_l^{i, \varsigma_i}:=\begin{cases} \dot B^0_{p_l^i, \infty}, \text{      if    } l= \h(i)  \text{   and     } \varsigma_i=+\\
                                                        \dot B^{\beta_i}_{p_l^i, \infty}, \text{      if    } l= \el(i)  \text{   and     } \varsigma_i=+\\
                                                      \dot B^{\beta_i}_{p_l^i, \infty},  \text{      if    } l= \h(i)  \text{   and     } \varsigma_i=-\\
 \dot B^{0}_{p_l^i, \infty},  \text{      if    } l= \el(i)  \text{   and     } \varsigma_i=- \\
 L^{p_l^i}  \text{      if    } l\notin \{ \h(i), \el(i)  \}.
\end{cases}
\end{equation}

If we carefully read inequality \eqref{eq:est:geom:mean:mess:N}, we have obtained that the $N$-parameter $n$-linear flag $T_{\mathcal{G}}(f_1, \ldots, f_n)$ is bounded above in the mixed norm $\| \cdot \|_{L^{\vec r}}$ by the geometric mean of $2^N$ terms (this is because $\ds \sum_{\sigma \in \Sigma_N} \nu_{\sigma}=1$), and each term is of the form 
\[
 \prod_{l=1}^{n}  \|F_l\|_{X_l^{1, \varsigma_1}  \ldots  X_l^{N, \varsigma_N}}.
\]
For every parameter $1 \leq i \leq N$, there are exactly two indices $\h(i)$ and $\el(i)$ in the index set $\{1, \ldots, n \}$ for the functions for which  
\begin{equation}
\label{eq:distrib:deriv}
( X_{\h(i)}^{i, \varsigma_i} , X_{\el(i)}^{i, \varsigma_i})=(  \dot B^0_{p_{\h(i)}^i, \infty},  \dot B^{\beta_i}_{p_{\el(i)}^i, \infty} ) \quad \text{or} \quad ( X_{\h(i)}^{i, \varsigma_i} , X_{\el(i)}^{i, \varsigma_i})=(  \dot B^{\beta_1}_{p_{\h(i)}^i, \infty},  \dot B^{0}_{p_{\el(i)}^i, \infty} ),
\end{equation}
while for the other indices $l \neq \h(i),\el(i)$ we simply have $X_l^{i, \varsigma_i}= L^{p_l^i}$. The identity \eqref{eq:distrib:deriv} means precisely that the derivatives are being distributed accordingly: for every $1 \leq i \leq N$, each $D^{\beta_i}_{(i)}$ is shared between $F_{\h(i)}$ and $F_{\el(i)}$. Finally, if we undo the definition \eqref{eq:def:deriv:subtrees}, we deduce that $T_{\mathcal{G}}(f_1, \ldots, f_n)$, restricted to conical regions according to the subtree structures, is indeed controlled by the geometric mean of terms of the form $\prod_{l=1}^n\|D_{(1)}^{\delta_1^{-1}(l)} \ldots D_{(N)}^{\delta_N^{-1}(l)}f_{l}\|_{\vec{p_l}}$, where $\delta_1\otimes \ldots \otimes \delta_N$ satisfy conditions \ref{deriv:distrib:i} and \ref{deriv:distrib:ii} from Section \ref{introduction}.

\section{Multilinear operators of positive order} \label{sec:generic:multipliers}

After having worked out the multi-parameter flag Leibniz rules, we discuss in this section Leibniz-type estimates for flag structures associated to Mikhlin symbols of positive order. Our intention here is to provide more examples for which the method introduced by Bourgain and Li \cite{BourgainLi-kato} offers an alternative to well-established techniques.

We start with the observation that on the region\footnote{On the other hand, in the region $\ds \tilde R_{1,2}=\{(\xi_1, \ldots, \xi_n) \in \BBR^{dn}: |\xi_1|\sim |\xi_2| \gg |\xi_3|, \ldots, |\xi_n|\}$, $\ds |\xi_1+\ldots + \xi_n|^\beta$ is less regular than a Mikhlin symbol of order $\beta$, unless $\beta \in 2 \BBN$. In general, for a Mikhlin symbol $m$ of order $\beta$, $m(\xi_1+\ldots +\xi_n)$ is singular along the subspace $\{(\xi_1, \ldots, \xi_n): \xi_1 + \ldots + \xi_n = 0\}$, and it can be seen as a natural extension of $|\xi_1+\ldots + \xi_n|^\beta$. Nonetheless, it is not difficult to verify that our method developed in this section also applies to such symbols.} $\ds R_1=\{(\xi_1, \ldots, \xi_n) \in \BBR^{dn}: |\xi_1| \gg |\xi_2|, \ldots, |\xi_n|\}$, the symbol $\ds |\xi_1+\ldots + \xi_n|^\beta$ -- which is naturally associated to Leibniz rules for $D^\beta(f_1 \ldots f_n)$ -- is a Mikhlin symbol of order $\beta$, satisfying \eqref{eq:positive:order:class} for arbitrarily many multi-indices $\gamma_1, \ldots, \gamma_n$. Because of this, in what follows we consider Mikhlin symbols of positive order, arbitrarily smooth away from the origin. Under this assumption, we provide a sketch of the proofs of Theorems \ref{thm:multipliers:positive:order}, \ref{thm:Nparam:tensor:positive:multipl}, \ref{thm:multiparameter_nontensor}. Afterwards we discuss \emph{smoothing properties} for multilinear operators associated to symbols of negative order, such as multilinear fractional integral operators. 

We would like to remark that Theorem \ref{thm:multipliers:positive:order} follows from the boundedness of one-parameter flag paraproducts \cite{flag_paraproducts} and Theorem \ref{thm:multiparameter_nontensor} is implied by the boundedness of multi-parameter paraproducts \cite{bi-parameter_paraproducts}. Moreover, the smoothing property described in Theorem \ref{thm:smoothing:N:param} in the one-parameter mixed-norm setting has been resolved by Hart-Torres-Wu \cite{HartTorresWu-smoothing-bil}, and its bi-parameter, non-mixed-norm variant was established in Yang-Liu-Wu \cite{YangLiuWu-smoothing}; both cases focus on symbols of limited regularity whereas we handle smooth symbols satisfying pointwise decay conditions.  

\subsection{Leibniz-type estimates for flag operators associated to Mikhlin symbols of positive order}

The natural multi-parameter adaptation of Theorem \ref{thm:main} in this context is provided exactly by Theorem \ref{thm:Nparam:tensor:positive:multipl}, in which the Mikhlin symbols associated to each vertex are assumed to be products of Mikhlin symbols in each parameter, i.e. they satisfy \eqref{symbol_N_parameter}. Because the symbols tensorize, the multi-parameter extension will follow closely the procedures described in Section \ref{sec:2param:5linflag}, Section \ref{subsection_bi_leibniz} and Section \ref{generic_optim_interp}. For that reason, we will focus on the one-parameter case -- Theorem \ref{thm:multipliers:positive:order}.

\begin{remark}
Since the multipliers considered are smooth away from the origin (they satisfy \eqref{eq:positive:order:class}), they are smooth on every Whitney rectangle and as a consequence the corresponding Fourier coefficients have arbitrary decay. This comes in contrast with the Leibniz rules presented in Theorem \ref{thm:main}, where conditions \eqref{cond:thm:1} and \eqref{cond:thm:2} are necessary. 
Notice also that in this situation we do not obtain the endpoint estimates corresponding to $p_l=1$ or $p_l=\infty$, although some of the $L^\infty$ endpoints can be proved through the Coifman-Meyer approach.
\end{remark}

\begin{proof}[Sketch of Proof of Theorem \ref{thm:multipliers:positive:order}]
Here again the inductive procedure will follow closely the steps described in Section \ref{sec:one:param:generic:ind}, and it will be enough to discuss steps \eqref{dec:root:symbol} and \eqref{dec:Fourier:series} of our strategy presented in Section \ref{strategy}: the splitting of the root symbol (and the simultaneous appearance of commutators), and the Fourier series decomposition for the new symbols. Once these steps performed, the operator associated to the rooted tree $\mathcal{G}$ naturally tensorized into operators associated to rooted subtrees of lower complexity; in many situations, the Mikhlin symbol $m_{\beta_{\tilde v}}(\xi_{i_1}, \ldots, \xi_{i_s})$ -- associated to the root of a subtree -- will be replaced by the product $m_{\beta_{\tilde v}}(\xi_{i_1}, \ldots, \xi_{i_s}) \cdot m_{\beta_{r_{\mathcal{G}}}}(0, \ldots, \xi_{i_1}, 0, \ldots, \xi_{i_s}, 0 , \ldots)$, which is again a Mikhlin symbol of order $\beta_{r_{\mathcal{G}}}+\beta_{\tilde v}>0$ in the variables $\xi_{i_1}, \ldots, \xi_{i_s}$.

For illustrative purposes, we will focus on the one-parameter Leibniz-type estimate of complexity 1 and explain how to achieve the steps \eqref{dec:root:symbol} and \eqref{dec:Fourier:series} mentioned above. In particular, we consider
\[
T_{m_\beta}(f_1, \ldots, f_n)(x):= \int_{\BBR^{nd}} m_\beta(\xi_1, \ldots, \xi_n) \hat f_1(\xi_1) \cdot \ldots \cdot \hat f_n(\xi_n) e^{2 \pi i x \cdot (\xi_1+\ldots+\xi_n)} d \xi_1 \ldots d \xi_n,
\]
when restricted to two typical regions:
\begin{equation} \label{cone:type}
\begin{split}
R_1:= & \{ (\xi_1, \ldots, \xi_n)  : |\xi_1| \gg |\xi_2|,|\xi_3| \ldots, |\xi_n|\} \\
\tilde R_{1,2}:= & \{ (\xi_1, \ldots, \xi_n)  : |\xi_1| \sim |\xi_2|\gg |\xi_3| \ldots, |\xi_n|\}.
\end{split}
\end{equation}

The first region corresponds to the situation when one of $|\xi_{j_0}|$ is much larger than the remaining variables, and so $|(\xi_1, \ldots, \xi_n)|\sim |\xi_{j_0}|$. For simplicity and without loss of generality, we assume that $j_0=1$. The second situation corresponds to the ``diagonal case'', when there exist $j_1 \neq j_2$ with $|\xi_{j_1}|\sim |\xi_{j_2}|$ larger than the norm of the remaining variables; then $|(\xi_1, \ldots, \xi_n)|\sim |\xi_{j_1}|\sim |\xi_{j_2}|$ and we assume that $j_1=1, j_2=2$.

\begin{enumerate}[leftmargin=*]
\item 
The multiplier localized on the off-diagonal conical region $\{(\xi_1, \ldots \xi_n): |\xi_1| \gg |\xi_2|, \ldots, |\xi_n| \}$ can be expressed as
\begin{align}
\label{eq:positive:order:start}
\sum_{k_1 \gg k_2, \ldots, k_n} \int_{\BBR^{nd}} m_\beta(\xi_1, \ldots, \xi_n) \widehat{\Delta_{k_1} f_1}(\xi_1)\cdot \ldots \cdot \widehat{\Delta_{k_n} f_n}(\xi_n) e^{2 \pi i x \cdot (\xi_1+\ldots+\xi_n)} d \xi_1 \ldots d \xi_n.
\end{align}
For this, we approximate $m_\beta(\xi_1, \xi_2, \ldots, \xi_n)$ by $m_\beta(\xi_1, 0,  \ldots, 0)$ and in consequence we need to study the ``commutator"  $m_\beta(\xi_1, \xi_2, \ldots, \xi_n)-m_\beta(\xi_1, 0,  \ldots, 0)$. We notice the following\footnote{Throughout the section, for a function $m: \BBR^{dn} \to \BBC$ and any $1 \leq j \leq n$, $\partial_{\xi_j}m$ or $\nabla_j m$ denotes the vector \[  \partial_{\xi_j}m(\xi_1, \ldots, \xi_n)= (\partial_{\xi_j^1} m (\xi_1, \ldots, \xi_n), \ldots, \partial_{\xi_j^d} m (\xi_1, \ldots, \xi_n) ).\]}:
\begin{align*}
m_\beta(\xi_1, \xi_2, \ldots, \xi_n)-m_\beta(\xi_1, 0,  \ldots, 0)= \int_0^1 \frac{d}{d \,t } \big(  m_\beta(\xi_1, t \xi_2, \ldots, t \xi_n)  \big) dt = \int_0^1 \sum_{l=2}^n \nabla_{\xi_l} m_\beta(\xi_1, t \xi_2, \ldots, t \xi_n) \cdot \xi_l dt.
\end{align*}

Now we fix $2 \leq l \leq n$ and assume without loss of generality that $l= 2$. The functions $f_1$ and $f_2$ will play a prominent role, and for this reason we can sum over $k_3, \ldots, k_n \ll k_1$ in \eqref{eq:positive:order:start}. We would like to implement the Fourier series decomposition of
\begin{equation} \label{Miksymbol:annuli}
\left(\int_0^1 \nabla_2 m_\beta(\xi_1, t \xi_2, \ldots, t \xi_n) \cdot \xi_2 dt \right) \tilde \psi_{k_1}(\xi_1) \tilde\psi_{k_2}(\xi_2) \tilde\varphi_{k_1}(\xi_3) \cdot \ldots \cdot\tilde\varphi_{k_1}(\xi_n),
\end{equation}
which however requires a further restriction of the symbol. For this, we cover the annuli $\{|\xi_1| \sim 2^{k_1} \}$ and $\{|\xi_2| \sim 2^{k_2} \}$ with Whitney cubes associated to directional cones as described in Remark \ref{remark:higher:dim}:
\begin{equation*}
\{|\xi_1| \sim 2^{k_1}\} \subseteq \bigcup_{c_1 \in \mathfrak{C}}\{|\xi_1| \sim 2^{k_1}\}\cap c_1, \quad \{|\xi_2| \sim 2^{k_2}\} \subseteq \bigcup_{c_2 \in \mathfrak{C}}\{|\xi_2| \sim 2^{k_2}\}\cap c_2.
\end{equation*}
Hence the symbol \eqref{Miksymbol:annuli} can be rewritten as
\[
\sum_{c_1, c_2 \in \mathfrak{C}}\int_0^1 \nabla_2 m_\beta(\xi_1, t \xi_2, \ldots, t \xi_n) \cdot \xi_2 \, \tilde \psi_{k_1,c_1}(\xi_1) \tilde \psi_{k_1}(\xi_1) \tilde \psi_{k_2,c_2}(\xi_2) \tilde \psi_{k_2}(\xi_2) \cdot \tilde \varphi_{k_1}(\xi_3) \ldots \tilde \varphi_{k_1}(\xi_n)  dt.
\]
For each summand with $c_1, c_2 \in \mathfrak{C}$ fixed, we apply the Fourier series decomposition to obtain
\begin{align*}
\sum_{L_1, \ldots, L_n \in \BBZ^d} C_{L_1, \ldots, L_n}^{k_1, k_2,c_1,c_2} e^{2 \pi i L_1 \cdot {{\xi_1} \over {2^{k_1}}}} e^{2 \pi i L_2 \cdot {{\xi_2} \over {2^{k_2}}}} e^{2 \pi i L_3 \cdot {{\xi_3} \over {2^{k_1}}}} \ldots e^{2 \pi i L_n \cdot {{\xi_n} \over {2^{k_1}}}}
\end{align*}
on the Whitney rectangle $Q_{k_1, c_1} \times Q_{k_2, c_2} \times \{ |\xi_3|\lesssim 2^{k_1}\} \times \ldots \times \{ |\xi_n|\lesssim 2^{k_1}\}$.

The Fourier coefficients $C_{L_1, \ldots, L_n}^{k_1, k_2,c_1,c_2}$ can be written as
\begin{align*}
2^{-k_1d(n-1)}2^{-k_2d}\int_0^1 \int_{\BBR^{nd}}& \nabla_2 m_\beta(\xi_1, t \xi_2, t\xi_3,\ldots, t \xi_n) \cdot \xi_2 \, \tilde \psi_{k_1,c_1}(\xi_1) \tilde \psi_{k_2,c_2}(\xi_2) \cdot  \tilde \psi_{k_1}(\xi_1) \cdot  \tilde \psi_{k_2}(\xi_2)
\\ & \cdot \tilde \varphi_{k_1}(\xi_3) \ldots\tilde  \varphi_{k_1}(\xi_n) \cdot
 e^{2 \pi i L_1 \cdot \frac{\xi_1}{2^{k_1}}}e^{2 \pi i L_2 \cdot \frac{\xi_2}{2^{k_2}}} e^{2 \pi i L_3 \cdot \frac{\xi_3}{2^{k_1}}} \ldots e^{2 \pi i L_n \cdot \frac{\xi_n}{2^{k_1}}} d\xi_1 \ldots d\xi_n dt.
\end{align*}
By change of variables $\xi'_{\tilde{l}} := 2^{-k_1}\xi_{\tilde{l}}$ for $\tilde{l}\neq 2$ and $\xi_2' := 2^{-k_2}\xi_2$, it becomes
\begin{align} \label{fourier_coeff:Mikhlin}
 2^{k_2} \int_0^1 \int_{\BBR^{nd}} & \nabla_2 m_\beta(2^{k_1}\xi'_1, t 2^{k_2}\xi'_2, t2^{k_1}\xi'_3, \ldots, t 2^{k_1}\xi'_n) \cdot \xi'_2 \, \tilde \psi_{0,c_1}(\xi'_1) \tilde \psi_{0,c_2}(\xi'_2) \tilde \psi(\xi'_1) \tilde \psi(\xi'_2)  \nonumber
 \\& \cdot \tilde \varphi_{0}(\xi'_3) \ldots \tilde \varphi_{0}(\xi'_n)
  e^{2 \pi i L_1 \cdot \xi'_1}e^{2 \pi i L_2 \cdot \xi'_2} e^{2 \pi i L_3 \cdot \xi'_3} \ldots e^{2 \pi i L_n \cdot \xi'_n} d\xi'_1 \ldots d\xi'_n dt.
\end{align}
Using integration by parts, we can bound \eqref{fourier_coeff:Mikhlin} by
\begin{align} \label{fourier_coeff:Mikhlin:ibp}
2^{k_2} (1+|L_1| + \ldots |L_n|)^{-|\tilde{M}|} & \int_0^1 \int_{\BBR^{nd}}  \big|\partial^{\tilde{M}}_{\xi'} \left(\nabla_2 m_\beta(2^{k_1}\xi'_1, t 2^{k_2}\xi'_2, t2^{k_1}\xi'_3, \ldots, t 2^{k_1}\xi'_n) \cdot \xi'_2 \right) \big| \nonumber \\ 
& \big|\tilde \psi_{0,c_1}(\xi'_1) \tilde \psi_{0,c_2}(\xi'_2) \tilde \psi(\xi'_1) \tilde \psi(\xi'_2) \cdot \varphi_{0}(\xi'_3) \ldots \varphi_{0}(\xi'_n)\big| d\xi'_1 \ldots d\xi'_n dt,
\end{align}
for any multi-indices $\tilde{M}$, where $\xi' := (\xi'_1, \ldots, \xi'_n)$. 

Since $m_\beta$ is of order $\beta>0$ and its derivatives decay away from $0$, we can majorize, for $(\xi'_1, \ldots \xi'_n)$ with $|\xi'_1| \sim 1, |\xi'_2| \sim 1, |\xi'_3| \lesssim 1 \ldots, |\xi'_n| \lesssim 1$,
\begin{equation}\label{decay:Mikhlin:Fourier}
 \big|\partial^{\tilde{M}}_{\xi'} \left( \nabla_2 m_\beta(2^{k_1}\xi'_1, t 2^{k_2}\xi'_2, t2^{k_1}\xi'_3, \ldots, t 2^{k_1}\xi'_n) \cdot \xi'_2 \right) \big| \lesssim 2^{k_1(\beta-1)}.
\end{equation}
By applying \eqref{decay:Mikhlin:Fourier} to \eqref{fourier_coeff:Mikhlin:ibp}, we conclude that for any $M>0$,
\begin{equation}
\big| C_{L_1, \ldots, L_n}^{k_1, k_2,c_1,c_2} \big| \lesssim_M  \frac{2^{k_1 (\beta-1)} 2^{k_2} }{ \left(1+|L_1|+ \ldots + |L_n| \right)^M}.
\end{equation}

The initial estimate \eqref{eq:positive:order:start} becomes the sum of terms of the form: 
\begin{align*}
I_A:= \sum_{c_1, c_2 \in \mathfrak{C}}\ \  \sum_{k_1 \gg k_2} C_{L_1, \ldots, L_n}^{k_1, k_2,c_1,c_2} \Delta_{k_1, c_1, \frac{L_1}{2^{k_1}}}f_1(x) \Delta_{k_2, c_2, \frac{L_2}{2^{k_1}}}f_2(x) \cdot S_{k_1, \frac{L_3}{2^{k_1}}}f_3(x) \cdot \ldots \cdot S_{k_1, \frac{L_n}{2^{k_1}}}f_n(x)
\end{align*}
and 
\begin{align} \label{simpler_symbol_multiplier}
I_B:= \sum_{k_1 \gg k_2, \ldots, k_n} (T_{m_\beta (\cdot, 0, \ldots, 0)} \Delta_{k_1 }f_1)(x) \cdot  (\Delta_{k_2 }f_2)(x) \cdot \ldots \cdot  (\Delta_{k_n }f_n)(x).
\end{align}

Above, the operator $ \Delta_{k_1, c_1}$ is defined in frequency by
\begin{equation}
\label{eq:def:proj:W}
\widehat{ \Delta_{k_1, c_1} f_1}(\xi_1):= \tilde \psi_{k_1,c_1}(\xi_1) \tilde \psi_{k_1}(\xi_1) \widehat{\Delta_{k_1} f_1}(\xi_1), 
\end{equation}
so that for any $1 \leq p_1 \leq \infty$, we still\footnote{$\Delta_{k_1, c_1, a}$ represents the $a$-modulation of $\Delta_{k_1, c_1} $.} have 
\begin{equation}
\label{eq:proj:W:same}
\|\Delta_{k_1, c_1, a} f_1\|_{p_1}= \| \Delta_{k_1, c_1} f_1 \|_{p_1} \lesssim \|\Delta_{k_1} f_1\|_{p_1}.
\end{equation}

For $I_A$, we get the usual estimates analogous to the ones in Section \ref{sec:one:param:generic:ind}:
\begin{align*}
\|I_A\|_r^\tau \lesssim 
\sum_{k_1 \gg k_2} 2^{k_1(\beta-1) \tau} 2^{k_2 \tau} \|\Delta_{k_1 }f_1\|_{p_1}^\tau \|\Delta_{k_2}f_2\|_{p_2}^\tau \cdot \|f_3\|_{p_3}^\tau \cdot \ldots \|f_n\|_{p_n}^\tau.
\end{align*}

For $I_B$, we switch the order of summation to rewrite it as
\[
 (T_{m_\beta (\cdot, 0, \ldots, 0)}f_1)(x) \cdot f_2(x) \cdot \ldots \cdot f_n(x) - \sum_{k_1\prec k_2} (T_{m_\beta (\cdot, 0, \ldots, 0)} \Delta_{k_1 }f_1)(x) (\Delta_{k_2 }f_2)(x) f_3(x) \cdot \ldots \cdot f_n(x) + \text{many similar terms}. 
\]
The first term is the reason why we cannot obtain endpoint estimates $p_1=1$ or $p_1=\infty$,\footnote{In other regions where for example $|\xi_{j_0}| \gg |\xi_1|, \ldots, |\xi_n|$, we will miss the endpoints $p_{j_0}=1$ or $p_{j_0}=\infty$; so overall we simply have the conditions $1<p_1, \ldots, p_n < \infty$.} but its boundedness reduces to H\"older's inequality and to the fact that
\[
m_{\beta}(\xi_1, 0, \ldots, 0) \cdot |\xi_1|^{- \beta}
\]
is a Mikhlin symbol of order $0$; hence 
\[
 \| T_{m_\beta (\cdot, 0, \ldots, 0)}f_1 \|_{p_1} \lesssim \| D^\beta f_1\|_{p_1}
\] 
for any $1<p_1< \infty$. The remaining terms reduce to familiar estimates of the form
\[
\sum_{k_1< k_j + n+10} 2^{k_1 \beta \tau} \|\Delta_{k_1} f_1\|_{p_1}^\tau \|\Delta_{k_j} f_j\|_{p_j}^\tau,  
\]
which can easily be bounded -- see the treatment of $I_A$ in Section \ref{Bourgain-Li_hilow}.
\vskip .2cm
\item In the diagonal cone $\{(\xi_1, \ldots, \xi_n): |\xi_1| \sim |\xi_l| \geq |\xi_{\tilde{l}}| \ \ \text{for} \ \ \tilde{l} \neq 1, l \}$, we assume without loss of generality that $l= 2$ and we would like to estimate 
\begin{align}
\label{eq:positive:order:start:diag}
\sum_{k_1 \sim  k_2} \int_{\BBR^{nd}} m_\beta(\xi_1, \ldots, \xi_n) \widehat{\Delta_{k_1} f_1}(\xi_1) \widehat{\Delta_{k_2} f_2}(\xi_2) \widehat{\Delta_{ \leq k_1} f_3}(\xi_3)\cdot \ldots \cdot \widehat{\Delta_{ \leq k_1} f_n}(\xi_n) e^{2 \pi i x \cdot (\xi_1+\ldots+\xi_n)} d \xi_1 \ldots d \xi_n.
\end{align}

We apply the Whitney decomposition 
 \begin{align} \label{diagonal:whitney:highdim}
 \displaystyle \bigcup_{k_1 \in \mathbb{Z}} \bigcup_{c_1, c_2 \in \mathfrak{C}}(\{ |\xi_1| \sim 2^{k_1} \} \cap c_1) \times (\{ |\xi_2| \sim 2^{k_1} \} \cap c_2) \times \{ |\xi_3| \leq 2^{k_1}  \} \times \ldots \times \{ |\xi_n| \leq 2^{k_1} \}
 \end{align}
and then perform a Fourier series decomposition of $m_\beta(\xi_1, \ldots, \xi_n)$ restricted to each Whitney cube with fixed $k_1 \in \mathbb{Z}$ and $c_1, c_2 \in \mathfrak{C}$. We thus obtain
\begin{align*}
m_\beta(\xi_1, \ldots, \xi_n) \tilde \psi_{k_1,c_1}(\xi_1) \tilde\psi_{k_1,c_2}(\xi_2) \tilde \varphi_{k_1}(\xi_3) \cdot \ldots \cdot \tilde \varphi_{k_1}(\xi_n)= \sum_{L_1, \ldots, L_n \in \BBZ^d} C_{L_1, \ldots, L_n}^{k_1,c_1, c_2} e^{2 \pi i L_1 \cdot {{\xi_1} \over {2^{k_1}}}} e^{2 \pi i L_2 \cdot {{\xi_2} \over {2^{k_1}}}}  e^{2 \pi i L_3 \cdot {{\xi_3} \over {2^{k_1}}}} \ldots e^{2 \pi i L_n \cdot {{\xi_n} \over {2^{k_1}}}}.
\end{align*}
Since this Whitney cube is away from the origin, we have again for any $M>0$,
\[
\big| C_{L_1, \ldots, L_n}^{k_1} \big| \lesssim_M \frac{2^{k_1 \beta}}{\left( 1+|L_1|+ \ldots+ |L_n| \right)^M}. 
\]
Hence estimating \eqref{eq:positive:order:start:diag} in $\|\cdot \|_{r}^\tau$ reduces to summing
\[
\sum_{k_1} 2^{k_1 \beta \tau} \|\Delta_{k_1} f_1\|_{p_1}^\tau  \|\Delta_{k_1} f_2\|_{p_2}^\tau   \|f_3\|_{p_3}^\tau  \cdot \ldots \cdot  \|f_n\|_{p_n}^\tau,
\]
which is a routine computation by now.
\end{enumerate}
\end{proof}

\begin{remark}
For depth-1 trees, we obtain ``Kato-Ponce''-type estimates:
\begin{equation}
\label{eq:KatoPonce:symbols:positive:order}
\|T_{m_\beta} (f, g)-  (T_{m_\beta(\cdot, 0)} f) \cdot g - f \cdot (T_{m_\beta(0, \cdot)} g) \|_r \lesssim \|D^\beta f\|_{p_1} \|g\|_{p_2} + \|f\|_{p_1}  \|D^\beta g\|_{p_2} 
\end{equation}
for any $1 \leq p_1, p_2 \leq \infty$, $1/r=1/{p_1}+1/{p_2}$. In this case we can allow for $L^1$ and $L^\infty$ endpoints. Of course, the right hand side of \eqref{eq:KatoPonce:symbols:positive:order} can be replaced by a geometric mean of appropriate Besov norms.
\end{remark}

The next interesting situation corresponds to mutilinear multi-parameter operators associated to generic Marcinkiewicz symbols of positive orders; that is, we consider trees of complexity $1$ for multi-parameter non-tensorized symbols satisfying \eqref{eq:positive:order:class:multi:param}. In order to avoid over-burdening the notation, we simply assume that only two parameters and two functions are involved: $N=2$, $n=2$.

\begin{proof}[Sketch of Proof of Theorem \ref{thm:multiparameter_nontensor}]
We start with $m_{\beta_1,\beta_2}(\xi,\eta)$ a symbol in $\mathbb{R}^{2d_1} \times \mathbb{R}^{2d_2}$ that is smooth away from the planes $\{(\xi_1, \xi_2) = 0 \}$ and $\{(\eta_1, \eta_2) = 0 \}$, and satisfies the Marcinkiewicz condition 
\begin{equation*}
|\partial^{\zeta_1}_{\xi}\partial^{\zeta_2}_{\eta}m_{\beta_1,\beta_2}(\xi, \eta)| \lesssim |\xi|^{\beta_1 - |\zeta_1|} |\eta|^{\beta_2 - |\zeta_2|}
\end{equation*}
for sufficiently many multi-indices $\zeta_1, \zeta_2$, where $\xi = (\xi_1, \xi_2) = \left((\xi_1^i)_{i=1}^{d_1}, (\xi^i_2)_{i=1}^{d_1}\right)$ and $\eta = (\eta_1, \eta_2) = \left((\eta_1^i)_{i=1}^{d_2}, (\eta^i_2)_{i=1}^{d_2}\right)$. 

The associated bilinear operator is given by
\begin{equation} \label{Marcink_condition}
T_{m_{\beta_1,\beta_2}}(f_1, f_2)(x) := \int_{\mathbb{R}^{2d_1} \times \BBR^{2d_2}} m_{\beta_1,\beta_2}(\xi, \eta)\f{f_1}(\xi_1,\eta_1)\f{f_2}(\xi_2,\eta_2)e^{2 \pi i x\cdot(\xi_1+\xi_2)}e^{2 \pi i y\cdot(\eta_1+\eta_2)} d\xi d\eta,
\end{equation}
and our aim is to prove that $T_{m_{\beta_1,\beta_2}}$ satisfies the same estimate described in \eqref{Leib_bi_paraproduct:mixed} for the Lebesgue exponents $1 < p^j_1, p^j_2 < \infty, \frac{1}{p_1^j} + \frac{1}{p_2^j} = \frac{1}{r^j},  0 < r^j <\infty, $ for all $1 \leq j \leq 2$.

As in the one-parameter setting, we first decompose the frequency spaces for both parameters into cones as in \eqref{cone:type}. Depending on the type of cones, the arguments will be different and we will develop a case-by-case study as before.
\vskip .1in
\noindent
\textbf{(1)}
In the case when the cones for both parameters are of type (1) as in \eqref{cone:type}, the root symbol split will be two-folded, involving commutators in each parameter.

Assume without loss of generality that the symbol is smoothly restricted to the region 
\begin{equation*}
R:= \{(\xi_1, \xi_2, \eta_1, \eta_2): |\xi_1| \gg |\xi_2|, |\eta_1| \gg |\eta_2| \}.
\end{equation*}
Let $\tilde{\chi}_R$ denote the smooth restriction to the region $R$.
We split the symbol on this region as
\begin{align*}
& m_{\beta_1,\beta_2}(\xi_1, \xi_2, \eta_1,\eta_2) \\
=&  \left(m_{\beta_1,\beta_2}(\xi_1, \xi_2, \eta_1, \eta_2) - m_{\beta_1,\beta_2}(\xi_1, 0, \eta_1, \eta_2)\right) + \left(m_{\beta_1,\beta_2}(\xi_1, 0, \eta_1, \eta_2) - m_{\beta_1,\beta_2}(\xi_1, 0, \eta_1, 0)\right) + m_{\beta_1,\beta_2}(\xi_1, 0, \eta_1, 0) \\
= & \int_{0}^1\partial_{\xi_2} m_{\beta_1,\beta_2}(\xi_1,t\xi_2, \eta_1,\eta_2) \cdot \xi_2 dt + \int_0^1\partial_{\eta_2} m_{\beta_1,\beta_2}(\xi_1, 0, \eta_1,s\eta_2) \cdot  \eta_2 ds + m_{\beta_1,\beta_2}(\xi_1, 0, \eta_1, 0) \\
= &  \int_{0}^1\left( \partial_{\xi_2} m_{\beta_1,\beta_2}(\xi_1,t\xi_2, \eta_1,\eta_2) - \partial_{\xi_2} m_{\beta_1,\beta_2}(\xi_1, t\xi_2, \eta_1,0)\right) \cdot \xi_2 dt + \int_0^1 \partial_{\xi_2} m_{\beta_1,\beta_2}(\xi_1, t\xi_2, \eta_1,0) \cdot \xi_2  dt +\\
&   \int_0^1\partial_{\eta_2} m_{\beta_1,\beta_2}(\xi_1, 0, \eta_1,s\eta_2) \cdot \eta_2 ds + m_{\beta_1,\beta_2}(\xi_1, 0, \eta_1, 0)\\
= & \int_0^1 \int_{0}^1\sum_{i=1}^{d_1}\sum_{i'=1}^{d_2} \xi^i_2 \eta^{i'}_2\partial_{\xi^i_2}\partial_{\eta^{i'}_2} m_{\beta_1,\beta_2}(\xi_1,t\xi_2, \eta_1,s\eta_2) dtds + \int_0^1 \partial_{\xi_2} m_{\beta_1,\beta_2}(\xi_1, t\xi_2, \eta_1,0) \cdot  \xi_2 dt + \\
& \int_0^1\partial_{\eta_2} m_{\beta_1,\beta_2}(\xi_1, 0, \eta_1,s\eta_2) \cdot  \eta_2 ds + m_{\beta_1,\beta_2}(\xi_1, 0, \eta_1, 0)\\
:= & \mathcal{I} + \mathcal{II} + \mathcal{III} + \mathcal{IV}.
\end{align*}
In the case when the symbol tensorizes, $\mathcal{I}$ corresponds to the tensor product of two commutator symbols, $\mathcal{II}$ and $\mathcal{III}$ are generalizations of the mix of a commutator symbol and a symbol of lower complexity whereas $\mathcal{IV}$ is a biparameter variant of $I_B$ defined in (\ref{simpler_symbol_multiplier}). 

It suffices to prove the boundedness of the multipliers $T_{\mathcal{I}\tilde{\chi}_R}$, $T_{\mathcal{II}\tilde{\chi}_R}$, $T_{\mathcal{III}\tilde{\chi}_R}$ and $T_{\mathcal{IV}\tilde{\chi}_R}$.
\vskip .05in
\noindent
\textbf{Estimate for $T_{\mathcal{I}\tilde{\chi}_R}.$}
We recall the Whitney decomposition used in the one parameter setting and decompose frequency spaces for both parameters as
\begin{align*}
\{(\xi_1, \xi_2) \in \BBR^{2d_1}: |\xi_1| \gg |\xi_2| \} = & \bigcup_{c_1, c_2 \in \mathfrak{C}_1}\bigcup_{k_1 \gg k_2 }(\{|\xi_1| \sim 2^{k_1} \} \cap c_1) \times (\{|\xi_2| \sim 2^{k_2} \} \cap c_2), \\
\{(\eta_1, \eta_2) \in \BBR^{2d_2}: |\eta_1| \gg |\eta_2| \} = & \bigcup_{c_1', c_2' \in \mathfrak{C}_2}\bigcup_{m_1 \gg m_2 }(\{|\eta_1| \sim 2^{m_1} \} \cap c_1') \times (\{|\eta_2| \sim 2^{m_2} \} \cap c_2'),
\end{align*}
where $\mathfrak{C}_1$ and $\mathfrak{C}_2$ represent the collections of directional cones in the frequency spaces $\BBR^{d_1}$ and $\BBR^{d_2}$ respectively. We then smoothly restrict the symbol $$
m_{C_{\beta_1}, C_{\beta_2}}(\xi_1,\xi_2, \eta_1,\eta_2):=\int_0^1 \int_{0}^1\sum_{i=1}^{d_1}\sum_{i'=1}^{d_2} \xi^i_2 \eta^{i'}_2\partial_{\xi^i_2}\partial_{\eta^{i'}_2} m_{\beta_1,\beta_2}(\xi_1,t\xi_2, \eta_1,s\eta_2) dtds$$
to each Whitney rectangle with fixed $k_1  \gg k_2, m_1 \gg m_2 $, $c_1, c_2 \in \mathfrak{C}_1$ and $c_1', c_2' \in \mathfrak{C}_2$, and denote it by $m^{k_1, k_2, m_1,m_2,c_1, c_2, c_1',c_2'}_{C_{\beta_1},C_{\beta_2}}$. We perform the quadruple Fourier series decomposition to obtain
\begin{align} \label{fourier_series_biparameter_marcink}
m_{C_{\beta_1}, C_{\beta_2}}^{k_1,k_2,m_1,m_2, c_1, c_2, c_1',c_2'}(\xi_1,\xi_2,\eta_1,\eta_2) =
\sum_{\substack{L_1,L_2 \in \mathbb{Z}^{d_1} \\ L_1',L_2' \in \mathbb{Z}^{d_2}}}C^{k_1,k_2,m_1,m_2,c_1, c_2, c_1',c_2'}_{L_1,L_2,L_1',L_2'}e^{2 \pi i L_1\cdot \frac{\xi_1}{2^{k_1}}}e^{2 \pi i  L_2\cdot\frac{\xi_2}{2^{k_2}}}e^{2 \pi i L_1'\cdot\frac{\eta_1}{2^{m_1}}}e^{2 \pi i L_2'\cdot\frac{\eta_2}{2^{m_2}}}, 
\end{align}
where the Fourier coefficients decay rapidly due to the Marcinkiewicz condition (\ref{Marcink_condition}) on $m_{\beta_1,\beta_2}$:
\begin{align*}
|C^{k_1,k_2,m_1,m_2,c_1,c_2, c_1',c_2'}_{L_1,L_2,L_1',L_2'}| \lesssim 2^{k_1(\beta_1-1)}2^{k_2}2^{m_1(\beta_2-1)}2^{m_2}(1+|L_1|+|L_2|)^{-N}(1+|L_1'|+|L_2'|)^{-N'}
\end{align*}
for sufficiently large $N$ and $N'$.

By applying the Fourier series representation (\ref{fourier_series_biparameter_marcink}), we rewrite the multiplier as
\begin{align*}
&T_{\mathcal{I}\tilde{\chi}_R} (f_1,f_2)(x,y) =  \sum_{\substack{c_1,c_2 \in \mathfrak{C}_1 \\ c_1',c_2' \in \mathfrak{C}_2}}\sum_{\substack{L_1, L_2 \in \mathbb{Z}^{d_1} \\ L_1', L_2' \in \mathbb{Z}^{d_2} \\ }}\sum_{\substack{k_1 \gg k_2 \\ m_1 \gg m_2}}C^{k_1,k_2,m_1,m_2,c_1, c_2, c_1',c_2'}_{L_1,L_2, L_1',L_2'} \\
& \int_{\mathbb{R}^{2d_1} \times \BBR^{2d_2}} \ii F(\Delta^{(1)}_{k_1,c_1}\Delta_{m_1,c_1'}^{(2)}{f_1}) (\xi_1,\eta_1) \ii F(\Delta_{k_2,c_2}^{(1)}\Delta^{(2)}_{m_2,c_2'}f_2)(\xi_2,\eta_2) e^{2 \pi i \xi_1\cdot(x+\frac{L_1}{2^{k_1}})} e^{2 \pi i  \xi_2\cdot(x+\frac{L_2}{2^{k_2}} )}e^{2 \pi i \eta_1\cdot(y+\frac{L_1'}{2^{m_1}})} e^{2 \pi i \eta_2\cdot(y+\frac{L_2'}{2^{m_2}})} d\xi d\eta.
\end{align*}
We can invoke now the analysis developed in Sections \ref{sec:2param:5linflag} and \ref{generic_induction} to conclude the discussion.
\vskip .05in 
\noindent
\textbf{Estimate for $T_{\mathcal{II}\tilde{\chi}_R}$ and $T_{\mathcal{III}\tilde{\chi}_R}$.}
We shall notice that the treatment of the multipliers corresponding to $\mathcal{II}$ and $\mathcal{III}$ are symmetric and for that reason we will focus on $T_{\mathcal{II}\tilde{\chi}_R}$. We rewrite the symbol as
\begin{align*}
\mathcal{II} =m_{C_{\beta_1},H}(\xi_1,\xi_2, \eta_1)|\eta_1|^{\beta_2},
\end{align*}
where
\begin{equation*}
m_{C_{\beta_1},H}(\xi_1,\xi_2, \eta_1) := \int_0^1 \frac{\partial_{\xi_2} m(\xi_1, t\xi_2, \eta_1, 0)\cdot \xi_2}{|\eta_1|^{\beta_2}} dt.
\end{equation*}
As the notation suggests, $m_{C_{\beta_1},H}(\xi_1,\xi_2, \eta_1)$ is a symbol generating a commutator in the first parameter and a Mikhlin symbol of order $0$ (the simplest example being the symbol corresponding to the Hilbert transform) in the second parameter localized on the region $\{(\xi_1, \xi_2, \eta_1): |\xi_1| \gg |\xi_2| \}$. In order to use the Fourier series decomposition on this symbol, we need to decompose  
\begin{align*}
\{(\xi_1, \xi_2, \eta_1): |\xi_1| \gg |\xi_2| \} = \bigcup_{c_1, c_2 \in \mathfrak{C}_1}  \bigcup_{c_1' \in \mathfrak{C}_2} \bigcup_{\substack{k_1 \gg k_2 \\ m_1 \in \mathbb{Z}}}| (\{|\xi_1| \sim 2^{k_1} \} \cap c_1) \times (\{|\xi_2| \sim 2^{k_2} \} \cap c_2) \times (\{|\eta_1| \sim 2^{m_1}  \} \cap c_1').
\end{align*}
We smoothly restrict $m_{C_{\beta_1},H}(\xi_1,\xi_2, \eta_1)$ to a region with fixed $k_1 \gg k_2, m_1 \in \mathbb{Z}$, $c_1, c_2 \in \mathfrak{C}_1$ and $c_1' \in \mathfrak{C_2}$, and let $m^{k_1,k_2,m_1,c_1,c_2,c_1'}_{C_{\beta_1},H}(\xi_1,\xi_2, \eta_1)$ denote the localized symbol. We then perform the triple Fourier series decomposition for
\begin{align} \label{fourier_series_commu_H}
m^{k_1,k_2, m_1,c_1,c_2,c_1'}_{C_{\beta_1},H}(\xi_1,\xi_2, \eta_1) = \sum_{\substack{L_1,L_2 \in \mathbb{Z}^{d_1} \\  L' \in \mathbb{Z}^{d_2}}} C^{k_1,k_2,m_1,c_1,c_2,c_1'}_{L_1,L_2, L'}  e^{2 \pi i L_1\cdot \frac{\xi_1}{2^{k_1}}} e^{2 \pi i L_2\cdot \frac{\xi_2}{2^{k_2}}} e^{2 \pi i L' \cdot\frac{\eta_1}{2^{m_1}}}.
\end{align}
Because of the regularity of $m_{C_{\beta_1},H}(\xi_1,\xi_2, \eta_1)$, the Fourier coefficients satisfy the decay condition
\begin{equation} \label{fourier_coef_commu_H}
|C^{k_1,k_2,m_1,c_1,c_2,c_1'}_{L_1,L_2, L'}|\lesssim 2^{k_1(\beta_1-1)}2^{k_2} (1+|L_1|+|L_2|)^{-N}(1+|L'|)^{-N'}
\end{equation}
for sufficiently large $N$ and $N'$.
Thanks to (\ref{fourier_series_commu_H}), we can rewrite the multiplier as
\begin{align*}
T_{\mathcal{II}\tilde{\chi}_R} &= \sum_{\substack{c_1,c_2 \in \mathfrak{C}_1 \\ c'_1 \in \mathfrak{C}_2}} \sum_{\substack{L_1,L_2 \in \mathbb{Z}^{d_1} \\  L' \in \mathbb{Z}^{d_2}}} \sum_{\substack{k_1 \gg k_2 \\ m_1 \gg m_2}}C^{k_1,k_2,m_1,c_1,c_2,c_1'}_{L_1,L_2,L'} \int |\eta_1|^{\beta_2} \ii F(\Delta_{k_1,c_1}^{(1)}\Delta_{m_1,c_1'}^{(2)} f_1) (\xi_1,\eta_1) \ii F(\Delta_{k_2,c_2}^{(1)}\Delta_{m_2}^{(2)}f_2)(\xi_2, \eta_2)e^{2 \pi i \xi_1\cdot(x+\frac{L_1}{2^{k_1}})} \\
&\quad \quad \quad \quad \quad \quad \quad \quad\quad \quad \quad \quad \quad \quad \quad \quad \quad e^{2 \pi i  \xi_2\cdot(x+\frac{L_2}{2^{k_2}})}e^{2 \pi i \eta_1\cdot(y+\frac{L'}{2^{m_1}})} e^{2 \pi i \eta_2\cdot y} d\xi d\eta\\
= & \sum_{\substack{c_1,c_2 \in \mathfrak{C}_1 \\ c'_1 \in \mathfrak{C}_2}} \sum_{\substack{L_1,L_2 \in \mathbb{Z}^{d_1} \\  L' \in \mathbb{Z}^{d_2}}} \sum_{\substack{k_1 \gg k_2 \\ m_1, m_2}}C_{L_1,L_2,L'}^{k_1,k_2,m_1,c_1,c_2,c_1'}  \int  \ii F(\Delta_{k_1,c_1}^{(1)}\Delta_{m_1,c_1'}^{(2)}D^{\beta_2}_{(2)} f_1) (\xi_1,\eta_1) \ii F(\Delta_{k_2,c_2}^{(1)}\Delta_{m_2}^{(2)}f_2)(\xi_2, \eta_2)e^{2 \pi i \xi_1\cdot(x+\frac{L_1}{2^{k_1}})} \\
&\quad \quad \quad \quad \quad \quad \quad \quad\quad \quad \quad \quad \quad \quad \quad \quad \quad e^{2 \pi i  \xi_2\cdot(x+\frac{L_2}{2^{k_2}} )}e^{2 \pi i \eta_1\cdot(y+\frac{L'}{2^{m_1}})} e^{2 \pi i \eta_2 \cdot y} d\xi d\eta\\
& - \sum_{\substack{c_1,c_2 \in \mathfrak{C}_1 \\ c'_1 \in \mathfrak{C}_2}} \sum_{\substack{L_1,L_2 \in \mathbb{Z}^{d_1} \\  L' \in \mathbb{Z}^{d_2}}} \sum_{\substack{k_1 \gg k_2 \\ m_1 \prec m_2}} C_{L_1,L_2,L'}^{k_1,k_2,m_1,c_1,c_2,c_1'} \int |\eta_1|^{\beta_2} \ii F(\Delta_{k_1,c_1}^{(1)}\Delta_{m_1,c_1'}^{(2)} f_1) (\xi_1,\eta_1) \ii F(\Delta_{k_2,c_2}^{(1)}\Delta_{m_2}^{(2)}f_2)(\xi_2, \eta_2)e^{2 \pi i \xi_1\cdot(x+\frac{L_1}{2^{k_1}})} \\
&\quad \quad \quad \quad \quad \quad \quad \quad\quad \quad \quad \quad \quad \quad \quad \quad \quad e^{2 \pi i  \xi_2\cdot(x+\frac{L_2}{2^{k_2}} )}e^{2 \pi i \eta_1\cdot(y+\frac{L'}{2^{m_1}})} e^{2 \pi i \eta_2\cdot y} d\xi d\eta := \mathcal{A} - \mathcal{B}.
\end{align*}
It is not difficult to verify that $\mathcal{B}$ can be treated using the argument presented in Section \ref{sec:2param:5linflag}. In contrast, the term $\mathcal{A}$ is trickier to estimate due to the fact that 
the non-tensorized symbol $m_{C_{\beta_1},H}(\xi_1,\xi_2, \eta_1)$ exhibits different types of behaviours in the $(\xi_1, \xi_2)$ and $\eta_1$ variables.

Nonetheless, we can simplify $\mathcal{A}$ as follows 
\begin{align*}
&\sum_{\substack{c_1,c_2 \in \mathfrak{C}_1 \\c_1' \in \mathfrak{C}_2}}\sum_{\substack{L_1,L_2 \in \mathbb{Z}^{d_1} \\  L' \in \mathbb{Z}^{d_2}}} \sum_{\substack{k_1 \gg k_2 \\ }}\sum_{m_1} C_{L_1,L_2,L'}^{k_1,k_2,m_1,c_1,c_2,c_1'} \Delta_{k_1,c_1}^{(1)}\Delta_{m_1,c_1'}^{(2)}D^{\beta_2}_{(2)} f_1(x + \frac{L_1}{2^{k_1}}, y + \frac{L'}{2^{m_1}})\sum_{m_2} \Delta_{k_2,c_2}^{(1)}\Delta_{m_2}^{(2)}f_2(x + \frac{L_2}{2^{k_2}}, y ) \\
= &\sum_{\substack{c_1,c_2 \in \mathfrak{C}_1 \\c_1' \in \mathfrak{C}_2}} \sum_{\substack{L_1,L_2 \in \mathbb{Z}^{d_1} \\  L' \in \mathbb{Z}^{d_2}}} \sum_{\substack{k_1 \gg k_2 \\ }}\sum_{m_1}  C_{L_1,L_2,L'}^{k_1,k_2,m_1,c_1,c_2,c_1'} \Delta_{k_1,c_1}^{(1)}\Delta_{m_1,c_1'}^{(2)}D^{\beta_2}_{(2)} f_1(x + \frac{L_1}{2^{k_1}}, y + \frac{L'}{2^{m_1}})\Delta_{k_2,c_2}^{(1)}f_2(x + \frac{L_2}{2^{k_2}}, y ).
\end{align*}
Its $\|\cdot \|^\tau_{L^{r^1}_x(L^{r^2}_y)}$ norm with $\tau \leq \min(1, r^1, r^2)$ can be majorized by
\begin{align} \label{commu_H_almost_final}
\sum_{\substack{c_1,c_2 \in \mathfrak{C}_1 \\c_1' \in \mathfrak{C}_2}}\sum_{\substack{L_1,L_2 \in \mathbb{Z}^{d_1} \\  L' \in \mathbb{Z}^{d_2}}} \sum_{\substack{k_1 \gg k_2 \\ }}  \big\|\sum_{m_1} C_{L_1,L_2,L'}^{k_1,k_2,m_1,c_1,c_2,c_1'}\Delta_{k_1,c_1}^{(1)}\Delta_{m_1,c_1'}^{(2)}D^{\beta_2}_{(2)} f_1(\cdot, \cdot + \frac{L'}{2^{m_1}})\big\|_{L^{p_1^1}_x(L^{p_1^2}_y)}^\tau \big\|\Delta_{k_2,c_2}^{(1)}f_2 \big\|_{L^{p_2^1}_x(L^{p_2^2}_y)}^\tau.
\end{align}
Let $F:= \Delta_{k_1,c_1}^{(1)}D^{\beta_2}_{(2)}f_1$. We claim that, for any $1<p_1^1, p_1^2<\infty$, 
\begin{equation} \label{claim_using_sq}
\big\|\sum_{m_1} C_{L_1,L_2,L'}^{k_1,k_2,m_1,c_1,c_2,c_1'}\Delta_{m_1,c_1'}^{(2)}F(\cdot, \cdot + \frac{L'}{2^{m_1}})\big\|_{L^{p_1^1}_x(L^{p_1^2}_y)} \lesssim (1+|L_1|+|L_2|)^{-N}(1+|L'|)^{-N'+1}2^{k_1(\beta_1-1)}2^{k_2}\|F\|_{L^{p_1^1}_x(L^{p_1^2}_y)}.
\end{equation}
Applying (\ref{claim_using_sq}) to (\ref{commu_H_almost_final}), we are left with a familiar expression that can be easily dealt with.

To prove the claim, we first linearize the left hand side of (\ref{claim_using_sq}) by choosing an appropriate function $h \in L^{{p_1^1}'}_x(L^{{p_1^2}'}_y)$ with $\|h\|_{L^{{p_1^1}'}_x(L^{{p_1^2}'}_y)} = 1$ such that
\begin{align} \label{ineq:translated_sq}
& \big\|\sum_{m_1}C_{L_1,L_2,L'}^{k_1,k_2,m_1,c_1,c_2,c_1'} \Delta_{m_1,c_1'}^{(2)}F(\cdot, \cdot + \frac{L'}{2^{m_1}})\big\|_{L^{p_1^1}_x(L^{p_1^2}_y)}= \big\|\sum_{m_1}C_{L_1,L_2,L'}^{k_1,k_2,m_1,c_1,c_2,c_1'}\Delta_{m_1,c_1'}^{(2)}F \ast_2 \tilde\psi_{m_1}(\cdot, \cdot + \frac{L'}{2^{m_1}})\big\|_{L^{p_1^1}_x(L^{p_1^2}_y)} \nonumber\\
= &\big| \int \sum_{m_1} C_{L_1,L_2,L'}^{k_1,k_2,m_1,c_1,c_2,c_1'}\Delta_{m_1,c_1'}^{(2)}F(x, y + \frac{L'}{2^{m_1}}) \tilde \Delta_{m_1}^{(2)}h(x,y) dx dy\big| \nonumber\\
 \leq &  \sup_{m_1}|C_{L_1,L_2,L'}^{k_1,k_2,m_1,c_1,c_2,c_1'}|\int \big(\sum_{m_1} | \Delta_{m_1,c_1'}^{(2)}F(x, y + \frac{L'}{2^{m_1}})|^2\big)^{\frac{1}{2}}\big( \sum_{m_1}|\tilde \Delta_{m_1}^{(2)}h(x,y)|^2\big)^{\frac{1}{2}} dxdy \nonumber \\
 \lesssim &\frac{2^{k_1(\beta_1-1)}2^{k_2}}{(1+|L_1|+|L_2|)^{N}(1+|L'|)^{N'}} \int \Big(\int \big(\sum_{m_1} |\Delta_{m_1,c_1'}^{(2)}F(x, y + \frac{L'}{2^{m_1}})|^2\big)^{\frac{p_1^2}{2}} dy\Big)^{\frac{1}{p_1^2}} \Big(\int \big( \sum_{m_1}|\tilde \Delta_{m_1}^{(2)}h(x,y)|^2 \big)^{\frac{{p_1^2}'}{2}} dy \Big)^{\frac{1}{{p_1^2}'}}dx,
 \end{align}
where the third inequality follows from Cauchy-Schwartz and the last one holds due to H\"older. We recall the boundedness\footnote{The bounds provided here are far from being optimal, but they are sufficient for our purpose.} of the shifted square function 
 \begin{equation} \label{sqaure:translated}
\Big(\int \big(\sum_{m_1} |\Delta_{m_1,c_1'}^{(2)}F(x, y + \frac{L'}{2^{m_1}})|^2\big)^{\frac{p_1^2}{2}} dy\Big)^{\frac{1}{p_1^2}} \lesssim   O\big(1+ |L'|^{100}\big)\big(\int |F_{c_1'}(x,y)|^{p_1^2} dy\big)^{\frac{1}{p_1^2}},
 \end{equation}
 where $F_{c_1'} := \ii F^{-1}(\hat{F}(\xi_1,\eta_1) \sum_{m_1 \in \mathbb{Z}}\psi_{m_1,c_1}(\eta_1))$ and $\psi_{m_1,c_1}$ is defined in Remark \ref{remark:higher:dim}. We also have that 
 \begin{equation}
 \big(\int |F_{c_1'}(x,y)|^{p_1^2} dy\big)^{\frac{1}{p_1^2}} \lesssim \big(\int |F_{}(x,y)|^{p_1^2} dy\big)^{\frac{1}{p_1^2}},
 \end{equation}
 for $1<p_1^2 <\infty$. This ends the proof of the claim.

\vskip .05in
\noindent
\textbf{Estimate for $T_{\mathcal{IV}\tilde{\chi}_R}.$}
\begin{align*}
m_{\beta_1,\beta_2}(\xi_1,0,\eta_1,0)\tilde{\chi}_R= \sum_{\substack{k_1\gg k_2 \\ m_1 \gg m_2}} m_{\beta_1,\beta_2}(\xi_1,0,\eta_1,0) \tilde{\chi}_{R_{k_1,k_2,m_1,m_2}},
\end{align*}
where $\tilde{\chi}_{R_{k_1,k_2,m_1,m_2}}$ denote the smooth restriction to the region $R_{k_1,k_2,m_1,m_2}$ achieved by the Littlewood-Paley decomposition.
Now we apply the high-low switch technique as previously to deduce 
\begin{align*}
& \sum_{\substack{k_1\gg k_2 \\ m_1 \gg m_2}} m_{\beta_1,\beta_2}(\xi_1,0,\eta_1,0) \tilde{\chi}_{R_{k_1,k_2,m_1,m_2}} =   \sum_{\substack{k_1,k_2 \\ m_1,m_2}}m_{\beta_1,\beta_2}(\xi_1,0,\eta_1,0) \tilde{\chi}_{R_{k_1,k_2,m_1,m_2}} - \sum_{\substack{k_1 \prec  k_2 \\ m_1,m_2}}m_{\beta_1,\beta_2}(\xi_1,0,\eta_1,0) \tilde{\chi}_{R_{k_1,k_2,m_1,m_2}} \\
& - \sum_{\substack{k_1, k_2 \\ m_1 \prec m_2}}m_{\beta_1,\beta_2}(\xi_1,0,\eta_1,0) \tilde{\chi}_{R_{k_1,k_2,m_1,m_2}} + \sum_{\substack{k_1\prec k_2 \\ m_1 \prec m_2}}m_{\beta_1,\beta_2}(\xi_1,0,\eta_1,0) \tilde{\chi}_{R_{k_1,k_2,m_1,m_2}} := \mathcal{IV}_a - \mathcal{IV}_b - \mathcal{IV}_c + \mathcal{IV}_d.
\end{align*}
It is straightforward to verify that $\mathcal{IV}_a$ generates the symbol for the (linear) bi-parameter Marcinkiewicz multiplier of order $(\beta_1, \beta_2)$, whose boundedness is well-known. $\mathcal{IV}_d$ can be estimated using the routine procedures developed in Section \ref{sec:2param:5linflag}; $\mathcal{IV}_b$ and $\mathcal{IV}_c$ are symmetric and follow the similar analysis as $\mathcal{II}$. 
\vskip .05in
\noindent
\textbf{(2)} When the cone for the first parameter is of type (1) as in \eqref{cone:type} and the cone for the second parameter is of type (2),\footnote{The other case is similar by symmetry.} the argument is a hybrid of the reasoning in cases (1) and (2) developed in the one-parameter setting. In particular, the symbol is smoothly restricted to the region 
$$\tilde{R}:= \displaystyle \{(\xi_1, \xi_2, \eta_1, \eta_2): |\xi_1| \gg |\xi_2|, |\eta_1| \sim |\eta_2|\},$$ where we split it as follows:
\begin{align*}
m_{\beta_1,\beta_2}(\xi_1,\xi_2,\eta_1,\eta_2) =& m_{\beta_1,\beta_2}(\xi_1,\xi_2,\eta_1,\eta_2) - m_{\beta_1,\beta_2}(\xi_1, 0, \eta_1, \eta_2)  + m_{\beta_1,\beta_2}(\xi_1, 0, \eta_1, \eta_2) \\
= & \int_0^1 \partial_{\xi_2} m_{\beta_1,\beta_2}(\xi_1, t\xi_2, \eta_1,\eta_2)\cdot \xi_2\, dt + m_{\beta_1,\beta_2}(\xi_1, 0, \eta_1, \eta_2).
\end{align*}
The study of the multiplier associated to the symbol 
$$m_{C_{\beta_1}, \beta_2}(\xi_1, \xi_2, \eta_1, \eta_2) := \int_0^1 \partial_{\xi_2} m_{\beta_1,\beta_2}(\xi_1, t\xi_2, \eta_1,\eta_2) \cdot \xi_2 dt$$
requests a Fourier series decomposition. With a by now routine decomposition of the frequency space $\tilde{R}$, we have
\begin{align} \label{fourier_series:whitney:highdim}
\tilde{R} = \bigcup_{\substack{c_1, c_2 \in \mathfrak{C}_1 \\ c_1', c_2' \in \mathfrak{C_2}}}
\bigcup_{\substack{k_1 \gg k_2 \\ m_1 \in \mathbb{Z}}}(\{|\xi_1| \sim 2^{k_1} \} \cap c_1) \times (\{|\xi_2| \sim 2^{k_2} \} \cap c_2) \times (\{|\eta_1| \sim 2^{m_1} \} \cap c_1') \times (\{|\eta_2| \sim 2^{m_1} \} \cap c_2'),
\end{align}
where we recall that $\mathfrak{C_1}$ and $\mathfrak{C_2}$ are collections of directional cones in the frequency spaces $\BBR^{d_1}$ and $\BBR^{d_2}$, respectively. 

We smoothly restrict the symbol to each piece of \eqref{fourier_series:whitney:highdim} with fixed $k_1 \gg k_2$, $m_1 \in \mathbb{Z}$, $c_1, c_2 \in \mathfrak{C}_1$ and $c'_1, c'_2 \in \mathfrak{C}_2$ and denote it by $m_{C_{\beta_1}, \beta_2}^{k_1,k_2,m_1, c_1, c_2, c_1',c_2'}$. By applying the Fourier series decomposition of the symbol to the corresponding multiplier, we obtain an expression whose analysis follows the standard procedure of this paper. 

The symbol $m_{\beta_1,\beta_2}(\xi_1, 0, \eta_1, \eta_2)$ can be rewritten as 
\begin{equation} \label{m:H,pos}
\frac{m_{\beta_1,\beta_2}(\xi_1, 0, \eta_1, \eta_2)}{|\xi_1|^{\beta_1}} |\xi_1|^{\beta_1} =: m_{H, \beta_2}(\xi_1,\eta_1,\eta_2) |\xi_1|^{\beta_1}.
\end{equation}

Last but not least, we notice that the symbol $m_{H, \beta_2}(\xi_1,\eta_1,\eta_2)$, as implied by the notation, is of order $0$ for the first parameter and order $\beta_2$ for the second parameter; because of that, the multiplier corresponding to \eqref{m:H,pos} can be treated similarly to $\mathcal{II}$. 
\end{proof}

\subsection{Some remarks on smoothing properties of multipliers}\label{remark:smoothing}

Finally, we present some results in the vein of \cite{HartTorresWu-smoothing-bil} and \cite{YangLiuWu-smoothing}. Although our methods do not apply to symbols satisfying only Sobolev conditions,\footnote{In the one-parameter bilinear setting, instead of the Sobolev conditions in \cite{HartTorresWu-smoothing-bil}, we require that $m$ is continuously differentiable away from the origin and that it satisfies
\begin{align*}
\sup_{c_1, c_2 \in \mathfrak{C}}\sup_{k} & \|m_{k,c_1,c_2}^\nu\|_{W^{(\tilde{r}, \tilde{r}),2}} < \infty, \ \ 
\sup_{c_1, c_2 \in \mathfrak{C}}\sup_{k_1 \ll k_2} \sup_{t \in [0,1]} \|m^{\nu, t}_{k_1,k_2,c_1,c_2}\|_{W^{(\tilde{r},\tilde{r}),2}} < \infty, \ \ \text{and} \\ 
& \sup_{c_1, c_2 \in \mathfrak{C}}\sup_{k_1 \ll k_2} \sup_{t \in [0,1]} \|m^{\nu, t, \nabla_i}_{k_1,k_2,c_1,c_2}\|_{W^{(\tilde{r},\tilde{r}),2}} < \infty \ \ \text{for} \ \ i=1,2,
\end{align*}
where $\mathfrak{C}$ is a collection of directional cones in the frequency space $\BBR^d$ and
\begin{align*}
m_{k,c_1,c_2}^\nu := & 2^{k\nu} m(2^{k} \xi_1, 2^{k} \xi_2) \psi_{0,c_1}(\xi_1) \psi_{0,c_2}(\xi_2), \\
m_{k_1,k_2,c_1,c_2}^{\nu,t}(\xi_1, \xi_2) :=& 2^{k_1 \nu} m(2^{k_1} \xi_1, t 2^{k_2} \xi_2) \psi_{0,c_1}(\xi_1) \psi_{0,c_2}(\xi_2), \\
m^{\nu, t, \nabla_2}_{k_1,k_2,c_1,c_2}(\xi_1,\xi_2) :=&  2^{k_1(\nu+1)}\left( \nabla_2 m(2^{k_1}\xi_1, t 2^{k_2}\xi_2)\cdot \xi_2 \right) \psi_{0,c_1}(\xi_1) \psi_{0,c_2}(\xi_2),
\end{align*}
and $m^{\nu, t, \nabla_1}_{k_1,k_2,c_1,c_2}$ is defined similarly. 
The discrete Sobolev norm $W^{(\tilde{r},\tilde{r}),2}$ for a function $f$ is defined by 
\begin{equation*}
\|f\|_{W^{(\tilde{r},\tilde{r}),2}} := \left(\sum_{n_1,n_2 \in \mathbb{Z}^d} (1+|n_1|^2)^{\tilde{r}} (1+|n_2|^2)^{\tilde{r}}|\hat{f}(n_1,n_2)|^2  \right)^{\frac{1}{2}}.
\end{equation*}
Using the argument in the proof of Theorem \ref{thm:smoothing:N:param}, \eqref{eq:smoothing:1} can be verified for symbols $m$ satisfying the conditions above for $0 \leq \nu < 2d$, $\max(\frac{d}{\tilde{r}}, \frac{d}{d+s}) < r < \infty$, $1 < p_1, p_2 < \infty$ and $\frac{1}{r} = \frac{1}{p_1} + \frac{1}{p_2}$. A similar Sobolev condition can be formulated in the biparameter setting. 
} we do recover the results from \cite{HartTorresWu-smoothing-bil} and from \cite{YangLiuWu-smoothing} for fractional integral operators also in the $N$-parameter, mixed-norm setting. Once more, the purpose of this section is only illustrative.

\begin{theorem}
\label{thm:smoothing:N:param}
Let $0 \leq \nu_1, \ldots, \nu_N < nd$ and let $m \in \mathcal{M}_{- \nu_1, \ldots, -\nu_N}(\BBR^{d_1} \times \ldots \times \BBR^{d_N})$ be a Mikhlin symbol of order $(- \nu_1, \ldots, -\nu_N)$. Then for any $s_1, \ldots, s_N \geq 0$ so that $s_j \neq \nu_j$ for any $1 \leq j \leq N$ and for any functions $f_1, \ldots, f_n \in \mathcal{S}(\BBR^{d_1} \times \ldots \times \BBR^{d_N})$,
we have
\begin{align}\label{eq:smoothing:multi:param}
\|D^{s_1}_{(1)}\ldots D^{s_N}_{(N)} T_m(f_1, \ldots, f_n)\|_{L^{\vec r}} \lesssim   \sum_{\substack{\sigma_1^1, \ldots, \sigma^N_n \in \{ 0, 1 \} \\ \sigma_1^j+ \ldots +\sigma_n^j=1}}\prod_{l=1}^n\|D^{\sigma_l^1 (s_1- \nu_1)}_{(1)} \ldots D^{\sigma_l^N (s_N-\nu_N)}_{(N)} f_{l}\|_{\vec p_l},
\end{align}
provided that $1< p_1^1, \ldots, p_1^N, p_2^1, \ldots, p_2^n, \ldots, p_n^1, \ldots, p_n^N < \infty$, $\frac{1}{n}<r^1, \ldots, r^n < \infty$ are satisfying component-wise the H\"older condition 
\[
\frac{1}{\vec r}= \frac{1}{ \vec p_1}+\ldots+\frac{1}{\vec p_n}
\] 
and 
\[
\frac{d_N}{d_N+s_N}< r^N, \quad \max(\frac{d_N}{d_N+s_N}, \frac{d_{N-1}}{d_{N-1}+s_{N-1}})< r^{N-1}, \ldots, \max \big( \frac{d_N}{d_N+s_N}, \ldots, \frac{d_1}{d_1+s_1}  \big)< r^1.
\]
\end{theorem}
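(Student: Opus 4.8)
\textbf{Proof plan for Theorem \ref{thm:smoothing:N:param}.}
The plan is to reduce the smoothing estimate to the positive-order Leibniz-type machinery already developed, by a frequency decomposition identical in spirit to the one used for $T_{m_\beta}$ in the proof of Theorem \ref{thm:multipliers:positive:order}. First I would decompose the frequency space $\BBR^{(d_1+\ldots+d_N)n}$ into the usual conical regions in each parameter: in the $j$-th parameter, either one variable $\xi^j_{l_0}$ dominates (so $|\xi^j_1+\ldots+\xi^j_n|\sim|\xi^j_{l_0}|$) or a ``diagonal'' region where $|\xi^j_{l_1}|\sim|\xi^j_{l_2}|$ are the two largest. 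Since the symbol $m$ tensorizes over parameters only in the model fractional-integral case, in general I would run the argument parameter-by-parameter, treating the symbol $m(\xi_1,\ldots,\xi_n)$ on one parameter at a time; in each parameter, on the region where $\xi^j_{l_0}$ dominates, the full symbol $D^{s_j}_{(j)}\circ T_m$ carries the factor $|\sum_l \xi^j_l|^{s_j}|\sum_l\xi^j_l|^{-\nu_j}\sim|\xi^j_{l_0}|^{s_j-\nu_j}$ up to a Mikhlin-type factor of order zero in that parameter. The key algebraic point is that on such a region, $|\sum_l\xi^j_l|^{s_j-\nu_j}$ is itself (when smoothly restricted) a Mikhlin symbol of order $s_j-\nu_j$ in the $j$-th variables — of \emph{positive} order if $s_j>\nu_j$ and of \emph{negative} order if $s_j<\nu_j$ — and either way smooth away from the origin on the relevant cone. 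This is exactly why the hypothesis $s_j\neq\nu_j$ is required: at $s_j=\nu_j$ one would land on a genuine order-zero (Coifman-Meyer) symbol, for which the Bourgain-Li scale-by-scale method fails, as remarked in the text.

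The core steps, in order, are: (i) perform the conical/paraproduct decomposition in all $N$ parameters simultaneously, reducing to finitely many operators each localized to a product of cones; (ii) in each parameter, split the root symbol as ``commutator $+$ derivative acting on fewer functions'' exactly as in \eqref{eq:intr:comm}-\eqref{eq:commutator}, where now the commutator involves $|\sum_l\xi^j_l|^{s_j-\nu_j}$ and picks up the gain $2^{k^j_{\max}(s_j-\nu_j-1)}2^{k^j_{l}}$ in the off-diagonal region (when $s_j-\nu_j>0$) or the analogous gain with the appropriate sign when $s_j-\nu_j<0$; (iii) cover the relevant annuli by Whitney cubes associated to directional cones (Remark \ref{remark:higher:dim}) and run a multi-variable Fourier series decomposition on each Whitney rectangle — the renormalized Fourier coefficients decay rapidly in the off-diagonal case (smooth symbol away from the origin) and have limited decay $|C_L|\lesssim 2^{k\cdot(s_j-\nu_j)}(1+|L|)^{-|s_j-\nu_j|-d_j}$ in the diagonal case, exactly as in \eqref{eq:limited:decay:higher:dim}, which is the source of the conditions $\max(d_i/(d_i+s_i),\ldots)<r^i$; (iv) sum the resulting fixed-scale estimates against their Besov-norm majorants as in Section \ref{Bourgain-Li_hilow}, using the interpolation of Besov norms \eqref{eq:interpolation:Besov:epsilon} — here one needs the version with $\beta<\epsilon<0$ alluded to in the footnote to \eqref{eq:interpolation:Besov:epsilon} when $s_j-\nu_j<0$; (v) run the optimization-and-interpolation bookkeeping of Section \ref{generic_optim_interp} to collect the geometric mean of terms $\prod_l\|D^{\sigma^1_l(s_1-\nu_1)}_{(1)}\ldots D^{\sigma^N_l(s_N-\nu_N)}_{(N)}f_l\|_{\vec p_l}$ with $\sum_l\sigma^j_l=1$, which is precisely the right-hand side of \eqref{eq:smoothing:multi:param}. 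For the depth-1 prototype (the $n$-linear $N$-parameter fractional integral) the symbol genuinely tensorizes, so steps (i)-(v) can be performed fully independently in each parameter, and this case can be written out in detail; the general Mikhlin case of order $(-\nu_1,\ldots,-\nu_N)$ follows the same template with the non-tensorized complications handled as in the proof of Theorem \ref{thm:multiparameter_nontensor} (the shifted-square-function argument of the claim \eqref{claim_using_sq} replacing naive coefficient summation when a symbol behaves differently in different parameters).

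The main obstacle I anticipate is two-fold. First, the sign bookkeeping: when $s_j>\nu_j$ the $j$-th parameter behaves like a positive-order flag and the derivatives ``want'' to move to the highest-oscillating function, whereas when $s_j<\nu_j$ the effective order $s_j-\nu_j$ is negative and the scale-by-scale sums converge for the opposite reason (decay at high frequencies comes from the operator, not the derivative); making the interpolation in \eqref{eq:interpolation:Besov:epsilon} work uniformly across the two regimes, and verifying that the limited-decay Fourier coefficients in the diagonal regions still yield summable series under the stated hypotheses $\max(d_i/(d_i+s_i),\ldots)<r^i$, is the delicate part. Second, in the genuinely non-tensorized Mikhlin case one must confirm that restricting $m$ to a Whitney rectangle and differentiating it still produces coefficients obeying the decay $|C_L|\lesssim 2^{k^j\cdot(s_j-\nu_j)}2^{k^j_l}(1+|L|)^{-M}$ uniformly in the scales — this is where the hypothesis that $m$ is smooth away from the union of the coordinate subspaces, together with the homogeneity-type bound \eqref{eq:positive:order:class:multi:param} with $\beta_j$ replaced by $-\nu_j$, must be used exactly as in the computation of \eqref{fourier_coeff:Mikhlin}-\eqref{decay:Mikhlin:Fourier}. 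Everything else is a routine (if lengthy) transcription of the arguments already in Sections \ref{sec:one:param:generic:ind}, \ref{subsection_bi_leibniz} and \ref{generic_optim_interp}.
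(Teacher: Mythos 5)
Your proposal is essentially the same argument the paper uses. The paper's proof is somewhat more economical in the off-diagonal regions: rather than re-running the commutator split, Whitney decomposition, and Fourier series expansion (which you describe in steps (ii)--(iii)), it simply observes that the combined symbol $|\sum_l\xi^j_l|^{s_j}\,m(\cdot)$ is already a Mikhlin (or, in the multi-parameter case, Marcinkiewicz) symbol of order $s_j-\nu_j$ on those cones and directly invokes Theorem~\ref{thm:multipliers:positive:order} (resp.\ Theorem~\ref{thm:multiparameter_nontensor}) as a black box; your unrolled version is just the internal content of those theorems. One small imprecision worth flagging: you write the factor as $|\sum_l\xi^j_l|^{s_j}|\sum_l\xi^j_l|^{-\nu_j}$, but $m$ is \emph{not} $|\sum_l\xi^j_l|^{-\nu_j}$ — the Mikhlin bound is in terms of $|\xi^j|=|(\xi^j_1,\ldots,\xi^j_n)|$ — though on the off-diagonal cone these are comparable, so the heuristic is fine; the cleaner statement is that $|\sum_l\xi^j_l|^{s_j}\cdot m$ is a symbol of order $s_j-\nu_j$ there. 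Your identification of the diagonal regions as the source of the constraints $\max(d_i/(d_i+s_i),\ldots)<r^i$ via the limited-decay Fourier coefficients, your handling of the sign dichotomy $s_j\gtrless\nu_j$ through the footnoted $\beta<\epsilon<0$ variant of~\eqref{eq:interpolation:Besov:epsilon}, and your appeal to the shifted square function estimate~\eqref{claim_using_sq} for the mixed (off-diagonal/diagonal) regions all match the paper's sketch.
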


We call attention to the fact that \eqref{eq:smoothing:multi:param} remains true whenever $s_1-\nu_1 \neq 0, \ldots, s_N- \nu_N \neq 0$, so even when they are negative numbers. 

A typical example of multipliers satisfying the boundedness property in Theorem \ref{thm:smoothing:N:param} is the $N$-parameter fractional integral operator $I_\nu(f_1, \ldots, f_n)$ given by
\begin{align*}
 \int_{\BBR^{nd_1} \times \ldots \times \BBR^{nd_N}} (|\xi^1_1|^2 + \ldots +|\xi^1_n|^2)^{-\frac{\nu_1}{2}} \ldots (|\xi_1^N|^2 + \ldots + |\xi_n^N|^2)^{-\frac{\nu_N}{2}}\hat{f_1}(\xi_1) \ldots \f{f_n}(\xi_n)e^{2\pi i x\cdot(\xi_1 + \ldots \xi_n)} d\xi^1 \ldots d\xi^N,
\end{align*}
where $\xi^j := (\xi^j_1, \ldots, \xi^j_n) \in \BBR^n$ for $1 \leq j \leq N$. In this particular case we can of course allow for $s_j=\nu_j$ and \eqref{eq:smoothing:multi:param} holds for $D^{s_1}_{(1)}\ldots D^{s_N}_{(N)} I_\nu(f_1, \ldots, f_n)$. The smoothing property for fractional integral operators, naturally implied by Theorem \ref{thm:smoothing:N:param}, seem to be new in the  multi-parameter, mixed-norm setting.

Again, for simplicity we only present the case $n=2$ and $N=2$ of the proof. First, we start with the one-parameter case, which relies on the results in Theorem \ref{thm:multipliers:positive:order} corresponding to a rooted tree of depth $1$; next we discuss the bi-parameter case, which makes use of Theorem \ref{thm:multiparameter_nontensor}.

\begin{proof}[Sketch of Proof of Theorem \ref{thm:smoothing:N:param} when $N=1$]

Here, like in \cite{HartTorresWu-smoothing-bil}, we would like to obtain smoothing properties for operators of order $\nu$, where $0 \leq \nu< 2d$. This simply means that we take $m(\xi, \eta) \in \mathcal{M}_{-\nu}(\BBR^{2d})$ and would like to prove for any $s \neq \nu$
\begin{align}
\label{eq:smoothing:1}
\|D^s T_m(f, g)\|_r \lesssim \|D^{s-\nu}f\|_{p_1} \|g\|_{p_2} + \|f\|_{p_1} \|D^{s-\nu} g\|_{p_2}.
\end{align}

As usual, $D^s T_m(f, g)$ breaks down as a sum of
\begin{align*}
&\sum_{k \ll \ell} \int_{\BBR^{2d}} |\xi_1+\xi_2|^s m_\nu(\xi_1, \xi_2) \widehat{\Delta_k f}(\xi_1) \widehat{\Delta_\ell g}(\xi_2) e^{2 \pi i x \cdot (\xi_1+\xi_2)} d \xi_1 d \xi_2 \\
& +\sum_{k \gg \ell} \int_{\BBR^{2d}} |\xi_1+\xi_2|^s m_\nu(\xi_1, \xi_2) \widehat{\Delta_k f}(\xi_1) \widehat{\Delta_\ell g}(\xi_2) e^{2 \pi i x \cdot (\xi_1+\xi_2)} d \xi_1 d \xi_2  \\
& +\sum_{ |k - \ell| \leq 3} \int_{\BBR^{2d}} |\xi_1+\xi_2|^s m_\nu(\xi_1, \xi_2) \widehat{\Delta_k f}(\xi_1) \widehat{\Delta_\ell g}(\xi_2) e^{2 \pi i x \cdot (\xi_1+\xi_2)} d \xi_1 d \xi_2 :=I+II+III
\end{align*}
and it will be enough to estimate the terms $I$ and $III$.

\begin{enumerate}[leftmargin=*]
\item For $I$, we simply notice that on the region 
\[
R:=\bigcup_{k \ll \ell} \{ (\xi_1, \xi_2): \quad  |\xi_1| \sim 2^k, \quad |\xi_2| \sim 2^\ell  \},
\]
the symbol $ |\xi_1+\xi_2|^s m_\nu(\xi_1, \xi_2)$ becomes a Mikhlin symbol of order $s-\nu$. So as long as $s>\nu$, the result follows from Theorem \ref{thm:multipliers:positive:order}. When $s< \nu$, the proof of Theorem \ref{thm:multipliers:positive:order} still holds: in that case, we would use $\| \cdot \|_{\dot B^{s-\nu}_{p_l, \infty}}$ and $\| \cdot \|_{\dot B^{-\epsilon}_{p_l, \infty}}$ Besov norms, with $s-\nu < - \epsilon< 0$.

\item For the diagonal term, we want to estimate 
\[
III:= \sum_{ k } \int_{\BBR^{2d}} |\xi_1+\xi_2|^s m_\nu(\xi_1, \xi_2) \widehat{\Delta_k f}(\xi_1) \widehat{\Delta_k g}(\xi_2) e^{2 \pi i x \cdot (\xi_1+\xi_2)} d \xi_1 d \xi_2.
\]

For this, we would like to use a Fourier series decomposition for $|\xi_1+\xi_2|^s m_\nu(\xi_1, \xi_2)$ in the region $\{(\xi_1, \xi_2): |\xi_1| \sim |\xi_2| \}$, which can be decomposed as \eqref{diagonal:whitney:highdim}:
\begin{align*}
\{(\xi_1, \xi_2) \in \BBR^{2d}: |\xi_1| \sim |\xi_2| \} = \bigcup_{c_1, c_2 \in \mathfrak{C}}\bigcup_{k \in \mathbb{Z}}( \{|\xi_1| \sim 2^{k} \} \cap c_1) \times ( \{|\xi_2| \sim 2^{k} \} \cap c_2),
\end{align*}
where $\mathfrak{C}$ is the collection of all directional cones in the frequency space $\BBR^{d}$.

We proceed with a Fourier series decomposition of the symbol restricted to the Whitney cube corresponding to a fixed scale $k \in \mathbb{Z}$ and fixed directional cones $c_1, c_2, \in \mathfrak{C}$, namely
\begin{equation} \label{symbol:smoothing:2para}
|\xi_1+\xi_2|^s m_\nu(\xi_1, \xi_2) \psi_{k.c}(\xi_1) \psi_{k,c_2}(\xi_2).
\end{equation}

In this region, while $m_\nu(\xi_1, \xi_2)$ is smooth, $|\xi_1+\xi_2|^s$ is less regular since we cannot exclude that $|\xi_1+\xi_2|=0$. So we proceed with Fourier series decompositions on both symbols separately. We first notice that \eqref{symbol:smoothing:2para} can be rewritten as
\begin{equation*}
\left(|\xi_1+\xi_2|^s \tilde \varphi_{k}(\xi_1+\xi_2)\right) \left(m_\nu(\xi_1, \xi_2) \psi_{k.c}(\xi_1) \psi_{k,c_2}(\xi_2)\right).
\end{equation*}

Then the Fourier series of the symbol in the first parenthesis yields 
\begin{equation} \label{eq:one_parameter_smoothing_diff}
|\xi_1+\xi_2|^s \tilde \varphi_k(\xi_1+\xi_2)= \sum_{L \in \BBZ^d} 2^{k s} C_L e^{2 \pi i {L \over {2^k}} \cdot (\xi_1+\xi_2)}, 
\end{equation}
where the renormalized Fourier coefficients $C_L$ satisfy
\begin{equation} \label{fourier_coef:one_parameter_smoothing_diff}
|C_L| \lesssim \frac{1}{\left( 1+|L|  \right)^{d+s}}.
\end{equation}

Next, we use a Fourier decomposition on $m_\nu(\xi_1, \xi_2) \psi_{k,c_1}(\xi_1) \psi_{k,c_2}(\xi_2)$:
\begin{align*}
m_\nu(\xi_1, \xi_2) \psi_{k,c_1}(\xi_1) \psi_{k,c_2}(\xi_2)= \sum_{L_1, L_2 \in \BBZ^d} C_{L_1, L_2}^{k,c_1,c_2} e^{2 \pi i {L_1 \over {2^k}} \cdot \xi_1} e^{2 \pi i {L_2 \over {2^k}} \cdot \xi_2},
\end{align*}
with 
\[
\big|  C_{L_1, L_2}^{k,c_1,c_2}  \big| \lesssim_M \frac{2^{-k \nu}}{\left( 1+ |L_1|+|L_2|  \right)^M}.
\]
for $M >0$ sufficiently large.
Overall we get 
\begin{align*}
|\xi_1+\xi_2|^s m_\nu(\xi_1, \xi_2) \psi_{k,c_1}(\xi_1) \psi_{k,c_2}(\xi_2) = \sum_{L \in \BBZ^d} \sum_{L_1, L_2 \in \BBZ^d} 2^{k s} C_L C_{L_1, L_2}^{k,c_1,c_2} e^{2 \pi i { {L+ L_1} \over {2^k}} \cdot \xi_1} e^{2 \pi i {{L+L_2} \over {2^k}} \cdot \xi_2}
\end{align*}
and in consequence, whenever $0<\tau \leq \min(1,r)$,
\begin{align*}
\|III\|_r^\tau \lesssim \sum_L |C_L|^\tau \sum_k  2^{k (s - \nu) \tau}  \|\Delta_k f\|_{p_1}^\tau \|\Delta_k   g\|_{p_2}^\tau.
\end{align*}
This imposes the restriction $r> \frac{d}{d+s}$, and implies the desired \eqref{eq:smoothing:multi:param} for either $s>\nu$ or $s<\nu$.
\end{enumerate}
\end{proof}

\begin{proof}[Sketch of Proof of Theorem \ref{thm:smoothing:N:param} when $N=2$.]
Now we prove the bi-parameter version of the smoothing property discussed above; that is, we will show that 
\begin{align}
\label{eq:smooth:bi:param}
\|D^{s_1}_{(1)}D^{s_2}_{(2)} T_m(f_1, f_2)\|_{L^{r^1}_x(L^{r^2}_y)} \lesssim & \|D_{(1)}^{s_1-\nu_1}D_{(2)}^{s_2-\nu_2}f_1\|_{L^{p_1^1}_x(L^{p_1^2}_y)}\|f_2\|_{L^{p_2^1}_x(L^{p_2^2}_y)} +  \|f_1\|_{L^{p_1^1}_x(L^{p_1^2}_y)}\|D_{(1)}^{s_1-\nu_1}D_{(2)}^{s_2-\nu_2}f_2\|_{L^{p_2^1}_x(L^{p_2^2}_y)}  \nonumber \\
&+  \|D_{(1)}^{s_1-\nu_1}f_1\|_{L^{p_1^1}_x(L^{p_1^2}_y)}\|D_{(2)}^{s_2-\nu_2}f_2\|_{L^{p_2^1}_x(L^{p_2^2}_y)} +  \|D_{(2)}^{s_2-\nu_2}f_1\|_{L^{p_1^1}_x(L^{p_1^2}_y)}\|D_{(1)}^{s_1-\nu_1}f_2\|_{L^{p_2^1}_x(L^{p_2^2}_y)},
\end{align}
for $1 < p_1^j, p_2^j < \infty$, $\frac{d}{d+s_j} < r^j$ and $\frac{1}{p_1^j} + \frac{1}{p_2^j} = \frac{1}{r^j}$ with $j = 1,2$.

As indicated by the argument in the one-parameter setting, various considerations are necessary for different regions of the frequency space.  
\vskip .05in
\noindent
\textbf{(1)}
In the ``off-diagonal'' region
$$
\{(\xi_1,\xi_2, \eta_1, \eta_2): |\xi_1| \gg |\xi_2|, |\eta_1| \gg |\eta_2| \}
$$
for both parameters (or other similar regions, obtained by permuting the roles of $\xi_1, \xi_2$ or $\eta_1, \eta_2$), the symbols $|\xi_1+\xi_2|^{s_1}$ and $|\eta_1+\eta_2|^{s_2}$ are Mikhlin symbols of order $s_1$ and $s_2$ respectively. Hence
$$
m_{s_1-\nu_1, s_2-\nu_2}(\xi_1,\xi_2, \eta_1,\eta_2) := |\xi_1+\xi_2|^{s_1}|\eta_1+\eta_2|^{s_2}m(\xi_1, \xi_2, \eta_1, \eta_2)
$$ is a bi-parameter Marcinkiewicz symbol satisfying 
\begin{equation*}
|\partial^{\zeta_1}_{\xi}\partial^{\zeta_2}_{\eta}m_{\beta_1,\beta_2}(\xi, \eta)| \lesssim |\xi|^{s_1-\nu_1 - |\zeta_1|} |\eta|^{s_2-\nu_2 - |\zeta_2|}.
\end{equation*}
We can now invoke Theorem \ref{thm:multiparameter_nontensor} to conclude the discussion for this case.
\vskip .05in
\noindent
\textbf{(2)}
In the diagonal region for both parameters
$$
\{(\xi_1,\xi_2, \eta_1, \eta_2): |\xi_1| \sim |\xi_2|, |\eta_1| \sim |\eta_2| \},
$$
we no longer need to split the symbol and consider commutator terms. Instead, we perform a quadruple Fourier series decomposition of $m(\xi_1,\xi_2,\eta_1,\eta_2)$ restricted to a Whitney rectangle and a Fourier series decompositions of the symbols $|\xi_1+\xi_2|^{s_1}$ and $|\eta_1+\eta_2|^{s_2}$ respectively, restricted accordingly. The subsequent analysis mimics the argument developed in Section \ref{sec:2param:5linflag}.
\vskip .05in
\noindent
\textbf{(3)}
In the region that is ``off-diagonal" for the first parameter and  ``diagonal" for the second parameter (and in the other similar regions, alike)
$$
\{(\xi_1,\xi_2, \eta_1, \eta_2): |\xi_1| \gg |\xi_2|, |\eta_1| \sim |\eta_2| \},
$$
we will use a hybrid of the arguments for \textbf{(1)} and \textbf{(2)}. In particular,
 \begin{align*}
&|\xi_1 +\xi_2|^{s_1}m(\xi_1,\xi_2, \eta_1, \eta_2) \\
= &\underbrace{|\xi_1 + \xi_2|^{s_1}m(\xi_1,\xi_2, \eta_1, \eta_2) - |\xi_1|^{s_1}m(\xi_1,0,\eta_1, \eta_2)}_{=:m_{C_{s_1-\nu_1}, -\nu_2}(\xi_1, \xi_2, \eta_1, \eta_2)} + \underbrace{|\xi_1|^{s_1}m(\xi_1, 0, \eta_1, \eta_2)}_{=:m_{s_1-\nu_1, -\nu_2}(\xi_1, \eta_1, \eta_2)}.
 \end{align*}
 
The quadruple Fourier series decomposition can be applied to the symbol $m_{C_{s_1-\nu_1}, -\nu_2}$ smoothly restricted to the region 
\begin{equation}\label{loc:biparameter_smoothing}
(\{|\xi_1| \sim 2^{k_1}\} \cap c_1) \times (\{|\xi_2| \sim 2^{k_2}\} \cap c_2) \times (\{|\eta_1| \sim 2^{m_1}\} \cap c_1') \times (\{|\eta_1| \sim 2^{m_1}\} \cap c_2'),
\end{equation}
where $k_1 \gg k_2$ and $c_1, c_2 \in \mathfrak{C}_1$ are directional cones in the frequency space for the first parameter and $c_1', c_2' \in \mathfrak{C}_2$ are directional cones in the frequency space for the second parameter.
 
Meanwhile, we can use the Fourier series decomposition to express the symbol $|\eta_1+\eta_2|^{s_2}$ localized to the cube $[-2^{m_1+1}, 2^{m_1+1}]^{d_2}$ which yields an expression similar to \eqref{eq:one_parameter_smoothing_diff}. The corresponding Fourier coefficients satisfy a decaying condition similar to \eqref{fourier_coef:one_parameter_smoothing_diff}, which imposes the constraint $\ds r^2 > \frac{d_2}{d_2+s_2}$ on the Lebesgue exponent $r^2$. Next, we use the Fourier series representation for the symbol $m_{C_{s_1-\nu_1}, -\nu_2}|\eta_1+\eta_2|^{s_2}$ in the multiplier; the remaining argument is routine and will be omitted here.
 
Last but not least, the symbol $m_{s_1-\nu_1, -\nu_2}(\xi_1, \eta_1, \eta_2)$ can be rewritten as
\begin{equation*}
m_{s_1-\nu_1, -\nu_2}(\xi_1, \eta_1, \eta_2) = \frac{m_{s_1-\nu_1, -\nu_2}(\xi_1, \eta_1, \eta_2)}{|\xi_1|^{s_1-\nu_1}}|\xi_1|^{s_1-\nu_1} =: m_{H, -\nu_2}(\xi_1, \eta_1, \eta_2) |\xi_1|^{s_1-\nu_1}.
\end{equation*}

We smoothly restrict the symbol $m_{H, -\nu_2}(\xi_1, \eta_1, \eta_2)$ to the region 
$$
(\{|\xi_1| \sim 2^{k_1}\} \cap c_1) \times (\{|\eta_1| \sim 2^{m_1}\} \cap c_1') \times  (\{|\eta_2| \sim 2^{m_1}\} \cap c_2'),
$$
where $k_1, m_1 \in \mathbb{Z}$ are fixed, $c_1\in \mathfrak{C}_1$ is a  directional cone in the frequency space for the first parameter and $c_1', c_2' \in \mathfrak{C}_2$ are directional cones in the frequency space for the second parameter. We denote the localized symbol by $m^{k_1, m_1}_{H, -\nu_2}$, on which we perform the triple Fourier series decomposition: 
\begin{equation*}
m^{k_1, m_1,c_1,c_1',c_2'}_{H, -\nu_2}(\xi_1, \eta_1, \eta_2) = \sum_{\substack{L \in \mathbb{Z}^{d_1} \\ L_1', L_2' \in \mathbb{Z}^{d_2}}}C^{k_1, m_1, c_1, c_1',c_2'}_{L, L_1',L_2'} e^{2 \pi i  L \cdot \frac{\xi_1}{2^{k_1}}}e^{2 \pi i  L_1' \cdot \frac{\eta_1}{2^{m_1}}}e^{2 \pi i  L_2' \cdot \frac{\eta_2}{2^{m_1}}},
\end{equation*} 
where the Fourier coefficients satisfy the decay condition 
\begin{equation*}
|C^{k_1, m_1,c_1,c_1',c_2'}_{L, L_1',L_2'}| \lesssim 2^{-m_1\nu_2}(1+|L|)^{-N}(1+|L_1'|+|L_2'|)^{-N'}
\end{equation*}
for sufficiently large $N, N'$.
When combined with the Fourier series representation for $|\eta_1+\eta_2|^{s_2}$ on $\{ |\eta_1+\eta_2| \leq 2^{m_1+1}  \}$, we obtain for fixed $c_1 \in \mathfrak{C_1}$ and $c_1',c_2' \in \mathfrak{C}_2$,
\begin{align*}
&\sum_{\substack{L \in \mathbb{Z}^{d_1} \\ L_1', L_2', L' \in \mathbb{Z}^{d_2}}} \sum_{\substack{k_1 \gg k_2 \\ m_1}}C^{k_1, m_1,c_1,c_1',c_2'}_{L, L_1', L_2'} C_{L'} 2^{m_1 s_1}\\
& \int |\xi_1|^{s_1-\nu_1}\ii F(\Delta_{k_1,c_1}^{(1)}\Delta_{m_1,c_1'}^{(2)}f_1)(\xi_1, \eta_1) \ii F(\Delta_{k_2}^{(1)}\Delta_{m_1,c_2'}^{(2)}f_2) (\xi_2, \eta_2)e^{2 \pi i \xi_1\cdot (x + \frac{L}{2^{k_1}})}e^{2 \pi i \xi_2\cdot x} e^{2 \pi i \eta_1 \cdot (y+  \frac{L_1'+L'}{2^{m_1}})}e^{2 \pi i  \eta_2 \cdot (y+\frac{L_2'+L'}{2^{m_1}})} d\xi d\eta .
\end{align*}

We then use the high-low switch technique to split the sum in two parts -- one with the sum over all $k_1, k_2, m_1$ and the second sum over $k_2 \succ k_1$ and all $m_1$. The second sum can be estimated using the usual analysis (optimization, Besov norms, etc), while the first can be simplified as 
\begin{align*}
\sum_{\substack{L \in \mathbb{Z}^{d_1} \\ L_1', L_2', L' \in \mathbb{Z}^{d_2}}} \sum_{k_1, m_1\in \mathbb{Z}} C^{k_1, m_1,c_1,c_1',c_2'}_{L, L_1', L_2'}C_{L'} 2^{m_1 s_1}\Delta_{k_1,c_1}^{(1)}\Delta_{m_1,c_1'}^{(2)}D^{s_1-\nu_1}_{(1)}f_1(x+\frac{L}{2^{k_1}}, y +  \frac{L_1'+L'}{2^{m_1}})\Delta_{m_1,c_2'}^{(2)}f_2(x, y + \frac{L_2'+L'}{2^{m_1}}). 
\end{align*}
By subadditivity and H\"older's inequality, its $\|\cdot \|_{L^{r^1}_x(L^{r^2}_y)}^\tau$ norm with $\tau \leq \min(1, r^1, r^2)$ can be estimated by 
\begin{align*}
\sum_{\substack{L \in \mathbb{Z}^{d_1} \\ L_1', L_2', L' \in \mathbb{Z}^{d_2}}} \sum_{m_1}|C_{L'}|^\tau 2^{m_1 s_1\tau} \big\|\sum_{k_1}C^{k_1, m_1,c_1,c_1',c_2'}_{L, L_1', L_2'}\Delta_{k_1,c_1}^{(1)}\Delta_{m_1,c_1'}^{(2)}D^{s_1-\nu_1}_{(1)}f_1(\cdot+\frac{L}{2^{k_1}}, \cdot )\big\|_{L^{p_1^1}_x(L^{p_1^2}_y)}^\tau \big\|\Delta_{m_1,c_2'}^{(2)}f_2(\cdot, \cdot)\big\|_{L^{p_2^1}_x(L^{p_2^2}_y)}^\tau.
\end{align*}
The reasoning used for proving the inequality \eqref{claim_using_sq} also implies that
\begin{equation*}
\big\|\sum_{k_1}C^{k_1, m_1,c_1,c_1',c_2'}_{L, L_1', L_2'}\Delta_{k_1,c_1}^{(1)}\Delta_{m_1,c_1'}^{(2)}D^{s_1-\nu_1}_{(1)}f_1(\cdot+\frac{L}{2^{k_1}}, \cdot)\big\|_{L^{p_1^1}_x(L^{p_1^2}_y)} \lesssim (1+|L|)^{-N+1}(1+|L_1'|+|L_2'|)^{-N'}2^{-m_1\nu_2}\|\Delta_{m_1}^{(2)}D^{s_1-\nu_1}_{(1)}f_1\|_{L^{p_1^1}_x(L^{p_1^2}_y)}.
\end{equation*}
From here on the summation in $m_1$ is standard.

The reasoning presented above yields the desired estimates in the case when $s_1> \nu_1, s_2>\nu_2$. As in the one-parameter case, a similar and even simpler\footnote{The high-low switch procedure is no longer necessary if $s_2<\nu_2$, when one needs to sum \[
\sum_{m_1} 2^{m_1(s_2-\nu_2)\tau} \|\Delta_{m_1}^{(2)}D^{s_1-\nu_1}_{(1)}f_1\|_{L^{p_1^1}_x(L^{p_1^2}_y)}^\tau \|\Delta_{m_1}^{(2)}f_2\|_{L^{p_2^1}_x(L^{p_2^2}_y)}^\tau.
\]} argument implies \eqref{eq:smooth:bi:param} for any $s_1 \neq \nu_1, s_2 \neq \nu_2$.
\end{proof}

\bibliographystyle{plain}
\bibliography{MultiParamFlagLeibniz.bbl}
\end{document}